\newtheorem{thm}{Theorem}\newtheorem{lemma}[thm]{Lemma}
\newtheorem{defi}[thm]{Definition}
\newtheorem{prop}[thm]{Proposition}
\newtheorem{rk}[thm]{Remark}
\newcommand{\rr}{{\mathbb{R}}}
\newcommand{\ttheta}{{\tilde{\theta}}}
\newcommand{\nn}{{\mathbb{N}}}
\newcommand{\sS}{{\mathbb{S}}}
\newcommand{\E}{{\mathbb{E}}}
\newcommand{\uU}{{\mathbb{U}}}
\newcommand{\PP}{{\mathbb{P}}}
\newcommand{\QQ}{{\mathbb{Q}}}
\newcommand{\e}{\varepsilon}
\newcommand{\vip}{\vskip.18cm}
\newcommand{\indiq}{\hbox{\rm 1}{\hskip -2.8 pt}\hbox{\rm I}}
\newcommand{\dd}{{\rm d}}
\newcommand{\ddiv}{{\rm div}}
\newcommand{\cX}{{\mathcal{X}}}
\newcommand{\cG}{{\mathcal{G}}}
\newcommand{\cY}{{\mathcal{Y}}}
\newcommand{\cZ}{{\mathcal{Z}}}
\newcommand{\bK}{{\mathbf K}}
\newcommand{\Et}{{E_\triangle}}
\newcommand{\cXt}{{\mathcal{X}_\triangle}}
\newcommand{\cUt}{{\mathcal{U}_\triangle}}
\newcommand{\cU}{{\mathcal{U}}}
\newcommand{\cL}{{\mathcal{L}}}
\newcommand{\cK}{{\mathcal{K}}}
\newcommand{\cM}{{\mathcal{M}}}
\newcommand{\cN}{{\mathcal{N}}}
\newcommand{\cF}{{\mathcal{F}}}
\newcommand{\cP}{{\mathcal{P}}}
\newcommand{\cA}{{\mathcal{A}}}
\newcommand{\bm}{{\mathbf m}}
\newcommand{\cC}{{\mathcal{C}}}
\newcommand{\cD}{{\mathcal{D}}}
\newcommand{\cE}{{\mathcal{E}}}
\newcommand{\xX}{{\mathbb{X}}}
\newcommand{\yY}{{\mathbb{Y}}}
\newcommand{\xXs}{{\mathbb{X}^*}}
\newcommand{\Xs}{X^*}
\newcommand{\ig}{[\![}
\newcommand{\id}{]\!]}
\newcommand{\intot}{\int_0^t}
\newcommand{\bla}{\color{black}}
\newcommand{\blue}{\color{black}}
\begin{document}

\title{Collisions of the supercritical Keller-Segel particle system}

\author{Nicolas Fournier and Yoan Tardy}
\address{Sorbonne Universit\'e, LPSM-UMR 8001, Case courrier 158,75252 Paris Cedex 05, France.}
\email{nicolas.fournier@sorbonne-universite.fr, yoan.tardy@sorbonne-universite.fr}
\subjclass[2010]{60H10, 60K35}

\keywords{Keller-Segel equation, Stochastic particle systems, Bessel processes, Collisions}
\thanks{The fruitful comments of the two referees helped us to
substantially improve the presentation of the paper.}

\begin{abstract}
We study a particle system naturally associated to the $2$-dimensional Keller-Segel equation. 
It consists
of $N$ Brownian particles in the plane, interacting through a binary 
attraction in $\theta/(Nr)$,
where $r$ stands for the distance between two particles.
When the intensity $\theta$ of this attraction is greater than $2$, this particle 
system explodes in finite time.
We assume that $N>3\theta$ and study in details what happens near explosion. 
There are two slightly different scenarios, depending on
the values of $N$ and $\theta$, here is one:
at explosion, a cluster consisting of precisely $k_0$ particles emerges, for some deterministic 
$k_0\geq 7$ depending on $N$ and
$\theta$.
Just before explosion, there are infinitely many $(k_0-1)$-ary collisions.
There are also infinitely many $(k_0-2)$-ary collisions
before each $(k_0-1)$-ary collision. And there are infinitely many 
binary collisions before each $(k_0-2)$-ary collision. Finally, collisions
of subsets of $3,\dots,k_0-3$ particles never occur.
The other scenario is similar except that there are no $(k_0-2)$-ary collisions.
\end{abstract}

\maketitle
%\tableofcontents

\section{Introduction and main results}
\setcounter{equation}{0}
\subsection{Informal definition of the model}
We consider some scalar parameter $\theta>0$ and a number $N \geq 2$ of particles with positions
$X_t=(X^1_t,\dots,X^N_t) \in (\rr^2)^N$ at time $t\geq 0$. Informally, we assume that 
the dynamics of these
particles are given by the system of S.D.E.s
\begin{align}\label{EDS}
\dd X^{i}_{t} = \dd B^{i}_{t} - \frac{\theta }{N} \sum_{j\neq i}
\frac{X^{i}_{t}-X^{j}_{t}}{\|X^{i}_{t}-X^{j}_{t} \|^{2}}\dd t, \qquad i\in \ig 1,N\id,
\end{align}
where the $2$-dimensional Brownian motions $((B^i_t)_{t\geq 0})_{i \in \ig 1,N\id}$ are independent.
In other words, we have $N$ Brownian particles in the plane interacting through 
an attraction in $1/r$,
which is Coulombian in dimension $2$. Actually,
this S.D.E. does not clearly make sense, due to the singularity of the drift, and we will use,
as suggested by Cattiaux-P\'ed\`eches \cite{cp}, the theory of
Dirichlet spaces, see Fukushima-Oshima-Takeda \cite{f}.

\subsection{Brief motivation and informal presentation of the main results}
This particle system is very natural from a physical point of view, because, as we will see, 
there is a tight competition between the Brownian excitation and the Coulombian attraction.
It can also be seen as an approximation of the famous Keller-Segel equation \cite{ks}, see also
Patlak \cite{p}. This nonlinear P.D.E. has been introduced to model the collective motion of cells, 
which are attracted by a chemical substance that they emit. It is well-known that a 
phase transition occurs:
if the intensity of the attraction is small, then there exist global solutions, 
while if the attraction
is large, the solution explodes in finite time. 

\vip

 We will show that this phase 
transition already occurs at the level
of the particle system \eqref{EDS}: there exist 
global (very weak) solutions if
$\theta\in (0,2)$ (subcritical case, see Proposition~\ref{reloumaisafaire} below), 
but solutions must explode in finite time if $\theta\geq 2$ 
(supercritical case).

\vip

To our knowledge, the supercritical case has not been studied in details, and we aim
to describe precisely the explosion phenomenon. Informally, we will show the following 
(see Theorem~\ref{theoremecollisions}
below).
We assume that $\theta \geq 2$ and $N>3\theta$, we set $k_0=\lceil 2N/\theta \rceil \in \ig 7,N\id$.
There exists a (very weak) solution  $(X_t)_{t\in [0,\zeta)}$ to \eqref{EDS}, with $\zeta<\infty$ a.s.
and such that $X_{\zeta-}=\lim_{t\to \zeta-} X_t$ exists. Moreover, there is a cluster containing precisely 
$k_0$ particles in the configuration $X_{\zeta-}$, and no cluster containing strictly more than $k_0$ particles.
Such a cluster containing $k_0$ particles is inseparable, so that \eqref{EDS} 
is meaningless (even in a very weak sense)
after $\zeta$.
Just before explosion, there are infinitely many $k_1$-ary collisions, where $k_1=k_0-1$.
If $(k_0-3)(2-(k_0-2)\theta/N)<2$, we set $k_2=k_1-2$ and just before each  $k_1$-ary collision, 
there are infinitely many $k_2$-collisions.
Else, we set $k_2=k_1$. In any case, there are infinitely many binary collisions just 
before each $k_2$-ary collision.
During the whole time interval $[0,\zeta)$, there are no $k$-ary collisions, for any
$k \in \ig 3,k_2-1 \id$. 

\vip

This phenomenon seems surprising and original, in particular because of the gap between
binary and $k_2$-ary collisions.

\subsection{Sets of configurations}\label{conf}
We introduce, for all $K \subset \ig 1,N\id$ and all $x=(x^1,\dots,x^N) \in (\rr^{2})^{N}$,
\begin{align*} 
S_{K}(x) = \frac{1}{|K|} \sum _{i \in K} x^{i} \in \rr^2 \quad \hbox{and}\quad
R_{K}(x) = \sum _{i \in K} \| x^{i} -S_{K}(x) \|^2 = \frac{1}{2|K|} 
\sum _{i,j \in K} \|x^{i}-x^{j}\|^{2} \geq 0.
\end{align*}
Here $|K|$ is the cardinal of $K$ and $\|\cdot\|$ stands for the Euclidean norm in $\rr^2$. 
Observe that 
$R_K(x)=0$ if and only if all the particles indexed in $K$ are at the same place.
We also set, for $k\geq 2$,
$$
E_k= \Big\{ x \in (\rr^{2})^{N} : \forall K \subset \ig 1,N\id \mbox{ with cardinal } 
|K| = k, \; R_{K}(x) >0 \Big\},
$$
which represents the set of configurations with no cluster of $k$ (or more) particles.
Observe that $E_k=(\rr^2)^N$ for all $k> N$.

\subsection{Bessel processes}
We recall that a squared Bessel process  $(Z_{t})_{t\ge 0}$ of dimension $\delta \in \rr $ is a 
nonnegative solution, killed when it reaches $0$ if $\delta \leq 0$, of the equation
$$Z_{t} = Z_{0} + 2 \int _{0} ^{t} \sqrt{Z_{s}}\dd W_{s} + \delta t,$$
where $(W_t)_{t\geq 0}$ is a $1$-dimensional Brownian motion. We then say that $(\sqrt{Z_t})_{t\geq 0}$ 
is a Bessel 
process of dimension $\delta$.
This process has the following property, see Revuz-Yor \cite[Chapter XI]{ry}:
\vip
\noindent $\bullet$ if $\delta \ge 2$, then a.s., for all $t>0$, $Z_{t} >0$;
\vip
	
\noindent $\bullet$ if $\delta \in (0,2)$, then a.s., $Z$ 
is reflected infinitely often at $0$;
\vip
\noindent $\bullet$ if $\delta \le 0$, then $Z$ a.s. hits $0$ and is then killed.

\vip
Applying informally the It\^o formula, one finds that $Y_t=\sqrt{Z_t}$ should solve
$$
Y_t = Y_0 + W_t + \frac{\delta-1}{2} \intot \frac{\dd s}{Y_s},
$$
which resembles \eqref{EDS} in that we have a Brownian excitation in competition 
with an attraction by $0$, or a repulsion by $0$, depending on the value of $\delta$, 
proportional to $1/r$.
This formula rigorously holds true only when 
$\delta>1$, see \cite[Chapter XI]{ry}.

\subsection{Some important quantities}\label{imp}

Consider a (possibly very weak) solution $(X_t)_{t\geq 0}$ to \eqref{EDS}.
As we will see, when fixing a subset $K \subset \ig 1,N \id$ and when
neglecting the interactions between the particles indexed in $K$ and the other ones,
one finds that the process $(R_K(X_t))_{t\geq 0}$ behaves like a squared Bessel process with dimension 
$d_{\theta,N}(|K|)$, where
\begin{equation}\label{dtn}
d_{\theta , N}(k) = (k-1)\Big(2-\frac{k\theta }{N} \Big).
\end{equation}
Similar computations already appear in Ha\v{s}kovec-Schmeiser \cite{hs1}, see also \cite{fj}.
A little study, see Appendix~\ref{ppre}, see also Figure~\ref{nul?} and
Subsection~\ref{reex} for numerical examples,
shows the following facts. For $r \in \rr_+$, we set
$\lceil r \rceil = \min \{ n \in \nn : n \geq r\}$.

\begin{lemma} \label{dimensions}
Fix $\theta>0$ and $N \geq 2$ such that $N> \theta$. For
$k_{0} = \lceil \frac{2N}{\theta} \rceil \geq 3$, we have
\begin{equation}\label{hh}
d_{\theta , N}(k) >0\quad \hbox{if}\quad  k\in \ig 2,k_0-1\id \quad \hbox{and}\quad
d_{\theta , N}(k)\leq 0 \quad \hbox{if} \quad   k\geq k_0.
\end{equation}
We also define $k_1 = k_{0}-1$, and 
$$
k_2 = \left\{ \begin{array}{lll} k_{0}-2 &\hbox{ if }& d_{\theta , N}(k_{0}-2) <2,\\[4pt]
k_0-1 & \hbox{ if }& d_{\theta , N}(k_{0}-2) \geq 2. \end{array} \right.
$$
If $\theta\geq 2$ and $N> 3\theta$, then $k_0 \in \ig 7,N \id$ and it holds that
\vip
\noindent $\bullet$ $d_{\theta ,N}(2) \in (0,2)$;
\vip
\noindent $\bullet$ $d_{\theta ,N}(k) \ge 2$ if $ k \in \ig 3,k_2-1 \id$;
\vip
\noindent $\bullet$ $d_{\theta ,N} (k) \in (0,2)$  if $ k \in \{k_2, k_1 \}$;
\vip
\noindent $\bullet$ $d_{\theta ,N}(k) \le 0$  if  $k\ge k_{0}$.
\end{lemma}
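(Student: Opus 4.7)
The strategy hinges on observing that $k \mapsto d_{\theta,N}(k) = -(\theta/N)(k-1)(k-\alpha)$, with $\alpha := 2N/\theta$, is a concave quadratic in the real variable $k$, with roots $1$ and $\alpha$. The entire lemma reduces to this concavity together with a handful of explicit evaluations at integer endpoints.

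The sign assertion \eqref{hh} is immediate: $d_{\theta,N}$ is positive exactly on the open interval $(1,\alpha)$, so the positive integer values are $\ig 2, k_0-1 \id$ and $d_{\theta,N}(k)\le 0$ for integer $k\ge k_0 = \lceil \alpha \rceil$. Under the hypotheses $\theta\ge 2$ and $N>3\theta$ one obtains $\alpha>6$ (hence $k_0\ge 7$) and $\alpha\le N$ (hence $k_0\le N$). Next, $d_{\theta,N}(2) = 2-2\theta/N \in (0,2)$ follows from $0<\theta<N$. For $k_1 = k_0-1$, I would rewrite
$$ d_{\theta,N}(k_0-1) = \frac{\theta}{N}(k_0-2)(\alpha - k_0 + 1), $$
and use $\alpha - k_0 + 1 \in (0,1]$ to get $d_{\theta,N}(k_0-1) \le \theta(k_0-2)/N < 2$, the last step amounting to $k_0 < \alpha+2$. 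Combined with \eqref{hh} and the case split defining $k_2$, this also yields $d_{\theta,N}(k_2)\in(0,2)$.

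It then remains to prove $d_{\theta,N}(k) \ge 2$ on $\ig 3, k_2-1 \id$; by concavity it is enough to verify the bound at the two endpoints of this interval. At $k=3$, $d_{\theta,N}(3)=4-6\theta/N\ge 2$ is exactly $N\ge 3\theta$. At $k=k_2-1$, Case B is free since the inequality is the defining condition of the case; the substantive obstacle is therefore Case A, where $k_2-1 = k_0-3$ and no hypothesis is directly available on this right-hand endpoint.

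For this final point, writing $d_{\theta,N}(k_0-3) = (\theta/N)(k_0-4)(\alpha - k_0 + 3)$ and unfolding $d_{\theta,N}(k_0-3)\ge 2$ reduces the claim to the inequality $\alpha(k_0-5)\ge (k_0-3)(k_0-4)$. Since $\alpha > k_0-1$ strictly (by the very definition of $k_0 = \lceil \alpha \rceil$) and $k_0-5>0$, it suffices to prove $(k_0-1)(k_0-5) \ge (k_0-3)(k_0-4)$; expanding, the difference is $k_0-7$, nonnegative thanks to $k_0\ge 7$. This closes the argument.
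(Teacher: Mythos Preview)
Your proof is correct and follows essentially the same approach as the paper: both exploit the concavity of $k\mapsto d_{\theta,N}(k)$ and reduce everything to checking the three values $d_{\theta,N}(3)\ge 2$, $d_{\theta,N}(k_0-3)\ge 2$, and $d_{\theta,N}(k_0-1)<2$. Your algebraic verification of the key inequality $d_{\theta,N}(k_0-3)\ge 2$ (reducing to $(k_0-1)(k_0-5)\ge(k_0-3)(k_0-4)$, i.e.\ $k_0\ge 7$) is a slightly different but equally clean repackaging of the paper's computation, which writes $a=2N/\theta=n+\alpha$ with $n\ge 6$, $\alpha\in(0,1]$ and checks $(n-3)(2+\alpha)\ge n+\alpha$.
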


\begin{figure}[ht]\label{nul?}
\noindent\fbox{\begin{minipage}{0.9\textwidth}
\vskip-.3cm
\caption{\small{Plot of $d_{\theta,N}(k)$ as a function of $k\in \ig 2,N\id$ with $N=9$ 
and with $\theta=2.35$ (left) and $\theta=2.42$ (right).}}
\centerline{\includegraphics[width=\textwidth]{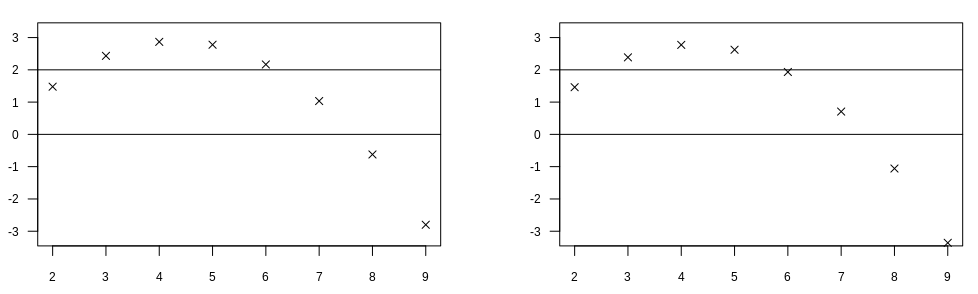}}
\vskip-0.2cm
\centerline{\begin{minipage}{0.75\textwidth}
\small{$k_0=8$, $k_1=7$, $k_2=7$ \hfill $k_0=8$, $k_1=7$, $k_2=6$}
\end{minipage}}
\end{minipage}}
\end{figure}

We thus expect that there may be some non sticky $k$-ary collisions for
$k \in \{2,k_2,k_1\}$, some sticky $k$-ary collisions when $k\geq k_0$, 
but no $k$-ary collision for $k\in \ig 3,k_2-1\id$.

\subsection{Generator and invariant measure}\label{inv}
\blue 
As we will see in Subsection~\ref{nonint}, the S.D.E. \eqref{EDS} cannot have a solution in the classical sense,
at least when $d_{\theta,N}(k_1) \in (0,1)$,
because the drift term cannot be integrable in time.
We will thus define a solution through the theory of the Dirichlet spaces. \bla

\vip

For $x=(x^1,\dots,x^N)\in(\rr^2)^N$ and for $\dd x$ the Lebesgue measure on $(\rr^2)^N$, we set 
\begin{equation}\label{mmu}
\bm(x) = \prod _{1\le i\neq j \le N} \|x^{i}-x^{j}\|^{-\theta/N} \quad \hbox{and}
\quad \mu(\dd x)=\bm(x)\dd x,
\end{equation}
\blue where $\{1\le i\neq j \le N\}$ stands for the set $\{(i,j) \in \ig 1, N\id^2 : i\neq j\}$.
\bla
Informally, the generator of the solution to \eqref{EDS} is given by $\cL^X$, where 
for $\varphi \in C^2((\rr^2)^N)$,
\begin{equation}\label{lx}
\cL^{X}\varphi(x) = \frac{1}{2}\Delta \varphi(x) 
- \frac{\theta }{N} \sum _{1\leq i\neq j \leq N} \frac{x^{i}-x^{j}}{\|x^{i}-x^{j}\|^{2}} \cdot 
\nabla _{x^{i}} \varphi(x)
= \frac{1}{2\bm(x)} \ddiv [\bm(x)\nabla \varphi(x)],
\end{equation}
\blue see \eqref{betm} for the last equality. \bla
It is well-defined for all $x\in E_2$ and $\mu$-symmetric.
Indeed, an integration by parts shows that
\begin{equation}\label{ipp}
\forall \; \varphi ,\psi \in C^2_c (E_2),\quad  \int _{(\rr ^2)^N} 
\varphi \cL^{X} \psi \; \dd \mu = 
-\frac12 \int _{(\rr ^2)^N} \nabla \varphi \cdot \nabla \psi \; \dd \mu = 
\int _{(\rr ^2)^N} \psi \cL^{X} \varphi \; \dd \mu.
\end{equation}

As we will see in Proposition~\ref{radon}, the measure $\mu$ is Radon on $(\rr^2)^N$ 
in the subcritical case $\theta \in (0,2)$, while it 
is Radon on $E_{k_0}$ (and not on $E_{k_0+1}$) in the supercritical case  $\theta \ge 2$.
This will allow us
to use some results found in 
Fukushima-Oshima-Takeda \cite{f} and to obtain the following existence result.

\begin{prop}\label{densite}
We fix $N\geq 2$ and $\theta>0$ such that $N> \theta$
and recall that $k_0=\lceil 2N/\theta \rceil$.
We set $\cX=E_{k_0}$ and $\cX_\triangle=\cX \cup\{\triangle\}$, where $\triangle$ is a cemetery point.
There exists a \blue diffusion \bla $\mathbb{X} = (\Omega^X,\cM^X,(X_t)_{t\geq 0},
(\PP_x^X)_{x\in \cXt})$ with values in $\cX_\triangle$, which is
$\mu$-symmetric, with regular Dirichlet space $(\cE^X,\cF^X)$ on $L^2((\rr ^2)^N,\mu)$ with core 
$C^\infty_c(\cX)$ defined by
$$
\hbox{for all}\quad  \varphi \in C^\infty_c(\cX), \quad
\mathcal{E}^{X} (\varphi,\varphi) = \frac12\int_{(\rr^2)^N} \|\nabla  \varphi\|^{2} \dd \mu
= - \int_{(\rr^2)^N}  \varphi \cL^X\varphi \;\dd  \mu
$$
and such that for all $x \in E_2$, all $t>0$, the law of $X_t$ under $\PP_x$ has a density with respect
to the Lebesgue measure on $(\rr^2)^N$.
%Moreover, for any increasing sequence of compact subsets 
%$(\cK_n)_{n\geq 1}$ of $\cX$ such that
%$\cup_{n\geq 1} \cK_n=\cX$ and for $\zeta=\inf\{t\geq 0 : X_t= \triangle\}$, 
%we have, for all $x\in E_2$,
%$$
%\hbox{$\PP_x^X$-a.s., \;\; $\zeta<\infty$ \;\; implies that\;\; 
%$\zeta=\lim_{n\to \infty} \inf\{t\geq 0 : X_t\notin  \cK_n\}$.}
%$$
We call such a process a $KS(\theta ,N)$-process and denote by $\zeta=\inf\{t\geq 0 : X_t= \triangle\}$
its life-time.
\end{prop}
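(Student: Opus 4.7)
The strategy is to realize $\xX$ via the theory of regular symmetric Dirichlet forms of Fukushima-Oshima-Takeda \cite{f}, following the subcritical construction of Cattiaux-P\'ed\`eches \cite{cp}. By Proposition \ref{radon}, $\mu$ is Radon on the open set $\cX=E_{k_0}$, so $C_c^\infty(\cX)$ is dense in $L^2(\cX,\mu)$ and
$$
\cE^X(\varphi,\psi) = \frac12 \int_{(\rr^2)^N} \nabla\varphi \cdot \nabla\psi\, \dd\mu
$$
defines a symmetric, non-negative bilinear form on $C_c^\infty(\cX)$; symmetry and the equivalent writing $-\int \varphi\, \cL^X\psi \, \dd\mu$ are provided by \eqref{ipp}.

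The key analytic step is closability. On $C_c^\infty(E_2)$, where $\bm$ is smooth and strictly positive, $\cL^X$ is an honest symmetric elliptic operator and closability is classical via the Friedrichs extension. To promote the core to the larger space $C_c^\infty(\cX)$, I would introduce smooth cutoffs $\chi_n \in C_c^\infty(E_2)$ with $\chi_n \uparrow 1$ on $\cX$ and check that, for every $\varphi \in C_c^\infty(\cX)$, $\chi_n \varphi \to \varphi$ in $L^2(\mu)$ and in the form. The only non-trivial piece is $\int \|\nabla \chi_n\|^2 \varphi^2 \dd\mu \to 0$, which requires precise control of $\bm$ near each intermediate cluster stratum $\{R_K = 0\}$ with $|K| \in \ig 2, k_0-1\id$. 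Here Lemma \ref{dimensions} (giving $d_{\theta,N}(k) > 0$ for $k \le k_0-1$, so that such strata sit strictly inside $\cX$) together with the sharp local bounds of Proposition \ref{radon} should be enough to carry out the estimate. I expect this cutoff argument to be the main technical obstacle, since it is where the supercritical regime genuinely differs from \cite{cp}.

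Denoting by $(\cE^X, \cF^X)$ the closure, Markovianity is automatic for a pure gradient form, and regularity follows because $C_c^\infty(\cX)$ is form-dense in $\cF^X$ by construction and uniformly dense in $C_c(\cX)$ by mollification ($\cX$ being open in $(\rr^2)^N$). The general construction of symmetric Hunt processes from regular Dirichlet forms \cite{f} then produces the $\mu$-symmetric Hunt process $\xX$ on $\cXt$, and strong locality of $\cE^X$ (only the gradient appears) yields continuous sample paths on $[0,\zeta)$.

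For the Lebesgue density when $x \in E_2$: fix a relatively compact open neighborhood $U \Subset E_2$ of $x$, on which $\bm$ is smooth and bounded above and below away from $0$. The part of $\cE^X$ associated with functions supported in $U$ is then comparable to the Euclidean Dirichlet form on $U$ and satisfies a Nash inequality, so the sub-Markovian semigroup of the process killed upon exiting $U$ admits a jointly continuous transition density with respect to Lebesgue on $U$. A strong Markov decomposition at the first exit from $U$ transfers absolute continuity to $X_t$ itself under $\PP^X_x$, producing a transition kernel of the form $p_t(x,\dd y) = q_t(x,y)\, \dd \mu(y) = q_t(x,y)\,\bm(y)\, \dd y$, which is the claimed Lebesgue density.
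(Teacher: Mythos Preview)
Your closability argument has a genuine gap. You propose to close the form first on $C_c^\infty(E_2)$ and then show that each $\varphi\in C_c^\infty(\cX)$ lies in that closure via cutoffs $\chi_n\in C_c^\infty(E_2)$, the crucial estimate being $\int \varphi^2\|\nabla\chi_n\|^2\,\dd\mu\to 0$. This estimate \emph{fails}. Near a binary stratum $\{x^i=x^j\}$ (which meets the support of a generic $\varphi\in C_c^\infty(\cX)$ since $k_0\ge 3$), set $r=\|x^i-x^j\|$; locally $\dd\mu\asymp r^{1-2\theta/N}\,\dd r$ and any such $\chi_n$ must climb from $0$ to $1$ as $r$ ranges over some $(0,\delta)$. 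By Cauchy--Schwarz,
\[
1=\int_0^\delta \chi_n'(r)\,\dd r \le \Big(\int_0^\delta |\chi_n'(r)|^2 r^{1-2\theta/N}\,\dd r\Big)^{1/2}\Big(\int_0^\delta r^{2\theta/N-1}\,\dd r\Big)^{1/2},
\]
and the second factor is finite since $\theta>0$, so the first is bounded below uniformly in $n$. In Bessel language, $d_{\theta,N}(2)=2-2\theta/N\in(0,2)$, so the binary stratum has positive capacity and cannot be cut off. The closures with cores $C_c^\infty(E_2)$ and $C_c^\infty(\cX)$ are therefore different Dirichlet spaces (the first one kills the process at binary collisions), and your route does not reach the form in the statement.

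The paper bypasses this entirely: closability on $C_c^\infty(\cX)$ is shown directly. If $\varphi_n\to 0$ in $L^2(\mu)$ and $\nabla\varphi_n\to g$ in $L^2(\mu)$, one tests against $\psi\in C_c^\infty(E_2,(\rr^2)^N)$; since $\bm$ is smooth and positive on $\mathrm{Supp}\,\psi$, integration by parts gives $\int \nabla\varphi_n\cdot\psi\,\dd\mu=-\int\varphi_n\,\ddiv(\bm\psi)\,\dd x\to 0$, hence $\int g\cdot\psi\,\dd\mu=0$ for all such $\psi$ and $g=0$ a.e. The point is that one may \emph{test} against functions supported in $E_2$ without forcing the $\varphi_n$ themselves to be supported there; no stratum estimates are needed.

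Your density argument is also incomplete. A strong Markov decomposition at $\tau_U$ controls only $\{t<\tau_U\}$; on $\{t\ge\tau_U\}$ you restart from $X_{\tau_U}\in\partial U$ and have no absolute continuity for the full process from there. More seriously, the Dirichlet-form construction specifies $(\PP^X_x)$ only for \emph{quasi}-all $x$, so for a fixed $x\in E_2$ lying in the exceptional set there is nothing to argue with. The paper handles both issues by introducing regularized SDEs $\dd X^n_t=\dd B_t+\frac{\nabla\bm_n}{2\bm_n}(X^n_t)\,\dd t$ with smooth bounded drift (so $X^n_\e$ has a Lebesgue density by ordinary Girsanov), showing $X^n$ and $X$ killed on exit from $E_2^n$ agree for quasi-all starting points, and then explicitly \emph{modifying} $\xX$ on the exceptional set by gluing in $X^n$ up to that exit. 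Density for the modified process comes from conditioning at a small \emph{deterministic} time $\e$ combined with $\mu$-symmetry: $\int\PP_y(X_{t-\e}\in A)\,\mu(\dd y)\le\mu(A)=0$ forces $\PP_y(X_{t-\e}\in A)=0$ for Lebesgue-a.e.\ $y$, while on $\{\sigma_n>\e\}$ the law of $X_\e$ is that of $X^n_\e$, which misses any Lebesgue-null set.
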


We refer to Subsection~\ref{ap1} for a quick summary about the notions used in this proposition: 
\blue diffusion (i.e. continuous Hunt process), \bla
link between its generator, semi-group and Dirichlet space, definition of the
one-point compactification topology endowing $\cX_\triangle$, etc.
Let us mention that by definition, $\triangle$ is absorbing, i.e.
$X_t=\triangle$ for all $t\geq \zeta$. Also, $t\mapsto X_t$ is {\it a priori} continuous on $[0,\infty)$ 
only for the one-point
compactification topology on $\cXt$, which precisely means that it is continuous for the usual
topology of $(\rr^2)^N$ during $[0,\zeta)$, and it holds that 
$\zeta=\lim_{n\to \infty} \inf\{t\geq 0 : X_t\notin  \cK_n\}$
for any increasing sequence of compact subsets 
$(\cK_n)_{n\geq 1}$ of $E_{k_0}$ such that $\cup_{n\geq 1} \cK_n=E_{k_0}$.

\vip

As we will see in Remark~\ref{nsar}, for all $x\in E_2$, 
under $\PP^X_x$, $X_t$ solves \eqref{EDS} during $[0,\sigma)$,
where $\sigma=\inf\{t\geq 0 : X_t \notin E_2\}$. By the Markov property, this implies $X_t$ solves \eqref{EDS} 
during any open time-interval on which it does not visit $\cX\setminus E_2$.

\vip

When $\theta<2$, we have $k_0 > N$ and thus $E_{k_0}=(\rr^2)^N$.
We will easily prove the following non-explosion result, which is almost contained
in Cattiaux-P\'ed\`eches \cite{cp}, who treat the case where $\theta \in (0,2(N-2)/(N-1))$.

\begin{prop}\label{reloumaisafaire}
Fix $\theta \in (0,2)$ and $N\geq 2$.
Consider the $KS(\theta ,N)$-process $\xX$ introduced in Proposition~\ref{densite}. 
For all $x \in E_2$, we have $\PP_x(\zeta=\infty)=1$.
\end{prop}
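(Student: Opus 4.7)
The plan is to introduce a quadratic Lyapunov function and rule out escape to infinity via a martingale bound. Since $\theta<2$ gives $k_0=\lceil 2N/\theta\rceil>N$, we have $\cX=E_{k_0}=(\rr^2)^N$, so $\{\zeta<\infty\}$ is exactly the event that the process leaves every compact subset of $(\rr^2)^N$, i.e.\ escapes to infinity. I will work with
\[
V(x)=1+\sum_{i=1}^N \|x^i\|^2.
\]
A direct computation from \eqref{lx}, using $\Delta V=4N$ and the pairwise symmetrization
\[
\sum_{1\leq i\neq j\leq N}\frac{(x^i-x^j)\cdot 2x^i}{\|x^i-x^j\|^2}
=\sum_{1\leq i\neq j\leq N}\frac{\|x^i-x^j\|^2}{\|x^i-x^j\|^2}=N(N-1)
\]
(swap $i\leftrightarrow j$ to replace $x^i$ by $-(x^j)$, then average), yields $\cL^X V(x)=2N-(N-1)\theta=:C_{\theta,N}$ for every $x\in E_2$, a finite (and, since $\theta<2$, strictly positive) constant.

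For $R>V(x)$, set $\cK_R=\{y\in(\rr^2)^N:V(y)\leq R\}$ and $\tau_R=\inf\{t\geq 0: X_t\notin\cK_R\}$; the family $(\cK_R)_{R>0}$ is a compact exhaustion of $\cX$. Pick $\chi_R\in C^\infty_c(\cX)$ with $\chi_R\equiv 1$ on a neighbourhood of $\cK_R$ and set $V_R=\chi_R V\in C^\infty_c(\cX)$, so that $V_R\equiv V$ and $\cL^X V_R\equiv C_{\theta,N}$ on $\cK_R\cap E_2$. Since $V_R$ lies in the core $C^\infty_c(\cX)$ of the regular Dirichlet form $(\cE^X,\cF^X)$, the standard Fukushima decomposition \cite{f} gives that $V_R(X_t)-V_R(X_0)-\int_0^t \cL^X V_R(X_s)\dd s$ is a $\PP_x$-martingale for quasi-every $x$. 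Stopping at $\tau_R$ and taking expectation will produce
\[
\E_x[V(X_{t\wedge\tau_R})]=V(x)+C_{\theta,N}\E_x[t\wedge\tau_R]\leq V(x)+C_{\theta,N}t.
\]
Because $X$ is continuous on $[0,\zeta)$ and $V(X_{\tau_R})\geq R$ on $\{\tau_R<\zeta\}$, Markov's inequality then yields $\PP_x(\tau_R\leq t)\leq R^{-1}(V(x)+C_{\theta,N}t)$, and letting $R\to\infty$ concludes $\PP_x(\zeta\leq t)=0$ for every $t>0$.

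The only real obstacle is the standard gap between the ``quasi-every $x$'' statement provided by Fukushima's decomposition and the ``every $x\in E_2$'' statement required here. I plan to bridge it using Proposition \ref{densite}: for $x\in E_2$ and $\varepsilon>0$, the law of $X_\varepsilon$ under $\PP_x$ admits a Lebesgue density, so the exceptional polar set does not matter once we condition at time $\varepsilon$; applying the Markov property to transfer the quasi-everywhere estimate to an estimate for the shifted process, and then letting $\varepsilon\to 0$ with the continuity of $X$ at time $0$, extends the bound to every starting point in $E_2$. The Lyapunov calculation itself is unusually clean, because the Coulombic singularities in the drift cancel exactly after pairing $(i,j)$ with $(j,i)$.
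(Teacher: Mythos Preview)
Your approach is correct and close to the paper's: both rest on the same symmetrization making $\cL^X\|x\|^2$ constant on $E_2$. The paper packages this as identifying $R_{\ig 1,N\id}(X)$ and $S_{\ig 1,N\id}(X)$ as a squared Bessel process and a Brownian motion (Lemma~\ref{besbro}), then invokes their local boundedness (Lemma~\ref{Xborne}-(i)); your Lyapunov/Markov-inequality route is a legitimate shortcut that avoids naming the full decomposition, at the cost of not producing information reused elsewhere. The passage from quasi-every to every $x\in E_2$ via the density of $X_\varepsilon$ and the Markov property is exactly the paper's Lemma~\ref{quasitotout}.

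One technical point is skipped: $V_R\in C^\infty_c(\cX)$ does not by itself lie in the domain $\cD_{\cA^X}$ of the generator, so the Fukushima decomposition does not immediately identify the zero-energy part as $\int_0^t\cL^X V_R(X_s)\,\dd s$. For a \emph{generic} cutoff $\chi_R$, the drift contribution $\sum_{i\neq j}\frac{x^i-x^j}{\|x^i-x^j\|^2}\cdot\nabla_{x^i}\chi_R$ need not be bounded, and this is precisely the obstacle the paper works around (Remark~\ref{caracdomaine}, Section~\ref{cutcut}). The fix is easy here: choose $\chi_R=g(\|x\|^2)$ with $g\in C^\infty_c(\rr_+)$, so that $V_R$ is again a function of $\|x\|^2$ alone and the same $(i,j)\leftrightarrow(j,i)$ pairing bounds $\cL^X_\alpha V_R$ uniformly in $\alpha\in(0,1]$; then Remark~\ref{caracdomaine} (whose computation is carried out in Step~1 of the proof of Lemma~\ref{besbro}) puts $V_R\in\cD_{\cA^X}$ and Lemma~\ref{marting} gives the martingale you need.
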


When $\theta\geq 2$, we will see that there is explosion. Note that any collision of a set of 
$k\geq k_0$ particles makes the process leave $E_{k_0}$ and thus explode.
However, it is not clear at all at this point that explosion is due to a precise collision: 
the process could explode
because it tends to infinity (which is not hard to exclude) or to the boundary 
of $E_{k_0}$ with possibly many 
oscillations.

\subsection{Main result}

To avoid any confusion, let us define precisely what we call a collision.

\begin{defi}\label{dfcol}
(i) For $K \subset \ig 1,N\id$, 
we say that there is a $K$-collision in the configuration $x\in (\rr^2)^N$
if $R_K(x)=0$ and if  $R_{K\cup \{i \}}(x) >0$ for all $i\in \ig1,N\id\setminus K$.

\vip

(ii) For a $(\rr^2)^N$-valued process $(X_t)_{t\in [0,\zeta)}$, we say that there is a $K$-collision at time 
$s\in [0,\zeta)$ if there is a $K$-collision in the configuration $X_s$.
\end{defi}

The main result of this paper is the following description of the explosion phenomenon.

\begin{thm}\label{theoremecollisions}
Assume that $\theta \geq 2$, that $N>3\theta$  and recall that $k_0 \in \ig 7,N\id$, $k_1 = k_0-1$
and $k_2\in \{k_0-1,k_0-2\}$ were defined in Lemma~\ref{dimensions}.
Consider the $KS(\theta ,N)$-process $\xX$
introduced in Proposition~\ref{densite}. For all $x \in E_2$,
we $\PP_x$-a.s. 
have the following properties:
\vip
(i) $\zeta$ is finite and $X_{\zeta- } =\lim _{t \rightarrow \zeta- } X_{t}$ exists 
for the usual topology of $(\rr^2)^N$;
\vip
(ii) there is $K_0\subset \ig 1,N \id $ with cardinal $|K_0| = k_0$ such that 
there is a $K_0$-collision in the configuration $X_{\zeta-}$, and
for all $K\subset \ig 1,N \id$ such that $|K| >k_0$, there is no $K$-collision 
in the configuration $X_{\zeta -}$;
\vip
(iii) for all $t\in [0,\zeta)$ and all $K \subset K_0$ with cardinal $|K| = k_1$, 
there is an infinite number of $K$-collisions during $(t,\zeta)$ and none of these instants 
of $K$-collision 
is isolated;
\vip
(iv) if $k_2= k_0-2$, then for all $L\subset K\subset K_0$ such that 
$|L| = k_2$ and $|K| = k_1$, for all instant $t \in (0,\zeta)$ of $K$-collision 
and all $s\in [0,t)$,
there is an infinite number of $L$-collisions during $(s,t)$ and none of these instants of 
$L$-collision is isolated;
\vip
(v) for all $ K \subset \ig 1,N \id$ with cardinal $|K| \in \ig 3, k_2-1 \id$, 
there is no $K$-collision during $[0,\zeta )$;
\vip
(vi) for all $L\subset K\subset K_0$ such that $|L| = 2$ and $|K| = k_2$, 
for all instant $t \in (0,\zeta)$ of $K$-collision and all $s\in [0,t)$,
there is an infinite number of $L$-collisions during $(s,t)$ and none of these instants 
of $L$-collision is isolated.
\end{thm}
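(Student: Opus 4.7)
\emph{Proof plan.} The central tool is a semimartingale analysis of the processes $t\mapsto R_K(X_t)$. A direct computation exploiting the symmetries of the Coulomb drift yields
\begin{equation*}
\cL^X R_K(x)=d_{\theta,N}(|K|)+I^K(x), \qquad \|\nabla R_K(x)\|^2=4R_K(x),
\end{equation*}
where $I^K(x)$ is a drift coming purely from the interaction between particles in $K$ and those in $\ig 1,N\id\setminus K$, and which is bounded on any region on which no superset $L\supsetneq K$ is close to collision. The Fukushima decomposition associated to the regular Dirichlet space $(\cE^X,\cF^X)$ of Proposition \ref{densite} then provides, up to each such localising stopping time, a semimartingale representation
\begin{equation*}
R_K(X_t)=R_K(X_0)+2\int_0^t\sqrt{R_K(X_s)}\,\dd\beta^K_s+d_{\theta,N}(|K|)\,t+\int_0^t I^K(X_s)\,\dd s,
\end{equation*}
for some one-dimensional Brownian motion $\beta^K$. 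Modulo a random time change, $R_K(X)$ is therefore comparable to a squared Bessel process of dimension $d_{\theta,N}(|K|)$.

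Given this comparison, the proof proceeds as follows. For (v), if $|K|\in\ig 3,k_2-1\id$ then $d_{\theta,N}(|K|)\ge 2$ by Lemma \ref{dimensions}, and a squared Bessel of dimension $\ge 2$ does not hit $0$; stopping at the first collision of some $L\supsetneq K$ and inducting from $|K|=k_2-1$ down to $|K|=3$ extends the conclusion to $[0,\zeta)$. For (iii), (iv) and (vi), the dimensions $d_{\theta,N}(k_1)$, $d_{\theta,N}(k_2)$ and $d_{\theta,N}(2)$ all lie in $(0,2)$, so around every zero of the relevant $R_K$ the comparison produces infinitely many non-isolated zeros, and nesting this fact level by level through the strong Markov property yields the hierarchy $k_1\supset k_2\supset 2$. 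For (i), observe that $R_{\ig 1,N\id}(X_t)$ carries no interaction term and is an exact squared Bessel process of dimension $(N-1)(2-\theta)\le 0$, which reaches $0$ in finite time; this forces $\zeta<\infty$, because an $N$-collision is excluded in $\cX$. Existence of $X_{\zeta-}$ then follows from the Brownian behaviour of the centre of mass $S_{\ig 1,N\id}(X)$ together with (v), which keeps distances within non-clustering sets bounded away from $0$. Finally, (ii) follows since (v) forbids the explosion from being caused by a cluster of size smaller than $k_0$, while two disjoint clusters of size $\ge k_0$ at time $\zeta$ are excluded by a Bessel comparison applied to $R_{K\cup K'}$.

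The main obstacle is making the above semimartingale decomposition and Bessel comparison rigorous up to $\zeta$: the drift $I^K$ blows up at partial collisions, and the Dirichlet-form construction of $\xX$ only controls $R_K(X)$ quasi-everywhere. One must therefore localise with carefully chosen stopping times that decouple the possible levels of collision, run the comparison on each level, and patch the conclusions via the strong Markov property at each collision time. The most delicate point is the excursion analysis around the critical $k_0$-collision, where the three nested squared Bessel comparisons in dimensions $d_{\theta,N}(k_1)$, $d_{\theta,N}(k_2)$ and $d_{\theta,N}(2)$ must be handled simultaneously in order to produce the dense collision hierarchy announced in (iii)--(iv)--(vi).
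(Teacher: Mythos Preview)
Your semimartingale identity for $R_K(X)$ is correct and is indeed the starting point of the paper, but the proposal has two genuine gaps that the direct Bessel comparison cannot close.

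\textbf{Infinitely many lower-level collisions.} For (iii), (iv), (vi) you say that ``around every zero of the relevant $R_K$ the comparison produces infinitely many non-isolated zeros''. The problem is the opposite direction: you must first \emph{produce} the zeros of $R_K$, for $|K|=k_1$, just before the $K_0$-collision. Your comparison with a squared Bessel process of dimension $d_{\theta,N}(k_1)$ is only valid while the interaction term $I^K$ is bounded, i.e.\ while the remaining particle of $K_0\setminus K$ is bounded away from $K$. But precisely as $t\to\zeta-$ this particle is forced into the cluster (since $R_{K_0}(X_t)\to 0$), so $I^K$ blows up and the comparison fails at the moment you need it. The paper solves this with two ingredients your plan does not have: a Girsanov argument (Proposition~\ref{girsanov}) that lets one replace the $K_0$-subsystem by an autonomous $KS(\theta k_0/N,k_0)$-process, and then a \emph{spherical decomposition} (Proposition~\ref{deco}) writing $X^i_t=M_t+\sqrt{D_t}\,U^i_{A_t}$ with $U$ an autonomous Markov process on $\sS$. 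The key point is that $U$ is positive recurrent when $k_0\ge N$ (Lemma~\ref{visite}), and recurrence is what forces $R_K(U)$ to visit $0$ infinitely often (Proposition~\ref{N-1toN}-(ii),(iii)). A pure Bessel comparison on $R_K(X)$ cannot see this recurrence.

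\textbf{Exact size of the cluster and continuity at $\zeta$.} Your argument for (ii) only excludes two disjoint clusters of size $\ge k_0$; it says nothing about a single cluster of size $k_0+1$ or more. Ruling this out is the content of Proposition~\ref{N-1toN}-(i): if $d_{\theta,N}(N-1)\le 0$ then $\inf_{[0,\zeta)}R_{\ig1,N\id}(X_t)>0$, i.e.\ a full collision of such a subsystem is preceded by explosion via a proper sub-collision. This is again proved through the spherical process (showing $\xi<\infty$). Similarly, your justification of $X_{\zeta-}$ existing via (v) is circular: (v) only forbids $k$-ary collisions for $k\in\ig3,k_2-1\id$ \emph{during} $[0,\zeta)$ and gives no control on the oscillation of $X_t$ as $t\to\zeta-$. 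The paper devotes an entire section (Proposition~\ref{cont}, Lemma~\ref{lemtech}) to this, via a reverse induction on $|K|$ combined with the Girsanov reduction to smaller $KS$-processes; the upcrossing argument there is not recoverable from a local Bessel comparison alone.
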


The condition  $\theta\geq 2$ is crucial to guarantee that $k_0 \leq N$. On the contrary, we impose
$N>3\theta$ for simplicity, because Lemma~\ref{dimensions} does not hold true without 
this assumption.
The other cases may also be studied, but we believe this
is not very restrictive:  $N$ is thought as very large when compared to $\theta$, at 
least as far as the 
approximation of the Keller-Segel equation is concerned. 

\subsection{Comments}\label{reex}
Let us mention that the very precise values of $N$ and $\theta$ influence the value $k_2$.

\vip

(a) If $N=200$ and $\theta=4.04$, we have $k_0=100$, $k_1=99$ and $k_2=98$.

\vip

(b) If $N=200$ and $\theta=4.015$, we have $k_0=100$ and $k_1=k_2=99$.

\vip

Let us describe informally, in the chronological order, what happens e.g. in case (b) above.
We start with $200$ particles at $200$ different places.
During the whole story, there is no $k$-ary collision for $k=3,\dots,98$. 
Here and there, two particles meet, they collide an infinite number of times, but manage to separate.
Then at some times, we have $98$ particles close to each other and there are many 
binary collisions.
Then, if a $99$-th particle arrives in the same zone (and this eventually occurs), there are infinitely many
$99$-ary collisions, with infinitely many binary collisions of all possible pairs before each. 
These $99$ particles may manage to
separate forever, or for a large time, but if a $100$-th particle arrives in the zone 
(and this situation eventually occurs), then there are infinitely many $99$-ary collisions of
all the possible subsets and, finally, a $100$-ary collision producing explosion, and the story is finished.
Informally, the resulting cluster is not able to separate, because the attraction
dominates the Brownian excitation, since a Bessel process of dimension 
$d_{\theta,N}(100)\leq 0$ is absorbed
when it reaches $0$. We hope to be able, in a future work, to propose and justify a 
model describing what
happens after explosion.

\subsection{References}
In many papers about the Keller-Segel equation, the parameter
$\chi=4\pi \theta$ is used, so that the transition at $\theta=2$ corresponds to the transition
at $\chi=8\pi$. As already mentioned, this nonlinear P.D.E. has been introduced to 
model the collective 
motion of cells, which are attracted by a chemical substance that they emit. It 
describes the density $f_t(x)$
of particles (cells) with position $x\in \rr^2$ at time $t\geq 0$ and writes,
in the so-called parabolic-elliptic case,
\begin{equation}\label{pde}
\partial_t f_t(x)+ \theta \ddiv_x ((K\star f_t)(x) f_t(x))=\frac12 \Delta_xf_t(x),
\quad \hbox{where}\quad
K(x)=-\frac{x}{|x|^2}.
\end{equation}
Informally, this solution should be the mean-field 
limit of the particle system \eqref{EDS} as $N\to\infty$.

\vip

We refer to the recent review paper on \eqref{pde} by Arumugam-Tyagi \cite{at}. 
The best existence of a global solution to \eqref{pde},
including all the subcritical parameters $\theta\in (0,2)$, is due to 
Blanchet-Dolbeault-Perthame \cite{bdp}. 
The blow-up of solutions to \eqref{pde},
in the supercritical case $\theta>2$, has been studied e.g. by Fatkullin \cite{fa} 
and Velasquez \cite{v1,v2}.
More close to our study, Suzuki \cite{s} has shown, still in the supercritical case, 
the appearance of a Dirac mass with a precise (critical)
weight, at explosion. This is the equivalent, in the limit $N\to\infty$,
to the fact that $\lim_{t\to \zeta-}X_t$ exists and corresponds to a $K$-collision, for some
$K\subset \ig 1,N\id$ with precise cardinal $k_0$.
Let us finally mention Dolbeault-Schmeiser \cite{ds}, who propose a post-explosion 
model in the supercritical case.
\vip
Concerning particle systems associated with \eqref{pde}, let us mention
Stevens \cite{st}, who studies a physically more complete particle system with two types 
of particles, 
for cells and chemo-attractant particles, with a regularized attraction kernel.
Ha\v{s}kovec and Schmeiser \cite{hs1,hs2} study a particle system closer to \eqref{EDS}, 
but with, again,
a regularized attraction kernel. 
\vip
Cattiaux-P\'ed\`eches \cite{cp}, as well as \cite{fj}, study the system
\eqref{EDS} without regularization in the subcritical case: existence of a global 
solution to \eqref{EDS}
has been shown in \cite{fj} when $\theta \in (0,2(N-2)/(N-1))$, and uniqueness of this solution has 
been established in \cite{cp}. Also, the theory of Dirichlet spaces has been used in \cite{cp}
to build a solution to \eqref{EDS}. Finally,
the limit as $N\to \infty$ to a solution of \eqref{pde}
is proved in \cite{fj} in the very subcritical case where $\theta\in (0,1/2)$, 
up to extraction
of a subsequence.
This last result has been improved by Bresch-Jabin-Wang \cite{bjw}, who remove the 
necessity of extracting
a subsequence and consider the (still very subcritical) case where $\theta\in (0,1)$. 
Olivera-Richard-Tomasevic \cite{ort} have recently established the $N\to\infty$ convergence
of a smoothed version of \eqref{EDS}, for all the subcritical
cases $\theta \in (0,2)$. Informally, in view of the mean distance between particles, 
the regularization used in \cite{ort} is not far from being physically reasonable.
There is also a related paper of Jabir-Talay-Tomasevic \cite{jtt}
about a one-dimensional but more complicated parabolic-parabolic model.

\vip
Let us finally mention the seminal paper of Osada \cite{o}, see also \cite{fhm} 
for a more recent study, 
which concerns the vortex model: this is very close to
\eqref{EDS}, but the attraction $-x/|x|^2$ is replaced by a rotating interaction $x^\perp/|x|^2$, 
so that particles never encounter.

\subsection{Originality and difficulties}
 To our knowledge, this is the first study
of the supercritical Keller-Segel particle system near explosion.
We hope that this model, which makes compete diffusion and Coulomb interactions,
is very natural from a physical point of view, beyond the Keller-Segel community.
The phenomenon we discovered seems surprising and original, in particular
because of the gap between binary and $k_2$-ary collisions.
We are not aware of other works, possibly dealing with other models, showing such a behavior.

\vip

In Section~\ref{miotp}, we give the main arguments of the proofs, with quite a high level of precision,
but ignoring the technical issues.
While it is rather clear, intuitively, that the process explodes in finite time when 
$\theta\geq 2$ and that no $K$-collisions may occur
for $|K|\in \ig 3,k_2-1\id$, the continuity at explosion is \blue delicate, \bla and some rather deep
arguments are required to show that that each $k_2$-ary collision is preceded
by many binary collisions, that each $k_1$-ary collision is preceded
by many $k_2$-ary collisions, that explosion is preceded by many $k_1$-ary collisions,
and that explosion is due to the emergence of a cluster with precise size $k_0$ (which more or
less says that a possible $(k_0+1)$-ary collision would necessarily be preceded by a $k_0$-collision).

\vip

Actually, the rigorous proofs are made technically much more
involved than those presented in Section~\ref{miotp}, because we have to use the theory of Dirichlet spaces.
Due to the singularity of the interactions and to the occurrence of many collisions near explosion,
we can unfortunately not, as already mentioned, deal at the rigorous level directly with the S.D.E. \eqref{EDS}.
We thus have to use some suitable heavy versions of some usual tools such as 
It\^o's formula, Girsanov's theorem, time-change, etc.

\subsection{Plan of the paper}

In Section~\ref{nota}, we introduce some notation of constant use.
In Section~\ref{miotp}, we explain the main ideas of the proofs, with quite a high level of precision, but
without speaking of the heavy technical issues related to the use of the theory of Dirichlet spaces.
Section~\ref{constr} is devoted to the existence of a first version of the Keller-Segel process,
namely without the property that $\PP^X_x \circ X_t^{-1}$ has a density, and we introduce a spherical
Keller-Segel process.
In Section~\ref{deccc}, we show that the Keller-Segel process enjoys a crucial and noticeable decomposition
in terms of a $2$-dimensional Brownian motion, a squared Bessel process and a spherical process.
Section~\ref{cutcut} consists in building some smooth approximations of some indicator functions that
behave well under the action of the generator $\cL^X$.
In Section~\ref{gggir}, we make use of the Girsanov theorem to prove that when two sets of particles of
a $KS$-process are not too close from each other, they behave as two independent smaller $KS$-processes.
In Section~\ref{explacon}, we study explosion and continuity (in the usual sense) at the explosion time.
Section~\ref{specialc} is devoted to establish some parts of Theorem~\ref{theoremecollisions}
for some particular ranges of values of $N$ and $\theta$.
Using the results of Section~\ref{gggir}, we
reduce the general study to the special cases of Section~\ref{specialc} and we prove, 
in Section~\ref{conccc},
that the conclusions of 
Theorem~\ref{theoremecollisions} hold true \blue quasi-everywhere. \bla
Finally, in Section~\ref{rq}, we remove the restriction \blue {\it quasi-everywhere} \bla and 
conclude the proofs of Propositions~\ref{densite} and \ref{reloumaisafaire} 
and of Theorem~\ref{theoremecollisions}.
\vip
Appendix~\ref{ppre} contains a few elementary computations: proof of Lemma~\ref{dimensions}, proof
that $\mu$ is Radon on $E_{k_0}$, and study of a similar measure on a sphere.
We end the paper with Appendix~\ref{aapp}, that summarizes all the notions and results
about Dirichlet spaces and Hunt processes we 
shall use.

\section{Notation}\label{nota}
We introduce the spaces
\begin{gather*}
H = \Big\{ x \in (\rr^{2})^{N}: S_{\ig 1,N\id}(x)=0 \Big\},\quad
S = \Big\{ x\in (\rr ^2)^N : \sum_{i=1}^N \|x^i\|^2=1 \Big\}\quad
\hbox{and} \quad \sS = H  \cap S.
\end{gather*}
For $u\in \sS$, we have $S_{\ig 1,N\id}(u)=0$ and $R_{\ig 1,N\id}(u)=1$.
We consider the (unnormalized) Lebesgue measure $\sigma$ on $\sS$, as well as, recall \eqref{mmu},
\begin{equation}\label{beta}
\beta (\dd u)= \bm(u) \sigma (\dd u).
\end{equation}

We define $\gamma:\rr^2 \to (\rr^2)^N$ by
$\gamma(z)=(z,\dots,z)$ and $\Psi : \rr^2 \times \rr_+^*\times \sS \to E_N\subset (\rr^2)^N$ by
\begin{equation}\label{Psi}
\Psi(z,r,u)=\gamma(z)+ \sqrt r \;u,\qquad \hbox{i.e.} \qquad
(\Psi(z,r,u))^i=z - \sqrt{r} u^i \quad \hbox{for}\quad  i \in \ig 1,N \id.
\end{equation}
We have $S_{\ig 1,N\id}(\Psi(z,r,u))=z$ and $R_{\ig 1,N\id}(\Psi(z,r,u))=r$.
\vip
The orthogonal projection $\pi_H: (\rr^2)^N \to H$ is given by
$$
\pi_H(x)=x-\gamma(S_{\ig 1,N\id}(x)),
\qquad \hbox{i.e.} \qquad
(\pi_H(x))^i=x^i -S_{\ig 1,N\id}(x) \quad \hbox{for}\quad  i \in \ig 1,N \id
$$
and we introduce $\Phi_\sS : E_{N}\to \sS$ defined by
\begin{align}\label{phis}
\Phi_\sS(x)=\frac{\pi_H x}{||\pi_H x||}, \qquad \hbox{i.e.}\qquad 
(\Phi_\sS(x))^i=\frac{x^i-S_{\ig 1,N\id}(x)}{\sqrt{R_{\ig 1,N\id}(x)}}\quad \hbox{for}\quad i \in \ig 1,N \id.
\end{align}
For $x \in (\rr^2)^N \setminus\{0\}$, the projections $\pi_{x^\perp}:(\rr^2)^N\to x^\perp$ 
and $\pi_x : (\rr^2)^N \to $ span$(x)$ are given by
$$
\pi_{x^\perp}(y)=y-\frac{x \cdot y}{||x||^2} x \qquad \hbox{and}\qquad 
\pi_x(y)= \frac{x \cdot y}{||x||^2} x,
$$
where $x \cdot y = \sum_{i=1}^N x^i \cdot y^i$.

\vip

We denote by $b:E_2 \to(\rr^2)^N$ the drift coefficient of \eqref{EDS}: for $x=(x^1,\dots,x^N)\in E_2$,
\begin{equation}\label{betm}
b(x)= \frac{\nabla \bm(x)}{2\bm(x)}\blue = \frac{\nabla \log \bm(x)}{2} \bla \in (\rr^2)^N,\qquad \hbox{i.e.}
\qquad b^i(x)=-\frac{\theta}{N} \sum_{j\neq i}\frac{x^i-x^j}{\|x^i-x^j\|^2} \in \rr^2
\end{equation}
\blue for  $i \in \ig 1,N \id$. Indeed, since $\log \bm(x)=-\frac{\theta}{2N}\sum_{1\leq i \neq j \leq N} 
\log ||x^i-x^j||^{2}$,
we e.g. have
$$
\frac{\nabla_{x^1} \log \bm(x)}2
= - \frac\theta{4N} \nabla_{x^1}\Big[
\sum_{i=2}^N \log ||x^i-x^1||^2 + \sum_{j=2}^N \log ||x^1-x^j||^2\Big]
= -\frac\theta {2N}\nabla_{x^1}\sum_{j=2}^N \log ||x^1-x^j||^2,
$$
whence
$$
\frac{\nabla_{x^1} \log \bm(x)}{2}=-\frac{\theta}{N} \sum_{j=2}^N\frac{x^1-x^j}{\|x^1-x^j\|^2}.
$$
\bla

Finally, we introduce the natural operators defined for $\varphi \in C^1(\sS)$ and $u \in \sS$ by
\begin{equation}\label{nablaS}
\nabla_\sS \varphi(u)= \nabla [\varphi\circ \Phi_\sS] (u) \in (\rr^2)^N \quad \hbox{and} \quad
\Delta_\sS \varphi(u)=\Delta[\varphi\circ \Phi_\sS](u) \in \rr,
\end{equation}
where
$\nabla$ and $\Delta$ stand for the usual gradient and Laplacian in $(\rr^2)^N$.
Since $\sS \subset E_{N}\subset (\rr^2)^N$, with $E_{N}$ open, and since $\Phi_\sS$ is smooth on
$E_N$, we can indeed define $\nabla [\varphi\circ \Phi_\sS] (u)$ and $\Delta[\varphi\circ \Phi_\sS] (u)$ 
for all $u \in \sS$.
Similarly, for $\varphi \in C^1(\sS,(\rr^2)^N)$ and $u \in \sS$, we set 
\begin{equation}\label{ddivS}
\ddiv_\sS \varphi(u)= \ddiv [\varphi\circ \Phi_\sS] (u) \in \rr.
\end{equation}
To conclude this subsection, we note that 
for all $\varphi \in C^\infty ( (\rr^2)^N)$, for all $u \in \sS$,
\begin{align}\label{tradgrad}
\nabla _\sS (\varphi|_\sS) (u) = \pi _H (\pi _{u^\perp}(\nabla \varphi (u))).
\end{align}
Indeed, it suffices to observe that setting $G(x)=x/||x||$ for all $x\in (\rr^2)^N \setminus \{0\}$, 
we have $\Phi_\sS=G\circ \pi_H$, $\dd_x G=\pi_{x^\perp}/||x||$ and $\dd_x \pi_H=\pi_H$
 and that for $u \in \sS$, we have $\pi_H(u)=u$ and $||\pi_H(u)||=1$. 

\section{Main ideas of the proofs}\label{miotp}

Here we explain the main ideas of the proofs of
Proposition~\ref{reloumaisafaire} and Theorem~\ref{theoremecollisions}.
The arguments below are completely informal. In particular, we do as if our $KS(\theta,N)$-process
$(X_t)_{t\in [0,\zeta)}$ was a true solution to \eqref{EDS} until explosion
and we apply It\^o's formula without care.
We always assume at least that $N\geq 2$, $\theta>0$ and $N> \theta$,
which implies that  $k_0=\lceil 2N/\theta\rceil \geq 3$.

\subsection{Existence}\label{existence}
The existence of the $KS(\theta,N)$-process
$(X_t)_{t\in [0,\zeta)}$, with values in $E_{k_0}$, is an easy application of 
Fukushima-Oshima-Takeda \cite[Theorem 7.2.1]{f}.
The only difficulty is to show that
the invariant measure $\mu$ is a Radon on $E_{k_0}$, see Proposition~\ref{radon}. 
The process may explode, i.e.
get out of any compact subset of $E_{k_0}$ in finite time. Observe that a typical
compact subset of $E_{k_0}$ is of the form, for $\e>0$,
$$
\cK_\e=\{x \in (\rr^2)^N : ||x||\leq 1/\e \mbox{ and for all } K \subset \ig 1,N \id 
\mbox{ such that } |K|=k_0,\; R_K(x)\geq \e\}.
$$

\subsection{Center of mass and dispersion process}\label{cmdp}
One can verify, using It\^o's formula,
that the center of mass $S_{\ig 1,N\id}(X)$ is a $2$-dimensional 
Brownian motion with diffusion constant $N^{-1/2}$, that
the dispersion process $R_{\ig 1,N \id}(X)$ is a squared Bessel process with 
dimension $d_{\theta,N}(N)$, 
recall \eqref{dtn}, and that these two processes are independent.

\vip

Consequently, if $\zeta<\infty$, the limits $\lim_{t\to \zeta-} S_{\ig 1,N\id}(X_t)$ and 
$\lim_{t\to \zeta-} R_{\ig 1,N\id}(X_t)$ a.s. exist, and this 
implies that $\limsup_{t\to\zeta-}||X_t||<\infty$:
the process cannot explode to infinity, it can only explode because it tends to the boundary
of $E_{k_0}$.
If moreover $k_0 > N$ (i.e. if $\theta < 2$), this is sufficient to show that 
$\zeta=\infty$, since then
$E_{k_0}=(\rr^2)^N$.

\subsection{Behavior of distant subsets of particles}\label{introgirsanov}
Consider a partition $K_1,\dots ,K_p$ of $\ig 1,N \id$.
If we neglect interactions between particles of which the indexes are not in the same subset, 
we have, for each $\ell \in \ig 1,p \id$, setting $\ttheta_\ell= \theta|K_\ell|/N$,
$$
\dd X^i_t = \dd B^i_t - \frac{\ttheta_\ell}{|K_\ell|} \sum _{j\in K_{\ell}\setminus \{i\} } 
\frac{X^i_t-X^j_t}{\|X^i_t-X^j_t\|^2}\dd t,\qquad i \in K_\ell
$$
and we recognize a $KS(\ttheta_\ell,|K_{\ell}|)$-process.
\vip

During time intervals where particles indexed in different subsets are far enough 
from each other, we can indeed bound the interaction between those particles, so that
the Girsanov theorem tells us that 
$(X^{i}_t)_{i \in K_1},\dots,(X^{i}_t)_{i \in K_p}$ behave similarly,
in the sense of trajectories, as
independent $KS(\ttheta_1,|K_{1}|)$, ..., $KS(\ttheta_p,|K_{p}|)$-processes.

\vip

\subsection{\blue Brownian and Bessel behaviors \bla of isolated subsets of particles}\label{girsanovbessel}
Consider $K \subset \ig 1,N\id$. As seen just above, during time intervals 
where the particles indexed in $K$ are 
far from all the other ones, the system $(X^i_t)_{i \in K}$ behaves, in the sense of trajectories,
like a $KS(\theta|K|/N,|K|)$-process.
Hence, as seen in Subsection~\ref{cmdp}, \blue $S_K(X_t)$ behaves like
a $2$-dimensional Brownian motion with diffusion constant $|K|^{-1/2}$ \bla and 
$R_K(X_t)$ behaves like a squared Bessel process
of dimension $d_{\theta|K|/N,|K|}(|K|)$, which equals
$d_{\theta,N}(|K|)$, recall \eqref{dtn}.

\subsection{Continuity at explosion}\label{cae}
Here we assume that $N> \theta\geq 2$, so that $k_0 \in \ig 2,N\id$ and we explain
why a.s., $\zeta<\infty$ and $X_{\zeta-}=\lim_{t\to \zeta-} X_t$ exists, in the usual sense of $(\rr^2)^N$.

\vip

(a) We first show that $\zeta<\infty$ a.s. On the event where $\zeta=\infty$, the squared Bessel process
$R_{\ig 1,N\id}(X)$ is defined for all times. \blue Recall that $d_{\theta,N}(N)\leq 0$ (because $\theta\geq 2$)
and that a squared Bessel process with negative dimension can be defined on the whole time half-line
and a.s. becomes negative in finite time. Since  $R_{\ig 1,N\id}(X)\geq 0$ by definition,
this contradicts the fact that $\zeta=\infty$. \bla
\vip
\blue Similarly, one can show that a $KS(\theta,N)$-process
has no chance to be defined after
the first hitting time $\tau_K$ of $0$ by $R_K(X_t)$, where $|K|= k_0$: this makes the
choice of the space $E_{k_0}$ very natural.
Indeed, assume that $X$ is defined during $[0,\zeta')$ with $\zeta'>\tau_K$. 
Consider the maximal subset $L$ of $\ig 1,N\id$ containing $K$ 
and such that $R_L(X_{\tau_K})=0$. Then there is $\e>0$ such that during $[\tau_K,\tau_K+\e]\subset [0,\zeta')$,
the particles labeled in $L$ are far from the ones labeled outside $L$. 
By Subsection~\ref{girsanovbessel}, $(R_L(X_{\tau_K+t}))_{t\in [0,\e]}$ behaves like a squared Bessel process 
with dimension $d_{\theta,N}(|L|)$ issued from $0$. But such a process is instantaneously negative,
because $d_{\theta,N}(|L|)\leq 0$ (since $|L|\geq k_0$). Since  $R_{L}(X)\geq 0$,
this contradicts the fact that $\tau_K \in [0,\zeta')$.
\bla

\vip

(b) We next show by reverse induction that a.s. for all $K\subset \ig 1,N\id$ with $|K|\geq 2$,
we have 
\begin{equation}\label{art}
\mbox{either}\quad \lim_{t\to \zeta-} R_K(X_t)=0  \quad \mbox{or}\quad \liminf_{t\to \zeta-} R_K(X_t)>0.
\end{equation}
If $K=\ig 1,N\id$, $\lim_{t\to \zeta-} R_K(X_t)$ exists by continuity of the (true) 
squared Bessel process
$R_K(X_t)$ and this implies the result. We now fix $n \in \ig 3,N \id$ and assume 
that \eqref{art} holds true
for all $K$ such that $|K|\geq n$. We consider $K\subset \ig 1,N\id$ with $|K|=n-1$: 
by induction assumption, 
either
there is $i \notin K$ such that $\lim _{t \to \zeta- }R_{K\cup\{i\}}(X_t)= 0$ 
and then $\lim _{t \to \zeta- }R_{K}(X_t)= 0$,
or for all $i\in \ig 1,N\id \setminus K$, $\liminf_{t \to \zeta- }R_{K\cup\{i\}}(X_t)>0$.
In this last case, and when $\limsup_{t\to\zeta-} R_K(X_t)>0$ and $\liminf_{t\to \zeta-} R_K(X_t)=0$
(which is the negation of \eqref{art}),
there are $\alpha>0$ and $\e>0$ such that (i) 
$R_K(X_t)$ upcrosses $[\e/2,\e]$ infinitely often during $[\zeta-\alpha,\zeta)$ and
(ii) for all $t\in[\zeta-\alpha,\zeta)$ such that
$R_K(X_t)<\e$, the particles indexed in $K$ are far from all the other ones
(because then $R_K(X_t)$ is small and $R_{K\cup\{i\}}(X_t)$ is large for all $i \notin K$),
so that $R_K(X_t)$ behaves like a squared Bessel process with dimension $d_{\theta,N}(|K|)$,
see Subsection~\ref{girsanovbessel}.
Points (i) and (ii) are in contradiction, since a squared Bessel process is continuous
and thus cannot upcross $[\e/2,\e]$ infinitely often during a finite time interval.

\vip
\blue
(c) We now show that $\lim_{t\to\zeta-}X_t$ exists. Using (b) and the (random) equivalence relation on $\ig1,N\id$
defined by $i\sim j$ if and only if $\lim_{t\to \zeta-} R_{\{i,j\}}(X_t)=0$, one can build a (random) partition
$\bK=(K_p)_{p\in \ig 1,\ell \id}$ of $\ig 1,N\id$ such that for all $p\in\ig 1,\ell\id$,
$\lim_{t\to\zeta-}R_{K_p}(X_t)=0$ and $\liminf_{t\to\zeta-} \min_{i\notin K_p}R_{K_p\cup\{i\}}(X_t)>0$.
Hence, there is $\alpha \in [0,\zeta)$ such that for all $p\neq q$, the particles labeled
in $K_p$ are far from the ones labeled in $K_q$ during $[\alpha,\zeta)$. As seen in Subsection 
\ref{girsanovbessel}, we conclude that for all $p \in \ig 1,\ell\id$, $S_{K_p}(X_t)$ behaves
like a Brownian motion during $[\alpha,\zeta)$, and thus $M_p=\lim_{t\to \zeta-} S_{K_p}(X_t)$ exists. Since moreover
$\lim_{t\to\zeta-}R_{K_p}(X_t)=0$, we deduce that for all $i \in K_p$, $\lim_{t\to\zeta-} X^i_t=M_p$.
As a conclusion $\lim_{t\to\zeta-} X^i_t$ exists for all $i\in \ig1,N\id$.
\bla

\subsection{A spherical process}\label{dec} 
We recall that $\sS$, $\pi_H$, $\pi_{u^\perp}$ and $b$ were introduced in Section~\ref{nota} and introduce 
the possibly exploding (with life-time $\xi$)
process $(U_t)_{t\in [0,\xi)}$ with values in $\sS  \cap E_{k_0}$,
informally solving (we will also use here the theory of Dirichlet spaces),  
for some given $U_0 \in \sS \cap E_{k_0}$ and some $(\rr^2)^N$-valued Brownian motion
$(B_t)_{t\geq 0}$,
$$
U_{t} = U_{0} + \int _{0} ^{t} \pi _{U_{s}^{\perp}} \pi_{H} \dd B_{s} 
+ \int _{0}^{t} \pi _{U_{s}^{\perp}} \pi _{H} b(U_{s})\dd s - \frac{2N-3}{2} \int _{0}^{t} U_{s}\dd s.
$$
We call such a process a $SKS(\theta,N)$-process.

\vip
One can check that this process is $\beta$-symmetric, where $\beta$ is defined in \eqref{beta}, and
that $\beta$ is Radon on $\sS \cap E_{k_0}$, see Proposition~\ref{radonsphere}. 
And we will see that if
$k_0\geq N$, then $\beta(\sS)<\infty$, so that the process $(U_t)_{t\geq 0}$ is
non-exploding and positive recurrent.

\subsection{Decomposition of the process}\label{decprocess}
We assume that $N\geq 2$ and $\theta>0$ are such $d_{\theta,N}(N)<2$ and, as usual, $N> \theta$.
We consider a $2$-dimensional Brownian $(M_t)_{t\geq 0}$ with 
diffusion constant $N^{-1/2}$, 
a squared Bessel process $(D_t)_{t\in [0,\tau_D)}$ with dimension $d_{\theta,N}(N)$ killed
when it hits $0$, with life-time $\tau_D$,
and a $SKS(\theta,N)$-process $(U_t)_{t\in [0,\xi)}$,
these three processes being independent. We introduce the time-change
$$
A_{t} = \int _{0}^{t} \frac{\dd s}{D_s}, \quad t \in [0,\tau_D).
$$
Since $\tau_D<\infty$ (because $d_{\theta,N}(N)<2$), since $D_{\tau_D}=0$ and since, roughly, the
paths of $(\sqrt{D_t})_{t \in [0,\tau_D)}$ are $1/2$-H\"older continuous, it holds that
$A_{\tau_D}=\infty$ a.s. We  introduce 
the inverse function $\rho:[0,\infty) \to [0,\tau_D)$ of $A:[0,\tau_D)\to [0,\infty)$.

\vip

We also set $\zeta'=\rho_\xi$ and observe that $\zeta'\leq \tau_D$, since $\rho$ is $[0,\tau_D)$-valued,
and that $\zeta'<\tau_D$ if and only if $\xi<\infty$.
A \blue fastidious \bla but straightforward computation shows that, recalling \eqref{Psi},
$$
X_t=\Psi(M_t,D_t,U_{A_t}),\qquad \hbox{i.e.}\quad
X_t^i= M_t+ \sqrt{D_t}U_{A_t}^i,\qquad i \in \ig 1,N \id,
$$
which is well-defined during $[0,\zeta')$,
solves \eqref{EDS}. 

\vip

{\it This decomposition of the $KS(\theta,N)$-process, which is noticeable in that $U$ satisfies 
an autonomous
S.D.E. and thus is Markov, is at the basis of our analysis.} 

\vip
In other words, $(X_t)_{t\in [0,\zeta')}$ is the
restriction to the time interval $[0,\zeta')$ of a $KS(\theta,N)$-process
$(X_t)_{t\in [0,\zeta)}$. Moreover, we have $\zeta'=\zeta\land \tau_D$:  if $\xi$ is finite, then
$U$ gets out of $\sS \cap E_{k_0}$ at time $\xi$, so that $X$ gets out of $E_{k_0}$
at time $\zeta'=\rho_\xi<\tau_D$, whence $\zeta=\zeta'=\zeta\land \tau_D$; if next $\xi=\infty$, then
$\zeta'=\tau_D$ and $U$ remains in $E_{k_0}$ for all times, 
so that $X$ remains in $E_{k_0}$ during $[0,\tau_D)$, whence $\zeta \geq \tau_D$.

\vip
We have $S_{\ig1,N\id}(X_t)=M_t$ and $R_{\ig1,N\id}(X_t)=D_t$ for all $t\in [0,\zeta\land \tau_D)$,
because $U$ is $\sS$-valued.
By definition of $\sS$, the process $U$ cannot have any $\ig 1,N \id$-collision.
But for any $K \subset \ig 1,N \id$ with cardinal at most $N-1$,
\begin{gather}\label{etoile}
\hbox{$U$ has a $K$-collision at $t \in [0,\xi)$ if and only if $X$ has a 
$K$-collision at $\rho_t \in [0,\zeta\land \tau_D)$}.
\end{gather}
Moreover, as seen a few lines above, $\xi<\infty$ is equivalent to $\zeta<\tau_D$.
In other words, since $R_{\ig1,N\id}(X_t)=D_t$ for all $t\in [0,\zeta\land \tau_D)$
and since $\tau_D=\inf\{t>0 : D_t=0\}$, we have 
\begin{gather}
\xi<\infty \qquad \hbox{if and only if} \qquad \inf_{t\in[0,\zeta)} R_{\ig1,N\id}(X_t) >0.
\label{etoile2}
\end{gather}

\subsection{Some special cases}\label{mainidea}
Using the Girsanov theorem, see Subsection~\ref{girsanovbessel}, we will manage to reduce a large
part of the study to the special cases that we examine in the present subsection.
Here we explain the following facts, for $N\geq 2$ and $\theta>0$ with $N> \theta$:
\vip
\noindent (a) if $d_{\theta ,N}(N-1) \in (0,2)$, then a.s., 
$\tau_D=\inf\{t>0 : R_{\ig 1,N \id }(X_t)=0\}\leq \zeta$ 
and for all $r\in [0,\tau_D)$, 
all $K\subset \ig 1,N \id $ with $|K|=N-1$,  $(X_t)_{t\in [0,\zeta)}$ has infinitely many
$K$-collisions during $[r,\tau_D)$;
\vip
\noindent (b) if $d_{\theta ,N}(N-1)\leq 0$ (whence $k_0 \leq N-1$),
then a.s., $\inf_{t\in [0,\zeta)} R_{\ig 1,N \id }(X_t)>0$.
\vip

We keep the same notation as in the previous subsection.
\vip

(i) We first verify that in (a), $\tau_D\leq \zeta$. Since $d_{\theta ,N}(N-1)\in(0,2)$, 
it holds that $k_0\geq N$.
If first $k_0>N$, then $\zeta=\infty$ by Subsection~\ref{cmdp} and we are done. 
If next $k_0=N$, then
$\zeta<\infty$ and $X_{\zeta-}$ exists
by Subsection~\ref{cae}. Moreover
$X_{\zeta-}$ cannot belong to $E_{k_0}=E_{N}$ by definition of $\zeta$ 
and thus has its $N$ particles at the same place, i.e.  $R_{\ig 1,N \id }(X_{\zeta-})=0$: we have $\zeta=\tau_D$.

\vip

(ii) In (b), $\zeta<\infty$ by Subsection~\ref{cae} because
$d_{\theta ,N}(N-1)\leq 0$ implies that $\theta \geq 2$.

\vip

(iii) We consider, in any case, the spherical process $(U_t)_{t\in [0,\xi)}$ and 
assume that $\xi=\infty$.
\blue An It\^o computation \bla shows that for $K \subset \ig 1,N \id$,
for some $1$-dimensional Brownian motion $(W_t)_{t\geq 0}$,
\begin{align*}
 \dd R_K(U_t) =& 2 \sqrt{R_K(U_t) (1 - R_K(U_t))}\dd W_{t} + d_{\theta ,N}(|K|) \dd t -d_{\theta,N}(N)R_K(U_t) \dd t \\ 
&-\frac{2 \theta}{N} \sum _{i \in K, j \notin K} \frac{U^i_t-U^j_t}{||U^i_t-U^j_t||^2} \cdot (U^i_t-S_K(U_t)) \dd t.
\notag
\end{align*}
We fix $\e>0$ to be chosen later. During time intervals where 
$\min_{i\in K,j\notin K} \|U^i_t-U^j_t\| \geq \e$,
we thus have, for some constant $C_\e$,
\begin{align}\label{majorationbesselsphere}
\dd R_K(U_t) \le& 2 \sqrt{R_K(U_t) (1 - R_K(U_t) )}\dd W_{t} + d_{\theta ,N}(|K|) \dd t +  
C_\e \sqrt{R_K(U_t)} \dd t,
\end{align} 
where we used the Cauchy-Schwarz inequality and that $R_K(U_t)$ is uniformly bounded 
(because $U$ is $\sS$-valued).
Hence, still during time intervals where $\min_{i\in K,j\notin K} \|U^i_t-U^j_t\| \geq \e$, 
by comparison, $R_K(U_t)$ is
smaller than $S_t$, the solution to
\begin{equation}\label{zK}
\dd S_t = 2 \sqrt{S_t(1 - S_t)}\dd W_{t} + d_{\theta ,N}(|K|) \dd t +  
C_\e \sqrt{S_t} \dd t.
\end{equation}
And a little study involving scale functions/speed measures shows that this process
hits zero in finite time if and only if $d_{\theta ,N}(|K|)< 2$, exactly as a squared Bessel process
with dimension $d_{\theta ,N}(|K|)$.

\vip

(iv) We end the proof of (a). In this case, $k_0\geq N$, so that $U$ is non-exploding,
as seen in Subsection~\ref{dec}. Hence $\xi=\infty$ and we can use (iii).
Moreover, $U$ is  recurrent, still by Subsection~\ref{dec}.
We fix $K$ with $|K|=N-1$ and we choose $\e>0$ small enough so that we have  
$$
\beta\Big(\Big\{u \in \sS : \min_{i\in K,j\notin K} \|u^i-u^j\| \geq \e \Big\}\Big)>0,
$$ 
where
$\beta$ is the invariant measure \eqref{beta} of $U$. 
Hence the process $\min_{i\in K,j\notin K} \|U^i_t-U^j_t\|$ 
visits the zone 
$(\e,\infty)$ infinitely often and each time, $R_K(U)$ has a (uniformly) positive probability to hit $0$
by (iii) and since $d_{\theta ,N}(|K|)=d_{\theta ,N}(N-1)<2$.
Consequently, for any $s>0$, $(U_t)_{t\geq 0}$ has infinitely many $K$-collisions during $[s,\infty)$.
Recalling \eqref{etoile}  
and that $\zeta \land\tau_D=\tau_D$ by (i), we conclude that for any $r\in [0,\tau_D)$,
$(X_t)_{t \in [0,\zeta)}$ has infinitely many $K$-collisions during 
$[r, \tau_D)$.

\vip

(v) We finally complete the proof of (b). By \eqref{etoile2}, it is sufficient to show that $\xi<\infty$ a.s. 

\vip

Assume that $U$ is recurrent (and thus non-exploding). Then we take $K=\ig 2,N\id$ and apply the same 
reasoning as in (iv): since $d_{\theta,N}(|K|)\leq 0<2$, 
$R_K(U)$ hits zero in finite time and this makes $U$ get out of
$E_{N-1}$ and thus explode, since $U$ is $(E_{k_0}\cap\sS)$-valued and since $k_0\leq N-1$. 
We thus have a contradiction.

\vip

Hence $U$ is transient and it eventually gets out of the compact of $E_{k_0}\cap \sS$
$$
\cK = \{ u \in \sS : \forall K \subset \ig 1,N \id \mbox{ such that } |K|=k_0,\mbox{ we have } R_K(u) \ge \e \},
$$ 
for any fixed $\e>0$. Hence on the event where $\xi=\infty$, $\lim_{t\to \infty} \min_{|K|=k_0} R_K(U_t)=0$ a.s.
Recalling now that $k_0\leq N-1$ and that $U$ is $\sS$-valued (whence $R_{\ig 1,N\id}(U_t)=1$)
we can a.s. find $K$ with $|K|\in \ig k_0 , N-1 \id$ such that $\liminf_{t\to \infty} R_K(U_t)=0$ but
$\liminf_{t\to \infty} \min_{i\notin K} R_{K\cup\{i\}}(U_t)>0$. It is then not too hard to find
$\alpha>0$ and $\e>0$ such that each time $R_K(U_t)<\alpha$ (which often happens), 
all the particles indexed in $K$ 
are far from all the other ones with a distance greater than $\e>0$.
We conclude from (iii), since $d_{\theta,N}(|K|)\leq  0$ (because $|K|\geq k_0$) that
each time $R_K(U_t)<\alpha$, it has a (uniformly) positive probability to hit zero. 
On the event $\xi=\infty$, this will eventually happen,
so that the process 
$U$ will have a $K$-collision and thus will leave $E_{k_0}$ in finite time. Hence
$U$ will explode, so that $\xi<\infty$.

\subsection{Size of the cluster}
We assume that $N>3\theta\geq 6$. Hence $\zeta<\infty$ and $X_{\zeta-}$ exists, by Subsection~\ref{cae}.
Moreover, by definition of $\zeta$, we know that $X_{\zeta-} \notin E_{k_0}$.
We want now to show that
$X_{\zeta-} \in E_{k_0+1}$, i.e. that the cluster causing explosion is precisely composed
of $k_0$ particles.
If $k_0=N$, there is nothing to do, since then $E_{k_0+1}=(\rr^2)^N$.
Now if $k_0 \leq N-1$, we assume by contradiction, that there is 
$K \subset \ig 1,N\id$ with $|K|\geq k_0+1$
such that $R_K(X_{\zeta-})=0$ and $\min_{i \notin K}R_{K\cup\{i\}}(X_{\zeta-})>0$. 
Then there is $\alpha>0$
such that during $[\zeta-\alpha,\zeta)$, the particles indexed in $K$ are far from the other ones,
so that $(X^i_t)_{t\in[0,\zeta),i\in K}$ behaves like a $KS(\theta |K|/N,|K|)$-process
by Subsection~\ref{introgirsanov}. 
Observe now that $d_{\theta |K|/N,|K|}(|K|-1)=d_{\theta,N}(|K|-1)\leq 0$ because 
$|K|-1\geq k_0$ and $|K| > \theta |K|/N$ because $N> \theta$. 
We thus know from the special case (b) of Subsection~\ref{mainidea}
that $\inf_{t\in [\zeta-\alpha,\zeta)} R_K(X_t)>0$, which contradicts the fact that $R_K(X_{\zeta-})=0$.

\subsection{Collisions before explosion}
We fix again $N>3\theta\geq 6$.
We recall that $k_1=k_0-1$ and we show that there are infinitely many $k_1$-ary collisions 
just before explosion.
We know from the previous subsection that there exists $K_0 \subset \ig 1,N \id$ 
such that $|K_0| = k_0$ and $R_{K_0}(X_{\zeta-})=0$
and $\min_{i\notin K_0}R_{K_0\cup\{i\}}(X_{\zeta-})>0$. 
Then there is $\alpha>0$
such that during $[\zeta-\alpha,\zeta)$, the particles indexed in $K_0$ are far from the other ones,
so that $(X^i_t)_{i\in K_0}$ behaves like a $KS(\theta k_0/N,k_0)$-process by Subsection 
\ref{introgirsanov}. 
Observe now that $d_{\theta k_0/N,k_0}(k_0-1)=d_{\theta,N}(k_0-1)\in (0,2)$ thanks to 
Lemma~\ref{dimensions} and that $k_0 > \theta k_0/N$ because $N> \theta$.
We thus know from the special case (a) of Subsection~\ref{mainidea}
that $(X^i_t)_{i\in K_0}$ has infinitely many $(K_0\setminus\{i\})$-collisions 
just before
$\zeta$, for all $i \in K_0$.

\vip

When $k_2=k_1-1$, one can show in the very same way that for all $K$ with $|K|=k_1$,
for all $i \in K$, there are infinitely many $(K\setminus\{i\})$-collisions 
just before each $K$-collision.
We may also use Subsection~\ref{mainidea}-(a), since 
$d_{\theta k_1/N,k_1}(k_1-1)=d_{\theta,N}(k_2)\in (0,2)$,
see Lemma~\ref{dimensions}.

\subsection{Absence of other collisions}
We want to show that when $N>3\theta\geq 6$, 
for $K \subset \ig 1,N \id$ with $|K| \in \ig 3,k_2-1 \id$, there is no
$K$-collision during $ ( 0,\zeta)$. Suppose by contradiction that there is 
$K \subset \ig 1,N \id$ with $|K| \in \ig 3,k_2-1 \id$ and $t\in ( 0,\zeta)$ such that
$R_K(X_t)=0$ and for all $i \notin K$, $R_{K\cup\{i\}}(X_t)>0$. Then there is $\alpha>0$
such that during $[t-\alpha,t]$, the particles indexed in $K$ are far from the other ones,
so that $R_K(X_t)$ behaves like a squared Bessel process with dimension $d_{\theta |K|/N,|K|}(|K|)$,
see Subsection~\ref{girsanovbessel}.
Since $d_{\theta |K|/N,|K|}(|K|)=d_{\theta,N}(|K|)\geq 2$ because $|K| \in \ig 3,k_2-1 \id$, 
see Lemma~\ref{dimensions},
such a Bessel process cannot hit zero, whence a contradiction.

\subsection{Binary collisions}
We still assume that $N>3\theta\geq 6$,
we suppose that there is a $K$-collision for some $K\subset \ig 1,N \id$ such that $|K| = k_2$
at some time $t \in  ( 0,\zeta)$ and we want to show that there are infinitely many binary collisions
just before $t$. There is $\alpha>0$ such that the particles indexed in $K$ are far 
from all the other ones
during $[t-\alpha,t]$, so that Subsection~\ref{introgirsanov} tells us that 
 $(X^i_t)_{i\in K}$ behaves 
like a $KS(\theta k_2/N,k_2)$-process. We observe that $k_2\geq 5$, that 
$d_{\theta k_2/N,k_2}(k_2-1)=d_{\theta,N}(k_2-1)\geq 2$ and that $d_{\theta k_2/N,k_2}(k_2)=d_{\theta,N}(k_2)\in (0,2)$
by Lemma~\ref{dimensions}.

\vip

We are reduced to show that a 
$KS(\theta,N)$-process, that we still denote by $(X_t^i)_{i \in \ig 1,N \id,t\geq 0}$, 
such that $N\geq 5$, $d_{\theta,N}(N-1)\geq 2$ and $d_{\theta,N}(N) \in (0,2)$,
a.s. has infinitely many binary
collisions before the first instant $\tau_D$ of $\ig 1,N\id$-collision. 
Such a process does not explode, because $k_0>N$ (since $d_{\theta,N}(N)>0$), 
see Subsection~\ref{cmdp}.
Hence using \eqref{etoile} (which is licit since $d_{\theta,N}(N)<2$), 
we only have to show that e.g. $U^1$ collides infinitely often with $U^2$ during
$[0,\infty)$. 

\vip

First, one easily gets convinced that the probability that e.g. $X^1$ collides with $X^2$ before 
$\tau_D$ is positive, because the probability that 
all the particles are pairwise far from each other, except $X^1$ and 
$X^2$, during the time interval $[0,1]$,
is positive. On this kind of event, by Subsection~\ref{girsanovbessel},
$R_{\{1,2\}}(X_t)$ behaves like a squared Bessel process with dimension 
$d_{\theta,N}(2)\in (0,2)$ and
thus hits zero during $[0,1]$ (and thus before $\tau_D$) with positive probability.

\vip

Using again \eqref{etoile}, we conclude that the probability that $U^1$ collides 
with $U^2$ in finite time
is positive. Since now $U$ is positive recurrent, recall Subsection~\ref{dec} 
and that $k_0> N$ (because $d_{\theta,N}(N)>0$),
we conclude that $U^1$ collides infinitely often with $U^2$ during $[0,\infty)$ as desired.
 
\blue
\subsection{Non-integrability of the drift term}\label{nonint} 
Here we check that when $d_{\theta , N}(k_1)\in (0,1)$, the S.D.E. \eqref{EDS} cannot have a solution 
in the classical sense, because the drift term is not integrable in time. More precisely,
recall that there is some $K$-collision at some time $\tau$ strictly before explosion,
for some $K\subset \ig1,N\id$ with cardinal $k_1$. We now show that a.s., for $a>0$,
$$
\int_{\tau-a}^{\tau+a} \sum_{i=1}^N \Big\|\sum_{j\neq i} \frac{X^i_s-X^j_s}{||X^i_s-X^j_s||^2} \Big\| \dd s =\infty,
$$
which indeed shows the non-integrability of the drift term. 
Since $\tau$ is an instant of $K$-collision, there exists 
$a>0$ small enough so that
during $[\tau-a,\tau+a]\subset [0,\zeta)$, the particles labeled in $K$ are far from the 
particles labeled in $K^c$. 
It clearly suffices to show that $Z=\infty$ a.s., where
$$
Z=\int_{\tau-a}^{\tau+a} \sum_{i\in K} \Big\|\sum_{j\in K,j\neq i} \frac{X^i_s-X^j_s}{||X^i_s-X^j_s||^2} \Big\| \dd s.
$$
But
$$
Z=\int_{\tau-a}^{\tau+a} \frac{f(V_{s})}{\sqrt{R_K(X_s)}} \dd s, \quad \hbox{where}\quad 
V_s=(V^i_s)_{i\in K} \quad \hbox{is defined by} \quad V^i_s=\frac{X_s^i-S_{K}(X_s)}{\sqrt{R_K(X_s)}},
$$
so that $V_s$ a.s. belongs to $\sS_K=\{(v^i)_{i\in K} \in (\rr^2)^{|K|} : 
\sum_{i\in K}v^i=0,\;\sum_{i\in K}||v^i||^2=1\}$, and where
$$
f(v)=\sum_{i \in K}\Big\|\sum_{j \in K,j\neq i} \frac{v^i-v^j}{||v^i-v^j||^2} \Big\|
$$
for each $v \in \sS_K$. Since the invariant measure $\bm$ of $X$ satisfies $\bm(E_2^c)=0$, it a.s. holds true
that $X_s\in E_2$ for a.e.  $s\in [0,\zeta)$ (at least for a.e. initial condition),
so that a.s., $f(V_s)$ is well-defined for a.e. $s\in [0,\zeta)$.
We now show that $f$ is bounded from below on $\sS_K$. We have
$$
f(v) \geq \max_{i\in K} \Big\|\sum_{j\in K,j\neq i} \frac{v^i-v^j}{||v^i-v^j||^2}\Big\|\geq  
\sqrt{\frac 1 {|K|}\sum_{i\in K}\Big\|\sum_{j \in K,j\neq i} \frac{v^i-v^j}{||v^i-v^j||^2} \Big\|^2}.
$$
Using now the Cauchy-Schwarz inequality and the fact that $\sum_{i\in K}||v^i||^2=1$, we find that
$$
f(v) \geq \frac{1}{\sqrt {|K|}} \sum_{i\in K} \sum_{j\in K,j\neq i} \frac{v^i-v^j}{||v^i-v^j||^2}\cdot v^i
=\frac{1}{2\sqrt {|K|}} \sum_{i,j\in K, j\neq i} \frac{v^i-v^j}{||v^i-v^j||^2}\cdot (v^i-v^j)
=\frac{|K|(|K|-1)}{2\sqrt {|K|}}.
$$
To conclude that $Z=\infty$ a.s., it remains to verify that 
$\int_{\tau-a}^{\tau+a} [R_K(X_s)]^{-1/2}\dd s =\infty$ a.s. By Subsection~\ref{girsanovbessel}, $R_{K}(X)$ 
behaves like a squared Bessel process with dimension $d_{\theta , N}(k_1)$ during $[\tau-a,\tau+a]$.
Since $d_{\theta,N}(k_1)\in (0,1)$ and $R_K(X_\tau)=0$, we conclude that indeed,
$\int_{\tau-a}^{\tau+a} [R_K(X_s)]^{-1/2}\dd s =\infty$ a.s.: this can be
shown by comparison with the $1$-dimensional Brownian motion.
\bla

\section{Construction of the Keller-Segel particle system}\label{constr}

The aim of this section is to build a first version of the Keller-Segel particle system
using the book of Fukushima-Oshima-Takeda \cite{f}.
We also build a $\sS$-valued process for later use.

\begin{prop}\label{existenceXU}
We fix $N\geq 2$ and $\theta>0$ such that $N> \theta$,
recall that $k_0=\lceil 2N/\theta \rceil$ and that $\mu$ and $\beta$ were 
defined in \eqref{mmu} and \eqref{beta}.
We set $\cX=E_{k_0}$ and $\cX_\triangle=\cX \cup\{\triangle\}$, as well as
$\cU=\sS \cap E_{k_0}$ and $\cU_\triangle=\cU\cup\{\triangle\}$,
where $\triangle$ is a cemetery point.
\vip
(i) There exists a unique \blue diffusion \bla $\mathbb{X} = (\Omega^X,\cM^X,(X_t)_{t\geq 0},
(\PP_x^X)_{x\in \cXt})$ with values in $\cX_\triangle$, which is  
$\mu$-symmetric, with regular Dirichlet space $(\cE^X\!,\!\cF^X)$ on $L^2((\rr ^2)^N\!,\!\mu)$ with core 
$C^\infty_c(\cX)$ defined by
$$
\hbox{for all}\quad  \varphi \in C^\infty_c(\cX), \quad
\mathcal{E}^{X} (\varphi,\varphi) = \frac12\int_{(\rr^2)^N} \|\nabla  \varphi\|^{2} \dd \mu.
$$
We call such a process a $QKS(\theta ,N)$-process and denote by $\zeta=\inf\{t\geq 0 : X_t= \triangle\}$
its life-time.

\vip

(ii) There exists a unique \blue diffusion \bla $\mathbb{U} =(\Omega^U,\cM^U,(U_t)_{t\geq 0},
(\PP^U_u)_{u\in \cUt})$ with values in $\cUt$, which is  
$\beta$-symmetric,  with regular Dirichlet space $(\cE^U,\cF^U)$ on ${L^2(\sS,\beta)}$ with core 
$C^\infty_c(\cU)$ defined by
$$
\hbox{for all}\quad  \varphi \in C^\infty_c(\cU), \quad
\mathcal{E}^{U} (\varphi,\varphi) = \frac 12 \int_{\sS} \|\nabla_\sS  \varphi\|^{2} \dd \beta.
$$
We call such a process a   $QSKS(\theta ,N)$  -process and denote by $\xi=\inf\{t\geq 0 : U_t= \triangle\}$
its life-time.
\end{prop}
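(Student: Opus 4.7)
The plan is to treat (i) and (ii) in parallel, as both are instances of the standard construction of a $\mu$-symmetric Hunt diffusion from a regular strongly local Dirichlet form, as developed in \cite{f}. Concretely, I would apply \cite{f}, Theorem 7.2.1 (existence of a Hunt process associated to a regular Dirichlet form on a locally compact separable space) together with the continuity criterion of the same reference under strong locality, and the associated uniqueness statement. For this, four points must be verified in each case: (a) the symmetric measure ($\mu$ on $\cX$, respectively $\beta$ on $\cU$) is Radon and has full topological support; (b) the bilinear form defined on the core $C^\infty_c(\cX)$ (resp. $C^\infty_c(\cU)$) is closable on $L^2((\rr^2)^N,\mu)$ (resp. $L^2(\sS,\beta)$); (c) the closure is a regular Dirichlet form; (d) the form is strongly local, which implies continuity of the trajectories.

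Point (a) is the content of Propositions \ref{radon} and \ref{radonsphere}, proved in the appendix; full support is clear since $\bm$ and $\bm|_\sS$ are strictly positive and smooth on $E_2\cap \cX$ and $E_2\cap \cU$ respectively, the complements of which have zero Lebesgue (resp. surface) measure. For point (b), the integration by parts \eqref{ipp} shows that on the smaller core $C^\infty_c(E_2)$ the form coincides with $-\int \varphi\, \cL^X\psi\,\dd\mu$ with $\cL^X$ symmetric and nonpositive on $L^2(\mu)$, so Friedrichs' extension theorem produces a closed extension. To pass from $C^\infty_c(E_2)$ to $C^\infty_c(\cX)$ I would approximate any $\varphi \in C^\infty_c(\cX)$ by $\varphi\chi_\e$, where $\chi_\e$ is a smooth cutoff equal to $1$ outside an $\e$-neighborhood of the singular set $\cX\setminus E_2$ and equal to $0$ near it; showing $\int \|\nabla \chi_\e\|^2 \dd\mu \to 0$ amounts to estimating the $\mu$-capacity of the singular set from the explicit form of $\bm$. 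Point (c) is standard: $C^\infty_c(\cX)$ is uniformly dense in $C_c(\cX)$ by Stone--Weierstrass, so the pre-form is regular after closure, and the Markov property passes to the limit through the usual truncation argument. Point (d) is immediate since $\nabla\varphi\cdot\nabla\psi$ vanishes pointwise whenever $\varphi$ is constant on a neighborhood of the support of $\psi$.

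With these four ingredients, \cite{f}, Theorem 7.2.1 together with its continuity corollary yields the continuous $\mu$-symmetric Hunt process $\mathbb{X}$ on $\cX_\triangle$, and uniqueness follows from the general uniqueness theorem for Hunt processes associated to a regular Dirichlet form. Statement (ii) is proved by exactly the same scheme, replacing $(\rr^2)^N$ by $\sS$, $\nabla$ by $\nabla_\sS$, $\mu$ by $\beta$, Lebesgue measure by $\sigma$, and using the intrinsic integration by parts on the sphere (which follows from \eqref{tradgrad} together with \eqref{nablaS}--\eqref{ddivS}) to justify $\beta$-symmetry of the corresponding spherical generator. The main obstacle is the closability step (b): because $\bm$ blows up on $\cX\setminus E_2$, one cannot invoke a generic closability statement for elliptic forms with locally bounded coefficients, and one has to genuinely exploit the explicit product structure of $\bm$ and the assumption $N>\theta$, which keeps the singular exponents small enough that the singular set is $\cE^X$-polar and can be approximated away without cost in the form.
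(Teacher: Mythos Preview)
Your overall framework matches the paper's: verify that the reference measure is Radon with full support, that the pre-form is closable, regular, and strongly local, then invoke \cite[Theorems 7.2.2 and 4.2.8]{f} (and \cite[Theorem 4.5.3]{f} for continuity at the lifetime). Points (a), (c), (d) are fine.

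The gap is in your closability argument (b). Your route---Friedrichs extension on $C^\infty_c(E_2)$ followed by a cutoff approximation $\varphi\chi_\e$---requires $\int \|\nabla\chi_\e\|^2\,\dd\mu\to 0$ on compacta, which is precisely the statement that $\cX\setminus E_2$ has zero $\cE^X$-capacity. This is \emph{false}: since $d_{\theta,N}(2)=2-2\theta/N\in(0,2)$, the process does hit the binary-collision set (indeed infinitely often, as Theorem \ref{theoremecollisions}(vi) shows), so that set is not polar and your final sentence ``the singular set is $\cE^X$-polar'' is incorrect. The approximation $\varphi\chi_\e\to\varphi$ in $\cE^X_1$-norm therefore cannot work, and your passage from $C^\infty_c(E_2)$ to $C^\infty_c(\cX)$ breaks down.

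The paper's argument avoids this entirely. Closability only asks: if $\varphi_n\to 0$ in $L^2(\mu)$ and $\nabla\varphi_n\to g$ in $L^2(\mu)$, then $g=0$ a.e. To verify this, test $g$ against $\psi\in C^\infty_c(E_2,(\rr^2)^N)$---not $C^\infty_c(\cX)$---and integrate by parts using only that $\bm$ is smooth and positive on $E_2$:
\[
\int g\cdot\psi\,\dd\mu=\lim_n\int\nabla\varphi_n\cdot\psi\,\bm\,\dd x=-\lim_n\int\varphi_n\,\ddiv(\bm\psi)\,\dd x,
\]
and the last expression vanishes by Cauchy--Schwarz since $|\ddiv(\bm\psi)|^2/\bm$ is bounded on the compact support of $\psi$ inside $E_2$. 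Because $\cX\setminus E_2$ is Lebesgue-null (hence $\mu$-null), this already forces $g=0$ a.e. No capacity estimate, no cutoff, no Friedrichs extension is needed. For (ii) the same trick works with the spherical integration by parts \eqref{ippu}.
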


The proof that we can build a $KS(\theta,N)$-process, i.e. a $QKS(\theta,N)$-process 
such that $\PP_x^X\circ X_t^{-1}$ has density 
for all $x \in E_2$ and all $t>0$ will be handled in Section~\ref{rq}.

\vip
We refer to Subsection~\ref{ap1} for some explanations about the notions used in this proposition: 
link between a \blue diffusion (i.e. a continuous Hunt process), \bla 
its generator, semi-group and its Dirichlet space, definition of the
one-point compactification topology, i.e. the topology endowing $\cX_\triangle$ and $\cU_\triangle$,
and about the \blue {\it quasi-everywhere} \bla notion. The state $\triangle$ is absorbing, i.e.
$X_t=\triangle$ for all $t\geq \zeta$
and $U_t=\triangle$ for all $t\geq \xi$.

\begin{rk}\label{rklt}
By definition of the one-point compactification topology, for any increasing sequence of compact subsets
$(\cK_n)_{n\geq 1}$ of $\cX$ such that
$\cup_{n\geq 1} \cK_n=\cX$,  $\zeta=\lim_{n\to \infty} \inf\{t\geq 0 : X_t\notin  \cK_n\}$.
%$$
%\hbox{for all $x\in \cX$, \; $\PP_x^X$-a.s., \;\; $\zeta<\infty$ \;\; implies that\;\; 
%$\zeta=\lim_{n\to \infty} \inf\{t\geq 0 : X_t\notin  \cK_n\}$.}
%$$
\vip
Similarly, for any increasing sequence of compact subsets
$(\cL_n)_{n\geq 1}$ of $\cU$ such that $\cup_{n\geq 1} \cL_n=\cU$,
 $\xi=\lim_{n\to \infty} \inf\{t\geq 0 : U_t\notin  \cL_n\}$.
%$$
%\hbox{for all $u\in \cU$, \; $\PP_u^U$-a.s., \;\; $\xi<\infty$ \;\; implies that\;\; 
%$\xi=\lim_{n\to \infty} \inf\{t\geq 0 : U_t\notin  \cL_n\}$.}
%$$
\end{rk}

\vip

The uniqueness stated e.g. in Proposition~\ref{existenceXU}-(i) has to be understood in the 
following sense, see \cite[Theorem 4.2.8 p 167]{f}: if we have another \blue diffusion \bla  
$\yY=(\Omega^Y,\cM^Y,(Y_t)_{t\geq 0},
(\PP_x^Y)_{x\in \cX})$ enjoying the same properties, then \blue quasi-everywhere, \bla
the law of $(Y_t)_{t\geq 0}$ under $\PP_x^Y$
equals the law of $(X_t)_{t\geq 0}$ under $\PP_x^X$.
The \blue quasi-everywhere \bla 
notion depends on the Hunt process under consideration but, as recalled in Subsection
\ref{ap1}, two Hunt processes with the same Dirichlet space share the same 
\blue quasi-everywhere \bla  notion.

\vip

\begin{proof}[Proof of Proposition~\ref{existenceXU}]
We start with (i). We consider the bilinear form $\cE^X$ on $C^\infty_c(\cX)$ defined by
$\cE^X(\varphi,\varphi)\!=\!\blue \frac12 \bla \int_{(\rr^2)^N}\! ||\nabla \varphi||^2 \dd \mu$. It is well-defined, since
$\mu$ is Radon on $\cX=E_{k_0}$ by Proposition~\ref{radon}.

\vip

We first show that it is closable, see \cite[page 2]{f}, i.e. that if 
$(\varphi_n)_{n\geq 1} \subset C^\infty_c(\cX)$ is
such that $\lim_n \varphi_n= 0$ in $L^2((\rr^2)^N,\mu)$ and 
$\lim_{n,m}\cE^X(\varphi_n-\varphi_m,\varphi_n-\varphi_m)=0$, then 
$\lim_{n}\cE^X(\varphi_n,\varphi_n)=0$: since $\nabla \varphi_n$ is a Cauchy sequence in 
$L^2((\rr^2)^N,\mu)$, it converges
to a limit $g$ and it suffices to prove that $g=0$  a.e. For   $\psi \in C^\infty_c(E_2, (\rr^2)^N)$,  
we have $\int_{(\rr^2)^N} g \cdot \psi \dd \mu=\lim_n\int_{(\rr^2)^N} \nabla \varphi_n \cdot \psi \dd \mu$. But,
recalling \eqref{mmu},
$$
\int_{(\rr^2)^N} \nabla \varphi_n \cdot \psi \dd \mu= 
\int_{(\rr^2)^N} \nabla \varphi_n(x) \cdot\psi(x) \bm(x)\dd x=- \int_{(\rr^2)^N} \varphi_n(x) 
\ddiv (\bm(x) \psi(x))\dd x.
$$
  Thus by the Cauchy-Schwarz inequality,
$$
\Big|\int_{(\rr^2)^N} \nabla \varphi_n \cdot \psi \dd \mu\Big| \leq 
\Big(\int_{(\rr^2)^N} \varphi_n^2 \dd \mu\Big)^{1/2}\Big(\int_{(\rr^2)^N} \frac{|\ddiv(\bm(x)\psi(x) )|^2}
{\bm(x)} \dd x \Big)^{1/2},
$$
which tends to $0$ since $\lim_n \varphi_n= 0$ in $L^2((\rr^2)^N,\mu)$, since $\psi \in C^\infty_c(E_2,(\rr^2)^N)$
and since $\bm$ is smooth and positive on $E_2$. 
Thus $\int_{(\rr^2)^N} g \cdot \psi \dd \mu=0$ for all 
$\psi \in C^\infty_c(E_2, (\rr^2)^N)$, so that $g=0$   a.e.  

\vip

We can thus consider the extension of $\cE^X$ to $\cF^X=
\overline{C_{c}^{\infty }( \cX)}^{\cE^{X}_{1}}$, where we have set 
$\cE^X_1(\varphi,\varphi)= \int_{(\rr^2)^N} (\varphi^2+\frac12||\nabla \varphi||^2) \dd \mu$ for
 $\varphi \in C^\infty_c(\cX)$. 

\vip

Next, $(\cE^X,\cF^X)$ is obviously regular with core $C^\infty_c(\cX)$, see \cite[page 6]{f}, 
because $C^\infty_c(\cX)$ is dense
in $\cF^X$ for the norm associated to $\cE_1^X$ by definition of $\cF^X$ and $C^\infty_c(\cX)$ is dense,
for the uniform norm,
in $C_c(\cX)$.  It is also strongly local, see \cite[page 6]{f}, i.e. $\cE^X(\varphi,\psi)=
\blue \frac12 \bla \int_{(\rr^2)^N} 
\nabla \varphi \cdot \nabla \psi \dd \mu=0$ if $\varphi,\psi \in C^\infty_c(\cX)$
and   if   $\varphi$ is constant on a neighborhood of ${\rm Supp}\; \psi$.

\vip

Then \cite[Theorems 7.2.2 page 380 and 4.2.8 page 167]{f} imply the existence and uniqueness of
a Hunt process $\mathbb{X} = (\Omega^X,\cM^X,(X_t)_{t\geq 0},
(\PP_x^X)_{x\in \cXt})$ with values in $\cXt$, which is 
$\mu$-symmetric, of which the Dirichlet space is $(\cE ^X,\cF ^X)$,
and such that $t\mapsto X_t$ is $\PP^X_x$-a.s. continuous on $[0,\zeta)$ for all $x\in \cX$,
where $\zeta=\inf\{t\geq 0 : X_t = \triangle\}$.

\vip

Furthermore, since $\cE^X$ is strongly local, we know from \cite[Theorem 4.5.3 page 186]{f} that we can choose 
$\xX$ \blue (modifying $\PP^X_x$ only on a properly exceptional set) \bla
such that $\PP_x(\zeta<\infty, X_{\zeta-} = \triangle)=1$
for all $x \in \cX$. This implies that for all $x\in\cX$, $\PP_x$-a.s., the map 
$t\mapsto X_t$ is continuous from $[0,\infty)$ to $\cX_\triangle$, endowed with the 
one-point compactification topology on $\cXt$ recalled in Subsection~\ref{ap1}.
\blue Hence $\xX$ is a diffusion. \bla

\vip

For (ii), the very same strategy applies. The only difference is the integration
by parts to be used for the closability: for $\varphi \in C^1_c(\cU)$
and   $\psi \in C^1_c(\sS \cap E_2, (\rr^2)^N)$, it classically holds that
\begin{equation}\label{ippu}
\int_\sS (\nabla_\sS \varphi )\cdot \psi \dd \beta=
\int_\sS (\nabla_\sS \varphi (u))\cdot \psi(u) \bm(u) \sigma (\dd u) 
= - \int_\sS \varphi(u) \ddiv_\sS (\bm(u)\psi(u) )  \sigma (\dd u).
\end{equation}
This can be shown naively using Lemma~\ref{changeofvariable}.
\end{proof}

We now make explicit the generators of $\xX$ and $\uU$ when applied to some functions 
enjoying a few properties. See Subsection~\ref{ap1}
for a precise definition of the generator of a Hunt process. We have to introduce
a few notation.
\vip
For $\varphi\in C^\infty((\rr^2)^N)$, $\alpha \in (0,1]$ and $x\in (\rr^2)^N$,
we set
\begin{equation}\label{dflxa}
\cL^X_\alpha \varphi(x) = \frac12 \Delta\varphi(x) - \frac \theta N  \sum_{1\leq i\neq j \leq N} 
\frac{x^i-x^j}
{\|x^i-x^j\|^2+\alpha}\cdot (\nabla\varphi(x))^i= \frac1{2\bm_\alpha(x)}\ddiv 
[\bm_\alpha(x)  \nabla  \varphi(x)],
\end{equation}
where
$$
\bm_\alpha(x)=\prod _{1\le i\neq j \le N} (\|x^{i}-x^{j}\|^2+\alpha)^{-\theta/(2N)}.
$$
\blue This is in accordance with \eqref{mmu}, in the sense that $\bm_0=\bm$.
The formula \eqref{dflxa} makes sense for $x\in E_2$ when $\alpha=0$  (with $\bm_\alpha$ replaced by $\bm$)
and we recall
that for $\varphi \in C^\infty((\rr^2)^N)$ and $x \in E_2$, 
$\cL^X \varphi(x)$ was defined in \eqref{lx} by $\cL^X\varphi(x)=\cL^X_0\varphi(x)$.
We will often use that 
for all $\varphi,\psi \in C^\infty ((\rr^2)^N)$, all $x\in (\rr^2)^N$,  all
$\alpha \in (0,1]$,
\begin{align} \label{lproduit}
\cL_\alpha^X (\varphi\psi)(x) = \varphi(x) \cL_\alpha^X\psi(x) + \psi(x) \cL_\alpha^X\varphi(x) + 
\nabla \varphi(x) \cdot \nabla \psi(x).
\end{align}
\bla

For $\varphi\in C^\infty(\sS)$, $\alpha \in (0,1]$ and $u\in \sS$,
we set  
\begin{equation}\label{dflua}
\cL^U_\alpha \varphi(u) = \frac12 \Delta_\sS\varphi(u) - \frac \theta N  \sum_{1\leq i\neq j \leq N} 
\frac{u^i-u^j}
{\|u^i-u^j\|^2+\alpha}\cdot (\nabla_\sS\varphi(u))^i= \frac1{2\bm_\alpha(u)}\ddiv _\sS
[\bm_\alpha(u)\nabla_\sS \varphi(u)].
\end{equation}  
This formula makes sense for $u\in \sS\cap E_2$ when $\alpha=0$ (with $\bm_\alpha$ replaced by $\bm$)
and we set,
for $\varphi \in C^\infty(\sS)$ and $u \in \sS\cap E_2$, 
$\cL^U\varphi(u)=\cL^U_0\varphi(u)$.

\begin{rk}\label{caracdomaine}
(i) Denote by $(\cA^X,\cD_{A^X})$ the generator of the process $\xX$ of 
Proposition~\ref{existenceXU}-(i).
If $\varphi \in C^\infty_c(\cX)$ 
satisfies $\sup_{\alpha\in (0,1]}\sup_{x\in (\rr^2)^N} |\cL^X_\alpha \varphi(x)|<\infty$,
then $\varphi \in \cD_{A^X}$ and $\cA^X \varphi = \cL^X \varphi$.

\vip
(ii) Denote by $(\cA^U,\cD_{A^U})$ the generator of the process $\mathbb{U}$ of Proposition 
\ref{existenceXU}-(ii).
If $\varphi \in C^\infty_c(\cU)$ satisfies $\sup_{\alpha\in (0,1]}\sup_{u\in \sS} 
|\cL^U_\alpha \varphi(u)|<\infty$, then $\varphi \in \cD_{A^U}$
and $\cA^U \varphi = \cL^U \varphi$.
\end{rk}

\begin{proof}
To check (i), it suffices by \eqref{caracdomaine0} to verify that 
(a) $\varphi \in \cF^X$, (b) $\cL^X \varphi \in L^2(\cX,\mu)$ and (c)  for all
$\psi \in \cF^X$, we have $\cE^X (\varphi,\psi) = -\int_\cX (\cL^X \varphi)\psi\dd \mu$. 

\vip
Point (a) is clear, since $\varphi \in C^\infty_c(\cX)$. Point (b) follows from the facts that 
$\mu$ is Radon on $\cX$, that $\varphi$ is compactly supported in $\cX$ and that
$\cL^X \varphi \in L^\infty((\rr^2)^N,\dd x)$, because for all $x\in E_2$,
$\cL^X\varphi(x)=\lim_{\alpha\to 0} \cL_\alpha^X\varphi(x)$. Concerning (c) it suffices, by 
definition of $(\cE^X,\cF^X)$ and since $\cL^X \varphi \in L^2(\cX,\mu)$,
to show that for all $\psi \in C^\infty_c(\cX)$, we have $\frac12\int_{(\rr^2)^N} \nabla \varphi\cdot\nabla \psi
\dd \mu = - \int_{(\rr^2)^N} (\cL^X \varphi)\psi \dd \mu$. But for $\alpha \in (0,1]$, by a standard integration
by parts, since $\varphi,\psi$ and $\bm_\alpha$ are smooth,
\begin{align*}
\frac12\int_{(\rr^2)^N}\nabla \varphi(x)\cdot\nabla \psi(x) \bm_\alpha (x)\dd x
=&-\frac12\int_{(\rr^2)^N}\ddiv(\bm_\alpha(x)\nabla\varphi(x)) \psi(x) \dd x\\
=&-\int_{(\rr^2)^N}  [\cL^X_\alpha \varphi(x)]  \psi(x) \bm_\alpha(x)\dd x.
\end{align*}
We conclude letting $\alpha\to 0$ by dominated convergence, since
$\bm_\alpha\to\bm$ and $\cL_\alpha^X \varphi\to\cL^X\varphi$ a.e., since by assumption, 
$|\nabla \varphi(x)\cdot\nabla \psi(x) \bm_\alpha (x)|+| [\cL^X_\alpha \varphi(x)] \psi(x) \bm_\alpha(x)|
\leq C \indiq_{\{x\in \cK\}} \bm(x)$ for some constant $C$ and for $\cK=$Supp $\psi$ which is compact
in $\cX$, and since $\mu(\cK)=\int_\cK \bm(x)\dd x<\infty$.

\vip
 
The proof of (ii) is exactly the same, using that
if $\varphi,\psi \in C^\infty(\sS)$, it holds that
$$
\frac12\int_{\sS}\nabla_\sS \varphi \cdot\nabla_\sS \psi \; \bm_\alpha \dd \sigma
=-\frac12\int_{\sS}   \ddiv_\sS  (\bm_\alpha   \nabla_\sS   \varphi) \psi \dd \sigma
=-\int_{\sS}  [\cL^U_\alpha \varphi]  \psi \bm_\alpha\dd \sigma,
$$
which can be shown naively using the projection $\Phi_\sS$, see \eqref{phis}, and Lemma~\ref{changeofvariable}.
\end{proof}

We end the section with a quick irreducibility/recurrence/transience study of the spherical process,
see Subsection~\ref{ap1} again for definitions.

\begin{lemma}\label{visite}
We fix $N\geq 2$ and $\theta>0$ such that $N>\theta$ and consider the process $\mathbb{U}$ and its 
Dirichlet space $(\cE^U,\cF^U)$ as in Proposition
\ref{existenceXU}-(ii).

\vip

(i) $(\cE^U,\cF^U)$ is irreducible and we have the alternative:
\vip
$\bullet$ either $(\cE^U,\cF^U)$ is recurrent and in particular 
it is non-exploding and for all measurable $A\subset \cU$ such that 
$\beta(A)>0$, \blue $\PP^U_u(\limsup_{t\to \infty}\{U_t\in A\})=1$ quasi-everywhere; \bla
\vip
$\bullet$ or $(\cE^U,\cF^U)$ is transient and in particular for all compact set $\cK$ of $\cU$, 
we have
\blue quasi-everywhere \bla
$\PP^U_u (\liminf_{t\to\infty}\{U_t \in \cK\}) = 0$.
\vip
(ii) If $d_{\theta ,N}(N-1)> 0$, then $(\cE^U, \cF^U)$ is recurrent.
\end{lemma}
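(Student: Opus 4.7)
The plan is to establish the irreducibility and the recurrence/transience alternative in (i) by applying the general theory of Dirichlet forms to the explicit structure of $(\cE^U,\cF^U)$, and then to read (ii) off as a finite-total-mass argument.

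For irreducibility in (i), the form has the strongly local expression $\cE^U(f,f)=\frac12\int_\sS\|\nabla_\sS f\|^2 \bm\,\dd\sigma$ with $\bm$ smooth and strictly positive on $\cU$. Hence any $f$ in the extended Dirichlet space with $\cE^U(f,f)=0$ satisfies $\nabla_\sS f=0$ $\sigma$-almost everywhere on $\cU$, forcing $f$ to be $\beta$-a.e.\ constant on each connected component of $\cU$. The space $\cU=\sS\cap E_{k_0}$ is connected: $\sS$ is a sphere of real dimension $2N-3$ in $H$, and $\sS\setminus E_{k_0}$ is a finite union of smooth submanifolds of real codimension $2(k_0-1)\geq 4$ (since $k_0\geq 3$, using $N>\theta$), which cannot disconnect $\sS$. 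Irreducibility now follows from the standard characterization in terms of invariant sets, see \cite[Section 1.6]{f}. The recurrence/transience dichotomy is then \cite[Theorem 1.6.3]{f}, and the trajectory-level consequences (visiting every set of positive $\beta$-measure infinitely often in the recurrent case, eventually leaving every compact in the transient case) follow from the corresponding FOT theorems in Chapter 4 of \cite{f}, together with the fact that a recurrent Dirichlet form is conservative, so $\xi=\infty$ quasi-everywhere.

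For (ii), a short computation shows that $d_{\theta,N}(N-1)>0$ is equivalent to $2N/\theta>N-1$, hence to $k_0\geq N$. In that case $\cU=\sS$, because on $\sS$ one has $R_{\ig1,N\id}(u)=1$, so no $\ig1,N\id$-collision can occur there, and if $k_0>N$ no cluster of size $k_0$ is possible at all. By Proposition \ref{radonsphere}, $\beta(\sS)<\infty$, so the constant function $\mathbf{1}$ lies in $\cF^U$ with $\cE^U(\mathbf{1},\mathbf{1})=0$; this is the standard sufficient criterion for recurrence of $(\cE^U,\cF^U)$, see again \cite[Theorem 1.6.3]{f}.

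The main delicate point is to carry out the irreducibility argument cleanly across the removed singular locus $\sS\setminus\cU$, that is, to make sure that the zero-gradient-implies-constant reasoning in the extended Dirichlet space leaves no room for a non-trivial $\beta$-invariant set supported near this locus. Once irreducibility is secured, the rest is a direct application of the Fukushima--Oshima--Takeda framework and of the finite-measure criterion for recurrence.
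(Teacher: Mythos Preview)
Your proposal is essentially correct and reaches the same conclusions, but the route for irreducibility differs from the paper's in a way worth noting.

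For (i), you argue directly: zero energy forces $\nabla_\sS f=0$ $\sigma$-a.e., hence $f$ is constant on the connected open set $\cU$, the connectedness coming from a codimension count on $\sS\setminus\cU$. The paper instead invokes \cite[Corollary 4.6.4]{f}: since $\bm$ is bounded from below on the compact set $\sS$, irreducibility of $(\cE^U,\cF^U)$ reduces to irreducibility of the form $\int_\sS\|\nabla_\sS\varphi\|^2\dd\sigma$ on $L^2(\cU,\sigma)$, which is the Dirichlet form of the $\sS$-Brownian motion killed on exiting $\cU$. The paper then observes that this Brownian motion a.s.\ never has two two-dimensional coordinates equal (a polar-set argument), so the killing never occurs and one is left with ordinary Brownian motion on $\sS$, which is manifestly irreducible. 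This bypasses exactly the ``delicate point'' you flag---namely, pushing the $\nabla_\sS f=0\Rightarrow f$ constant implication through the extended Dirichlet space across the singular locus---by reducing to a classical process for which irreducibility is immediate. Your codimension argument is morally the same (and arguably more geometric), but the paper's comparison trick is cleaner to justify rigorously within the FOT framework.

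For the transient branch of (i), you defer to ``FOT Chapter 4''; the paper is more explicit: from $\beta(\cK)<\infty$ and the definition of transience one gets $\E^U_u[\int_0^\infty\indiq_\cK(U_s)\dd s]<\infty$ for $\beta$-a.e.\ $u$, hence $\PP^U_u(\tau_{\cK^c}<\infty)=1$, and this extends to quasi-all $u$ by fine continuity via \cite[Lemma 4.1.5]{f}; the $\liminf$ statement then follows from the Markov property.

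For (ii), your argument and the paper's are equivalent formulations of the same finite-mass criterion: you say $\mathbf{1}\in\cF^U$ with zero energy; the paper argues the contrapositive, noting that transience would force $\E^U_u[\xi]=\E^U_u[\int_0^\infty 1\,\dd s]<\infty$ since $1\in L^1(\cU,\beta)$, contradicting $\xi=\infty$ (which holds because $\cU=\sS$ is compact so no explosion is possible).
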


In the transient case, one might also prove that 
$\PP^U_u(\limsup_{t\to \infty}\{U_t\in \cK\})=0$, but this would be useless for our purpose.

\begin{proof}
We start with (i). 
We first show that in any case, $(\cE^U,\cF^U)$ is irreducible. By \cite[Corollary 4.6.4 page 195]{f}
and since $\cE^U(\varphi,\varphi)=\blue \frac12 \bla 
\int_{\sS} \|\nabla _\sS \varphi  \|^2 \bm \dd \sigma$ with $\bm$ bounded 
from below by a constant
(on $\sS$), it suffices to prove that the $\sigma$-symmetric Hunt process with regular 
Dirichlet space 
$(\cE,\cF)$ on $L^2(\cU, \sigma)$ with core $C^\infty _c ( \cU)$ such that for all 
$\varphi\in C^\infty _c (\cU)$,
$\cE (\varphi,\varphi) = \blue \frac12 \bla \int _{\sS} \|\nabla _\sS \varphi  \|^2 \dd \sigma$
is irreducible. But this Hunt process is nothing but a $\sS$-valued Brownian motion. 
This Brownian motion is {\it a priori} killed when it gets out of $\cU$, 
but this does a.s. never occur since such a Brownian motion never has two (bi-dimensional)
coordinates equal. This $\sS$-valued Brownian motion is of course irreducible.
We conclude from \cite[Lemma 1.6.4 page 55]{f} that $(\cE^U,\cF^U)$ is either recurrent or transient. 
 
\vip
 
$\bullet$ When $(\cE^U,\cF^U)$ is recurrent, \cite[Theorem 4.7.1-(iii) page 202]{f} gives us the result.
 
\vip

$\bullet$ When $(\cE^U,\cF^U)$ is transient, we fix a compact set $\cK$ of $\cU$ and we know from 
Lemma~\ref{radonsphere} that $\beta (\cK ) < \infty$, so that by definition of transience,
for $\beta$-a.e $u\in \cU$,
$\E^U_u [\int_0^\infty \indiq_\cK (U_s)\dd s ] < \infty$.
Setting $\tau_{\cK^c} = \inf \{ t\ge 0 : U_t \notin \cK \}$, we get in particular that for $\beta$-a.e 
$u\in \cU$, $\PP^U_u ( \tau_{\cK^c} < \infty ) =1$.
But, by \cite[(4.1.9) page 155]{f}, $u\mapsto \PP^U_u (\tau_{\cK^c} < \infty)$ is finely continuous.
Using \cite[Lemma 4.1.5 page 155]{f}, we deduce that 
$\PP^U_u (\tau_{\cK^c} < \infty) =1$ \blue quasi-everywhere. \bla The Markov property allows us to conclude.

\vip
Concerning (ii), we recall from Proposition~\ref{radonsphere} that $\beta (\sS) < \infty$, because
$d_{\theta, N}(N-1) >0$ implies that $k_0\geq N$, see Lemma~\ref{dimensions}.
Moreover, $k_0\geq N$ implies that
$E_{k_0}\supset E_N \supset \sS$, whence $\cU=E_{k_0}\cap\sS=\sS$ is compact:
the process cannot explode, i.e. $\xi = \infty$. 
Consequently, $(\cE^U,\cF^U)$ is recurrent, since $\varphi\equiv 1$ belongs to $L^1(\cU,\beta)$
and since  $\E_u^U[\int_0^\infty \varphi(U_s) \dd s]=\E_u^U[\xi]=\infty$. Indeed, 
as recalled Subsection~\ref{ap1}, if $(\cE^U,\cF^U)$ was transient, we would have
$\E_u^U[\int_0^\infty \varphi(U_s) \dd s]<\infty$ for all $\varphi \in L^1(\cU,\beta)$, with 
the convention that $\varphi(\triangle)=0$.
\end{proof}

\section{Decomposition}\label{deccc}

The goal of this section is to prove the following decomposition of the Keller-Segel particle system
defined in Proposition~\ref{existenceXU}-(i). This 
decomposition is noticeable and crucial for our purpose. 

\begin{prop}\label{deco}
We fix $N\geq 2$ and $\theta>0$ such that $N > \theta$, and we recall that
$k_0=\lceil 2N/\theta\rceil$, that $\cX=E_{k_0}$ and that $\cU=\sS\cap E_{k_0}$.

\vip

For $x \in E_N$, we set $r=R_{\ig 1,N\id}(x)>0$, $z=S_{\ig 1,N\id}(x)\in \rr^2$ and
$u=(x-\gamma(z))/\sqrt{r} \in \sS$ and we consider three independent processes:
\vip
\noindent $\bullet$  $(M_t)_{t\ge 0}$, a $2$-dimensional Brownian motion with diffusion constant $N^{-1/2}$ 
starting from $z$,
\vip
\noindent $\bullet$ $(D_t)_{t\ge 0}$ a squared Bessel process with dimension $d_{\theta ,N}(N)$ starting from $r$
and killed when it gets out of $(0,\infty)$, with life-time $\tau_D = \inf \{ t\ge 0 : D_t =\triangle \}$,
\vip
\noindent $\bullet$ $(U_t)_{t\ge 0}$, a  $QSKS(\theta,N)$  -process starting from $u$,
with life-time $\xi= \inf\{ t\ge 0 : U_t =\triangle \} $.
\vip
We introduce  $A_t= \int _0 ^{t\land \tau_D} D_s^{-1} \dd s$,
and its generalized inverse $\rho_t=\inf\{s>0 : A_s>t\}$.
We define $Y_t = \Psi(M_t,D_t,U_{A_t})$, where we recall from \eqref{Psi} that 
$\Psi(z,r,u)=\gamma(z)+\sqrt{r}u\in E_N$ when $(z,r,u)\in \rr^2\times (0,\infty)
\times \sS$ and where we set $\Psi(z,r,u)=\triangle$ when $r=\triangle$ or $u=\triangle$.
Observe that the life-time of $Y$ equals $\zeta'=\rho_\xi\land \tau_D$.
\vip
Consider also a $QKS(\theta,N)$-process $\xX=(\Omega^X,\cM^X,(X_t)_{t\geq 0},(\PP_x^X)_{x\in \cXt})$,
with life-time $\zeta$, and
$\xXs=(\Omega^X,\cM^X,(\Xs_t)_{t\geq 0},(\PP_x^X)_{x\in  (\cX \cap E_{N}) \cup \{\triangle\}})$, 
where $\Xs_t=X_t\indiq_{\{t<\tau\}}+\triangle\indiq_{\{t\geq \tau\}}$ and 
where $\tau=\inf \{t\ge 0 : R_{\ig 1,N \id }(X_t) \notin (0,\infty)\}$. 
In other words, $\xXs$ is the version of $\xX$ killed when it
gets out of $E_{N}$. The life-time of $\xXs$ is $\tau$.

\vip

The law of $(Y_t)_{t \geq 0}$ is the same as that of
$(\Xs_t)_{t\geq 0}$ under $\PP_x^X$, \blue quasi-everywhere in $\cX\cap E_N$. \bla
\end{prop}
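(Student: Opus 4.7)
The plan is to identify the law of $Y$ with that of $\xXs$ via a Dynkin/martingale-problem argument centered on a generator identity for $\cL^{X}$ in the $(z,r,u)$ coordinates supplied by $\Psi$. For $\varphi\in C^\infty_c(\cX\cap E_N)$, set $\tilde\varphi(z,r,u)=\varphi(\Psi(z,r,u))$ on $\rr^2\times(0,\infty)\times\sS$. Using the symmetric form $\cL^{X}=\frac{1}{2\bm}\ddiv(\bm\nabla\cdot)$ together with the scaling $\bm(\Psi(z,r,u))=r^{-\theta(N-1)/2}\bm(u)$ and the orthogonality of the three directions $\gamma(\rr^2)$, $\rr u$ and $T_u\sS$, one should obtain
$$
\cL^{X}\varphi(\Psi(z,r,u))=\tfrac{1}{2N}\Delta_z\tilde\varphi+2r\,\partial^2_r\tilde\varphi+d_{\theta,N}(N)\,\partial_r\tilde\varphi+\tfrac{1}{r}\,\cL^{U}\!\bigl[\tilde\varphi(z,r,\cdot)\bigr](u).
$$
The first three terms on the right are precisely the generators of $M$ and of the squared Bessel process $D$ of dimension $d_{\theta,N}(N)$ acting on $\tilde\varphi$, while the factor $1/r$ in front of $\cL^{U}$ is exactly the one that the time change $A$ will absorb.

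Next I would apply It\^o's formula (or rather the Fukushima decomposition of Proposition \ref{existenceXU}-(ii) for the $U$-component, in the form of Remark \ref{caracdomaine}-(ii)) to $\tilde\varphi(M_t,D_t,U_{A_t})$. Independence of $M$, $D$, $U$ decouples the martingale parts; the explicit SDEs for $M$ and $D$ yield the first three drift terms with $r$ replaced by $D_t$, and using $dA_t=D_t^{-1}dt$ on $[0,\tau_D)$ converts the $U$-contribution into $D_t^{-1}\cL^{U}[\tilde\varphi(M_t,D_t,\cdot)](U_{A_t})\,dt$. Combined with the identity above, this shows that for every admissible $\varphi$, the process $\varphi(Y_t)-\int_0^t \cL^{X}\varphi(Y_s)\,ds$ is a local martingale in the natural filtration of $(M,D,U)$, up to $\zeta'=\rho_\xi\wedge\tau_D$. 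Since $S_{\ig 1,N\id}(Y_t)=M_t$ and $R_{\ig 1,N\id}(Y_t)=D_t$ by construction of $\Psi$, the process $Y$ leaves $E_N$ exactly at $\tau_D$ or when $U$ is killed (at time $\rho_\xi$), so its life-time matches the killing time $\tau$ of $\xXs$. The Markov property of $(M,D,U)$ together with the deterministic-given-$D$ time change makes $(Y_t)_{t\in[0,\zeta')}$ a strong Markov process on $\cX\cap E_N$ whose generator agrees with $\cL^{X}$ on the core $C^\infty_c(\cX\cap E_N)$; the uniqueness statement in Proposition \ref{existenceXU}-(i), applied to the part of $\xX$ killed on exit from $E_N$, then yields equality in law for quasi all starting points in $\cX\cap E_N$.

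The main obstacle is the rigorous handling of the It\^o/Dynkin formula for the Hunt process $U$ composed with the random but $(M,D)$-measurable time change $A$. Since $U$ is defined only through its regular Dirichlet space, the right tool is the Fukushima decomposition for $\varphi(U_t)$ combined with the Revuz/time-change theorem: one has to check that the additive functional $A$ falls in the appropriate class and that composing with $A$ preserves the local-martingale property on the larger filtration generated by $(M,D,U)$, not just on the one generated by $U$. A secondary subtlety is to identify the part process of $\xX$ on $E_N$ as the Hunt process associated with the restriction of $(\cE^{X},\cF^{X})$ to functions vanishing on $\cX\setminus E_N$, so that $C^\infty_c(\cX\cap E_N)$ really serves as a core for $\xXs$ and the uniqueness of Proposition \ref{existenceXU}-(i) applies; this follows from standard hereditarity of regular Dirichlet spaces but deserves a short explicit verification.
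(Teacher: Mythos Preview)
Your approach and the paper's are morally the same computation but carried out at different levels: you work with generators and a martingale-problem argument, while the paper stays entirely at the level of Dirichlet forms. The key identity you write for $\cL^{X}\varphi\circ\Psi$ is exactly the ``differentiated'' version of the paper's Step~4, where it is shown instead that
\[
\cE^Y(\varphi,\varphi)=\tfrac12\int_{(\rr^2)^N}\|\nabla\varphi\|^2\,\dd\mu
\]
by decomposing $\|\nabla\varphi(\Psi(z,r,u))\|^2$ into the $z$-, $r$- and $u$-pieces via the orthogonal projections $\pi_{H^\perp}$, $\pi_H\pi_u$, $\pi_H\pi_{u^\perp}$. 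So the geometric content is the same.

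Where your route becomes problematic is the endgame. The uniqueness you invoke from Proposition~\ref{existenceXU}-(i) is \emph{Dirichlet-space} uniqueness (via \cite[Theorem~4.2.8]{f}), not well-posedness of a martingale problem; to apply it you must show that $Y$ is a $\mu$-symmetric Hunt process whose Dirichlet space has core $C^\infty_c(\cX\cap E_N)$ and coincides there with $\cE^X$. Merely checking that $\varphi(Y_t)-\int_0^t\cL^X\varphi(Y_s)\,\dd s$ is a local martingale for each test function does not deliver $\mu$-symmetry, nor does it identify $\cF^Y$. The paper handles this by never leaving the Dirichlet framework: it uses the time-change lemma (Lemma~\ref{chgmttemps}) and the concatenation lemma (Lemma~\ref{concatenation}) from \cite{f,ly} to build, step by step, the regular Dirichlet space of $(M,D_\rho,U)$, then of $(M,D,U_A)$, and finally pushes forward by the bijection $\Psi$ (Steps~2--5). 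This yields directly that $Y$ is a $\mu$-symmetric Hunt process with the right form, and the comparison with $\xX^*$ is then a one-line appeal to Lemma~\ref{tuage} and uniqueness.

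Your acknowledged obstacle --- composing the Fukushima decomposition of $U$ with the $D$-measurable time change $A$ inside the enlarged filtration --- is real, and the paper's route simply sidesteps it: rather than proving that $N^\varphi_{A_t}$ remains a martingale after enlargement, it applies the abstract time-change theorem at the level of Dirichlet forms, where the issue disappears. If you insist on the generator route, you would in addition need to verify $\mu$-symmetry of $Y$ explicitly (the paper does this in Step~3 via Lemma~\ref{changeofvariable}) and argue that $Y$ is a genuine Hunt process, both of which push you back towards the same toolbox the paper uses.
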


We take the convention that $R_{\ig 1,N \id }(\triangle)=0$, so that $\tau \in [0,\zeta]$.
  Since $R_{\ig1,N\id}(Y_t)=D_t$ and $S_{\ig1,N\id}(Y_t)=M_t$ for all  $t \in [0,\zeta')$, Proposition~\ref{deco}
in particular implies that $(R_{\ig1,N\id}(X_t))_{t\ge 0}$ and $(S_{\ig1,N\id}(X_t))_{t\ge 0}$ are some independent 
squared Bessel process and  Brownian motion until the first time $(R_{\ig1,N\id}(X_t))_{t\ge 0}$ vanishes. 
This actually holds true until explosion, as shown in Lemma~\ref{besbro} below.
The \blue quasi-everywhere \bla notion refers to the Hunt process $\mathbb{X}$.
Observe that when $\theta \geq 2$, we have $k_0 \leq N$, so that $\cX \cap E_N = \cX$ and $\xX=\xX^*$.  

\begin{proof}
We slice the proof in several steps. 
\blue The two first steps are more or less classical, even if we give all the details:
we determine the Dirichlet spaces
of the three processes $(M_t)_{t\geq 0}$, $(D_t)_{t\geq 0}$ and $(U_t)_{t\geq 0}$ involved in the 
construction of $(Y_t)_{t\geq 0}$; then we compute the Dirichlet space of $(D_{\rho_t})_{t\geq 0}$;
we next identify the Dirichlet space of $(D_{\rho_t},U_t)_{t\geq 0}$, which allows us to find the one
of $(D_{t},U_{A_t})_{t\geq 0}$ by a second time-change; by concatenation, we deduce the Dirichlet space of
$(M_t,D_{t},U_{A_t})_{t\geq 0}$. The main computations are handled in Steps 3 and 4, 
where we find the Dirichlet space of
$(Y_t)_{t\geq 0}$, which allows us to conclude in Step 5 by uniqueness.\bla
\vip

{\it Step 1.}
First, take 
$\mathbb{U}=(\Omega^U,\cM^U,(U_t)_{t\geq 0},
(\PP^U_u)_{u\in \cUt})$ as in Proposition~\ref{existenceXU}-(ii). 

\vip

Second, consider a 
$2$-dimensional Brownian motion 
$\mathbb{M} = (\Omega ^M,\cM^M,(M_t)_{t\ge 0}, (\PP^M_z)_{z\in \mathbb{R}^2 })$ with diffusion 
constant $N^{-1/2}$.
We know from \cite[Example 4.2.1 page 167]{f} that $\mathbb{M}$ is a $\dd z$-symmetric
(here $\dd z$ is the Lebesgue measure on $\rr^2$) \blue diffusion \bla with regular Dirichlet space 
$(\cE^M,\cF^M)$ on $L^2(\rr ^2, \dd z)$ with core $C_{c}^{\infty}(\rr^{2})$ and for all 
$\varphi\in C_{c}^{\infty}(\rr^{2})$,
\begin{equation}\label{jabM}
\mathcal{E}^{M}(\varphi,\varphi) = \frac{1}{2N} \int _{\rr^{2}} \|\nabla _z \varphi(z)\|^{2}\dd z. 
\end{equation}

Finally, let $\mathbb{D} = (\Omega^D,\cM^D,(D_t)_{t\ge 0}, (\PP^D_r)_{r\in \rr_+^*\cup \{\triangle\}})$ 
be a squared Bessel process of dimension $d_{\theta ,N}(N)$ killed when it gets out of $\rr_+^*=(0,\infty)$
and set $\nu = d_{\theta ,N}(N)/2-1$, see Revuz-Yor \cite[page 443]{ry}.
Fukushima \cite[Theorem 3.3]{mf} tells us that $\mathbb{D}$ is a $r^\nu \dd r$-symmetric \blue diffusion \bla 
(here $\dd r$ is the Lebesgue measure on $\rr_+^*$) 
with regular Dirichlet space $(\cE ^D,\cF ^D)$ on $L^2(\rr _+, r^\nu \dd r )$ with core 
$C_{c}^{\infty}(\rr_{+}^{*})$ where for all $ \varphi\in C_{c}^{\infty}(\rr_{+}^{*})$,  
\begin{equation}\label{jabD}
\mathcal{E}^{D}(\varphi,\varphi) =2\int _{\rr_{+}} |\varphi'(r)|^{2}r^{\nu +1}\dd r.
\end{equation}
Together with  \cite[Theorem 3.3]{mf}, this uses that the scale function and the speed measure 
of $(D_t)_{t\ge 0}$ are  respectively $r \mapsto r^{-\nu} $ and $-[r^\nu /(2\nu)]\dd r$.
Actually, we don't take the speed measure as reference measure but $r^\nu \dd r$ 
which is the same up to a constant.

\vip

{\it Step 2.} We apply Lemma~\ref{chgmttemps} to $\mathbb{D}$ with $g(r)=1/r$, i.e.
with $A_t=\int_0^t D_s^{-1}\dd s= \int_0^{t\land \tau_D} D_s^{-1}\dd s$ thanks to the convention $\triangle^{-1}=0$
and recall that $\rho$ is its generalized inverse: we find that setting 
$D_{\rho_t}=D_{\rho_t}\indiq_{\{\rho_t<\infty\}}+\triangle \indiq_{\{\rho_t=\infty\}}$,
$$
\mathbb{D}_\rho= (\Omega^D, \cM^D, (D_{\rho _t})_{t\ge 0}, (\PP^D_r)_{r\in \rr ^* _+})
$$ 
is a $r^{\nu -1}\dd r$-symmetric $(\rr ^* _+ \cup \{ \triangle \})$-valued \blue diffusion \bla
with regular Dirichlet space $(\cE^{D_\rho}, \cF ^{D_\rho})$ on $L^2(\rr _+, r^{\nu -1}\dd r )$ 
with core $C_c^\infty(\rr^*_+)$ such that for all $\varphi \in C_c^\infty(\rr^*_+)$,
\begin{equation}\label{jabD2}
\mathcal{E}^{D_\rho}(\varphi,\varphi)  = \mathcal{E}^{D}(\varphi,\varphi) 
=2\int _{\rr_{+}} |\varphi'(r)|^{2}r^{\nu +1}\dd r = 2\int _{\rr_{+}} |r\varphi'(r)|^{2}r^{\nu -1}\dd r.
\end{equation}

We use Lemma~\ref{concatenation} and the notation therein: 
recalling that $\cM^{(D,U)}=\sigma((D_{\rho_t},U_t) : t\geq 0)$, with the convention that
$(r,\triangle)=(\triangle,u)=(\triangle,\triangle)=\triangle$,
and 
that $\PP^{(D,U)}_{(r,u)}=\PP^D_r\otimes\PP^U_u$ if $(r,u) \in \mathbb{R}^*_+ \times \cU$
and $\PP^{(D,U)}_\triangle=\PP^D_\triangle\otimes\PP^U_\triangle$,  it holds that
$$
(\mathbb{D_\rho}, \mathbb{U}) = \Big( \Omega^D \times \Omega^U,\cM^{(D,U)},
(D_{\rho_t },U_t)_{t\ge 0}, (\PP^{(D,U)}_{(r,u)})_{(r,u)\in (\mathbb{R}^*_+ \times \cU) \cup \{ \triangle \} }\Big)
$$ 
is a $r^{\nu -1}\dd r \beta(\dd u)$-symmetric $(\mathbb{R}^*_+ \times \cU)\cup\{\triangle\}$-valued 
\blue diffusion \bla with regular Dirichlet space given by 
$(\mathcal{E}^{(D_\rho ,U)}, \mathcal{F}^{(D_\rho ,U)})$ on $L^2(\rr _+ \times \sS , r^{\nu -1}\dd r \beta(\dd u))$ 
with core 
$C_{c}^{\infty }(\rr_{+}^{*} \times \cU)$, and for all $\varphi \in C_{c}^{\infty }(\rr_{+}^{*} \times \cU)$, 
$$
\mathcal{E}^{(D_\rho ,U)} (\varphi,\varphi) 
= \int _{\rr_{+}} \mathcal{E}^{U}(\varphi(r,\cdot), \varphi(r,\cdot)) r^{\nu -1 }\dd r 
+ \int _{\sS} \mathcal{E}^{D_\rho}(\varphi(\cdot,u), \varphi(\cdot,u)) \beta (\dd u).
$$

We now apply Lemma~\ref{chgmttemps} to $(\mathbb{D_\rho}, \mathbb{U})$ with 
$g(r,u)=r$ for all $r\in \rr_+^*$ and all $u\in \cU$.
We consider the time-change $\alpha_t=\int_0^t g(D_{\rho_s},U_s)\dd s$, with the convention that 
$g(r,u)=0$ as soon as $(r,u)=\triangle$. We also set
$B_t=\inf\{s>0 : \alpha_s>t\}$. As we will see in a few lines, it holds that
\begin{equation}\label{claim}
(D_{\rho_{B_t}},U_{B_t})=(D_t,U_{A_t}) \qquad \hbox{for all $t\geq 0$}.
\end{equation}
Hence Lemma~\ref{chgmttemps} tells us that
$$
(\mathbb{D}, \mathbb{U}_A)=
\Big( \Omega^D \times \Omega^U,\cM^{(D,U)},
(D_{t},U_{A_t})_{t\ge 0}, (\PP^{(D,U)}_{(r,u)})_{(r,u)\in (\mathbb{R}^*_+  \times \cU)\cup \{ \triangle \}} \Big)
$$ 
is a $r^{\nu}\dd r \beta(\dd u)$-symmetric $(\mathbb{R}^*_+ \times \cU)\cup\{\triangle\}$-valued 
\blue diffusion \bla with Dirichlet space
$(\cE^{(D,U_A)},\cF^{(D,U_A)} )$ on $L^2(\rr_+ \times \sS , r^{\nu }\dd r \beta (\dd u))$, regular with core 
$ C_c^\infty (\rr^*_+ \times \cU)$ and for all $\varphi\in C_c^\infty (\rr^*_+ \times \cU)$, 
\begin{align}\label{tyty}
\cE ^{(D,U_A)}(\varphi,\varphi) =  \mathcal{E}^{(D_\rho ,U)} (\varphi,\varphi) 
= \int _{\rr_{+}}\!\! \mathcal{E}^{U}(\varphi(r,\cdot), \varphi(r,\cdot)) r^{\nu -1 }\dd r 
+ \int _{\sS }\!\! \mathcal{E}^{D_\rho}(\varphi(\cdot,u), \varphi(\cdot,u)) \beta (\dd u) .
\end{align}

We now check the claim \eqref{claim}. Recall that $D$ explodes at time $\tau_D$, that
$A_t=\int_0^{t\land \tau_D} D_s^{-1} \dd s$ and that $\rho$ is the generalized inverse of $A$. 
Hence $(\rho_t)_{t\in [0,A_{\tau_D})}$ is the true inverse of 
$(A_t)_{t\in [0,\tau_D)}$ and we have $\rho_t'=D_{\rho_t}$, whence
$\rho_t=\intot D_{\rho_s}\dd s $ for $t \in [0,A_{\tau_D})$. We also have $\rho_t=\infty$ for $t\geq A_{\tau_D}$.
Next, $\alpha_t=\intot D_{\rho_s}\dd s=\rho_t$ for $t\in [0,A_{\tau_D}\land \xi)$, because $g(D_{\rho_s},U_s)=D_{\rho_s}$
if $(D_{\rho_s},U_s)\neq \triangle$, i.e. if $s<A_{\tau_D}\land \xi$. Hence $B$, the generalized inverse
of $\alpha$, equals $A$ during $[0,\tau_D\land \rho_\xi)$, thus in particular $\rho_{B_t}=t$ for 
$t\in[0,A_{\tau_D}\land \xi)$. As conclusion, \eqref{claim} holds true for $t\in [0,A_{\tau_D}\land \xi)$.
If now $t\geq \tau_D\land \rho_\xi$, then $B_t=\infty$,
because $B$ is the generalized inverse of $\alpha$ and because for all $t\geq 0$,
$$
\alpha_t \leq \alpha_{A_{\tau_D}\land \xi}
=\rho_{A_{\tau_D}\land \xi}=\tau_D\land \rho_\xi.
$$
Hence, still if $t\geq \tau_D\land \rho_\xi$, we have
$(D_{\rho_{B_t}},U_{B_t})=\triangle$, while 
$(D_{t},U_{A_t})=\triangle$ because either $t\geq \tau_D$ and thus $D_t=\triangle$ or
$t\geq \rho_\xi$ and thus $A_t\geq \xi$ so that $U_{A_t}=\triangle$. We have proved \eqref{claim}.

\vip

We finally conclude, thanks to Lemma~\ref{concatenation} again,
setting $\cM^{(M,D,U)}=\sigma((M_t,D_t,U_{A_t}) : t\geq 0)$ with the convention that 
$(z,\triangle)=\triangle$ and setting $\PP_{(z,r,u)}^{(M,D,U)}=\PP^M_z\otimes \PP^{(D,U)}_{(r,u)}$
in the case where ${\blue (z,r,u)\bla \in \mathbb{R}^2 \times \mathbb{R}^*_+ \times \cU}$ and
$\PP_{\triangle}^{(M,D,U)}=\PP^M_\triangle \otimes \PP^{(D,U)}_{\triangle}$,
that 
\begin{align*}
(\mathbb{M}, \mathbb{D}, \mathbb{U_A})=\Big(\Omega^M \times \Omega ^D \times \Omega ^U,\cM^{(M,D,U)},
(M_t,D_t,U_{A_t})_{t\ge 0}, 
 (\PP_{(z,r,u)}^{(M,D,U)} )_{(z,r,u) \in (\mathbb{R}^2 \times \mathbb{R}^*_+ \times \cU )\cup \{\triangle\}}\Big)
\end{align*}
is a $\dd z r^\nu \dd r \beta(\dd u)$-symmetric 
$(\mathbb{R}^2 \times \mathbb{R}^*_+ \times \cU )\cup \{\triangle\}$-valued 
\blue diffusion \bla with regular Dirichlet space 
$(\cE ^{(M,D,U_A)},\cF^{(M,D,U_A)})$ on $L^2(\rr^2 \times \rr _+ \times \sS , \dd z r^\nu \dd r \beta(\dd u))$,
with core $C_c^\infty (\rr ^2 \times \rr ^*_+ \times \cU)$. Moreover, for all 
$\varphi \in C_c^\infty (\rr ^2 \times \rr ^*_+ \times \cU)$,
\begin{align}
\cE ^{(M,D,U_A)}(\varphi,&\varphi)   = \int _{\rr_+ \times \sS} \!\!\cE ^M (\varphi(\cdot,r,u),\varphi(\cdot,r,u))
r^\nu \dd r \beta (\dd u)
+\int _{\rr ^2} \!\!\mathcal{E}^{(D,U_A)}(\varphi(z,\cdot,\cdot), \varphi(z,\cdot,\cdot))\dd z   
\nonumber \\
=& \int _{\rr_+ \times \sS} \!\!\cE ^M (\varphi(\cdot,r,u),\varphi(\cdot,r,u))r^\nu \dd r \beta (\dd u)
+\int _{\rr ^2 \times \sS} \!\!\mathcal{E}^{D_\rho}(\varphi(z,\cdot,u), \varphi(z,\cdot,u))\dd z \beta (\dd u)\nonumber \\
&\hskip0.7cm+ \int _{\rr^2 \times \rr_{+}} \mathcal{E}^{U}(\varphi(z,r,\cdot), \varphi(z,r,\cdot)) \dd z
r^{\nu -1 } \dd r \nonumber \\
=& \int_{\rr^2\times \rr_+ \times \sS} \Big[\frac1{2N}||\nabla_z \varphi(z,r,u)||^2+ 2r|\partial_r \varphi(z,r,u)|^2
+ \frac 1 {2r}||\nabla_\sS \varphi(z,r,u)||^2\Big] \dd z r^\nu \dd r \beta (\dd u).
\label{formedirichletMUD}
\end{align}  
For the second line, we used \eqref{tyty}.
For the last line, we used \eqref{jabM}, \eqref{jabD2} and the expression of $\cE^U$, see
Proposition~\ref{existenceXU}-(ii). 
 
\vip

{\it Step 3.} We recall that 
$Y_t=\Psi(M_t,D_t,U_{A_t})$, where $\Psi(z,r,u)=\gamma(z)+\sqrt{r}u$ for $(z,r,u)\in \rr^2\times \rr_+^*
\times \cU$ and $\Psi(z,r,u)=\triangle$ for $(z,r,u)=\triangle$.
One easily checks that $\Psi$ is a bijection from $(\rr^2\times \rr_+^* \times \cU)\cup\{\triangle\}$
to $(\cX\cap E_{N})\cup\{\triangle\}$, recall that $\cX=E_{k_0}$ and $\cU=E_{k_0}\cap \sS$.

\vip

We now study
$$
\mathbb{Y} = (\Omega^Y,\cM^Y,
(Y_t)_{t\ge 0}, (\PP^{Y}_y)_{y \in (\cX\cap E_{N})\cup\{\triangle\}}),
$$ 
where $\Omega^Y=\Omega ^M \times \Omega^D \times\Omega^U$, 
$\cM^Y=\cM^{(M,D,U)}$ and $\PP^{Y}_y=\PP_{(z,r,u)}^{(M,D,U)}$ for $(z,r,u)=\Psi^{-1}(y)$.

\vip

First, $\mathbb{Y}$ is a $(\cX \cap E_{N}) \cup\{\triangle\}$-valued \blue diffusion, \bla
because the bijection $\Psi$ from $(\rr^2\times \rr_+^* \times \cU)\cup\{\triangle\}$
to $(\cX\cap E_{N})\cup\{\triangle\}$ is continuous, both sets being endowed with 
the one-point compactification 
topology, see Subsection~\ref{ap1}.

\vip

Next, we prove that $\mathbb{Y}$ is $\mu$-symmetric:
if $\varphi,\psi$ are nonnegative measurable functions on $\cX \cap E_N$ and $t\ge 0$, 
we have, thanks to Lemma~\ref{changeofvariable}
(recall that $\nu=d_{\theta,N}(N)/2-1$),
$$
\int _{(\rr^2)^N} [P^Y_t\varphi (y)] \psi(y) \mu (\dd y)= \frac12 \int _{\rr^2 \times \rr_+\times \sS} 
[(P^Y_t\varphi)(\Psi (z,r,u))] \psi(\Psi (z,r,u)) r^\nu \dd z \dd r \beta(\dd u ) .
$$
But $(P^Y_t\varphi)(\Psi (z,r,u))
=\mathbb{E}_{ (z,r,u)}[\varphi(\Psi(M_t,D_t,U_{A_t}))]= 
P^{(M,D,U_A)}_t(\varphi\circ \Psi)(z,r,u)$, so that
\begin{align*}
\int _{(\rr^2)^N} [P^Y_t\varphi (y)] \psi(y) \mu (\dd y)=& 
\frac12 \int _{\rr^2 \times \rr_+\times \sS} [P^{(M,D,U_A)}_t(\varphi\circ \Psi)(z,r,u)] [(\psi\circ \Psi) (z,r,u)]
r^\nu \dd z \dd r \beta (\dd u).
\end{align*}
Using that $(\mathbb{M},\mathbb{D},\mathbb{U_{A}})$ is $\dd z r^\nu \dd r \beta(\dd u)$-symmetric and
then the same computation in reverse order, one concludes that
$\int _{(\rr^2)^N} [P^Y_t\varphi] \psi \dd \mu =\int _{(\rr^2)^N} \varphi [P^Y_t\psi]\dd \mu$ as desired.

\vip

Thus $\mathbb{Y}$ has a Dirichlet space $(\cE ^Y, \cF ^Y)$ on $L^2((\rr ^2)^N, \mu )$ that we now determine. 
For $\varphi\in L^2 ( (\rr^2)^N, \mu)$, using as above Lemma~\ref{changeofvariable} and that 
$(P^Y_t\varphi)(\Psi (z,r,u))= P^{(M,D,U_A)}_t(\varphi\circ \Psi)(z,r,u)$,
\begin{align*}
&\frac1t \int _{(\rr^2)^N} (P^Y_t\varphi -\varphi )\varphi \dd \mu \\
=& \frac{1}{2t} \int _{\rr^2 \times \rr^*_+\times \sS} [P^{(M,D,U_A)}_t
(\varphi\circ \Psi) (z,r,u) - (\varphi\circ \Psi)(z,r,u)] 
[\varphi\circ \Psi (z,r,u)] r^\nu \dd z \dd r  \beta (\dd u) .
\end{align*}
Since $\Psi$ is bijective, we deduce, see \cite[Lemma 1.3.4 page 23]{f}, that
\begin{gather}\label{noyauY}
\cF^Y = \Big\{\varphi\in L^2((\rr^2)^N,\mu)  : \varphi\circ \Psi \in \cF ^{(M,D,U_A)}\Big\} \\[5pt]
\hbox{and for $\varphi \in \cF^Y$,}
\quad \cE ^Y(\varphi,\varphi) = \frac12\cE ^{(M,D,U_A)}(\varphi\circ \Psi , \varphi\circ \Psi ).
\label{dirichletZ}
\end{gather}

{\it Step 4.} We now compute $\cE ^Y(\varphi,\varphi)$ for $\varphi\in C^\infty_c(\cX\cap E_{N})$, so that
$\varphi\circ \Psi \in C^\infty_c(\rr^2\times \rr_+^*\times\cU)$.
Thanks to \eqref{formedirichletMUD} and \eqref{dirichletZ}, we have
\begin{align}\label{dirichletZ2}
\cE ^Y (\varphi ,\varphi) = \frac 12 
\int_{\rr^2\times \rr_+ \times \sS} I(z,r,u)
\dd z r^\nu \dd r \beta (\dd u),
\end{align}
where
\begin{gather*}
I(z,r,u)=\frac1{2N}||\nabla_z (\varphi\circ\Psi)(z,r,u)||^2
+2r|\partial_r (\varphi\circ\Psi)(z,r,u)|^2
+\frac 1 {2r}||\nabla_\sS (\varphi\circ\Psi)(z,r,u)||^2.
\end{gather*}

We recall that for $\varphi:(\rr^2)^N\to\rr$, we call $\nabla \varphi (x)=((\nabla \varphi(x))^1,\dots,
(\nabla \varphi(x))^N) \in (\rr^2)^N$ 
the total gradient of $\varphi$ at $x \in (\rr^2)^N$, and we have $(\nabla \varphi (x))^i \in \rr^2$  for each 
$i\in\ig 1,N\id$. And for $\phi:O \to \rr^p$, where $O$ is open in $\rr^n$, 
we denote by $\dd_z \phi$ the differential of $\phi$
at $z \in O$.

\vip

We start with the study of $\Psi(z,r,u)=\gamma(z)+\sqrt{r}u$, where we 
recall that $\gamma$ was introduced in Section~\ref{nota}
and that $\Phi_\sS(x)=\pi_H x / ||\pi_H x||$ 
is defined on a neighborhood of $\sS$ in $(\rr^2)^N$, see \eqref{phis}.
It holds that for all $(z,r,u)\in \rr^2\times\rr_+^*\times \sS$ and all
$h\in \rr^2$, $k\in \rr$ and $\ell \in (\rr^2)^N$,
\begin{gather*}
\dd_z \Psi(\cdot,r,u)(h) = \gamma(h),\qquad
\dd_r \Psi(z,\cdot,u)(k) = \frac k {2\sqrt{r}}u ,\qquad
\dd_u [\Psi(z,r,\Phi_\sS(\cdot))](\ell)=\sqrt r \pi_{u^\perp}(\pi_H (\ell)),
\end{gather*}
For the first equality, it suffices to use that $\gamma$ is linear, so that 
$\dd_z \Psi(\cdot,r,u)(h)=\dd_z \gamma (h) = \gamma(h)$. The second equality is obvious.
For the third equality, which is the differential at $u\in \sS$
of the function $F(x)=\gamma(z)+\sqrt{r}\Phi_\sS(x)$ defined for $x\in E_{N}$ (which is open in $(\rr^2)^N$
and contains $\sS$),
we write $\dd_u F=\sqrt{r}\dd_u\Phi_\sS$. But $\Phi_S=G\circ\pi_H$, where $G(x)=x/||x||$, 
and we have $\dd_u \pi_H=\pi_H$ 
and $\dd_{\pi_H(u)}G=\dd_uG = \pi_{u^\perp}$ for $u \in \sS$.
All in all, $\dd_u F = \sqrt{r}\pi_{u^\perp}\circ \pi_H$.

\vip
First, we have $\nabla_z (\varphi\circ \Psi)(z,r,u)=\sum _{i=1}^N [\nabla \varphi(\Psi (z,r,u))] ^i$. Indeed,
for all $h \in \rr^2$, it holds that
$$
\dd_z (\varphi\circ \Psi(\cdot,r,u ))(h)= (\dd_{\Psi (z,r,u)} \varphi)[(\dd_z \Psi (\cdot,r,u)) (h)]
=(\dd_{\Psi (z,r,u)} \varphi)(\gamma(h))  = \nabla\varphi(\Psi (z,r,u)) \cdot \gamma(h), 
$$ 
which, by definition of $\gamma$, equals
$h\cdot \sum _{i=1}^N [\nabla \varphi(\Psi (z,r,u))]^i$.

\vip

This implies that 
\begin{equation}\label{aa1}
\frac 1{2N} \| \nabla _z (\varphi\circ \Psi (z,r,u)) \|^2 = 
\frac 1{2N} \Big\|\sum _{i=1}^N [\nabla \varphi(\Psi (z,r,u))] ^i\Big\|^2
=\frac 12 \| \pi _{H^\perp } ( \nabla \varphi(\Psi (z,r,u)))\|^2. 
\end{equation}
Indeed, recalling the expression of $\pi _H$, see Section~\ref{nota}, it suffices to note that for all 
$x \in (\rr ^2)^N$, $\|\pi _{H^\perp } (x)\|^2 = \| \gamma ( S_{\ig 1,N\id }(x))\|^2= 
N\|S_{\ig 1,N\id }(x)\|^2= N^{-1}\|\sum _{i=1} ^N x^i \|^2 $.

\vip

Next, $\partial_r (\varphi\circ\Psi)(z,r,u)=(\nabla \varphi) (\Psi(z,r,u)) \cdot u / (2\sqrt r) $. Indeed, 
for $k \in \rr$,  
$$
\dd_r(\varphi\circ\Psi(z,\cdot,u))(k)=(\dd_{\Psi (z,r,u)} \varphi)[(\dd_r \Psi (z,\cdot,u)) (k)]
=(\dd_{\Psi (z,r,u)} \varphi)(u) \times \frac k{2\sqrt r}, 
$$
which is nothing but 
$(\nabla \varphi)(\Psi(z,r,u)) \cdot u \times k/ (2\sqrt r)$.

\vip

This implies, recalling that $\pi_u$ is the orthogonal projection on ${\rm Span}(u)\subset (\rr^2)^N$,
that
\begin{equation}\label{aa2}
2r|\partial_r (\varphi\circ\Psi)(z,r,u)|^2 = \frac 12 \| \pi _u ( (\nabla \varphi)(\Psi (z,r,u)))\|^2 
= \frac 12 \| \pi_H (\pi _u ( (\nabla \varphi )(\Psi (z,r,u))))\|^2
\end{equation}
since $u \in \sS$, so that $||u||=1$ and $u\in H$.

\vip

Finally, $\nabla_\sS (\varphi\circ \Psi)(z,r,u)=\sqrt r \pi_H(\pi_{u^\perp}(\nabla \varphi(\Psi (z,r,u) )))$.
Indeed, for all $\ell \in (\rr^2)^N$,  
\begin{align*}
d_u ((\varphi\circ \Psi) (z,r,\Phi _\sS (\cdot ) ))(\ell ) 
=& (d_{\Psi (z,r,u)}\varphi)(d_u [\Psi (z,r,\Phi _\sS (\cdot ))](\ell ))\\
=& \sqrt{r}(d_{\Psi (z,r,u)}\varphi) ( \pi_{u^\perp}(\pi_H (\ell)))\\ 
=& \sqrt r  \nabla \varphi(\Psi (z,r,u)) \cdot \pi_{u^\perp} (\pi _H (\ell ))\\
=&\sqrt r \pi _H (\pi _{u^\perp} (\nabla \varphi (\Psi (z,r,u)))) \cdot \ell,
\end{align*} 
and we conclude since 
$\nabla_\sS (\varphi\circ \Psi)(z,r,u)=\nabla_x((\varphi\circ \Psi)(z,r,\Phi_\sS(\cdot)))(u)$
by definition of $\nabla_\sS$, see \eqref{nablaS}.

\vip

This implies that \begin{equation}\label{aa3}
\frac 1 {2r}||\nabla_\sS (\varphi\circ\Psi)(z,r,u)||^2 
= \frac 12 \| \pi _H  (\pi _{u^\perp} ( \nabla \varphi(\Psi (z,r,u))))\|^2.
\end{equation}

Gathering \blue \eqref{aa1}, \eqref{aa2} and \eqref{aa3}, \bla we see that 
$I(z,r,u) = \frac 12 \| \nabla \varphi (\Psi (z,r,u))\|^2$, since
for $x \in (\rr^2)^N$, 
$$
\| \pi_{H^\perp} (x) \| ^2 +\| \pi_H (\pi _u (x)) \| ^2 + \| \pi_H (\pi _{u^\perp} (x)) \| ^2 
= \|x\|^2
$$
because $u\in \sS \subset H$.

\vip

Injecting the value of $I$ in \eqref{dirichletZ2} and using Lemma~\ref{changeofvariable}, we obtain 
\begin{align*}
\cE^Y (\varphi,\varphi) 
= \frac{1}{4} \int _{\rr^2 \times \rr_{+}^{*}\times \sS} \| \nabla \varphi(\Psi (z,r,u)) \|^2 \dd z r^{\nu } \dd r 
\beta (\dd u)
=\frac 12 \int _{(\rr ^2)^N} \| \nabla \varphi \|^2 \dd \mu.
\end{align*} 

{\it Step 5.} As a last technical step, we verify that $(\cE^Y, \cF ^Y)$ 
is a regular Dirichlet space on $L^2((\rr^2)^N,\mu)$ with core $C_c^\infty (\cX \cap E_{N})$,
i.e. that for all $\varphi \in \cF^Y$, there is $\varphi_n\in C_c^\infty (\cX \cap E_{N})$ such that
$\lim_n ||\varphi_n-\varphi||_{L^2((\rr^2)^N,\mu)}+\cE^Y(\varphi_n-\varphi,\varphi_n-\varphi)= 0$.

\vip

Recalling \eqref{noyauY} and using that $(\cE ^{(M,D,U_A)},\cF^{(M,D,U_A)})$ on 
$L^2(\rr^2 \times \rr _+ \times \sS , \dd z r^\nu \dd r \beta(\dd u))$ is
regular with core $C_c^\infty (\rr ^2 \times \rr ^*_+ \times \cU)$, there is 
$g_n \in C_c^\infty (\rr ^2 \times \rr ^*_+ \times \cU)$ such that
$$
||g_n-\varphi\circ \Psi||_{L^2(\rr^2 \times \rr _+ \times \sS , \dd z r^\nu \dd r \beta(\dd u))}
+\cE^{(M,D,U_A)}(g_n-\varphi\circ \Psi,g_n-\varphi\circ \Psi)\to 0.
$$
Setting $\varphi_n=g_n\circ \Psi^{-1}$, it holds that $\varphi_n\in C_c^\infty (\cX \cap E_{N})$ and
we have, by \eqref{dirichletZ},
$$
\cE^Y(\varphi_n-\varphi,\varphi_n-\varphi)
=\frac12\cE^{(M,D,U_A)}(g_n-\varphi\circ \Psi,g_n-\varphi\circ \Psi) \to 0,
$$
as well as, by Lemma~\ref{changeofvariable},
$$
||\varphi_n-\varphi||_{L^2((\rr^2)^N,\mu)} 
= \frac12 ||g_n-\varphi\circ \Psi||_{L^2(\rr^2 \times \rr _+ \times \sS , \dd z r^\nu \dd r \beta(\dd u))}\to 0.
$$

{\it Step 6.} By Steps 3, 4 and 5, we know that $\yY$ is a $\mu$-symmetric 
$(\cX \cap E_{N})\cup\{\triangle\}$-valued \blue diffusion \bla with regular Dirichlet space $(\cE^Y,\cF^Y)$
with core $C_c^\infty (\cX \cap E_{N})$ and with 
$\cE^Y(\varphi,\varphi)=\frac12\int_{(\rr^2)^N} ||\nabla \varphi ||^2 \dd \mu$
for $\varphi \in C_c^\infty (\cX \cap E_{N})$.

\vip

Now, applying Lemma~\ref{tuage} to $\mathbb{X}$ defined in Proposition~\ref{existenceXU}-(i) with 
the open set $\cX \cap E_{N}$, we see that $\xXs$, i.e. $\xX$ killed when getting outside $\cX \cap E_{N}$,
is a $\mu$-symmetric 
$(\cX \cap E_{N})\cup\{\triangle\}$-valued \blue diffusion \bla 
process with regular Dirichlet space $(\cE^{X^*},\cF^{X^*})$
with core $C_c^\infty (\cX \cap E_{N})$ and with 
$\cE^{ X^*}(\varphi,\varphi)=\frac12\int_{(\rr^2)^N} ||\nabla \varphi ||^2 \dd \mu$
for $\varphi \in C_c^\infty (\cX \cap E_{N})$.

\vip
This implies, as recalled in Subsection~\ref{ap1}, that $(\cE^{X^*},\cF^{X^*})=(\cE^Y,\cF^Y)$.
The conclusion follows by uniqueness, see \cite[Theorem 4.2.8 p 167]{f}.
\end{proof}

Actually, $(R_{\ig1,N\id}(X_t))_{t\ge 0}$ and $(S_{\ig1,N\id}(X_t))_{t\ge 0}$ are some independent 
squared Bessel process and  Brownian motion {\it until explosion}
(and not only until the first time where $R_{\ig1,N\id}(X_t)=0$, as shown in Proposition~\ref{deco}),
a fact that we shall often use.

\begin{lemma}\label{besbro}
We fix $N\geq 2$ and $\theta>0$ such that $N > \theta$ and we consider a 
$QKS(\theta,N)$-process $\xX=(\Omega^X,\cM^X,(X_t)_{t\geq 0},(\PP_x^X)_{x\in \cXt})$.
\blue Quasi-everywhere, \bla there are a $2D$-Brownian motion $(M_t)_{t\geq 0}$ with diffusion constant $N^{-1/2}$
issued from $S_{\ig1,N\id}(x)$
and a squared Bessel process $(D_t)_{t\geq 0}$ with dimension $d_{\theta,N}(N)$ issued from $R_{\ig1,N\id}(x)$
(killed when 
it gets out of $(0,\infty)$ if $d_{\theta,N}(N)\leq 0$) independent of $(M_t)_{t\ge 0}$ such that $\PP_x^X$-a.s., 
$S_{\ig1,N\id}(X_t)=M_t$ and $R_{\ig1,N\id}(X_t)=D_t$ for all $t\in [0,\zeta)$.
\end{lemma}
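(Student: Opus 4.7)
The plan is to extend the Brownian/squared-Bessel decomposition of Proposition \ref{deco}, which is stated only up to the first zero $\tau=\inf\{t\geq 0:R_{\ig 1,N\id}(X_t)=0\}$ of the squared-Bessel part, to the entire life-time interval $[0,\zeta)$. I split according to the sign of $2-\theta$.

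If $\theta\geq 2$, then $k_0\leq N$, so $\cX=E_{k_0}\subseteq E_N$, every path of $\xX$ remains in $E_N$ up to $\zeta$, and $\tau\geq \zeta$. In particular the killed process $\xXs$ of Proposition \ref{deco} coincides with $\xX$ on $[0,\zeta)$, so that proposition directly yields the claim (note that $d_{\theta,N}(N)\leq 0$ here, and the Bessel being killed at $0$ matches the statement of the lemma). If $\theta<2$, then $k_0>N$, $\cX=(\rr^2)^N$, $\zeta=\infty$ a.s.\ by Proposition \ref{reloumaisafaire}, and $d_{\theta,N}(N)=(N-1)(2-\theta)>0$. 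When moreover $d_{\theta,N}(N)\geq 2$, a squared-Bessel process of that dimension never reaches $0$, so $\tau=\infty$ a.s.\ and Proposition \ref{deco} already gives the result on the whole of $[0,\infty)$.

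The substantive case is $d_{\theta,N}(N)\in(0,2)$, where $D$ does hit $0$ in finite time but $\zeta=\infty$. Here I would iterate Proposition \ref{deco} using the strong Markov property. At $\tau_1:=\tau<\infty$, $X_{\tau_1}$ is coalesced at $M_{\tau_1}$ and does not belong to $E_N$, so Proposition \ref{deco} does not apply directly from $X_{\tau_1}$. The workaround is to introduce, for small $\e>0$, the stopping times $\sigma^\e_k=\inf\{t>\tau_k:R_{\ig 1,N\id}(X_t)\geq \e\}$ and $\tau_{k+1}=\inf\{t>\sigma^\e_k:R_{\ig 1,N\id}(X_t)=0\}$, apply Proposition \ref{deco} starting from $X_{\sigma^\e_k}\in \cX\cap E_N$ (quasi-surely) to obtain a Brownian/Bessel pair on $[\sigma^\e_k,\tau_{k+1})$, paste the pieces together, and finally pass $\e\to 0$.

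The main obstacle is this final gluing step: one must verify that the concatenation of the independent squared-Bessel excursions (each killed on hitting $0$) produces a \emph{genuine} squared-Bessel process of dimension $d_{\theta,N}(N)\in(0,2)$ on the full half-line, instantaneously reflected at $0$, and that the concatenated centers of mass form a true $2$-dimensional Brownian motion independent of it. For the Bessel part, this should follow from the Dirichlet-form characterization of Fukushima \cite{mf} already exploited in the proof of Proposition \ref{deco}, together with the fact that the zero set $\{t\geq 0:R_{\ig 1,N\id}(X_t)=0\}$ has zero Lebesgue measure a.s.\ (inherited from the Bessel process of dimension in $(0,2)$, which spends no time at $0$). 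For the Brownian part, one uses the continuity of $S_{\ig 1,N\id}(X_\cdot)$ on $[0,\infty)$ and the matching of quadratic variations across the coalescent times.
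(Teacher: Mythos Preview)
Your treatment of the case $\theta\geq 2$ matches the paper exactly: since $k_0\leq N$ one has $\cX\subset E_N$, so $\tau=\zeta$ and Proposition~\ref{deco} already gives the conclusion.

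For $\theta<2$, however, the paper does \emph{not} iterate Proposition~\ref{deco}. Instead it works directly with the generator: using Remark~\ref{caracdomaine} it shows that for any $\varphi\in C^\infty_c(\rr^2\times\rr_+)$ the function $\psi(x)=\varphi(S_1(x),S_2(x),R(x))$ (with $S=S_{\ig1,N\id}$, $R=R_{\ig1,N\id}$) lies in $\cD_{\cA^X}$ and computes $\cA^X\psi$ explicitly. Applying Lemma~\ref{marting} to the test functions $s_1,s_2,s_1s_2,s_k^2,r,r^2,rs_k$ (localized) yields martingale decompositions for $S_k(X_t)$, $R(X_t)$ and their products, from which L\'evy's characterization identifies $(S_1(X),S_2(X),W)$ as three independent Brownian motions, with $R(X_t)=R(x)+2\int_0^t\sqrt{R(X_s)}\,\dd W_s+d_{\theta,N}(N)t$. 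Independence of $R(X)$ from $S(X)$ then follows from pathwise uniqueness (Yamada--Watanabe). This argument never leaves $[0,\zeta)$ and makes no reference to $\tau$.

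Your proposed route has two genuine gaps. First, you invoke Proposition~\ref{reloumaisafaire} to get $\zeta=\infty$, but in the paper that proposition is proved \emph{via} Lemma~\ref{Xborne}, which itself uses the present Lemma~\ref{besbro}; this is circular. Second, and more seriously, the gluing step you flag as ``the main obstacle'' is not carried out. Proposition~\ref{deco} only controls $X^*$, the process killed upon leaving $E_N$; at $\tau_1$ one has $X_{\tau_1}\notin E_N$, and nothing in your argument establishes that $R_{\ig1,N\id}(X_t)$ instantaneously becomes positive again (so that $\sigma^\e_1<\infty$), nor that the countable concatenation of killed Bessel pieces, after letting $\e\to0$, yields a genuine squared Bessel process of dimension in $(0,2)$ reflected at $0$. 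The latter is a delicate excursion-theoretic statement that the Dirichlet-form reference you cite does not settle. The paper's direct martingale computation sidesteps all of this: it never needs to analyze what happens at the zeros of $R(X)$, because the SDE characterization of the squared Bessel process is obtained globally on $[0,\zeta)$ in one stroke.
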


\begin{proof}
If $\theta \geq 2$, this follows from Proposition~\ref{deco}: setting 
$\tau=\inf \{ t>0 : R_{\ig 1,N \id}(X_t)\notin  (0, \infty ) \}$, we have $\tau = \zeta$.
Indeed, on $\{\tau<\zeta\}$, we have $X_\tau \notin E_N$, whence $X_{\tau}\notin \cX$ since
$\cX=E_{k_0}$ with $k_0 \le N$ (because $\theta \geq 2$),
which contradicts the fact that $\tau<\zeta$.
\vip

We now suppose that $\theta<2$, so that $k_0>N$ and thus $\cX=(\rr^2)^N$. We introduce the shortened
notation $R(x)=R_{\ig 1,N \id}(x)$, $S(x)=S_{\ig 1,N \id}(x)$ and split the proof in three parts.

\vip \blue

{\it Step 1.} First, one can show similarly (but much more easily) as in the proof of 
Proposition~\ref{deco} that there exists a $2D$-Brownian motion $(M_t)_{t\geq 0}$ independent of
$(X_t-\gamma(S(X_t)))_{t\geq 0}$, such that $S(X_t)=M_t$ for all $t\in [0,\zeta)$.
This moreover shows that $(M_t)_{t\geq 0}$ is independent of $(R(X_t))_{t\geq 0}$,
because $R(X_t)=\|X_t-\gamma(S(X_t))\|^2$.
\vip

{\it Step 2.} We consider some function $g_m \in C^\infty_c((\rr^2)^N)$ such that $g_m=1$
on $B(0,m)$ and $\sup_{\alpha \in (0,1]}\sup_{x\in (\rr^2)^N} |\cL^X_\alpha g_m(x)|<\infty$.
Such a function exists by Remark \ref{indiqXU2}.
For $\varphi \in C^\infty_c(\rr_+)$, we set $\psi (x)= \varphi(R(x))$
and show that $\psi g_m \in \cD_{\cA^X}$ and that for all $x \in B(0,m)$,
\begin{align}\label{abcdg}
\cA^X (\psi g_m) (x) =& 2 R(x) \varphi'' (R(x))+d_{\theta,N}(N)
\varphi' (R(x)).
\end{align}

To this end, we apply Remark~\ref{caracdomaine}. Since 
$\psi g_m \in C^\infty_c((\rr^2)^N)$ and since $\cX=(\rr^2)^N$, we have to show that
$\sup _{\alpha \in (0,1]}\sup_{x\in (\rr^2)^N} |\cL^X_\alpha (\psi g_m) (x) | < \infty$, and we will deduce
that $\cA^X (\psi g_m)=\cL^X(\psi g_m)$. By \eqref{lproduit}, 
we have $\cL^X_\alpha (\psi g_m) = g_m \cL^X_\alpha \psi+ \psi \cL^X_\alpha g_m+\nabla \psi \cdot\nabla g_m$.
The only difficulty consists in showing that $\sup _{\alpha \in (0,1]}\sup_{x\in (\rr^2)^N} |\cL^X_\alpha \psi (x) | 
< \infty$.
Using that $\nabla_{x^i}R(x)=2(x^i-S(x))$, we find
$\nabla _{x^i} \psi (x) =2(x^i - S(x)) \varphi'(R(x)).$
Hence by symmetry,
\begin{align}\label{singpart}
\frac \theta N \sum_{1\leq i\neq j \leq N}\frac{x^i-x^j}{\|x^i-x^j\|^2+\alpha}\cdot \nabla _{x^i} \psi (x) =& 
\frac{2\theta} N \varphi' (R(x)) 
\sum_{1\leq i\neq j \leq N}\frac{x^i-x^j}{\|x^i-x^j\|^2+\alpha}\cdot x^i \notag\\
=&\frac{\theta} N \varphi' (R(x))
\sum_{1\leq i\neq j \leq N}\frac{\|x^i-x^j\|^2}{\|x^i-x^j\|^2+\alpha}.
\end{align}
Besides, $\Delta _{x^i} \psi (x) = 4(1-1/N ) \varphi'(R(x))
+ 4 \|x^i - S(x)\|^2 \varphi'' (R(x)),$ whence
\begin{align}\label{laplapart}
\Delta \psi (x) =&  4(N-1) \varphi'(R(x)) +4 R(x) \varphi'' (R(x)).
\end{align}
We conclude by combining \eqref{singpart} and \eqref{laplapart} that
\begin{align*}
\cL^X_\alpha \psi (x) =&  2 R(x) \varphi'' (R(x))
+ \Big( 2(N-1)-\frac\theta {N}  \sum_{1\leq i\neq j \leq N} \frac{\|x^i-x^j\|^2}{\|x^i-x^j\|^2+\alpha} \Big) 
\varphi' (R(x)).
\end{align*}
We immediately deduce, since $\varphi$ is compactly supported, that 
$\sup _{\alpha \in (0,1]}\sup_{x\in (\rr^2)^N}  |\cL^X_\alpha \psi (x) | < \infty$, whence 
$\sup _{\alpha \in (0,1]}\sup_{x\in (\rr^2)^N}  |\cL^X_\alpha (\psi g_m) (x) | < \infty$.
Hence $\psi g_m \in \cD_{\cA^X}$ and $\cA^X (\psi g_m)=\cL^X (\psi g_m)$.
Moreover, recalling that $\cL^X\psi=\cL^X_\alpha \psi$ with $\alpha=0$ and that $g_m=1$ on $B(0,m)$, 
we conclude that $\cA^X (\psi g_m)(x)=\cL^X_0 \psi(x)$ for $x \in B(0,m)$, whence 
\eqref{abcdg}, because $2(N-1)-\theta (N-1)=d_{\theta,N}(N)$.

\vip
 
{\it Step 3.} We define $\zeta_m= \inf\{t>0 : X_t \notin B(0,m)\}$.
By Lemma~\ref{marting} and  Step 1, for all $\varphi \in C^\infty_c(\rr_+)$,
quasi-everywhere in $B(0,m)$,
$\varphi(R(X_{t\land \zeta_m}))- \varphi(R(x))-\int_0^{t\land \zeta_m} \cL^X \varphi(R(X_s)) \dd s$ 
is a $\PP_x^X$-martingale.
Recalling \eqref{abcdg}, we classically conclude that there is a Brownian motion $W$ such that
$R(X_t)=R(x)+2\int_0^t \sqrt{R(X_s)} \dd W_s + d_{\theta,N}(N) t$ during $[0,\zeta_n]$.
We recognize the S.D.E. of a squared Bessel process with dimension $d_{\theta,N}(N)$, 
see Revuz-Yor \cite[Chapter XI]{ry}. Since we know from Remark \ref{rklt} that $\zeta=\lim_m \zeta_m$,
the proof is complete. \bla
\end{proof}

\section{Some cutoff functions}\label{cutcut}

We will need several times to approximate some indicator functions by some smooth functions,
on which the generator $\cL^X$ (or $\cL^U$) is bounded. 
This does not seem obvious, due to the singularity
of $\cL^X$. We recall that $\cL^X_\alpha$ and $\cL^U_\alpha$ were defined in \eqref{dflxa} and \eqref{dflua}.
%\blue Let us mention that the following result does 
%not use the relation between $\theta$ and $k_0$.\bla

\blue
\begin{lemma}\label{indiqXU}
Fix $N \geq 2$, $\theta>0$, recall that $k_0= \lceil 2N/\theta\rceil$ and that $\cX = E_{k_0}$.
Consider a partition $\bK=(K_p)_{p\in \ig1,\ell\id}$ and define, for $\e\in [0,1]$, (with the convention that
$B(0,1/0)=(\rr^2)^N$),
$$
G_{\bK,\e}=\Big\{x \in \cX : \min_{1\leq p\neq q \leq \ell}\;\;\min_{i\in K_p,j\in K_q}||x^i-x^j||^2>\e  \Big\}
\cap B\Big(0,\frac1\e\Big).
$$

(i) For all $\e\in (0,1]$, there is a family of open relatively compact subsets $G^{n}_{\bK,\e}$ of 
$G_{\bK,0}$ such that
$$
\bigcup_{n\geq 1} G^{n}_{\bK,\e} \supset G_{\bK,\e}
\quad \hbox{and for each $n\geq 1$, } G^{n}_{\bK,\e}
\subset G^{n+1}_{\bK,\e},
$$
and some of $[0,1]$-valued functions $\Gamma_{\bK,\e}^{n} \in C^\infty_c(G_{\bK,0})$
such that for some $\eta\in (0,1]$, for all $n\geq 1$,
$$
{\rm Supp}\; \Gamma_{\bK,\e}^{n} \subset G_{\bK,\eta},\quad
\Gamma_{\bK,\e}^{n} =1 \; \hbox{ on }\; G_{\bK,\e}^{n}  \quad \hbox{and}\quad  
\sup_{\alpha\in(0,1]}\sup_{x\in (\rr^2)^N}\Big|\cL^X_\alpha \Gamma_{\bK,\e}^{n}  (x)\Big|<\infty.
$$

(ii) With the same sets $G^{n}_{\bK,\e}$ as in (i), there 
 is a family of functions $\Gamma_{\bK,\e}^{\sS,n} \in C^\infty_c(\sS \cap G_{\bK,0})$ with values in $[0,1]$
such that for all $n\geq 1$,
$$
\Gamma_{\bK,\e}^{\sS,n}=1 \; \hbox{ on }\; \sS \cap G^{n}_{\bK,\e} \quad \hbox{and}\quad  
\sup_{\alpha\in(0,1]}\sup_{ u \in \sS}\Big|\cL^U_\alpha \Gamma_{\bK,\e}^{\sS,n} (u)\Big|<\infty.
$$
\end{lemma}
\bla

The section is devoted to the proof of this lemma.
We start with the following technical result. 

\begin{lemma}\label{campingcar}
We define the family $(c_\ell)_{\ell \in \ig 1,N\id }$ by $c_0 =1$ and for all $\ell \in \ig 1,N-1 \id$, 
$c_{\ell+1} = \blue (2+4\ell)c_\ell$.
For all   $K \subsetneq \ig 1,N \id$,   all $\e \in (0,1]$, all $x \in (\rr ^2)^N$ such that 
$$
R_{K}(x) \le 2c_{|K|}\e  \quad \hbox{and}\quad   \min_{j\notin K} R_{K\cup \{j\} } (x) \ge c_{|K|+1} \e, 
$$ 
it holds that $\|x^{i}-x^{j}\|^{2} \ge c_{|K|}\e$ 
for all $i \in K$, all $j \notin K$.
\end{lemma}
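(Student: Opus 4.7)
The plan is to control $\|x^i-x^j\|$ by a two-step argument going through the center of mass $S := S_K(x)$. The key algebraic identity, which I would establish first, is
$$R_{K\cup\{j\}}(x) = R_K(x) + \frac{k}{k+1}\|x^j - S\|^2, \qquad k := |K|,$$
valid for any $j \notin K$. This follows from a direct expansion using $S_{K\cup\{j\}} = S + (x^j - S)/(k+1)$: one writes $\|x^i - S_{K\cup\{j\}}\|^2$ for $i \in K$ and for $i=j$ and sums, and the cross-term vanishes because $\sum_{i \in K}(x^i - S) = 0$.

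Combined with the two hypotheses, this identity yields the lower bound
$$\frac{k}{k+1}\|x^j - S\|^2 \;\geq\; c_{k+1}\e - 2c_k\e \;=\; 8k\,c_k\e, \qquad \hbox{i.e.} \qquad \|x^j - S\|^2 \geq 8(k+1)c_k\e.$$
Note that the case $K = \emptyset$ is vacuous: then $R_{\{j\}}(x)=0$, so the hypothesis $R_{\{j\}}(x) \geq c_1\e = 2\e$ cannot hold for $\e>0$. Hence I may assume $k \geq 1$. On the other side, the hypothesis $R_K(x) \leq 2c_k \e$ immediately gives $\|x^i - S\|^2 \leq 2c_k\e$ for every $i \in K$, since $R_K(x) = \sum_{i\in K}\|x^i - S\|^2$ is a sum of nonnegative terms.

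The triangle inequality then finishes the proof:
$$\|x^i - x^j\| \;\geq\; \|x^j - S\| - \|x^i - S\| \;\geq\; \sqrt{c_k\e}\bigl(\sqrt{8(k+1)} - \sqrt{2}\bigr),$$
and for $k \geq 1$ one has $\sqrt{8(k+1)} - \sqrt{2} \geq 4 - \sqrt{2} > 1$, so squaring gives $\|x^i-x^j\|^2 \geq c_k\e$ as required. There is no real obstacle: the whole argument is elementary, and the somewhat mysterious recursion $c_{k+1}=(2+8k)c_k$ is designed precisely so that the surplus $c_{k+1} - 2c_k = 8kc_k$ delivered by the second hypothesis is large enough to absorb the $\sqrt{2c_k\e}$ slack coming from $\|x^i - S\|$ in the triangle inequality.
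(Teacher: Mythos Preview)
Your proof is correct. The identity $R_{K\cup\{j\}}(x)=R_K(x)+\frac{k}{k+1}\|x^j-S_K(x)\|^2$ is right, and once it is in hand the rest is a clean two-line triangle-inequality argument.

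Your route differs from the paper's. The paper argues by contradiction: it supposes some $i_0\in K$, $j_0\notin K$ satisfy $\|x^{i_0}-x^{j_0}\|^2<c_{|K|}\e$, then uses this pivot $i_0$ together with $R_K(x)\le 2c_{|K|}\e$ to bound every $\|x^{j_0}-x^i\|^2$ from above, and feeds these bounds back into (an upper estimate for) $R_{K\cup\{j_0\}}$ to contradict the second hypothesis. Your approach is direct and goes through the center of mass $S_K(x)$ rather than through a pivot particle; the exact identity you prove makes transparent why the recursion $c_{k+1}=(2+8k)c_k$ has the shape it does, and avoids any need to estimate individual pairwise distances inside $K$. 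Both arguments are elementary, but yours is tighter and arguably more illuminating.
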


\begin{proof}
We   fix $K \subsetneq \ig 1,N \id$, $\e\in (0,1]$ 
\blue and $ x \in (\rr ^2)^N$ as in the statement \bla and assume by 
contradiction that there are $i_{0} \in K$, $ j_{0} \notin K$ such that 
$\|x^{i_{0}}-x^{j_{0}}\|^{2} < c_{|K|} \e$.
Then for all $i \in K$,  
$$
\|x^{j_{0}}-x^{i}\|^{2} \le 2  \|x^{i_{0}}-x^{j_{0}}\|^{2} +2  \|x^{i_{0}}-x^{i}\|^{2}  \leq  2  \|x^{i_{0}}-x^{j_{0}}\|^{2} +
\blue 2 |K| \bla R_{K}(x) < \blue (2+4|K|)\bla c_{|K|}\e.
$$
This implies that \blue
$$ 
R_{K\cup \{j_{0} \}}(x) =\frac1{2(|K|+1)}\Big( 2 |K| R_{K}(x) + 2 \sum _{i\in K} \|x^{j_{0}}-x^{i}\|^{2} \Big)
\leq R_{K}(x)+ \frac{1}{|K|+1}  \sum _{i\in K} \|x^{j_{0}}-x^{i}\|^{2},
$$ 
whence
$$
R_{K\cup \{j_{0} \}}(x)< 2c_{|K|}\e + \frac{2+4|K|}{|K|+1} |K| c_{|K|}\e< (2+4|K|) c_{|K|} \e =  c_{|K|+1}\e,
$$
\bla
which is a contradiction.
\end{proof}

We are now ready to give the

\begin{proof}[Proof of Lemma~\ref{indiqXU}]
We introduce some nondecreasing $C^\infty$ function $\varrho : \rr _+ \to [0,1]$
such that $\varrho=0$ on $[0,1/2]$  and $\varrho=1$ on $[1,\infty)$.
We divide the proof in three steps.

\vip

\blue

{\it Step 1.} We fix $n\geq 1$ and define, for $K\subset \ig 1,N\id$, 
using the family  $(c_\ell)_{\ell \in \ig 1,N\id }$ of Lemma~\ref{campingcar},
$$
\tilde{E}_{K,n}=\Big\{x \in (\rr^2)^N : \forall \; L \supset K,\; R_L(x)>\frac{c_{|L|}}n\Big\}
\qquad \hbox{and} \qquad \tilde{\Gamma}_{K,n}(x)=\prod_{L\supset K} \varrho\Big(\frac{n R_L(x)}{c_{|L|}}\Big),
$$
where $\{L\supset K\}=\{L \subset \ig 1,N\id : K \subset L\}$.
We have 
\begin{equation}\label{ttf1}
\tilde{\Gamma} _{K,n}\in C^\infty ( (\rr^2)^N),\quad  \mbox{Supp } \tilde{\Gamma} _{K,n} \subset  
\tilde{E}_{K,2n}
\quad \mbox{and}\quad \tilde{\Gamma}_{K,n} =1 \quad \mbox{on} \quad \tilde{E}_{K,n}.
\end{equation}
Since $R_K(x)>0$ implies that $R_L(x)>0$ for all $L\supset K$, we also have
\begin{equation}\label{ttf2}
\cup_{n\geq 1} \tilde{E}_{K,n} = \tilde E_K, \quad \hbox{where}\quad \tilde E_K=\{x\in (\rr^2)^N :  R_K(x)>0\}.
\end{equation}
We now show, and this is the main difficulty of the step, that for all $A>0$, all $K \subset \ig1,N\id$
with $|K|\geq 2$, we have $\sup_{\alpha\in(0,1]}\sup_{x\in B(0,A)} |\cL^X_\alpha\tilde{\Gamma}_{K,n}(x)|<\infty$.
Since $\sup_{x\in B(0,A)} |\Delta \tilde{\Gamma}_{K,n} (x)|<\infty$, we only have to verify that 
$\sup_{\alpha \in (0,1]} \sup_{x\in B(0,A)}|I_{K,n,\alpha}(x)| <\infty$, where
$$
I_{K,n,\alpha}(x)=\sum_{1\leq i\neq j \leq N} \frac{x^i-x^j}{\|x^i-x^j\|^2} \cdot \nabla_{x^i} \tilde \Gamma_{K,n} (x)
=\sum_{L \supset K} f_{K,L,n}(x)
\sum_{1\leq i\neq j \leq N} \frac{x^i-x^j}{\|x^i-x^j\|^2} \cdot \nabla_{x^i} R_L(x),
$$
with 
$$
f_{K,L,n}(x)=\frac n{c_{|L|}}\varrho'\Big(\frac{n R_L(x)}{c_{|L|}}\Big)\prod_{M\supset K, M\neq L} 
\varrho\Big(\frac{n R_M(x)}{c_{|M|}}\Big).
$$ 
Using that $\nabla_{x^i} R_L(x)=2(x^i-S_L(x))\indiq_{\{i \in L\}}$, we now write 
$$
I_{K,n,\alpha}(x) =2\sum_{L \supset K} f_{K,L,n}(x)(A_{L,\alpha}(x) + B_{L,\alpha}(x)),
$$
where,
$$
A_{L,\alpha}(x)=\sum_{i,j\in L, i\neq j}  \frac{(x^i-x^j)\cdot (x^i-S_L(x))}{\|x^i-x^j\|^2+\alpha} 
\quad \hbox{and} \quad B_{L,\alpha}(x)=\sum_{i\in L, j \in L^c}  
\frac{(x^i-x^j)\cdot (x^i-S_L(x))}{\|x^i-x^j\|^2+\alpha}.
$$
We have $\sup_{\alpha \in (0,1]} \sup_{x\in B(0,A)}|f_{K,L,n}(x)A_{L,\alpha}(x)| <\infty$ 
because $f_{K,L,n}$ is bounded and because
\begin{align*}
A_{L,\alpha}(x)
=\sum_{i,j\in L, i\neq j}  \frac{(x^i-x^j)\cdot x^i}{\|x^i-x^j\|^2+\alpha}
=&\frac12\sum_{i,j\in L, i\neq j}  \frac{\|x^i-x^j\|^2}{\|x^i-x^j\|^2+\alpha}\in \Big[0,\frac{|L|  (|L|-1)}2\Big].
\end{align*}
Next, we assume that $L \subsetneq \ig 1, N\id$ (else $B_{L,\alpha}(x)=0$) and 
observe that $f_{K,L,n}(x) \neq 0$ implies that $R_L(x)<c_{|L|}/n$ (because $\varrho'=0$ on $[1,\infty)$) and that 
$\min_{i\notin L} R_{L\cup\{i\}}(x)>c_{|L|+1}/(2n)$ (because $\varrho=0$ on $[0,1/2]$). By Lemma~\ref{campingcar},
this implies that $\min_{i \in L, j \in L^c} ||x^i-x^j||^2 \geq c_{|L|}/(2n)$. We immediately conclude that
$\sup_{\alpha \in (0,1]} \sup_{x\in B(0,A)}|f_{K,L,n}(x)B_{L,\alpha}(x)| <\infty$.

\vip

{\it Step 2.}
We can now prove (i). We fix $\e \in (0,1]$ and a 
partition $\bK=(K_p)_{p\in \ig1,\ell\id}$ of $\ig 1,N\id$. 
For some $m \geq 1$ to be chosen later (as a function of $\e$), for each $n\geq 1$, we set
\begin{gather*}
G_{\bK,\e}^{n} = B(0,m) \cap \Big(\bigcap _{K\subset \ig 1,N\id : |K|=k_0} \tilde{E}_{K,n} \Big) \cap 
\Big( \bigcap_{1\leq p \neq q\leq \ell}\;\;\bigcap_{i\in K_p,j\in K_q} \tilde E_{\{i,j\},m}\Big),\\
\Gamma_{\bK,\e}^{n} (x) = g_m(x) \Big(\prod_{K\subset \ig 1,N\id:|K|=k_0}\tilde{\Gamma}_{K,n}(x) \Big)
\Big(\prod_{1\leq p \neq q\leq \ell}\;\;
\prod_{i\in K_p,j\in K_q} \tilde{\Gamma}_{\{i,j\},m}(x)\Big),
\end{gather*}
where $g_m(x) = \varrho (m/\|x\|)$ with the extension $g_m(0)=1$.

\vip

First, $G_{\bK,\e}^{n}$ is clearly included in $G_{\bK,\e}^{n+1}$ and relatively compact in
$G_{\bK,0}$. We deduce from \eqref{ttf2} that,
setting $H_{\bK,m}=B(0,m)\cap (\cap_{1\leq p \neq q\leq \ell}\;\;\cap_{i\in K_p,j\in K_q} \tilde E_{\{i,j\},m})$,
$$
\bigcup_{n\geq 1} G_{\bK,\e}^{n} = \Big(\bigcap _{K\subset \ig 1,N\id : |K|=k_0} \tilde E_K \Big)\cap
H_{\bK,m} = E_{k_0} \cap H_{\bK,m}= \cX \cap H_{\bK,m}.
$$
By \eqref{ttf2} again, we can choose $m$ large enough so that
$H_{\bK,m}$ contains $G_{\bK,\e}$. Next, by \eqref{ttf1}, it holds that
$\Gamma_{\bK,\e}^{n} \in C^\infty((\rr^2)^N)$, that $\Gamma_{\bK,\e}^{n} =1$ on $G_{\bK,\e}^{n}$ and that
$$
{\rm Supp}\; \Gamma_{\bK,\e}^{n} \subset B(0,2m) 
\cap \Big(\bigcap _{K\subset \ig 1,N\id : |K|=k_0} \tilde{E}_{K,2n} \Big) \cap 
\Big( \bigcap_{1\leq p \neq q\leq \ell}\;\;\bigcap_{i\in K_p,j\in K_q} \tilde E_{\{i,j\},2m}\Big),
$$
which is compact in $G_{\bK,0}$. Moreover, ${\rm Supp}\; \Gamma_{\bK,\e}^{n}
\subset H_{\bK,2m}$.
Since there exists $\eta\in (0,1]$ such that 
$H_{\bK,2m} \subset G_{\bK,\eta}$, we conclude that Supp $\Gamma_{\bK,\e}^{n}\subset G_{\bK,\eta}$.

\vip

It remains to show that $\sup_{\alpha\in (0,1]} \sup_{x\in (\rr^2)^N} |\cL^X_\alpha  \Gamma^{n}_{\bK,\e} (x)|<\infty$.
Introducing 
$$
\chi_{\bK,\e}^n(x)=\Big(\prod_{K\subset \ig 1,N\id:|K|=k_0}\tilde{\Gamma}_{K,n}(x) \Big)
\Big(\prod_{1\leq p \neq q\leq \ell}\;\;
\prod_{i\in K_p,j\in K_q} \tilde{\Gamma}_{\{i,j\},m}(x)\Big),
$$
which belongs to $C^\infty((\rr^2)^N)$ by Step 1, we have $\Gamma^{n}_{\bK,\e}=g_m \chi_{\bK,\e}^n(x)$ (with the chosen
value of $m$) and thus
by \eqref{lproduit}
$$ 
\cL^X_\alpha  \Gamma^{n}_{\bK,\e} (x) = g_m(x) \cL^X_\alpha \chi_{\bK,\e}^n(x) + \chi_{\bK,\e}^n\cL^X_\alpha g_m(x) + 
\nabla g_m(x) \cdot \nabla \chi_{\bK,\e}^n(x).
$$
The first term is uniformly bounded because $g_m$ is bounded and supported in $B(0,2m)$
and because $\sup_{\alpha\in (0,1]} \sup_{x\in B(0,2m)} |\cL^X \chi_{\bK,\e}^n (x)|<\infty$
by Step 1 and \eqref{lproduit}.
The third term
is also uniformly bounded, since $\chi_{\bK,\e}^n \in C^\infty((\rr^2)^N)$ and since $\nabla g_m$ is bounded
and supported in $B(0,2m)$. Finally, the middle term is bounded because $\chi_{\bK,\e}^n$ is bounded by $1$ and
because $\cL^X_\alpha g_m$ is uniformly bounded, as we now show: $\Delta g_m$ is obviously bounded
since $g_m \in C^\infty_c((\rr^2)^N)$
and, since $\nabla_{x^i} g_m (x) = -m \varrho'(m/||x||) x^i / \|x\|^3$,
\begin{align*}
\sum_{1\leq i, j \leq N} \frac{x^i-x^j}{\|x^i-x^j\|^2+\alpha}\cdot \nabla _{x^i}g_m (x) 
=& -\frac{m \varrho'(m/||x||)}{\|x\|^3}\sum_{1\leq i, j \leq N} \frac{x^i-x^j}{\|x^i-x^j\|^2+\alpha}\cdot x^i\\
=& -\frac{m\varrho'(m/||x||)}{2\|x\|^3}\sum_{1\leq i, j \leq N} \frac{\|x^i-x^j\|^2}{\|x^i-x^j\|^2+\alpha}.
\end{align*}
This last quantity is uniformly bounded,
since $\varrho'$ is bounded and vanishes on $[1,\infty)$.

\vip

{\it Step 3.} We now prove (ii), by showing that  the restriction
$\Gamma_{\bK,\e}^{\sS,n} = \Gamma_{\bK,\e}^{n}|_\sS$ satisfies the required conditions.
We obviously have $\Gamma_{\bK,\e}^{\sS,n} \in C^\infty_c(\sS\cap G_{\bK,0})$ and
$\Gamma_{\bK,\e}^{\sS,n} = 1$ on $ \sS \cap G_{\bK,\e}^{n}$. It remains to show that 
$\sup_{\alpha\in (0,1]}\sup_{u\in \sS} |\cL^U_\alpha \Gamma_{\bK,\e}^{\sS,n}|<\infty$,
recall \eqref{dflua}. Since $\Gamma_{\bK,\e}^{\sS,n} \in C^\infty(\sS)$, $\Delta_\sS \Gamma_{\bK,\e}^{\sS,n}$ 
is bounded.
We thus only have to verify that  $\sup_{\alpha\in (0,1]}\sup_{u\in \sS} |T_\alpha(u)|<\infty$, where
$$
T_\alpha(u)= -\frac\theta N \sum_{1\leq i, j \leq N} \frac{u^i-u^j}{\|u^i-u^j\|^2+\alpha}
\cdot (\nabla_\sS\Gamma_{\bK,\e}^{\sS,n}(u))^i
$$
Setting $b^i_\alpha(u)=-\frac\theta N \sum_{j=1}^N \frac{u^i-u^j}{\|u^i-u^j\|^2+\alpha}$
and using \eqref{tradgrad},
$$
T_\alpha(u) = b_\alpha(u)\cdot \nabla_\sS\Gamma_{\bK,\e}^{\sS,n}(u)
=b_\alpha(u) \cdot \pi_H (\pi_{u^\perp}(\nabla \Gamma_{\bK,\e}^{\sS,n}(u))).
$$
Since now $b(u) \in H$ and since
$\pi _H$ and $\pi_{u^\perp}$ are self-adjoint, as every orthogonal projection, we get
$$ 
T_\alpha(u) = \pi_{u^\perp} (b_\alpha(u)) \cdot  \nabla \Gamma_{\bK,\e}^{\sS,n}(u)
=b_\alpha(u)\cdot  \nabla \Gamma_{\bK,\e}^{\sS,n}(u) - ( b_\alpha(u)\cdot u )  (u \cdot  \nabla \Gamma_{\bK,\e}^{\sS,n}(u)).
$$
But $b_\alpha(u)\cdot  \nabla \Gamma_{\bK,\e}^{\sS,n}(u) = \cL^X_\alpha \Gamma_{\bK,\e}^{\sS,n}(u) 
- \frac12\Delta \Gamma_{\bK,\e}^{\sS,n}(u)$
is uniformly bounded by point (i) and since $\Delta \Gamma_{\bK,\e}^{\sS,n}(u)$ is bounded on $\sS$. Next,
$u \cdot  \nabla \Gamma_{\bK,\e}^{\sS,n}(u)$ is smooth and thus bounded on $\sS$. Finally,
$$
b_\alpha(u)\cdot u= -\frac{\theta}N \sum_{1\leq i, j \leq N} \frac{(u^i-u^j)\cdot u^i}{\|u^i-u^j\|^2+\alpha}= 
-  \frac{\theta }{2 N}  \sum_{1\leq i, j \leq N} \frac{\|u^i-u^j\|^2}{\|u^i-u^j\|^2+\alpha}
$$
is also uniformly bounded.
\end{proof}

\blue
\begin{rk}\label{indiqXU2}
We have proved in Step 2 that for each $m>0$, 
$g_m \in C^\infty_c((\rr^2)^N)$ satisfies $g_m=1$ on $B(0,m)$ and 
$\sup_{\alpha \in (0,1]} \sup_{x \in (\rr^2)^N} |\cL^X_\alpha g_m(x)|<\infty$.
\end{rk}
\bla

\section{A Girsanov theorem for the Keller-Segel particle system.}\label{gggir}

In this section, we prove a rigorous version of the intuitive argument presented in 
Subsection~\ref{girsanovbessel}.

\vip

For $x\in (\rr^2)^N$, all $K\subset \ig 1,N\id$, we denote by $x|_K=(x^i)_{i\in K}$.
For $\bK= (K_p)_{p\in \ig 1,\ell \id}$ a partition of $\ig 1,N \id$, for 
$y_1\in(\rr^2)^{|K_1|}, \dots,y_\ell\in(\rr^2)^{|K_\ell|}$, we abusively denote by $(y_p)_{p\in \ig 1,\ell\id}$
the element $y$ of $(\rr^2)^N$ such that for all $i\in \ig 1,\ell \id$, $y|_{K_i}=y_i$.

\vip

We adopt the convention that for any $\theta>0$, a $QKS(\theta,1)$-process is a $2$-dimensional Brownian motion.
This is natural in view of \eqref{EDS}.

\begin{prop}\label{girsanov}
Let $N\ge 2$, $\theta >0$ such that $N> \theta$ and set $k_0=\lceil 2N/\theta \rceil$. Fix 
some partition $\bK = (K_p)_{p\in \ig 1,\ell \id}$ of $\ig 1,N \id$ with $\ell \geq 2$. 
Consider the state spaces $\cX=E_{k_0}$ and, for each $p\in \ig 1,\ell\id$, 
$$
\cY_p=\Big\{y \in (\rr^2)^{|K_p|} : \forall K \subset \ig 1,|K_p|\id \hbox{ with $|K|\geq k_0$, } \sum_{i,j=1}^{|K_p|}
||y^i-y^j||^2>0\Big\}.
$$
Consider 

\vip

\noindent $\bullet$ $\xX = (\Omega^X,\cM^X,(X_t)_{t\geq 0},(\PP^X_x)_{x\in \cXt})$ a $QKS(\theta ,N)$-process,

\vip

\noindent $\bullet$ For all $p\in \ig 1,\ell \id$, $\yY^p = (\Omega^p,\cM^p,
(Y_{p,t})_{t\geq 0},(\PP^p_y)_{y\in \cY^{p}_\triangle})$ a 
$QKS(\theta|K_p|/N ,|K_p|)$-process.

\vip

We set $\Omega^Y=\prod_{p=1}^\ell \Omega^p$ and $Y_t=(Y_{p,t})_{p \in \ig 1,\ell\id}$, with the convention that
$Y_t=\triangle$ as soon as $Y_{p,t}=\triangle$ for some $p\in \ig1,\ell\id$.
We also introduce $\cM^Y=\sigma(Y_t : t\geq 0)$, as well as 
$\PP^Y_y= \otimes _{p=1}^\ell \PP^{p}_{y_p}$ for all $y=(y_p)_{p\in \ig 1,\ell\id} \in (\rr^2)^N$.

\vip

We fix $\e \in (0,1]$, \blue recall that
$$
G_{\bK,\e} = \Big\{ x \in \cX : \min_{1\leq p\neq q \leq \ell} \;\;\min _{i\in K_p, j\in K_q}\|x^i-x^j\|^2 > \e\Big\}
\cap B\Big(0,\frac 1\e\Big),
$$ 
and set \bla
\begin{gather*}
\tau_{\bK,\e}=\big\{ t\geq 0 : X_t \notin G_{\bK,\e} \} \quad \hbox{and} \quad 
\tilde{\tau}_{\bK,\e}=\big\{ t \geq 0 : Y_t \notin G_{\bK,\e} \}.
\end{gather*}

Fix $T>0$. \blue Quasi-everywhere in $G_{\bK,\e}$, \bla there is a probability measure 
$\mathbb{Q}_x^{T, \e, \bK}$ on $(\Omega^X,\cM^X)$, equivalent to  $\PP_x^X$, such that
the law of the process $(X_{t\land T \land \tau_{\bK,\e}})_{t\geq 0}$ under $\mathbb{Q}_x^{T, \e, \bK}$
is the same as that of 
$(Y_{t\land T \land \tilde{\tau}_{\bK,\e}})_{t\geq 0}$ 
on $(\Omega^Y,\cM^{Y})$ under $\PP^Y_x$.
\vip

Furthermore, the Radon-Nikodym density $\frac{\dd\mathbb{Q}^{T,\e,\bK}_x}{\dd\PP^X_{x}}$ 
is $\cM^X_{T\land \tau_{\bK,\e}}$-measurable, where as usual $\cM^X_t = {\sigma (X_s , s\le t)}$,  and there is a 
deterministic constant $C_{T,\e,\bK}>0$ such that \blue quasi-everywhere in $G_{\bK,\e}$, \bla
$$C_{T, \e,\bK}^{-1} \le\frac{d\mathbb{Q}^{T,\e,\bK}_x}{d\PP^X_{x}} \le C_{T, \e,\bK}.$$
\end{prop}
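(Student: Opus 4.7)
The approach is a Girsanov transformation compensating for the inter-cluster drift. The decisive observation is that for $i \in K_p$, writing $b$ for the drift of \eqref{EDS} and $b_Y(x) = (b_p(x|_{K_p}))_{p=1}^\ell$ for the concatenation of the drifts of the factors $QKS(\theta|K_p|/N, |K_p|)$, one has
\[
h^i(x) := b^i(x) - b_Y^i(x) = -\frac{\theta}{N}\sum_{q\neq p}\sum_{j \in K_q}\frac{x^i-x^j}{\|x^i-x^j\|^2}.
\]
The singular intra-cluster attractions cancel identically in this difference, and on $G_{\bK,\e}$ the inter-cluster distances are bounded below by $\sqrt{\e}$, so $\|h(x)\|\le C_\e$ uniformly, for some constant $C_\e$ depending only on $N$, $\theta$, and $\e$.

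First I would combine the cutoffs $\Gamma^{\bK}_{k,n}$ of Lemma \ref{indiqXU} (arranged to coincide with each coordinate map on $G_{\bK,\e}$), Remark \ref{caracdomaine}, and Lemma \ref{marting} to produce, for quasi all $x\in G_{\bK,\e}$, a continuous $(\rr^2)^N$-valued $\PP^X_x$-martingale $(M_t)_{t\ge 0}$ with $\langle M^i,M^j\rangle_t = \delta_{ij}t\,I_{\rr^2}$ such that $X_t = X_0 + M_t + \int_0^t b(X_s)\,\dd s$ for all $t\in [0,T\wedge \tau_{\bK,\e}]$. The same strategy, via Lemma \ref{concatenation} applied to the independent factors $\yY^p$, gives the parallel decomposition of $\yY$ with drift $b_Y$ up to $T\wedge\tilde\tau_{\bK,\e}$.

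Next, the boundedness of $h$ on $G_{\bK,\e}$ makes Novikov's criterion trivial, so
\[
Z_T := \exp\left(-\int_0^{T\wedge\tau_{\bK,\e}} h(X_s)\cdot \dd M_s - \tfrac{1}{2}\int_0^{T\wedge\tau_{\bK,\e}}\|h(X_s)\|^2\,\dd s\right)
\]
is a genuine $\PP^X_x$-martingale, $\cM^X_{T\wedge \tau_{\bK,\e}}$-measurable by construction. Set $\dd \mathbb{Q}^{T,\e,\bK}_x := Z_T\,\dd \PP^X_x$. By Girsanov's theorem, $\tilde M_t := M_t + \int_0^{t\wedge T\wedge\tau_{\bK,\e}} h(X_s)\,\dd s$ is a continuous $\mathbb{Q}^{T,\e,\bK}_x$-martingale with the same bracket, and substituting yields $X_t = X_0 + \tilde M_t + \int_0^t b_Y(X_s)\,\dd s$ for $t\in [0,T\wedge\tau_{\bK,\e}]$. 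Hence $(X_{t\wedge T\wedge \tau_{\bK,\e}})_{t\ge 0}$ solves under $\mathbb{Q}^{T,\e,\bK}_x$ the same stopped martingale problem as $(Y_{t\wedge T\wedge \tilde\tau_{\bK,\e}})_{t\ge 0}$ under $\PP^Y_x$; uniqueness of this problem, which reduces via Lemma \ref{concatenation} to the uniqueness of each factor $\yY^p$ provided by Proposition \ref{existenceXU}, identifies the two laws.

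The main obstacles are twofold. First, producing the Brownian martingale $M$ up to $\tau_{\bK,\e}$ for a Dirichlet process whose classical SDE interpretation is broken by the singular drift $b$ is precisely the point of Lemma \ref{indiqXU}: its uniformly $\cL^X_\alpha$-bounded cutoffs enable Lemma \ref{marting} to extract a genuine martingale, and the crucial feature is that the singular part of $b$ cancels in $h$, so Girsanov only involves a bounded perturbation. Second, upgrading the weak equivalence $\E[Z_T]=1$ to the deterministic two-sided bound $C_{T,\e,\bK}^{-1}\le Z_T\le C_{T,\e,\bK}$ is the most delicate step and calls for an integration-by-parts argument: since $h = \nabla H$ with $H(x) := -\frac{\theta}{N}\sum_{p<q}\sum_{i\in K_p,j\in K_q}\log\|x^i-x^j\|$ bounded on $G_{\bK,\e}$, the It\^o formula rewrites the stochastic integral appearing in $\log Z_T$ as a bounded boundary contribution plus compensator terms, which can in turn be controlled on $G_{\bK,\e}$ using the just-established decomposition of $X$.
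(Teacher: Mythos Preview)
Your plan has a genuine gap at the very first step. You propose to obtain, for quasi all $x\in G_{\bK,\e}$, a Brownian martingale $M$ such that $X_t = X_0 + M_t + \int_0^t b(X_s)\,\dd s$ on $[0,T\wedge\tau_{\bK,\e}]$. But $G_{\bK,\e}$ only controls \emph{inter}-cluster distances; intra-cluster collisions (binary, $k_2$-ary, $k_1$-ary) are not excluded, and precisely the point of the paper (see Subsection \ref{inv}) is that $\int_0^t b(X_s)\,\dd s$ may diverge near such collisions when $d_{\theta,N}(k_1)\le 1$. Lemma \ref{indiqXU} does not rescue this: Remark \ref{caracdomaine} and Lemma \ref{marting} require $\sup_{\alpha}\sup_x |\cL^X_\alpha\varphi|<\infty$, and for any cutoff $\varphi$ agreeing with a coordinate function $x^i$ on $G^{\bK}_{k_0,n}$ one has $\cL^X\varphi = b^i$ there, which is unbounded. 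So the semimartingale decomposition you need to launch Girsanov is unavailable, and hence so is the It\^o-formula argument for the two-sided bound.

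The paper sidesteps this entirely by never writing $X$ as an SDE. It applies the Dirichlet-space Girsanov theorem (Lemma \ref{girsanovfuku}) directly to the multiplicative functional $\varrho=e^u$ with your $u=-H$, exploiting that $\varrho^2\mu=\mu_1\otimes\mu_2$. The key computation (Step 1 of the proof) is that $\cL^X_\alpha\varrho$ is bounded on $G_{\bK,\e}$: although $b$ is singular, the intra-cluster part of $b\cdot\nabla u$ cancels by antisymmetry after a Lipschitz estimate on the inter-cluster kernel, much as in your observation that $h=b-b_Y$ is bounded. Because $u\notin\cF^X$ (the set $G_{\bK,\e}$ is not relatively compact in $\cX$), a further localization $u_n=u\,\Gamma^{\bK}_{k_0,n}$ and a killing on $G^{\bK}_{k_0,3n}$ are needed before Lemma \ref{girsanovfuku} applies; one then passes to the limit $n\to\infty$ via a martingale argument. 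The two-sided bound is immediate from the explicit form $L^\varrho_t=\frac{\varrho(X_t)}{\varrho(X_0)}\exp\bigl(-\int_0^t\frac{\cL^X\varrho}{\varrho}(X_s)\,\dd s\bigr)$, since $\varrho$ is bounded above and below and $\cL^X\varrho/\varrho$ is bounded on $\overline{G_{\bK,\e}}$---no It\^o formula is involved.
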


The \blue quasi-everywhere \bla notion refers to the process $\xX$.
Let us mention that for $\zeta$ the life-time of $\xX$, 
we have $\tau_{\bK,\e}\in [0,\zeta]$ when
$\zeta<\infty$ because $\triangle \notin G_{\bK,\e}$.
\blue Although this is not clear at this point of the paper, the event $\{\tau_{\bK,\e}=\zeta\}$ has a 
positive probability if $\max_{p=1,\dots,\ell} |K_p|\geq k_0$. \bla

\begin{proof}
\blue We only consider the case where $\ell =2$. The general case is
heavier in terms of notation but contains no additional difficulty.
We fix $\bK=(K_1,K_2)$ a non-trivial partition of $\ig 1,N\id$. 
The main idea is to apply Lemma~\ref{girsanovfuku} to $\xX$
with the function
\begin{equation}\label{rrrrr}
\varrho(x)=\exp(u(x)), \quad \hbox{where}\quad 
u(x) = \frac {\theta } N \sum _{i \in K_1, j\in K_2}\log ( \|x^i -x^j\|).
\end{equation}
Unfortunately, this is not licit because $u \notin \cF^X$.

\vip

{\it Step 1.} Set $\yY=(\Omega^Y,\cM^{Y},(Y_t)_{t\geq 0},
(\PP^Y_y)_{y \in (\cY_{1} \times \cY_{2})\cup \{\triangle\}})$ and fix $\e\in (0,1]$ and $n\geq 1$.
We first compute the Dirichlet space of 
$\yY$ killed when it
gets outside of $G_{\bK,\e}^{n}$, recall Lemma~\ref{indiqXU}. Consider the measures
$$
\mu_1(\dd y)=\prod_{i,j\in K_1,  i \neq j} ||y^i-y^j||^{-\theta/N}\dd y \quad \hbox{and}\quad
\mu_2( \dd y  )=\prod_{i,j\in K_2,  i \neq j} ||y^i-y^j||^{-\theta/N}\dd y
$$ 
on $(\rr^2)^{|K_1|}$ and $(\rr^2)^{|K_2|}$, with  $\mu_i(\dd y)=\dd y$ if $|K_i|=1$.
Recall that $\mu(\dd x)=\bm(x)\dd x$, see \eqref{mmu} and that
by definition, see \eqref{rrrrr},
$\varrho (x) = \prod_{i\in K_1, j\in K_2} \|x^i-x^j \|^{\theta/ N}$: we deduce that
$$
\mu_1\otimes \mu_2 = \varrho^2 \mu.
$$
By Proposition~\ref{existenceXU}, for $p=1,2$, $\yY^p$ is a $\cY^p_\triangle$-valued 
$\mu_p$-symmetric (since $(\theta|K_p|/N)/|K_p|=\theta/N$)
diffusion with regular Dirichlet space $(\cE_p,\cF_p)$ with core
$C^\infty_c(\cY_p)$ and, for $\varphi \in C^\infty_c(\cY_p)$, 
$\cE_p(\varphi,\varphi)=\frac12 \int_{(\rr^2)^{|K_p|}} ||\nabla \varphi||^2 \dd \mu_p$.
This also holds true if e.g. $|K_1|=1$, see \cite[Example 4.2.1 page 167]{f}, since then $\mu_1$
is nothing but the Lebesgue measure on $\rr^2$.
Since now $\mu_1\otimes\mu_2=\varrho^2 \mu$, by Lemma~\ref{concatenation},
$\yY$
is a $\varrho^2 \mu$-symmetric $\cX_\triangle$-valued diffusion with regular Dirichlet
space $(\cE^{Y},\cF^{Y})$ on $L^2(\cY_{1} \times \cY_{2}, \varrho ^2 \dd \mu)$ with core
$C^\infty_c(\cY_{1} \times \cY_{2})$ and, for $\varphi\in C^\infty_c(\cY_{1} \times \cY_{2})$,
\begin{align*}
\cE^{Y}(\varphi,\varphi)=&\int_{(\rr^2)^{|K_1|}} \hskip-0.3cm\cE_2(\varphi(y,\cdot),\varphi(y,\cdot))\mu_1(\dd y)
+ \int_{(\rr^2)^{|K_2|}} \hskip-0.3cm\cE_1(\varphi(\cdot,z),\varphi(\cdot,z))  \mu_2(\dd z)
=\frac{1}{2} \int_{(\rr^2)^N} \hskip-0.3cm\|\nabla  \varphi \|^2  \varrho^2 \dd \mu.
\end{align*}
Finally, we
apply Lemma~\ref{tuage} to $\yY$ with the open set $G_{\bK,\e}^{n} \subset \cX\subset  \cY_1\times\cY_2$,
to find that the resulting killed process 
$$
\yY^{n,\e}=\Big(\Omega^Y,\cM^{Y},(Y_t^{n,\e})_{t\geq 0},(\PP^Y_y)_{y\in G_{\bK,\e}^{n}\cup\{\triangle\}} \Big)
$$ 
is a
$ \varrho^2  \mu |_{G_{\bK,\e}^{n}}$-symmetric $G_{\bK,\e}^{n}\cup \{\triangle\}$-valued
diffusion with regular Dirichlet space
$({\cE}^{Y,n,\e},{\cF}^{Y,n,\e})$ with core $C_c^\infty (G_{\bK,\e}^{n})$ such that for all 
$\varphi \in C_c^\infty (G_{\bK,\e}^{n})$, 
$$
{\cE}^{Y,n,\e} (\varphi,\varphi) 
= \frac 12 \int _{G_{\bK,\e}^{n}} ||\nabla \varphi||^2 \varrho^2 \dd \mu.
$$

\vip

{\it Step 2.} We now fix $\e\in (0,1]$ and introduce, for each $n\geq 1$,
$u_{n,\e}(x)= u(x) \Gamma_{\bK,\e}^{n}(x)$, recall \eqref{rrrrr} and Lemma~\ref{indiqXU},
and $\varrho_{n,\e}=\exp(u_{n,\e})$. We check here that the functions $u_{n,\e}$ and $\varrho_{n,\e}$
satisfy the assumptions of Lemma~\ref{girsanovfuku} (to be applied to $\xX$), that
$\cA^X[\varrho_{n,\e}-1]=\cL^X\varrho_{n,\e}$ and that
\begin{equation}\label{theb}
\sup_{n\geq 1}\sup_{x\in \cX} |u_{n,\e}(x)|< \infty \qquad 
\hbox{and} \qquad 
\sup_{n\geq 1} \sup_{x \in G_{\bK,\e}^{n}} |\cL^X\varrho_{n,\e}(x)|<\infty.
\end{equation}
First, $u_{n,\e} \in \cF^X$ because $u_{n,\e} \in C^\infty_c(\cX)$, and $|u_{n,\e}|$ is bounded,
uniformly in $n\geq 1$, because $\Gamma_{\bK,\e}^{n}$ is bounded by $1$ and vanishes outside $G_{\bK,\eta}$ 
(see Lemma~\ref{indiqXU}), while $u$ is smooth on $G_{\bK,\eta}$.
To show that $\cA^X[\varrho_{n,\e}-1]=\cL^X\varrho_{n,\e}$, it suffices by Remark~\ref{caracdomaine} to verify that
$\varrho_{n,\e}-1 \in C^\infty_c(\cX)$, which is clear, and that
$\sup_{\alpha \in (0,1]}\sup_{x \in (\rr^2)^N}|\cL_\alpha^X\varrho_{n,\e}(x)|<\infty$. We have
$$
\cL^X_\alpha \varrho_{n,\e}(x) = e^{u_{n,\e}(x)}\cL^X_\alpha u_{n,\e} (x) + \frac 12 e^{u_{n,\e}(x)} \|\nabla u_{n,\e} (x) \|^2.
$$
Since $u_{n,\e} \in C_c^\infty((\rr^2)^N)$, the only difficulty is to check that 
$\sup_{\alpha \in (0,1]}\sup_{x \in (\rr^2)^N}|\cL^X_\alpha u_{n,\e}(x)|<\infty$.
By \eqref{lproduit},
$$ 
\cL^X_\alpha u_{n,\e} (x) = \Gamma_{\bK,\e}^{n}(x)\cL^X_\alpha u (x)+u(x) \cL^X_\alpha\Gamma_{\bK,\e}^{n}(x)
+ \nabla \Gamma_{\bK,\e}^{n}(x)\cdot \nabla u(x).
$$
Again, the only difficulty consists of the first term, because $\cL^X_\alpha \Gamma_{\bK,\e}^{n}$ 
is uniformly bounded by Lemma~\ref{indiqXU} and vanishes outside $G_{\bK,\eta}$, while
$u$ is smooth on $G_{\bK,\eta}$.
Since Supp $\Gamma_{\bK,\e}^{n}\subset G_{\bK,\eta}$,
we are reduced to show
that $\sup_{\alpha \in (0,1]}\sup_{x \in G_{\bK,\eta}}|\cL^X_\alpha u(x)|<\infty$. But
$$
\cL^X_\alpha u= \frac12 \Delta u - \frac{\theta}{N}S_\alpha, \quad \hbox{where}\quad 
S_\alpha(x) =\sum _{1\le i, j \le N}  \frac{x^i-x^j}{\|x^i-x^j\|^2+\alpha}\cdot \nabla_{x^i}{u}(x),
$$
and we only have to verify that $\sup_{\alpha \in (0,1]}\sup_{x \in G_{\bK,\eta}}|S_\alpha(x)|<\infty$.

\vip

For $k \in K_1$ and $\ell \in K_2$, we have
\begin{align*}
\nabla_{x^k} {u} (x) = \sum _{j\in K_2}  \frac{ \theta}{N} \frac{x^{k}-x^{j}}{\|x^{k}-x^{j}\|^{2}}
\quad \hbox{and}\quad \nabla_{x^\ell} {u} (x) = \sum _{i\in K_1}  \frac{ \theta}{N} \frac{x^{\ell}-x^{i}}
{\|x^{\ell}-x^{i}\|^{2}}.
\end{align*}
Hence
$ S_{\alpha} =S_{1,\alpha}+S_{2,\alpha}+S_{3,\alpha}+S_{4,\alpha}$, where
\begin{align*}
S_{1,\alpha}(x)=& \frac \theta N \sum_{i,j \in K_1} \frac{x^{i}-x^{j}}{\|x^{i}-x^{j}\|^{2}+\alpha}\cdot \sum _{k \in K_2} 
\frac{x^{i}-x^{k}}{\|x^{i}-x^{k}\|^{2}} ,\\
S_{2,\alpha}(x)=&\frac \theta N\sum _{i \in K_2, j\in K_1 }
\frac{x^{i}-x^{j}}{\|x^{i}-x^{j}\|^{2}+\alpha}\cdot \sum _{k \in K_1} \frac{x^{i}-x^{k}}{\|x^{i}-x^{k}\|^{2}},
\end{align*}
and $S_{3,\alpha}$ (resp. $S_{4,\alpha}$) is defined as $S_{1,\alpha}$ (resp. $S_{2,\alpha}$) exchanging the roles of 
$K_1$ and $K_2$.
First, $S_{2,\alpha}$ (and $S_{4,\alpha}$) is obviously uniformly bounded on $G_{\bK,\eta}$.
Next, by symmetry,
$$ 
S_{1,\alpha}(x) = \frac \theta { 2N }\sum _{i,j \in K_1} \frac{x^{i}-x^{j}}{\|x^{i}-x^{j}\|^{2}+\alpha} 
\sum _{k\in K_2} \Big( \frac{x^{i}-x^{k}}{\|x^{i}-x^{k}\|^{2}} - \frac{x^{j}-x^{k}}{\|x^{j}-x^{k}\|^{2}}\Big).
$$
Moreover, there is $C_\eta >0$ such that for all $x\in G_{\bK,\eta}$, all
$i,j \in K_1$ such that $i\neq j$, all $k\in K_2$,
\begin{align*}
\Big\|\frac{x^{i}-x^{k}}{\|x^{i}-x^{k}\|^{2}} -\frac{x^{j}-x^{k}}{\|x^{j}-x^{k}\|^{2}}\Big\| \le C_{\eta } \|x^{i}-x^{j} \|,
\end{align*}
so that $S_{1,\alpha}$ (and $S_{3,\alpha}$) is bounded on $G_{\bK,\eta}$, uniformly in $\alpha\in (0,1]$, as desired.
\vip
Finally, the above computations, together with the facts that $\Gamma_{\bK,\e}^{n}=1$ on $G_{\bK,\e}^{n}$,
also show that for $x \in G_{\bK,\e}^{n}$,
$$
\cL^X \varrho_{n,\e}(x)=e^{u(x)}\Big(\frac 12 \Delta u(x)- \frac\theta N S_\alpha(x)\Big) 
+ \frac12 e^{u(x)}||\nabla u(x)||^2,
$$
which is bounded on $G_{\bK,\eta}$. Since $G_{\bK,\e}^{n}\subset G_{\bK,\eta}$, this
implies that $\sup_{n\geq 1} \sup_{x \in G_{\bK,\e}^{n}} |\cL^X\varrho_{n,\e}(x)|$ and completes the step.

\vip

{\it Step 3.} We apply Lemma~\ref{girsanovfuku} to the process $\xX$ with
$u_{n,\e}$ and $\varrho_{n,\e}$ defined in Step 2. Recalling that $\cA^X [\varrho_{n,\e}-1]=\cL^X \varrho_{n,\e}$
and using the conventions $\varrho_{n,\e}(\triangle)=1$ and  $\cL^X \varrho_{n,\e}(\triangle)=0$, we set
\begin{equation}\label{eqlne}
L^{n,\e}_t
=\frac{\varrho_{n,\e} (X_t)}{\varrho_{n,\e} (X_0)} 
\exp \Big( -\int _0 ^t \frac{\cL^X \varrho_{n,\e}(X_s)}{\varrho_{n,\e}(X_s)} \dd s \Big).
\end{equation}
Set $\cM^X_t=\sigma (\{ X_s, s\le t \})$.
By Lemma~\ref{girsanovfuku}, there is a family of probability measures 
$(\QQ^{n,\e}_x)_{x\in \cX\cup\{\triangle\}}$ such that
$$
\QQ^{n,\e} _x = L_t^{n,\e} \cdot \PP_x^X \quad \hbox{on} \quad \cM^X_t
$$  
for all $t\ge 0$ and quasi-everywhere in $\cX\cup\{\triangle\}$,
and such that
$$
\xX^{n,\e}  =\Big(\Omega^X, \cM^X, (X_t)_{t\ge 0}, (\QQ^{n,\e}_x)_{x\in \cX_\triangle}\Big)
$$ 
is a $ \varrho_{n,\e}^2  \mu $-symmetric $\cX\cup \{\triangle \}$-valued 
diffusion with regular  Dirichlet space  $(\cE^{n,\e},\cF^{n,\e})$ 
with core $C_c^\infty (\cX)$ such that for all $\varphi \in C_c^\infty (\cX)$, 
$$
\cE^{n,\e} (\varphi,\varphi) = \frac 12 \int_{(\rr^2)^N} ||\nabla \varphi||^2 \varrho_{n,\e}^2 \dd \mu. 
$$
Next, we apply Lemma~\ref{tuage} to $\xX^{n,\e}$ with the open set $G_{\bK,\e}^{n}$: the 
resulting killed process 
$$
\xX^{*,n,\e}=\Big(\Omega^X, \cM^X, (X_t^{*,n,\e})_{t\ge 0}, (\QQ^{n,\e}_x)_{x\in G_{\bK,\e}^{n}\cup \{\triangle \}}\Big)
$$ 
is a
$ \varrho_{n,\e}^2  \mu |_{G_{\bK,\e}^{n}}$-symmetric $G_{\bK,\e}^{n}\cup \{\triangle\}$-valued
diffusion with regular Dirichlet space
$({\cE}^{*,n,\e},{\cF}^{*,n,\e})$ with core $C_c^\infty (G_{\bK,\e}^{n})$ such that for all 
$\varphi \in C_c^\infty (G_{\bK,\e}^{n})$, 
$$
{\cE}^{*,n,\e} (\varphi,\varphi) 
= \frac 12 \int _{G_{\bK,\e}^{n}} ||\nabla \varphi||^2 \varrho_{n,\e}^2 \dd \mu.
$$
Comparing this Dirichlet space with the one found in Step 1, using that 
$\varrho_{n,\e}=\varrho$ on $G_{\bK,\e}^{n}$ and a uniqueness argument, see \cite[Theorem 4.2.8 p 167]{f},
we conclude that quasi-everywhere in $G_{\bK,\e}^{n}$, the law of $X^{*,n,\e}$ under $\QQ^{n,\e}_x$ equals 
the law of $Y^{n,\e}$ under $\PP^Y_x$.

\vip

{\it Step 4.} We fix $T>0$ and $\e\in (0,1]$ and complete the proof.
Since $\QQ^{n,\e}_x=L^{n,\e}_T\cdot \PP_x^X$ on $\cM^X_T$, we know from Step 3 that for all $n\geq 1$,
quasi-everywhere
in $G_{\bK,\e}^{n}$,
for all continuous bounded $\Phi : C([0,T],\cX_\triangle) \to \rr$, (observe 
that $\bar G_{\bK,\e}^{n}\subset \cX\subset \cX_\triangle$)
$$
\E_x^X[ \Phi(X_{\cdot \land \tau_{\bK,n,\e}\land T}) L^{n,\e}_T]=\E_x^Y[ \Phi(Y_{\cdot \land \tilde\tau_{\bK,n,\e}\land T})],
$$
where $\tau_{\bK,n,\e} = \inf \{t>0 : X_t \notin G_{\bK,\e}^{n}\}\land \tau_{\bK,\e}$
and $\tilde \tau_{\bK,n,\e} = \inf \{t>0 : Y_t \notin G_{\bK,\e}^{n}\}\land \tilde\tau_{\bK,\e}$.
Since $(L^{n,\e}_{t})_{t\ge 0}$ is a $\PP_x^X$-martingale by Lemma \ref{girsanovfuku}, we deduce that
quasi-everywhere in $G_{\bK,\e}^{n}$,
\begin{equation}\label{newform}
\E_x^X[ \Phi(X_{\cdot \land \tau_{\bK,n,\e}\land T}) L^{n,\e}_{\tau_{\bK,n,\e}\land T}]
=\E_x^Y[ \Phi(Y_{\cdot \land \tilde \tau_{\bK,n,\e}\land T})].
\end{equation}
Recall that $G_{\bK,\e} \subset \cup_{n\geq 1 }G_{\bK,\e}^{n}$,
see Lemma~\ref{indiqXU}. Hence $\lim_n \tau_{\bK,n,\e}=\tau_{\bK,\e}$, $\lim_n \tilde\tau_{\bK,n,\e}=\tilde\tau_{\bK,\e}$, 
and 
for each $x\in G_{\bK,\e}$, there is $n_x\geq 1$ such that $x \in G_{\bK,\e}^{n}$ for all $n\geq n_x$.
We  deduce from \eqref{newform} that quasi-everywhere in $G_{\bK,\e}$,
the process $(L^{n,\e}_{\tau_{\bK,n,\e}\land T })_{n\ge n_x}$ is a $(\cM_{\tau_{\bK,n,\e}\land T}^X)_{n\ge n_x}$-martingale
under $\PP_x^X$.
Moreover, recalling the expression \eqref{eqlne} of $L^{n,\e}$, that $\varrho_{n,\e}=\exp(u_{n,\e})$
and the bound \eqref{theb}, we conclude that
there is a constant $C_{T,\e,\bK}>0$ such that quasi-everywhere in  $G_{\bK,\e}$,
$$
\hbox{$\PP^X_x$-a.s., for all $n\geq n_x$,} \quad
C_{T,\e,\bK}^{-1} \leq L^{n,\e}_{\tau_{\bK,n,\e}\land T } \leq C_{T,\e,\bK}.
$$
Hence the martingale $(L^{n,\e}_{\tau_{\bK,n,\e}\land T })_{n\ge n_x}$
is closed by some $\cM_{\tau_{\bK,\e}\land T}$-measurable random variable $J_{T,\e,\bK}$
that satisfies $C_{T,\e,\bK}^{-1} \leq J_{T,\e,\bK} \leq C_{T,\e,\bK}$, and 
\eqref{newform} implies that for all $n\geq n_x$,
\begin{equation*}
\E_x^X[ \Phi(X_{\cdot \land \tau_{\bK,n,\e}\land T}) J_{T,\e,\bK}]=\E_x^Y[ \Phi(Y_{\cdot \land \tilde\tau_{\bK,n,\e}\land T})].
\end{equation*}
Letting $n\to \infty$, we find that
quasi-everywhere in  $G_{\bK,\e}$, for $\Phi \in C_b(C([0,T],\cX_\triangle),\rr)$,
\begin{equation*}
\E_x^X[ \Phi(X_{\cdot \land \tau_{\bK,\e}\land T}) J_{T,\e,\bK}]=\E_x^Y[ \Phi(Y_{\cdot \land \tilde \tau_{\bK,\e}\land T})].
\end{equation*}
Setting $\QQ^{T,\e,\bK}_x=J_{T,\e,\bK}\cdot \PP_x^X$ completes the proof. \bla
\end{proof}

\section{Explosion and continuity at explosion}\label{explacon}

In this section we consider a $QKS(\theta,N)$-process 
$\xX$ with life-time $\zeta$. 
We show that $\zeta=\infty$ when $\theta\in (0,2)$ and that $\zeta<\infty$ when $\theta\geq 2$.
In the latter case, we also prove that $\lim_{t\to \zeta-} X_t$ a.s. exists, 
{\it for the usual topology of $(\rr^2)^N$}: 
the Keller-Segel process is continuous at explosion. 
This is not clear at all at first sight: we know that $\lim_{t\to \zeta-} X_t=\triangle$ a.s. for
the one-point compactification topology, which
means that the process escapes
from every compact of $\cX$, but it could either go to infinity, which is not 
difficult to exclude, or it could tend to the boundary of $\cX$ without converging, 
e.g. because it could alternate very fast
between having its particles labeled in $\ig 1,k_0\id$ very close and having its 
 particles labeled in $\ig 2,k_0+1\id$ very close.
The goal of the section is to prove
the following result.

\begin{prop}\label{cont}
Fix $\theta >0$ and $N\geq 2$ such that $N>\theta$, set $k_0=\lceil 2N/\theta\rceil$
and $\cX=E_{k_0}$
and consider a $QKS(\theta,N)$-process 
$\xX =(\Omega^X, \cM^X, (X_t)_{t\ge 0}, (\PP_x^X)_{x\in \cX \cup \{ \triangle \}})$ with life-time $\zeta$.
\vip
(i) If $\theta < 2$, then \blue quasi-everywhere, \bla $\PP^X_x(\zeta = \infty)=1$.
\vip
(ii) If $\theta\geq 2$, then \blue quasi-everywhere, \bla $\PP^X_x$-a.s., $\zeta<\infty$
and $X_{\zeta-}=\lim_{t\to \zeta} X_t$ exists for the usual topology of $(\rr^2)^N$ and does not 
belong to $E_{k_0}$.
\end{prop}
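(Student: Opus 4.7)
The plan is to handle (i) and the finite-explosion part of (ii) quickly via the Bessel--Brownian decomposition of Lemma~\ref{besbro}, then establish the continuity at explosion in (ii) by first proving a dichotomy on cluster dispersions and then running an induction on $n=N-k_0$.

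For (i), since $\theta<2$ forces $k_0>N$, we have $\cX=(\rr^2)^N$, so any Hunt-process explosion must be an escape to infinity. For quasi all $x$, Lemma~\ref{besbro} makes $S_{\ig 1,N\id}(X_t)$ a $2D$-Brownian motion and $R_{\ig 1,N\id}(X_t)$ a squared Bessel process of strictly positive dimension $d_{\theta,N}(N)>0$ on $[0,\zeta)$, both a.s.\ non-exploding. The identity $\|X_t\|^2=N\|S_{\ig 1,N\id}(X_t)\|^2+R_{\ig 1,N\id}(X_t)$ then controls $\|X_t\|$ on bounded intervals, and Remark~\ref{rklt} applied to $\cK_n=\{\|x\|\le n\}$ yields $\zeta=\infty$.

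For (ii), assume $\theta\ge 2$, so $k_0\le N$ and $d_{\theta,N}(N)\le 0$. If $\PP_x^X(\zeta=\infty)>0$, Lemma~\ref{besbro} would present $R_{\ig 1,N\id}(X_t)$ as a nonpositive-dimension squared Bessel process killed at $0$ defined on $[0,\infty)$, which a.s.\ hits $0$ in finite time and forces $X$ to leave $E_N\supseteq\cX$, contradiction. The heart of the continuity is the following dichotomy, proved by reverse induction on $|K|\ge 2$: a.s., either $\lim_{t\to\zeta-}R_K(X_t)=0$ or $\liminf_{t\to\zeta-}R_K(X_t)>0$. The case $|K|=N$ is continuity of the squared Bessel path. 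For the step, assume the dichotomy on all strict supersets of $K$; if some $R_{K\cup\{i\}}\to 0$ then $R_K\to 0$, otherwise $\liminf R_{K\cup\{i\}}>0$ for every $i\notin K$, and I argue by contradiction, supposing $\liminf R_K=0$ and $\limsup R_K>0$. For $\e>0$ small, Lemma~\ref{campingcar} provides $\alpha>0$ such that on $[\zeta-\alpha,\zeta)$ the event $\{R_K(X_t)<\e\}$ forces the cluster $K$ to be uniformly separated from its complement. Proposition~\ref{girsanov} with the partition $(K,K^c)$ then identifies, on each such separation excursion and under a probability equivalent to $\PP^X_x$, the subprocess $(X^i_t)_{i\in K}$ with a $QKS(\theta|K|/N,|K|)$-process; Lemma~\ref{besbro} applied to the latter makes $R_K(X_t)$ a genuine continuous squared Bessel process of dimension $d_{\theta,N}(|K|)$ on that excursion, hence unable to upcross $[\e/2,\e]$ infinitely often in finite time. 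Equivalence of measures transfers the impossibility back to $\PP^X_x$. This step is the main technical obstacle, since the Girsanov comparison must be stitched together across countably many random excursions.

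I then induct on $n=N-k_0(\theta,N)\in\nn$ to produce $X_{\zeta-}=\lim_{t\to\zeta-}X_t$. Base $n=0$ ($k_0=N$): the dichotomy applied to $K=\ig 1,N\id$ and the fact that $X$ must exit every compact of $E_N$ at $\zeta$ force $\lim R_{\ig 1,N\id}(X_t)=0$; combining with the a.s.\ existence of $\lim_{t\to\zeta-}S_{\ig 1,N\id}(X_t)$ as the terminal value of a Brownian motion on the finite interval $[0,\zeta]$, all $N$ particles converge to the same point. Induction step: using the dichotomy, select a maximal proper $K\subsetneq\ig 1,N\id$ with $\lim R_K=0$ and $\liminf R_{K\cup\{i\}}>0$ for every $i\notin K$; for some $\alpha>0$ the groups $K$ and $K^c$ are uniformly separated on $[\zeta-\alpha,\zeta)$, and Proposition~\ref{girsanov} identifies $(X^i_t)_{i\in K}$ and $(X^i_t)_{i\in K^c}$ there with independent $QKS(\theta|K|/N,|K|)$ and $QKS(\theta|K^c|/N,|K^c|)$ processes, whose $k_0$-parameters both equal $k_0(\theta,N)$. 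For each subprocess, either its life-time strictly exceeds $\zeta$ (trivial continuity there) or equals $\zeta$, in which case the sub-defect $|K|-k_0(\theta,N)<n$ (using $|K|\le N-1$) and the induction hypothesis applies. Finally $X_{\zeta-}\notin E_{k_0}$ is automatic, since otherwise $X$ would remain in a compact of $\cX$ up to $\zeta$, contradicting $X_\zeta=\triangle$ in the one-point compactification topology.
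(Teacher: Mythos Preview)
Your proposal is correct and follows essentially the same approach as the paper: Lemma~\ref{besbro} handles (i) and the finiteness of $\zeta$ in (ii), Lemma~\ref{lemtech} establishes exactly your dichotomy on $R_K(X_t)$ by the same reverse induction and Girsanov comparison with a squared Bessel process, and the paper then runs the same induction on $n=N-k_0(\theta,N)$ via Proposition~\ref{girsanov}. The only minor omission in your sketch is that in the induction step you should also dispose separately of the case $\lim_{t\to\zeta-}R_{\ig 1,N\id}(X_t)=0$ (where no proper maximal $K$ need exist), but this is immediate from the base-case argument.
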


 We first show that the process does not explode in the subcritical case and 
cannot go to infinity at explosion in the supercritical case.

\begin{lemma}\label{Xborne}
(i) If $\theta < 2$ and $N\ge 2$, then \blue quasi-everywhere, \bla $\PP^X_x(\zeta = \infty)=1$.
\vip
(ii) If $\theta \ge 2$ and $N> \theta$, then \blue quasi-everywhere, \bla
$$
\PP^X_x\Big(\zeta < \infty \hbox{ and } \blue \sup_{t\in [0, \zeta )}\bla  \|X_t\| < \infty\Big)=1.
$$
\end{lemma}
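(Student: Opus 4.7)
The plan is to deduce both claims from the Brownian-Bessel decomposition furnished by Lemma~\ref{besbro}, combined with the elementary identity
$$
\|y\|^2 = N\,\|S_{\ig 1,N\id}(y)\|^2 + R_{\ig 1,N\id}(y), \qquad y\in(\rr^2)^N,
$$
which is immediate from $\sum_{i=1}^N(y^i-S_{\ig 1,N\id}(y))=0$. For $x$ in the quasi-all subset of $\cX$ provided by Lemma~\ref{besbro}, denote by $(M_t)_{t\ge 0}$ and $(D_t)_{t\ge 0}$ the associated independent processes: a two-dimensional Brownian motion with diffusion constant $N^{-1/2}$ starting from $S_{\ig 1,N\id}(x)$, and a squared Bessel process of dimension $d_{\theta,N}(N)=(N-1)(2-\theta)$ starting from $R_{\ig 1,N\id}(x)$, with $D$ killed at $\tau_D:=\inf\{t>0:D_t=0\}$ when $d_{\theta,N}(N)\le 0$. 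The identity and the decomposition yield
$$
\|X_t\|^2 = N\|M_t\|^2 + D_t \qquad \text{for every } t\in[0,\zeta).
$$

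For~(i), the assumption $\theta<2$ gives $d_{\theta,N}(N)>0$, so $D$ is continuous on all of $[0,\infty)$ (strictly positive if $d_{\theta,N}(N)\ge 2$, reflected at $0$ otherwise). In particular $\sup_{t\in[0,T]}(N\|M_t\|^2+D_t)<\infty$ a.s.\ for every $T>0$. Moreover $k_0>N$ here, so $\cX=(\rr^2)^N$ and the closed balls $\overline{B}(0,n)$ form a compact exhaustion of $\cX$. By Remark~\ref{rklt}, $\zeta$ is the almost sure limit of the exit times of $X$ from these balls, and the bound above forces each such exit time to be a.s.\ infinite, hence $\zeta=\infty$ a.s.

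For~(ii), $\theta\ge 2$ gives $d_{\theta,N}(N)\le 0$ and $k_0\le N$, so $\tau_D<\infty$ a.s. The central claim is $\zeta\le\tau_D$. Suppose for contradiction that $\tau_D<\zeta$ with positive probability; then $X$ is continuous at $\tau_D$ with $X_{\tau_D}\in\cX$, and combining the identity $R_{\ig 1,N\id}(X_t)=D_t$ on $[0,\zeta)$ with the continuity of $D$ on $[0,\tau_D]$ and $D_{\tau_D}=0$ yields $R_{\ig 1,N\id}(X_{\tau_D})=0$. But this means that all $N$ particles coincide in $X_{\tau_D}$, which is forbidden in $E_{k_0}$ as soon as $k_0\le N$, contradicting $X_{\tau_D}\in\cX=E_{k_0}$. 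Therefore $\zeta\le\tau_D<\infty$, and the continuity of $M$ and $D$ on the compact interval $[0,\tau_D]$ gives
$$
\sup_{t\in[0,\zeta)}\|X_t\|^2 \le \sup_{t\in[0,\tau_D]}\bigl(N\|M_t\|^2+D_t\bigr) < \infty \quad \text{a.s.}
$$
The real content of the proof is packaged inside Lemma~\ref{besbro}; once the Brownian-Bessel decomposition is available on the full interval $[0,\zeta)$, the argument above is essentially a matching of the Bessel dichotomy $d_{\theta,N}(N)>0$ vs.\ $\le 0$ against the cluster-size condition $k_0>N$ vs.\ $k_0\le N$, the only delicate point being the continuity argument at $\tau_D$ that rules out $\tau_D<\zeta$ in~(ii).
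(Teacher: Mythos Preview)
Your proof is correct and follows essentially the same approach as the paper's, using Lemma~\ref{besbro} to reduce both claims to the behavior of a squared Bessel process of dimension $d_{\theta,N}(N)$; you even sharpen the paper's inequality $\|X_t\|^2\le 2R_{\ig 1,N\id}(X_t)+2N\|S_{\ig 1,N\id}(X_t)\|^2$ to the exact identity. One small slip in~(i): the bound does not force \emph{each} exit time from $\overline{B}(0,n)$ to be infinite, but rather shows that $\sup_{[0,\zeta)}\|X_t\|<\infty$ on $\{\zeta<\infty\}$, which is incompatible with explosion when $\cX=(\rr^2)^N$.
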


\begin{proof}
The arguments below only apply \blue quasi-everywhere, \bla since we use Proposition
\ref{deco}.
In both cases, we have for all $i\in \ig 1,N \id$ and all $t \in [0,\zeta)$,
$$
||X_t||^2\leq 2 \sum_{i=1}^N (\|X^i_t-S_{\ig 1,N \id}(X_t)\|^2 + \| S_{\ig 1,N \id}(X_t) \|^2)
=2 R_{\ig 1,N \id}(X_t) + 2N \| S_{\ig 1,N \id}(X_t) \|^2.
$$
 By Lemma~\ref{besbro}, there are a Brownian motion
$(M_t)_{t\geq 0}$ and a squared Bessel process $(D_t)_{t\geq 0}$ with dimension $d_{\theta ,N}(N)$
(killed when it gets out of $(0,\infty)$ if $d_{\theta ,N}(N)\leq 0$), such that
$S_{\ig 1,N \id}(X_t)=M_t$ and $R_{\ig 1,N \id}(X_t)=D_t$ for all $t \in [0,\zeta)$.
These processes being locally bounded, we conclude that
\begin{align}\label{processborne}
\mbox{ a.s., for all } T>0, \quad \sup _{t \in [0,\zeta \land T)}\|X_t\| < \infty .
\end{align}

(i) When $\theta <2$ and $N\ge 2$, we have $k_0=\lceil 2N/\theta\rceil > N$, so that $\cX = (\rr^2)^N$.
Hence on the event $\{\zeta<\infty\}$, we necessarily have $\limsup_{t\to\zeta-}||X_t||=\infty$,
and this is incompatible with \eqref{processborne} with $T=\zeta$.

\vip
(ii) When $N>\theta \ge 2$, we have $d_{\theta,N}(N)\leq 0$, so that   $(D_t)_{t\ge 0}$   is killed at some finite time $\tau$.
It holds that $\zeta\leq \tau$. Indeed, on the event where $\tau<\zeta$, we have
$R_{\ig 1,N\id}(X_{\tau})=\lim_{t\to\tau-} R_{\ig 1,N\id}(X_{t})=\lim_{t\to\tau-} D_t=0$, so that $X_{\tau} \notin E_{k_0}$
(since $k_0\leq N$), which is not possible since $\tau<\zeta$.
Hence $\zeta$ is also a.s. finite and it holds that  
$\blue \sup_{t\in[0, \zeta )}\bla \|X_t\|< \infty$ a.s. by \eqref{processborne} with the choice $T=\zeta$.
\end{proof}

To show the continuity at explosion in the supercritical case, we need to prove
the following \blue delicate \bla lemma.

\begin{lemma}\label{lemtech}
Assume that $N>\theta\geq 2$. \blue Quasi-everywhere, \bla
for all $K\subset \ig 1,N\id$ with $|K|\ge 2$,
$$
\PP^X_x\hbox{-\blue a.s.\bla },\qquad \lim_{t \to \zeta-} R_K(X_t) = 0 \quad \mbox{ or } \quad \liminf_{t\to \zeta-} R_K(X_t) >0. 
$$
\end{lemma}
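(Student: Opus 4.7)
I would argue by reverse induction on $k = |K|$, following the informal outline in Subsection \ref{cae}(b). For the base case $k = N$, Lemma \ref{besbro} identifies $R_{\ig 1,N\id}(X_t)$ on $[0,\zeta)$ with a true squared Bessel process $(D_t)$, which is continuous; combined with Lemma \ref{Xborne}(ii) guaranteeing $X$ stays bounded on $[0,\zeta)$, this yields the existence of $\lim_{t\to\zeta-} R_{\ig 1,N\id}(X_t) \in [0,\infty)$, hence the dichotomy.

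For the inductive step, assume the property for all subsets of cardinal $\ge n$ and fix $K$ with $|K| = n-1$. If some $i \notin K$ has $\lim_{t\to\zeta-} R_{K\cup\{i\}}(X_t) = 0$, the elementary inequality $R_K(x) \le \frac{n}{n-1} R_{K\cup\{i\}}(x)$ (immediate from the expression $R_L(x) = \frac{1}{2|L|}\sum_{i,j\in L}\|x^i-x^j\|^2$) forces $\lim_{t\to\zeta-} R_K(X_t) = 0$. Otherwise, the induction hypothesis supplies a random $c > 0$ with $\liminf_{t\to\zeta-} R_{K\cup\{i\}}(X_t) > c$ for every $i \notin K$. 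Suppose for contradiction that on an event of positive $\PP_x^X$-probability inside this favorable case, $\liminf_{t\to\zeta-} R_K(X_t) = 0 < \limsup_{t\to\zeta-} R_K(X_t)$. Stratifying over rational parameters, I may fix deterministic $\e, \alpha, T, \delta > 0$ and restrict to a measurable event $B$ of positive probability on which $\zeta < T$, $R_K(X_t)$ upcrosses $[\e/2,\e]$ infinitely often during $[\zeta-\alpha,\zeta)$, and --- by choosing $\e$ small enough relative to the threshold inherited from the $\liminf$ and applying Lemma \ref{campingcar} --- whenever $R_K(X_t) \le \e$ during $[\zeta-\alpha,\zeta)$ we have $\min_{i\in K, j\notin K}\|X_t^i-X_t^j\|^2 \ge \delta$ and $X_t \in B(0,1/\delta)$, i.e.\ $X_t \in G_{\bK,\delta}$ for the partition $\bK = (K,K^c)$.

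To derive the contradiction, I would apply Proposition \ref{girsanov} with this partition and the parameter $\delta$: under the equivalent probability measure $\QQ^{T,\delta,\bK}_x$, the killed process $(X_{t\wedge T\wedge \tau_{\bK,\delta}})_{t\ge 0}$ has the law of the analogously killed product of independent $KS(\theta|K|/N,|K|)$ and $KS(\theta|K^c|/N,|K^c|)$ processes. Lemma \ref{besbro} applied to the first factor gives that $R_K$ is, under $\QQ^{T,\delta,\bK}_x$ and on the corresponding killed trajectory, a continuous squared Bessel process of dimension $d_{\theta,N}(|K|)$, which cannot upcross $[\e/2,\e]$ infinitely often on a bounded time interval. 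Since the measures are equivalent (with bounded density), the same finiteness holds $\PP_x^X$-a.s.\ on $[0, T \wedge \tau_{\bK,\delta}]$. On event $B$, the infinitely many upcrossings occur during $[\zeta-\alpha, \zeta) \subset [0,T]$ and at times where $X_t \in G_{\bK,\delta}$, so they must occur before $\tau_{\bK,\delta}$ (or in a succession of excursions into $G_{\bK,\delta}$), producing the desired contradiction.

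\textbf{Main obstacle.} The delicate point is that Proposition \ref{girsanov} yields comparison only up to the \emph{global} exit time $\tau_{\bK,\delta}$, whereas on $B$ the system is only guaranteed to lie in $G_{\bK,\delta}$ during the excursions of $R_K$ below $\e$, and may leave $G_{\bK,\delta}$ in between (an outside particle may approach the cluster while $R_K$ is above $\e$). Turning the pathwise finiteness of upcrossings under the equivalent measure into a global contradiction therefore requires iterating the Girsanov comparison across successive excursions via the strong Markov property, and then a countable-union argument over rational $(\e,\alpha,T,\delta)$ to convert the stratum-by-stratum quasi-sure statements into a single full $\PP_x^X$-measure event. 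Executing this bookkeeping cleanly, while respecting the \emph{quasi all $x$} restriction inherent to the Dirichlet-space framework, is the technically heavy part of the argument.
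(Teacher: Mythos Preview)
Your proposal is correct and follows the paper's proof closely: reverse induction on $|K|$, the same case split via the induction hypothesis, Lemma \ref{campingcar} for separation, and Proposition \ref{girsanov} applied excursion-by-excursion via the strong Markov property. The paper resolves the obstacle you flag exactly as you suggest, by showing that at each downcrossing time $\sigma_k^\e$ the next upcrossing, conditionally on the past, either lasts at least a fixed duration $A$ (via comparison with a squared Bessel process staying below $2\e$) or the process exits the good region, with conditional probability at least some uniform $p_{A,\e}>0$; a product/Borel--Cantelli argument then excludes infinitely many short excursions, and Step~3 takes the countable union over rational parameters.
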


\begin{proof} 
We proceed by reverse induction on the cardinal of $K$. If first $K = \ig 1,N \id$, the result is clear 
because $(R_{\ig 1,N \id}(X_t))_{t\in [0,\zeta)}$ is a (killed) squared Bessel process on $[0,\zeta)$ by Lemma
\ref{besbro} (and since $\zeta \leq \tau$ exactly as in the proof of Lemma~\ref{Xborne}-(ii)),   
hence it has a limit in $\rr_+$ as $t\to\zeta$.
Then, we assume that the property is proved if $|K| \ge n$ where $n \in \ig 3,N \id$, we take 
$K \subset \ig 1,N \id$ such that $|K| = n-1$ and we show in several steps that a.s.,
either $\lim_{t\to \zeta-} R_K(X_t)=0$ or $\liminf_{t\to \zeta-} R_K(X_t)>0$.

\vip

{\it Step 1.} We fix $\e\in (0,1]$ and introduce $\tilde \sigma^\e_0=0$ and, for
$k\geq 1$,
$$
\sigma^\e_k=\inf\{t\in (\tilde\sigma^\e_{k-1},\zeta) : R_K(X_t)\leq \e\} \quad \hbox{and} 
\quad \tilde \sigma^\e_k=\inf\{t\in (\sigma^\e_k,\zeta) : R_K(X_t)\geq 2\e\},
$$
with the convention that $\inf \emptyset = \zeta$.
We show in this step that for all deterministic $A>0$,
there exists a constant $p_{A,\e}>0$ such that for all $k\geq 1$, \blue quasi-everywhere,
on $\{\sigma_k^\e<\zeta\}$,
\begin{align*}
\PP^X_x\Big(\{\tilde\sigma^\e_k\geq (\sigma^\e_k+A)\land \zeta \}\cup B_{k,\e} \Big\vert \cM^X_{\sigma^\e_k}\Big)
\geq p_{A,\e},
\end{align*}
where $\cM^X_t=\sigma(X_s : s\in [0,t])$, and where, setting $a_\e=c_{|K|+1} \e / c_{|K|}$ 
(recall Lemma~\ref{campingcar}),
$$
B_{k,\e}=\Big\{\blue \sup_{t\in[\sigma^\e_k,\tilde \sigma^\e_k)} ||X_t|| \geq 1/\e\; \hbox{ or } \;
\inf_{t\in[\sigma^\e_k,\tilde \sigma^\e_k)}
\min_{i\notin K} R_{K\cup\{i\}} (X_t)\leq a_\e\Big\}.
$$

By the strong Markov property of $\xX$, on $\{\sigma_k^\e<\zeta\}$,
$$
\PP^X_x\Big(\{\tilde\sigma^\e_k\geq (\sigma^\e_k+A)\land \zeta \}\cup B_{k,\e} \Big\vert \cM^X_{\sigma^\e_k}\Big)
=g(X_{\sigma _k^\e}),
$$
where
$$
g(y)=
\PP^X_{y}\Big(\{\tilde\sigma^\e_1\geq(\sigma^\e_1+ A) \land \zeta \}\cup B_{1,\e}\Big)=
\PP^X_{y}\Big(\{\tilde\sigma^\e_1\geq A \land \zeta \}\cup C_{1,\e}\Big)
$$
and 
$$
C_{1,\e}=\Big\{\blue \sup_{t\in[0,\tilde \sigma^\e_1)} ||X_t|| \geq 1/\e\;\; \hbox{ or } \;
\inf_{t\in[0,\tilde \sigma^\e_1)}
\min_{i\notin K} R_{K\cup\{i\}} (X_t)\leq a_\e\Big\}.
$$
We used that  $R_K(X_{\sigma_k^\e}) \leq \e$ on $\{\sigma_k^\e<\zeta\}$ by definition of $\sigma_k^\e$, 
so that $\sigma^\e_1=0$ under $\PP^X_{X_{\sigma_k^\e}}$.
Using again that $R_K(X_{\sigma_k^\e})\leq \e$ on $\{\sigma_k^\e<\zeta\}$,
it suffices to show that there is a constant $p_{A,\e}>0$ such that $g(y)\geq p_{A,\e}$ 
quasi-everywhere in $\{y \in \cX : R_K(y)\leq \e\}$.

\vip

If first $||y||\geq 1/\e$ or $\min_{i\notin K} R_{K\cup\{i\}} (y)\leq a_\e$, then clearly, $g(y)=1$. 

\vip

Otherwise,
$y \in G_{\bK,\e}$, where
$$
G_{\bK,\e}=\{ x \in \cX : \mbox{ for all } i\in K, \mbox{ all } j\notin K, \;
\|x^i-x^j\|^2 > \e \} \cap B(0,1/\e) 
$$ 
as in Proposition~\ref{girsanov} with $\bK=(K,K^c)$, because $||y|| < 1/\e$ and because 
$R_K(y)\leq \e <2\e$ and $\min_{i\notin K} R_{K\cup\{i\}} (y)> a_\e=c_{|K|+1} \e /c_{|K|}$ imply that
$||x^i-x^k||^2 > \e$ for all $i\in K$, $j\notin K$ by Lemma~\ref{campingcar}.
For the very same reasons and by definition of $\tilde\sigma_1^\e$, it holds that 
\begin{equation}\label{nn1}
C_{1,\e}^c \subset \{\hbox{for all } t\in [0,\tilde\sigma_1^\e), \;\; X_t \in G_{\bK,\e}\}.
\end{equation}

We now apply Proposition~\ref{girsanov} with $T=A$ (and $\e$) and we find that
quasi-everywhere in $G_{\bK,\e}$,
\begin{align}
g(y)\geq& C_{A,\e,\bK}^{-1}\QQ^{A,\e,\bK}_{y}(\{\tilde\sigma^\e_1\geq A \land \zeta \}\cup C_{1,\e})\notag\\
=&C_{A,\e,\bK}^{-1}\QQ^{A,\e,\bK}_{y}(\{\tilde\sigma^\e_1\geq A \land \zeta \}\cap C_{1,\e}^c)\label{nn2}
+C_{A,\e,\bK}^{-1}\QQ^{A,\e,\bK}_{y}(C_{1,\e}).
\end{align}
But we know from Proposition~\ref{girsanov} and Lemma~\ref{besbro}
that under $\QQ^{A,\e,\bK}_{y}$,  
$(R_K(X_t))_{t\in[0,\tau_{K,\e} \land A]}$ is a squared Bessel process
with dimension $d_{\theta |K| /N, |K|}(|K|) = d_{\theta ,N}(|K|)$, issued from $R_K(y)\leq\e$,
stopped at time $\tau_{\bK,\e} \land A$, where $\tau_{\bK,\e}=\inf\{t>0 : X_t \notin G_{\bK,\e}\}$. Hence there exists,
under $\QQ^{A,\e,\bK}_{y}$, a squared Bessel process $(S_t)_{t\ge 0}$ with dimension
$d_{\theta ,N}(|K|)$ such that $S_t = R_K(X_t)$ for all $t\in  [0,\tau_{\bK,\e}\land A]$.
We introduce $\kappa_\e=\inf\{t>0 : S_t \geq 2\e\}$ and we observe that
\begin{align*}
\{\kappa_\e \geq A \land \zeta\}\cap C_{1,\e}^c
= \{\tilde\sigma^\e_1 \geq A \}\cap C_{1,\e}^c.
\end{align*}
Indeed, we used that on $C_{1,\e}^c$, we have $\tau_{\bK,\e}\geq \tilde \sigma^\e_1$ by \eqref{nn1}
so that $R_K(X_t)=S_t$ for all $t\in [0,\tilde\sigma^\e_1\land A)$, from which we conclude 
that $\kappa_\e\geq A \land \zeta$ if and only $\tilde \sigma^\e_1 \geq A \land \zeta$. Coming back to
\eqref{nn2}, we get
\begin{align*}
g(y)\geq&C_{A,\e,\bK}^{-1}\QQ^{A,\e,\bK}_{y}(\{\kappa_\e\geq A \land \zeta \}\cap C_{1,\e}^c)
+C_{A,\e,\bK}^{-1}\QQ^{A,\e,\bK}_{y}(C_{1,\e})= C_{A,\e,\bK}^{-1} \QQ^{A,\e,\bK}_{y}(\kappa_\e\geq A \land \zeta).
\end{align*}
The step is complete, since $\QQ^{A,\e,\bK}_{y}(\kappa_\e\geq A)$ is the probability that a squared
Bessel process with dimension $d_{\theta ,N}(|K|)$ issued from $R_K(y)\leq \e$ remains below $2\e$ during $[0,A]$
and is thus strictly positive, uniformly in $y$ (such that $y\in G_{\bK,\e}$ and $R_K(y)\leq \e$). \bla

\vip

{\it Step 2.} We prove here that for all $\e\in (0,1]$, all $A >0$, \blue quasi-everywhere, \bla
$$
\PP^X_x\Big(\limsup_{t\to \zeta-}||X_t||\geq 1/\e  \;\; \hbox{ or }\;\; 
\liminf_{t\to \zeta-}\min_{i\notin K} R_{K\cup\{i\}} (X_t)\leq a_\e
\;\; \hbox{ or }\;\; \exists \; k\geq 1,\; \sigma^\e_k\geq \zeta\land A
\Big) =1.
$$
All the arguments below only hold  \blue quasi-everywhere, \bla even if we do not mention it explicitly
during this step. For $k\geq 1$, we introduce, \blue with $B_{k,\e}$ defined in Step 1,
$$
\Omega_{k+1}=\{\sigma^\e_{k+1}< \zeta\land A\}\cap B_{k,\e}^c
$$
\bla and we first show that $\PP^X_x(\liminf_k \Omega_k)=0$. To this end, it suffices to check that for all 
$\ell\geq 1$, $\PP^X_x(\cap_{k=\ell}^{\infty} \Omega_k)=0$.
Since $\Omega_k$ is $\cM_{\sigma^\e_{k}}$-measurable, for all $m\geq \ell\geq 1$,
$$
\PP^X_x(\cap_{k=\ell}^{m+1} \Omega_k)=\E^X_x[\indiq_{\cap_{k=\ell}^{m} \Omega_k}
\PP^X_x(\Omega_{m+1} |\cM_{\sigma^\e_{m}})].
$$
Since moreover $\cap_{k=\ell}^{m} \Omega_k\subset \{\sigma^\e_{m}<\zeta\}$
and since $\sigma^\e_{m+1}\geq \tilde\sigma^\e_m \geq \tilde\sigma^\e_m-\sigma_m^\e$,
we deduce that
on $\cap_{k=\ell}^{m} \Omega_k$, \blue
\begin{align*}
\PP^X_x(\Omega_{m+1} |\cM_{\sigma^\e_{m}}) =&
1- \PP^X_x(\{\sigma^\e_{m+1} \geq \zeta \land A \}\cup B_{m,\e} |\cM_{\sigma^\e_{m}})\\
\leq& 1-\PP^X_x(\{\tilde\sigma^\e_m\geq (\sigma^\e_m+A)\land \zeta \}\cup B_{m,\e} |\cM_{\sigma^\e_{m}}),
\end{align*}
\bla so that $\PP^X_x(\Omega_{m+1} |\cM_{\sigma^\e_{m}})\leq 1-p_{A,\e}$ by Step 1.
Hence we conclude that 
$$
\PP^X_x(\cap_{k=\ell}^{m+1} \Omega_k)
\leq (1-p_{A,\e})\PP^X_x(\cap_{k=\ell}^{m} \Omega_k)
$$  for all $m\geq \ell\geq 1$, so that
$\PP^X_x(\cap_{k=\ell}^{\infty} \Omega_k)=0$ as desired.
\vip

Hence $\PP^X_x(\liminf_k \Omega_k)=0$, so that a.s., an infinite number of $\Omega_k^c$ are realized.
\blue Recalling that 
$$
\Omega_{k+1}^c=\Big\{\sigma^\e_{k+1}\geq \zeta\land A\;\; \hbox{ or }\;\; \inf_{t\in[\sigma^\e_k,\tilde \sigma^\e_k)}
\min_{i\notin K} R_{K\cup\{i\}} (X_t)\leq a_\e\;\; \hbox{ or }\;\; \sup_{t\in[\sigma^\e_k,\tilde \sigma^\e_k)}||X_t||\geq 1/\e\Big\},
$$
we find the following alternative: \bla
\vip

\noindent $\bullet$ either there is $k\geq 1$ such that $\sigma^\e_{k}\geq \zeta \land A$;
\vip
\noindent $\bullet$ or for all $k\geq 1$, $\sigma^\e_{k}< \zeta$
and $\blue \inf_{t\in[\sigma^\e_k,\tilde \sigma^\e_k)} \bla \min_{i\notin K} R_{K\cup\{i\}} (X_t)\leq a_\e$ for infinitely many $k$'s, 
which implies
that $\liminf_{t\to\zeta-} \min_{i\notin K} R_{K\cup\{i\}} (X_t)\leq a_\e$ because necessarily,
$\lim_{\infty} \sigma^\e_{k}=\zeta$ by definition of the sequence $(\sigma_k^\e)_{k\geq 1}$ and by continuity
of $t\to R_K(X_t)$ on $[0,\zeta)$;
\vip
\noindent $\bullet$ or for all $k\geq 1$, $\sigma^\e_{k}< \zeta$ and there are infinitely many $k$'s for which 
$\sup_{t\in[\sigma^\e_k,\tilde \sigma^\e_k)}||X_t||\geq 1/\e$ and this implies that $\limsup_{t\to\zeta-} ||X_t|| \geq 1/\e$,
because $\lim_{\infty} \sigma^\e_{k}=\zeta$ as previously.

\vip

{\it Step 3.} We conclude. Applying Step 2,
we find that  \blue quasi-everywhere, $\PP^X_x$-a.s., for all 
$A \in \nn$ and all $\e \in \QQ\cap(0,1]$,\bla
$$
\limsup_{t\to \zeta-}||X_t||\geq 1/\e  \;\; \hbox{ or }\;\; 
\liminf_{t\to \zeta-}\min_{i\notin K} R_{K\cup\{i\}} (X_t)\leq a_\e
\;\; \hbox{ or }\;\; \exists \; k\geq 1,\; \sigma^\e_k\geq \zeta\land A.
$$
By Lemma~\ref{Xborne}-(ii), we know that $\zeta<\infty$, so that choosing $A=\lceil\zeta\rceil$, 
we conclude that  \blue quasi-everywhere, \bla $\PP^X_x$-a.s., for all $\e\in\QQ\cap(0,1]$
\begin{equation}\label{nbb}
\limsup_{t\to \zeta-}||X_t||\geq 1/\e  \;\; \hbox{ or }\;\; 
\liminf_{t\to \zeta-}\min_{i\notin K} R_{K\cup\{i\}} (X_t)\leq a_\e
\;\; \hbox{ or }\;\; \exists \; k\geq 1, \; \sigma^\e_k = \zeta.
\end{equation}
And by Lemma~\ref{Xborne}-(ii) again,
$\limsup_{t\to \zeta-}||X_t||\leq 1/\e_0$ for some (random) $\e_0\in (0,1]$.

\vip

On the event where $\liminf_{t\to \zeta-}\min_{i\notin K} R_{K\cup\{i\}} (X_t)=0$, there exists some (random)
$i_0\notin K$ such that $\liminf_{t\to \zeta-} R_{K\cup\{i_0\}} (X_t)=0$, whence 
$\lim_{t\to \zeta-} R_{K\cup\{i_0\}} (X_t)=0$ by induction assumption, and this obviously implies that
$\lim_{t\to \zeta-} R_{K} (X_t)=0$.

\vip

On the complementary event, we fix
$\e_1 \in (0,\e_0]$ such that $\liminf_{t\to \zeta-}\min_{i\notin K} R_{K\cup\{i\}} (X_t)> a_{\e_1}$
and we conclude from \eqref{nbb} and the fact that $\limsup_{t\to \zeta-}||X_t||\leq 1/\e_0$
that for all \blue $\e\in\QQ\cap(0,\e_1]$, \bla there exists $k_\e \geq 1$ such that
$\sigma_{k_\e}^\e = \zeta$. Recalling the definition of $(\sigma^\e_k)_{k\geq 1}$, 
we deduce that for all \blue $\e\in \QQ\cap(0,\e_1]$, \bla
$R_K(X_t)$  upcrosses the segment $[\e,2\e]$ a finite number of times
during $[0,\zeta)$. Hence for all \blue $\e\in (0,\e_1]\cap \QQ$, \bla there exists $t_\e\in [0,\zeta)$
such that either $R_K(X_t)>\e$ for all $t \in [t_\e,\zeta)$ or $R_K(X_t)<2\e$ for all $t \in [t_\e,\zeta)$.
If there is \blue $\e\in \QQ\cap(0,\e_1]$ \bla such that $R_K(X_t)>\e$ for all $t \in [t_\e,\zeta)$, then
$\liminf_{t\to\zeta-} R_K(X_t) \geq \e>0$. If next for all \blue $\e\in \QQ\cap(0,\e_1]$, \bla we have
$R_K(X_t)<2\e$ for all $t \in [t_\e,\zeta)$, then $\lim_{t\to\zeta-} R_K(X_t)=0$.

\vip
Hence in any case, we have either 
$\lim_{t\to\zeta-} R_K(X_t)=0$ or $\liminf_{t\to\zeta-} R_K(X_t)>0$.
\end{proof}

We finally give the

\begin{proof}[Proof of Proposition~\ref{cont}]
Point (i), which concerns the subcritical case, has already been checked in Lemma~\ref{Xborne}-(i).
Concerning point (ii), which concerns the supercritical case $\theta\geq 2$, we already know that
\blue quasi-everywhere, \bla $\PP^X_x(\zeta <\infty)=1$ by Lemma~\ref{Xborne}-(ii), and it remains to prove that 
$\PP^X_x$-a.s., 
$\lim _{t \rightarrow \zeta- } X_t$ exists and does not belong to $E_{k_0}$. We divide the proof in \blue four steps.

\vip 

{\it Step 1.} For $\bK=(K_p)_{p\in \ig1,\ell\id}$ a partition of $\ig 1,N \id$ and $\e\in (0,1]$,
we consider as in Proposition~\ref{girsanov}
$$
G_{\bK,\e} = \Big\{ x \in \cX : \min_{1\leq p\neq q \leq \ell} \;\;
\min _{i\in K_p, j\in K_q}\|x^i-x^j\|^2 > \e \Big\} \cap B\Big(0,\frac 1\e\Big)
$$ 
and $\tau_{\bK,\e}=\inf\{t\geq 0 : X_t \notin G_{\bK,\e}\}\in [0,\zeta]$. We show here for each $T>0$,
quasi-everywhere in $G_{\bK,\e}$, $\PP^X_x$-a.s., for all $T>0$, all $p\in \ig 1,\ell \id$,  $S_{K_p}(X_t)$ 
has a limit in $\rr^2$ as $t\to (\tau_{\bK,\e}\land T)-$.

\vip

If $\ell=1$, the result is obvious since $S_{\ig 1,N\id}(X_t)$ is a Brownian motion during $[0,\zeta)$
by Lemma~\ref{besbro}.
If next $\ell\geq 2$, Proposition~\ref{girsanov} and Lemma~\ref{besbro} tell us that 
under $\QQ_x^{T,\e,\bK}$, which is equivalent 
to $\PP^X_x$, the processes $S_{K_p}(X_t)$ are some Brownian motions on $[0,\tau_{\bK,\e}\land T)$,
and thus have some limits as $t\to (\tau_{\bK,\e}\land T)-$.

\vip

{\it Step 2.} 
For  $\e\in (0,1]$ and $\bK=(K_p)_{p\in \ig 1,\ell\id}$ a partition of $\ig 1,N \id$, 
we set $\tilde{\eta}_0^{\bK,\e}=0$ and, for $k\ge 0$,
$$
\eta _{k+1}^{\bK,\e} = \inf \{  t\ge \tilde{\eta}_k^{\bK,\e} : X_t \in G_{\bK,2\e} \}\quad \mbox{ and } 
\quad \tilde{\eta }_{k+1}^{\bK,\e} = \inf \{  t\ge \eta _{k+1} ^{\bK,\e} : X_t \notin G_{\bK,\e}\},
$$
with the convention that $\inf \emptyset=\zeta$. 
Using Step 1 and the strong Markov property, we conclude that
quasi-everywhere, $\PP^X_x$-a.s., for all $\e\in (0,1]\cap \QQ$, 
all $k\geq 1$, all $T \in \nn_+$, on $\{\eta_{k}^{\bK,\e}<\zeta\}$, for all $p\in \ig 1,\ell \id$,  $S_{K_p}(X_t)$ 
admits a limit in $\rr^2$ as  $t$ goes to $(\tilde\eta^{\bK,\e}_k\land T)-$.
Choosing $T = \lceil \zeta \rceil$, we conclude that 
quasi-everywhere, $\PP^X_x$-a.s., on $\{\eta_{k}^{\bK,\e}<\zeta\}$, for all $\e\in (0,1]\cap \QQ$, 
all $k\geq 1$, all $p\in \ig 1,\ell \id$,
\begin{equation*}
S_{K_p}(X_t) \mbox{ admits a limit in $\rr^2$ as } t \mbox{ goes to } \tilde\eta^{\bK,\e}_k-.
\end{equation*}

{\it Step 3.} We now check that quasi-everywhere, $\PP^X_x$-a.s., 
there is a partition $\bK=(K_p)_{p\in \ig 1,\ell\id}$ of $\ig 1,N \id$, some $\e \in (0,1]\cap \QQ$ and some $k\geq 1$ 
such that (i) $\eta_{k}^{\bK,\e}<\zeta$ and $\tilde\eta^{\bK,\e}_k=\zeta$ and (ii) for all $p \in \ig 1,\ell\id$,
$\lim_{t\to \zeta-}R_{K_p}(X_t)=0$.

\vip

By Lemma~\ref{lemtech}, we know that for all $K \subset \ig1,N\id$, we have the alternative
$\lim_{t\to\zeta-}R_K(X_t)=0$ or $\liminf_{t\to\zeta-}R_K(X_t)>0$. Hence
the partition $\bK=(K_p)_{p\in \ig 1,\ell\id}$ of $\ig 1,N \id$ consisting of the classes
of the equivalence relation defined
by $i \sim j$ if and only if $\lim_{t\to \zeta} R_{\{i,j\}}(X_t)=0$ satisfies that for all $p\in\ig 1,\ell\id$,
$\lim_{t\to\zeta-}R_{K_p}(X_t)=0$ and $\liminf_{t\to\zeta-} \min_{i\notin K_p}R_{K_p\cup\{i\}}(X_t)>0$.

\vip
Using moreover that $\limsup_{t\to \zeta-} ||X_t||<\infty$ according to Lemma~\ref{Xborne}, we deduce 
that there is $\alpha \in (0,\zeta)$ and $\e\in (0,1]\cap\QQ$ such that for all $t \in [\alpha,\zeta)$, 
$X_t$ belongs to $G_{\bK,2\e}$. Finally, we consider $k=\max\{m\geq 1 : \eta^{\bK,\e}_m\leq \alpha\}$,
which is finite by continuity of $t\mapsto X_t$ on $[0,\alpha]$, and
it holds that $\eta^{\bK,\e}_k\leq \alpha<\zeta$ and that $\tilde \eta^{\bK,\e}_k=\zeta$.

\vip

{\it Step 4.} We consider the (random) partition $\bK=(K_p)_{p\in \ig 1,\ell\id}$ introduced in Step 3.
By Step 2 and since $\eta_{k}^{\bK,\e}<\zeta$ and $\tilde\eta^{\bK,\e}_k=\zeta$, we know that
quasi-everywhere, $\PP^X_x$-a.s., 
for all $p\in \ig1,\ell\id$, $M_p=\lim_{t\to \zeta-} S_{K_p}(X_t)$ exists in $\rr^2$. By Step
3, we know that for all $p\in \ig1,\ell\id$, $\lim_{t\to \zeta} R_{K_p}(X_t)=0$.
We easily conclude that quasi-everywhere, $\PP^X_x$-a.s., 
for all $p \in \ig 1, \ell\id$, all $i \in K_p$, $\lim_{t\to \zeta-} X^i_t=M_p$.
This shows that quasi-everywhere, $\PP^X_x$-a.s., $X_{\zeta-}=\lim_{t\to \zeta-}X_t$
exists in $(\rr^2)^N$. Moreover, $X_{\zeta-}$ cannot belong to $\cX=E_{k_0}$,
because $\lim_{t\to \zeta-}X_t=\triangle$ when $E_{k_0}\cup\{\triangle\}$ is endowed with
the one-point compactification topology, see Subsection~\ref{ap1}.
\bla
\end{proof}

\section{Some special cases}\label{specialc}

During a $K$-collision, the particles labeled in $K$ are isolated from the other ones.
Thanks to Proposition~\ref{girsanov}, it will thus be possible to describe what happens in
a neighborhood of the instant of this $K$-collision, by studying a $QKS(\theta |K| / N, |K|)$-process.
In other words, we may assume that $|K|=N$, so that the following special cases,
which are the purpose of this section, will be crucial.

\begin{prop}\label{N-1toN} Let $N\geq 4$ and $\theta>0$ such that $N>\theta$. Consider a $QKS(\theta,N)$-process
$\xX$ as in Proposition~\ref{existenceXU}. Recall that $\zeta=\inf\{t\geq 0 : X_t=\triangle\}$
and set $\tau=\inf \{t \geq 0 : R_{\ig 1,N \id}(X_t) \notin (0,\infty)\}$
with the convention that $R_K(\triangle)=0$, so that $\tau\in [0,\zeta]$.

\vip

(i) If $d_{\theta,N}(N-1) \le 0$ and $d_{\theta,N}(N)<2$, then \blue quasi-everywhere, \bla
$$
\PP^X_x \Big(\inf _{t \in [0,\zeta)}R_{\ig 1,N \id}(X_t) >0 \Big)=1.
$$

(ii) If $d_{\theta,N}(N-1) \in (0,2)$ and $d_{\theta,N}(N)<2$, then \blue quasi-everywhere, \bla $\PP^X_x$-a.s,
for all $K\subset \ig 1,N\id $ with cardinal $|K|=N-1$,
there is $t \in [0,\tau)$ such that $R_K(X_t)=0$.

\vip 

(iii) If $0<d_{\theta,N}(N)<2\le d_{\theta,N}(N-1)$, then \blue quasi-everywhere, \bla 
$\PP^X_x$-a.s,
for all $K\subset \ig 1,N \id$ with cardinal $|K|=2$, 
there is $t \in [0,\tau)$ such that $R_K(X_t)=0$.
\end{prop}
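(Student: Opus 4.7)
The three cases share a common framework stemming from Proposition \ref{deco}: since $d_{\theta,N}(N)<2$ in all three situations, one has $\tau_D<\infty$ a.s., $R_K(X_{\rho_t})=D_{\rho_t}R_K(U_t)$ for every $K\subsetneq\ig 1,N\id$, so by \eqref{etoile} the process $X$ has a $K$-collision at $\rho_t\in[0,\tau)$ iff $U$ has a $K$-collision at $t\in[0,\xi)$, and by \eqref{etoile2} the assertion in (i) is equivalent to $\xi<\infty$ a.s. The main engine is the comparison argument of Subsection \ref{mainidea}-(iii): on time-intervals where $\min_{i\in K,j\notin K}\|U^i_t-U^j_t\|\geq\e$, the process $R_K(U_t)$ is dominated by the solution $S$ of the SDE \eqref{zK} whose drift dimension equals $d_{\theta,N}(|K|)$, and a scale-function/speed-measure analysis on $(0,1)$ shows that $S$ a.s. hits $0$ in finite time iff $d_{\theta,N}(|K|)<2$.

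For (ii), the hypothesis $d_{\theta,N}(N-1)>0$ implies $k_0\geq N$ by Lemma \ref{dimensions}, so Lemma \ref{visite}-(ii) makes $U$ recurrent and non-exploding, whence $\xi=\infty$ and $\tau=\tau_D<\infty$. Fix $K\subset\ig 1,N\id$ with $|K|=N-1$, and pick $\e>0$ small enough that $A_\e=\{u\in\sS:\min_{i\in K,j\notin K}\|u^i-u^j\|\geq\e\}$ has $\beta(A_\e)>0$ (which is clear since $\beta$ has full support on $\sS\cap E_2$). Recurrence ensures quasi surely that $U$ visits $A_\e$ at unbounded times; at each visit, the strong Markov property together with the comparison bound and the fact that \eqref{zK} with dimension $d_{\theta,N}(N-1)\in(0,2)$ hits $0$ before any fixed small time with uniform positive probability, yields a Borel--Cantelli argument producing infinitely many $K$-collisions for $U$ a.s. We transfer to $X$ via \eqref{etoile}.

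For (iii), $d_{\theta,N}(N)>0$ forces $k_0>N$, hence $\cU=\sS$ is compact with $\beta(\sS)<\infty$, so $U$ is positive recurrent. Fix $K\subset\ig 1,N\id$ with $|K|=2$; we apply Proposition \ref{girsanov} with the partition $\bK$ whose classes are $K$ and the singletons $\{i\}$, $i\notin K$. Under $\QQ^{T,\e,\bK}_x$, the pair $(X^i)_{i\in K}$ is a $QKS(2\theta/N,2)$-process, and by Lemma \ref{besbro} applied to this smaller process, $R_K(X_t)$ is a squared Bessel process of dimension $d_{\theta,N}(2)=2-2\theta/N\in(0,2)$ up to $\tau_{\bK,\e}$; on the event (of positive probability, by choosing $x$ with well-separated singletons) that all particles outside $K$ stay pairwise and from $K$ at distance $\geq\sqrt{\e}$ for a fixed time $T$, such a Bessel process hits $0$ before $T\wedge\tau_{\bK,\e}$ with positive probability. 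The bounded Radon--Nikodym density transfers this to positive probability under $\PP^X_x$, so the set of $u\in\cU$ with $\PP^U_u(U\text{ has a }K\text{-collision in finite time})>0$ has positive $\beta$-measure; recurrence of $U$ and the Markov property upgrade this to almost-sure occurrence, and the conclusion follows from \eqref{etoile}.

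For (i), $d_{\theta,N}(N-1)\leq 0$ gives $k_0\leq N-1$, so $\cU=\sS\cap E_{k_0}\subsetneq\sS$. Assume by contradiction that $\PP^U_u(\xi=\infty)>0$. If $U$ is recurrent, the argument of (ii) with $K=\ig 2,N\id$ (and dimension $d_{\theta,N}(N-1)\leq 0<2$) forces $R_K(U)$ to hit $0$, so $U$ exits $E_{N-1}\supseteq E_{k_0}$ in finite time, contradicting $\xi=\infty$. Hence $U$ is transient, and Lemma \ref{visite}-(i) says $U$ eventually leaves every compact of $\cU$, so on $\{\xi=\infty\}$, $\min_{|J|=k_0}R_J(U_t)\to 0$; a reverse-induction à la Lemma \ref{lemtech} produces a (random) $K$ with $|K|\in\ig k_0,N-1\id$ such that $\liminf R_K(U_t)=0$ while $\min_{i\notin K}\liminf R_{K\cup\{i\}}(U_t)>0$, and then, as in Subsection \ref{cae}-(b), each low excursion of $R_K(U)$ occurs with the $K$-particles uniformly $\e$-isolated from the others, so the comparison argument (with $d_{\theta,N}(|K|)\leq 0<2$) yields a uniform positive hitting probability of $0$, again forcing $U$ to exit $E_{k_0}$ in finite time, a contradiction. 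The main technical obstacle is making the comparison with \eqref{zK} rigorous within the Dirichlet-space framework (Itô cannot be applied directly to $R_K(U)$ since $U$ touches $\sS\setminus E_2$), and, for (i) transient, the delicate random choice of $K$ and the bootstrap showing the isolation zone is revisited with enough time to complete the hitting excursion.
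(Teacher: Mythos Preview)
Your sketch is correct and follows essentially the same route as the paper: reduce to the spherical process $U$ via Proposition \ref{deco} and \eqref{etoile}--\eqref{etoile2}, derive the SDE for $R_K(U_t)$ (the paper's Lemma \ref{formuleRKU}), compare with a Bessel-like process (the paper's Lemma \ref{touche}) when the $K$-particles are isolated, and conclude via recurrence of $U$ for (ii)--(iii) and a transience/compactness contradiction for (i). Your acknowledged technical obstacle---making the It\^o-type formula for $R_K(U_t)$ rigorous in the Dirichlet-space setting---is exactly what the paper resolves through the cutoff functions of Lemma \ref{indiqXU} and Remark \ref{caracdomaine}.

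One organizational difference worth noting for (i): you outline finding a random maximal $K$ with $\liminf R_K(U_t)=0$ but $\liminf R_{K\cup\{i\}}(U_t)>0$ for all $i\notin K$, then arguing the isolation persists long enough to force a hit. The paper instead runs a clean decreasing induction (its Step 3.3) showing directly that on $\{\xi=\infty\}$ one has $\inf_{t\geq 0}R_K(U_t)>0$ for every $K$ with $|K|\geq k_0$, and then invokes compactness of the resulting set in $\cU$ to contradict transience. The paper's organization avoids the delicate ``bootstrap'' you flag, since at each step of the induction the outer dispersions $R_{K\cup\{i\}}$ are already known to be uniformly bounded below, so the isolation zone is never left and the comparison runs unobstructed. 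Both arguments rest on the same Step-1 estimate (uniform positive probability of hitting zero or losing isolation within a fixed time), but the paper's inductive packaging is tighter.
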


%Let us mention that when we will use this proposition,
%the condition on $d_{\theta,N}(N-1)$
%will always imply that the condition on $d_{\theta,N}(N)$ is also satisfied.
%\vip

The proof of this proposition is very long. First, we recall some notation
about the decomposition of $\xX$ obtained in Proposition~\ref{deco} and we study the involved
time-change. We then derive a formula describing
$R_K(U_t)$, valid on certain time intervals, for any $K \subset \ig 1,N\id$. This
formula is of course not closed, but it
allows us to compare 
$R_K(U_t)$, when it is close to $0$, to some process resembling a squared Bessel process,
of which one easily studies the behavior near $0$.
Finally, we prove Proposition~\ref{N-1toN}, unifying a little points (i) and (ii)
and treating separately point (iii).

\subsection{Notation and preliminaries}\label{rrrr}
We recall the decomposition of 
Proposition~\ref{deco}, which holds true \blue quasi-everywhere in $\cX\cap E_N$. \bla
Consider a Brownian motion $(M_t)_{t\geq 0}$ with diffusion coefficient $N^{-1/2}$ starting from $S_{\ig 1,N\id}(x)$, 
a squared Bessel process 
$(D_t)_{t\geq 0}$ starting from $R_{\ig 1,N\id}(x)>0$ killed when leaving $(0,\infty)$ with life-time
$\tau_D=\inf\{t \geq 0 : D_t=\triangle\}$
and a  $QSKS(\theta,N)$   -process $(U_t)_{t\geq 0}$ starting from $\Phi_\sS(x)$ with life-time
$\xi=\inf\{t \geq 0 : U_t=\triangle\}$, all these processes being independent.
For $t\in [0,\tau_D)$, we put   $A_t=\int_0^{t} \frac{\dd s}{D_s}$.  
We also consider the inverse $\rho:[0,A_{\tau_D})\to [0,\tau_D)$ of
$A$.

\begin{lemma}\label{explosionalinfini}
If $d_{\theta,N}(N)<2$, then $\tau_D <\infty$ and $A_{\tau_D}=\infty$ a.s.
\end{lemma}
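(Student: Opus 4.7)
The plan is to prove the two assertions separately, with the second relying on an Itô-logarithm trick.

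First I would handle $\tau_D<\infty$. Since $(D_t)_{t\ge 0}$ is a squared Bessel process of dimension $\delta=d_{\theta,N}(N)<2$ starting from $R_{\ig 1,N\id}(x)>0$, the classical Bessel theory (Revuz-Yor \cite[Chapter XI]{ry}) recalled in the introduction tells us that $D$ a.s. reaches $0$ in finite time. Since by definition $\tau_D=\inf\{t\ge 0:D_t\notin(0,\infty)\}$ and $D$ is continuous on $[0,\tau_D)$, we get $\tau_D<\infty$ a.s., and moreover $D_{\tau_D-}=0$ by continuity.

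For $A_{\tau_D}=\infty$ I would argue by contradiction, using the SDE $\dd D_t=2\sqrt{D_t}\,\dd W_t+\delta\,\dd t$ on $[0,\tau_D)$. Apply Itô's formula to $\log D_t$ on $[0,\tau_D)$ (where $D>0$, so this is legitimate) to obtain
$$
\log D_t=\log D_0+M_t+(\delta-2)A_t,\qquad t\in[0,\tau_D),
$$
where $M_t:=2\int_0^t D_s^{-1/2}\,\dd W_s$ is a continuous local martingale with quadratic variation $\langle M\rangle_t=4\int_0^t D_s^{-1}\,\dd s=4A_t$. Suppose for contradiction that $\PP(A_{\tau_D}<\infty)>0$ and work on that event. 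Then $\langle M\rangle_{\tau_D}<\infty$, so by the standard convergence theorem for continuous local martingales with finite quadratic variation (e.g.\ via Dambis--Dubins--Schwarz) the limit $M_{\tau_D-}:=\lim_{t\to\tau_D-}M_t$ exists and is finite a.s.\ on this event. Consequently the right-hand side of the displayed equation has a finite limit as $t\to\tau_D-$, hence so does $\log D_t$; but $D_{\tau_D-}=0$ forces $\log D_t\to-\infty$, which is the desired contradiction.

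The only delicate point is invoking convergence of $M_t$ as $t\uparrow\tau_D$; this is not circular because it rests solely on $\langle M\rangle_{\tau_D}<\infty$, which is exactly what the hypothesis $A_{\tau_D}<\infty$ provides. Everything else is routine. Note that the sign $\delta-2<0$ is not actually needed for the contradiction: the key is that $(\delta-2)A_{\tau_D}$ is finite, which only uses $A_{\tau_D}<\infty$.
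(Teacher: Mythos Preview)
Your proof is correct and essentially the same as the paper's. The paper time-changes by $\rho$ to obtain the geometric Brownian motion representation $D_{\rho_t}=r\exp(2W_t+(d_{\theta,N}(N)-2)t)$ and then notes that this cannot vanish at a finite time $A_{\tau_D}$; your It\^o computation of $\log D_t$ together with the DDS/martingale-convergence argument is precisely the same identity viewed in the original time scale, so the two arguments coincide.
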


\begin{proof}
Since   $(D_t)_{t\ge 0}$   is a (killed) squared Bessel process with dimension $d_{\theta,N}(N)<2$, 
we have $\tau_D < \infty$ a.s according to Revuz-Yor \cite[Chapter XI]{ry}. 
Moreover, 
there is a Brownian motion $(B_t)_{t\geq 0}$ such that $D_t=r+2\intot \sqrt{D_s}\dd B_s + d_{\theta,N}(N) t$ 
for all $t\in [0,\tau_D)$, where $r=R_{\ig 1,N\id}(x)>0$. A simple computation shows the existence of a Brownian 
motion $(W_t)_{t\ge 0}$ such that for all $t\in [0,A_{\tau_D})$,
$$ 
D_{\rho_t} = r+ 2 \int _0 ^t D_{\rho _s}\dd W_s + d_{\theta,N}(N) \int _0 ^t D_{\rho _s}\dd s.
$$
Hence for all $t\in [0, A_{\tau_D})$, $D_{\rho _t} = r \exp ( 2W_t + (d_{\theta,N}(N) -2)t)$.
On the event where $A_{\tau_D} < \infty$, we have $0=D_{\tau_D-}=\lim_{t\to A_{\tau_D}} D_{\rho_t}=
\exp ( 2W_{A_{\tau_D}} + (d_{\theta,N}(N) -2)A_{\tau_D} )>0$.
Hence $A_{\tau_D}=\infty$ a.s.
\end{proof}

From now on, we assume that $d_{\theta,N}(N)<2$. Hence $A:[0,\tau_D)\to[0,\infty)$ is an increasing bijection,
as well as $\rho:[0,\infty)\to [0,\tau_D)$.
By Proposition~\ref{deco}, \blue quasi-everywhere in $\cX\cap E_N$ \bla,
we can find a triple   $(M_t,D_t,U_t)_{t\ge 0}$   as above such that
for $\xX$ our $QKS(\theta,N)$ process starting from $x$, 
for all $t\in [0,\tau_D \land \rho_\xi)$, and actually for all $t\in [0,\rho_\xi)$
because $\rho_\xi\leq \tau_D$ since $\rho$ is $[0,\tau_D)$-valued,
$$
X_t= \Psi(M_t,D_t,U_{A_t}), \quad \hbox{i.e.} \quad M_t=S_{\ig 1, N\id}(X_t), \quad D_t=R_{\ig 1, N\id}(X_t)
\quad \hbox{and} \quad U_{A_t}=\Phi_\sS(X_t).
$$
We recall that $\Psi(m,r,u)=\gamma(m)+\sqrt r u$ if $(m,r,u)\in \rr^2\times (0,\infty)\times\cU$
and $\Psi(m,r,u)=\triangle$ if $(m,r,u)=\triangle$.
Observe that $\tau=\tau_D\land \rho_\xi=\rho_\xi$, where
$\tau=\inf\{t \geq 0 : R_{\ig 1,N\id}(X_t)\notin (0,\infty)\} \in [0,\zeta]$.

\vip

We note that if $\xi<\infty$, then $\rho_\xi<\tau_D$, because
$\rho$ is an increasing bijection from $[0,\infty)$ into $[0,\tau_D)$. 
 Hence, still if $\xi<\infty$, then 
  $X$   explodes at time $\rho_\xi$ strictly before $\tau_D$, whence
\begin{equation}\label{etoile3}
\{\xi<\infty\}  \subset \Big\{\inf_{t\in [0,\zeta)} R_{\ig 1,N\id}(X_t)>0\Big\}.
\end{equation}

Finally note that since   $U$   is $\sS$-valued, it cannot have a $\ig 1,N\id$-collision.
But for any $K \subset \ig 1,N \id$ with cardinal $|K|\leq N-1$, it holds that
\begin{gather}\label{etoile22}
\hbox{$  U   $ has a $K$-collision at $t \in [0,\xi)$ if and only if $  X  $ has a 
$K$-collision at $\rho_t \in [0,\tau)$},
\end{gather}
which follows from the facts that 

\vip

$\bullet$ for all 
$(m,r,u)\in \rr^2\times (0,\infty)\times\cU$, 
$R_K(\Psi(m,r,u))=0$ if and only if $R_K(u)=0$;

\vip

$\bullet$ $\rho$ is an increasing bijection from $[0,\xi)$ into $[0,\tau)$, because $\rho_\xi=\tau$.

\vip

We conclude this subsection with a remark about the \blue quasi-everywhere \bla notions of $\xX$
and $\uU$, in the case where they are related as above.
See Subsection~\ref{ap1} for a short reminder on this notion.

\begin{rk}\label{UtoX}
Fix $B\in \cM^U$ such that $\PP^U_u(B)=1$ \blue quasi-everywhere \bla (here \blue quasi-everywhere \bla 
refers to the Hunt process
$\uU$).
Then  $\PP^U_{\Phi_\sS(x)}(B)=1$ \blue quasi-everywhere \bla 
(here \blue quasi-everywhere \bla refers to the Hunt process $\xX^*$, which is $\xX$ killed when it 
gets outside $E_N$).
\end{rk}

\begin{proof}
By definition,
there exists $\cN^U$ a properly exceptional set relative to $\uU$ such that for all 
$u\in \cU \setminus \cN^U$, $\PP^U_u(B) = 1$. Thus  
for all $x \in \Phi_\sS ^{-1} (\cU \setminus \cN^U) $, $\PP^U_{\Phi_\sS (x)} ( B ) = 1$.

\vip

By Proposition~\ref{deco}, there exists $\cN^X$ a properly exceptional set relative to 
$\xX^*$, such that for all $x \in (\cX\cap E_N) \setminus \cN^X$, the law of $(X_t)_{t\ge 0}$ under $\PP^X_x$
is equal to the the law of $(Y_t=\Psi(M_t,D_t,U_{A_t}))_{t\ge 0}$ under 
$\QQ^Y_x=\PP^M_{\pi_{H^\perp}(x)}\otimes \PP^D_{\|\pi_H(x)\|^2} \otimes \PP^U_{\Phi_\sS (x)}$,
with some obvious notation.

\vip

Hence we only have to prove that $\cN=\Phi_\sS ^{-1} (\cN^U)\cup \cN^X$
 is properly exceptional for $\xX^*$. 
  
\vip
 
$\bullet$ First, we have $\PP^X_x(X_t^* \notin \cN $ for all $t\geq 0)=1$ for all $x\in \cX\setminus \cN$.
Indeed, since $x \in \cX\setminus \cN$, the law of $(X_t^*)_{t\ge0}$ under $\PP^X_x$
equals the law of $(Y_t)_{t\ge 0}$ under 
$\QQ^Y_x$.
Since $\PP^U_u(U_t \notin \cN^U $ for all $t\geq 0)=1$ for all $u \in \cU\setminus \cN^U$
and since $\Phi_\sS (Y_t)=U_{A_t}$, we have
$\QQ^Y_x(Y_t \notin \Phi_\sS^{-1}(\cN^U)$ for all $t\geq 0)=1$ for all $x \in \cX\setminus \Phi_\sS^{-1}(\cN^U)$.
Hence $\PP^X_x(X_t^* \notin \Phi_\sS^{-1}(\cN^U)$ for all $t\geq 0)=1$ for all $x \in \cX \setminus 
(\Phi_\sS^{-1}(\cN^U) \cup \cN^X)$. Finally,
$\PP^X_x(X_t^* \notin \Phi_\sS^{-1}(\cN^U)\cup \cN^X$ for all $t\geq 0)=1$ for all $x \in \cX \setminus 
(\Phi_\sS^{-1}(\cN^U) \cup \cN^X)$ because $\cN^X$ is properly exceptional for $\xX^*$.
 
 \vip
 
$\bullet$ We have $\mu (\cN) = 0$. Indeed, $\mu ( \cN^X) = 0 $ by definition and, using Lemma
\ref{changeofvariable},
\begin{align*}
\mu ( \Phi_\sS ^{-1} (\cN^U)) 
=\frac12 \int _{\rr^2 \times \rr^*_+ \times \sS} \indiq _{\{\Psi(z,r,u)\in \Phi_\sS ^{-1}(\cN^U)\}} r^\nu \dd z \dd r \beta (\dd u) 
= \frac12 \int _{\rr^2 \times \rr^*_+} \beta (\cN^U)r^\nu \dd z \dd r =0,
 \end{align*} 
because $\beta (\cN^U) = 0$. We used that $\Psi(z,r,u)\in \Phi_\sS ^{-1}(\cN^U)\Leftrightarrow
u\in \cN^U$, since $\Phi_\sS (\Psi(z,r,u))=u$.
\end{proof}

\subsection{An expression of dispersion processes on the sphere}

We now study the dispersion process   $(R_K(U_t))_{t\ge 0}$  , for $K \subset \ig 1,N\id$.
The equation below can be informally established if assuming that \eqref{EDS} rigorously holds true, 
after a time-change and \blue several It\^o computations. \bla

\begin{lemma}\label{formuleRKU}
Fix $N\geq 2$ and $\theta>0$ such that $N>\theta$ and recall that $k_0=\lceil 2N/\theta \rceil$. Consider
a $QSKS(\theta,N)$ -process $\uU$ with life-time $\xi$,
fix $K \subset \ig 1,N \id$ such that $|K|\geq 2$, and set $\bK=(K,K^c)$.
Recall that \blue $G_{\bK,\e}$ \bla was introduced in Lemma~\ref{indiqXU}, and 
observe that
$$
\blue G_{\bK,0}\cap \sS = \bla \Big\{u \in \cU : \min_{i \in K, j\notin K} ||u^i-u^j||>0\Big\}.
$$
\blue Quasi-everywhere in $G_{\bK,0}\cap \sS$, \bla enlarging the filtered probability space 
$(\Omega^U,\cM^U,(\cM^U_t)_{t\ge 0},\PP^U_u)$ if necessary, there exists a $1$-dimensional
$(\cM^U_t)_{t\ge 0}$-Brownian motion $(W_t)_{t\geq 0}$ under $\PP_u^U$ such that
\begin{align}\label{abcd}
R_K(U_t) =& R_K(u) + 2 \int_0^t \sqrt{R_K(U_s) (1 - R_K(U_s))}\dd W_{s} + d_{\theta ,N}(|K|) t \\
&- d_{\theta , N}(N) \int _0^t R_K(U_s) \dd s   
-\frac{2\theta} N \sum_{i\in K, j\notin K} \int_0^t \frac{U_s^i-U_s^j}{\|U_s^i-U_s^j\|^2} \cdot (U_s^i -S_K(U_s))\dd s
\notag
\end{align} 
for all $t \in [0,\kappa_K)$, where $\kappa_K= \inf\{t \ge 0 : U_t \notin \blue G_{\bK,0} \bla \}$.
\end{lemma}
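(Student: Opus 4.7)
The plan is to obtain the decomposition by applying Itô's formula in its Fukushima incarnation to the function $R_K$ restricted to the sphere, after localising with the cutoff functions of Lemma \ref{indiqXU}.

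First, I would do the drift computation. Since $\Phi_\sS|_\sS = \mathrm{Id}$, the definitions of $\nabla_\sS$, $\Delta_\sS$ yield $\cL^U \varphi(u) = \cL^X(\varphi\circ\Phi_\sS)(u)$ for $u\in\sS\cap E_2$ and $\varphi\in C^\infty(\sS)$. Since $R_K$ depends only on differences $x^i-x^j$ with $i,j\in K$ (which are preserved by $\pi_H$) and is $2$-homogeneous, one has $R_K\circ\Phi_\sS(x)=R_K(x)/R_{\ig1,N\id}(x)$ on $E_N$. Using the Leibniz rule $\cL^X(fg)=f\cL^X g+g\cL^X f+\nabla f\cdot\nabla g$, the identities (which I would check directly by symmetrisation)
\begin{align*}
\cL^X R_K(x) &= d_{\theta,N}(|K|)-\frac{2\theta}{N}\!\!\sum_{i\in K,j\notin K}\!\!\frac{x^i-x^j}{\|x^i-x^j\|^2}\cdot(x^i-S_K(x)),\quad \cL^X R_{\ig1,N\id}(x)=d_{\theta,N}(N),
\end{align*}
together with $\nabla R_K(u)\cdot\nabla R_{\ig1,N\id}(u)=4R_K(u)$ and $\|\nabla R_{\ig1,N\id}(u)\|^2=4$ for $u\in\sS$, give
$$\cL^U R_K(u)=d_{\theta,N}(|K|)-d_{\theta,N}(N)R_K(u)-\frac{2\theta}{N}\!\!\sum_{i\in K,j\notin K}\!\!\frac{u^i-u^j}{\|u^i-u^j\|^2}\cdot(u^i-S_K(u)),$$
for $u\in G^{\bK}_{k_0}\cap\sS$, matching the drift of \eqref{abcd}. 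In parallel, using $\nabla R_K\in H$ and $u\cdot\nabla R_K=2R_K(u)$, $\|\nabla R_K\|^2=4R_K(u)$, I compute $\|\nabla_\sS R_K(u)\|^2=\|\pi_{u^\perp}\nabla R_K(u)\|^2=4R_K(u)(1-R_K(u))$, which will pin down the quadratic variation.

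Next, I would localise. Set $\varphi_n = R_K\cdot \Gamma^{\bK,\sS}_{k_0,n}$ with $\Gamma^{\bK,\sS}_{k_0,n}$ from Lemma \ref{indiqXU}-(ii). Since $\Gamma^{\bK,\sS}_{k_0,n}$ is supported in $G^{\bK}_{k_0,2n}\cap\sS$ where $\cL^U_\alpha R_K$ is bounded uniformly in $\alpha\in(0,1]$ (by the explicit expression above, because all $\|u^i-u^j\|$ with $i\in K$, $j\notin K$ are bounded below), one checks $\sup_{\alpha,u}|\cL^U_\alpha\varphi_n(u)|<\infty$. By Remark \ref{caracdomaine}-(ii), $\varphi_n\in\cD_{\cA^U}$ with $\cA^U\varphi_n=\cL^U\varphi_n$. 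Lemma \ref{marting} then produces, for quasi all $u\in\cU$, a $\PP^U_u$-martingale $M^n$ with
$$\varphi_n(U_t)=\varphi_n(u)+M^n_t+\int_0^t\cL^U\varphi_n(U_s)\,\dd s,\qquad t\ge 0.$$
Setting $\kappa_{K,n}=\inf\{t\ge 0:U_t\notin G^{\bK}_{k_0,n}\cap\sS\}$, and recalling that $\Gamma^{\bK,\sS}_{k_0,n}\equiv 1$ on $G^{\bK}_{k_0,n}\cap\sS$, one has $\varphi_n(U_t)=R_K(U_t)$ and $\cL^U\varphi_n(U_s)=\cL^U R_K(U_s)$ for $s\in[0,\kappa_{K,n})$. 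For $m\ge n$, on $[0,\kappa_{K,n})$ we have $\varphi_m=\varphi_n$ and hence $M^m-M^n$ is a continuous local martingale with zero quadratic variation, so $M^m\equiv M^n$ there; this patches into a continuous local martingale $M$ on $[0,\kappa_K)=\bigcup_n[0,\kappa_{K,n})$.

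For the quadratic variation, I use the standard identification of the energy measure of the MAF associated to a function in the domain of a strongly local regular Dirichlet form possessing a carré du champ: here $\Gamma(\varphi,\varphi)=\|\nabla_\sS\varphi\|^2$, so $\dd\langle M^n\rangle_s=\|\nabla_\sS\varphi_n(U_s)\|^2\dd s$, which on $[0,\kappa_{K,n})$ equals $4R_K(U_s)(1-R_K(U_s))\dd s$. Consequently $\langle M\rangle_t=\int_0^t 4R_K(U_s)(1-R_K(U_s))\dd s$ on $[0,\kappa_K)$. A standard Dambis--Dubins--Schwarz argument, enlarging $(\Omega^U,\cM^U,\PP^U_u)$ by an independent Brownian motion to account for the possible flat intervals of $\langle M\rangle$ (where $R_K(U_s)\in\{0,1\}$), yields a Brownian motion $W$ with $M_t=\int_0^t 2\sqrt{R_K(U_s)(1-R_K(U_s))}\dd W_s$ on $[0,\kappa_K)$, which is the asserted identity. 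The main obstacle is the bookkeeping between the localised Fukushima decomposition and the carré du champ identification of $\langle M^n\rangle$; once this identification is in place (via Appendix \ref{aapp}), everything else is direct computation and a patching/time-change argument.
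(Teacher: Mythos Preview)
Your proposal is correct and follows the same overall architecture as the paper (localise with $\Gamma^{\bK,\sS}_{k_0,n}$, invoke Remark~\ref{caracdomaine}-(ii) and Lemma~\ref{marting}, identify the quadratic variation, represent the martingale via a Brownian integral), but two of your intermediate steps take different routes.

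For the drift, the paper computes $\nabla_\sS R_K$ and $\Delta_\sS R_K$ directly from the definitions \eqref{nablaS} and \eqref{tradgrad}, expanding $\nabla[R_K\circ\Phi_\sS]$ and $\Delta[R_K\circ\Phi_\sS]$ by hand at points of $\sS$. Your quotient identity $R_K\circ\Phi_\sS=R_K/R_{\ig1,N\id}$ together with the Leibniz and chain rules for $\cL^X$ is slicker and reaches the same formula; it also carries over to $\cL^X_\alpha$ without change, which is what you need for the uniform bound on $\cL^U_\alpha\varphi_n$.

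For the quadratic variation you invoke the carr\'e du champ identification $\dd\langle M^n\rangle_s=\|\nabla_\sS\varphi_n(U_s)\|^2\dd s$ and cite Appendix~\ref{aapp}. This identification is correct (it follows e.g.\ from \cite[Theorem~5.2.3]{f}), but it is \emph{not} among the results collected in Appendix~\ref{aapp}. The paper avoids this extra input by an elementary trick: it applies Remark~\ref{caracdomaine} and Lemma~\ref{marting} to \emph{both} $R_K\Gamma^{\bK,\sS}_{k_0,n}$ and $(R_K\Gamma^{\bK,\sS}_{k_0,n})^2$, then squares the first decomposition via It\^o and compares with the second, using $\cL^U(R_K^2)=2R_K\cL^U R_K+\|\nabla_\sS R_K\|^2$ to read off $\langle M^{1,n}\rangle_t=\int_0^t\|\nabla_\sS R_K(U_s)\|^2\dd s$. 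This stays entirely within the toolbox of Subsection~\ref{tool}. Either you should cite \cite{f} directly for the energy-measure formula, or adopt the paper's squaring trick, which is self-contained.
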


 As usual, $\kappa_K \leq \xi$ because $\triangle \notin \blue G_{\bK,0}$. \bla
Note also that if $K=\ig1,N\id$, then $R_K(U_t)=1$ for all $t\in [0,\xi)$,
and that the constant process $1$ indeed solves \eqref{abcd}.

\begin{proof}
We divide the proof in several steps. The main idea is to compute $\cL^U R_K$ and $\cL^U (R_K)^2$ and
to use that $R_K(U_t)=R_K(u)+\intot \cL^U R_K(U_s)\dd s +M_t$, for some martingale
$(M_t)_{t\geq 0}$ of which we can compute the bracket. However, we need to 
regularize $R_K$ and to localize space in a zone where the last term of \eqref{abcd} is bounded.

\vip

{\it Step 1.} We fix $n\geq 1$ \blue and $\e\in (0,1]$ and recall 
$\Gamma_{\bK,\e}^{\sS, n}\in C^\infty(\sS)$, compactly supported in $G_{\bK,0}\cap \sS$, \bla 
was defined in Lemma~\ref{indiqXU}.
We want to apply Remark~\ref{caracdomaine} to
\blue $R_K \Gamma_{\bK,\e}^{\sS, n}$ \bla and
\blue$(R_K \Gamma_{\bK,\e}^{\sS, n})^2$. \bla
We thus have to show that \blue $R_K \Gamma_{\bK,\e}^{\sS, n}$ \bla and 
\blue$(R_K \Gamma_{\bK,\e}^{\sS, n})^2$ \bla belong to $C^\infty_c(\cU)$ for all $n\ge 1$, which is clear,
and that
$$
\sup_{\alpha \in (0,1]}\sup_{u\in\sS} \Big(|\cL^U_\alpha [R_K \blue \Gamma_{\bK,\e}^{\sS, n}]\bla (u)|+
|\cL^U_\alpha [(R_K \blue \Gamma_{\bK,\e}^{\sS, n}\bla )^2](u)|\Big)<\infty
$$ 
 for all $n\ge 1$. Since
\begin{align}\label{lproduitU}
\cL^U_\alpha (fg) = f \cL^U_\alpha g  + g \cL^U_\alpha f + \nabla_\sS f \cdot \nabla_\sS g
\end{align}
for all $f,g \in C^\infty (\sS)$ and recalling that
$\sup_{\alpha \in (0,1]}\sup_{u\in\sS}|\cL^U_\alpha \blue \Gamma_{\bK,\e}^{\sS, n}\bla (u)|<\infty$ by Lemma~\ref{indiqXU}
and that \blue $\Gamma_{\bK,\e}^{\sS, n}$ is compactly supported in $G_{\bK,0}\cap \sS$, the
only issue is to verify that, for $A$ compact in  $G_{\bK,0}\cap \sS$,\bla
\begin{equation}\label{o1}
\sup_{\alpha \in (0,1]}\sup_{u\in A} |\cL^U_\alpha R_K(u)|<\infty.
\end{equation}

{\it Step 2.} Here we prove that
\begin{align}\label{LURKa}
 \cL^U_\alpha R_K (u) =& 2(|K|-1)-2(N-1)R_K(u)+ \frac\theta N R_K(u)  
\sum_{1\leq i, j \leq N}\frac{\|u^i-u^j\|^2}{\|u^i-u^j\|^2+\alpha} \\
&-\frac\theta N \sum_{i\in K, j\in K} \frac{\|u^i-u^j\|^2}{\|u^i-u^j\|^2+\alpha} 
-\frac{2\theta }N \sum_{i\in K, j\notin K} \frac{u^i-u^j}{\|u^i-u^j\|^2+\alpha}\cdot (u^i-S_K(u)), \notag
\end{align}
and this will imply \eqref{o1}:
the first four terms are obviously uniformly bounded on $\sS$, and the last one is uniformly bounded
on \blue $A$ (because $A$ is compact in $G_{\bK,0}\cap \sS$). \bla
\vip
This will also imply, taking $\alpha=0$ and observing that
$2(|K|-1)-\frac\theta N |K|(|K|-1)=d_{\theta,N}(|K|)$ and $2(N-1)- \frac\theta N N(N-1)=d_{\theta,N}(N)$,
that for all $u\in \sS\cap E_2$,
\begin{align}\label{lurk}
\cL^U R_K (u) = d_{\theta,N}(|K|) - d_{\theta,N}(N) R_K(u) - 
\frac{2\theta }N \sum_{i\in K, j\notin K} \frac{u^i-u^j}{\|u^i-u^j\|^2}\cdot (u^i-S_K(u)).
\end{align}
\vip
{\it Step 2.1.} We first verify that for all $u\in \sS$,
\begin{gather}
(\nabla_\sS R_K(u))^i=2 (u^i - S_K(u)) \indiq_{\{ i\in K\}} 
-2R_K(u)u^i,\qquad i\in \ig1,N\id, \label{fi}\\[3pt]
\Delta_\sS R_K(u)=4(|K|-1)-4(N-1)R_K(u). \label{si}
\end{gather}
First, a simple computation shows that for $x\in (\rr^2)^N$, for $i\in \ig 1,N\id$,
\begin{align}\label{deriveRK}
\nabla _{x^i} R_K (x) = 2 (x^i - S_K(x))\indiq_{\{ i\in K\}}\quad
\hbox{and} \quad \Delta_{x^i} R_K(x)=\frac{4(|K|-1)}{|K|} \indiq_{\{i\in K\}},
\end{align}
so that in particular $\nabla R_K (x) \in H$ and
\begin{align}\label{supc}
\nabla R_K (x) \cdot x = 2\sum _{i\in K} (x^i - S_K (x))\cdot x^i
=2\sum _{i\in K} (x^i - S_K (x))\cdot (x^i - S_K(x))= 2R_K(x).
\end{align}
 
Next, proceeding as in \eqref{tradgrad}, we get
$\nabla[R_K\circ\Phi_\sS](x)=||\pi_H (x)||^{-1}\pi_H(\pi_{(\pi_H(x))^\perp}(\nabla R_K(\Phi_\sS (x))))$ 
for all $x\in E_N$, so that
$$
\nabla[R_K\circ\Phi_\sS](x)=\frac{\pi_H\Big(\nabla R_K(\Phi_\sS (x)) 
- \frac{\pi_H(x)\cdot \nabla R_K(\Phi_\sS (x))}{||\pi_H(x)||^2}\pi_H(x)  \Big)}{||\pi_H (x)||}
=\frac{\nabla R_K(x)-2R_K(x)\frac{\pi_H(x)}{||\pi_H(x)||^2}}{||\pi_H(x)||^{2}}.
$$
We used that $\nabla R_K ( \Phi_\sS (x)) = \nabla R_K (x) / \|\pi_H(x)\|$ thanks to \eqref{deriveRK},
that  $\nabla R_K (x) \in H$ by \eqref{deriveRK}
and that $\pi_H(x)\cdot \nabla R_K (x)=x \cdot \nabla R_K (x)=2R_K(x)$
by \eqref{supc}.
\vip
We first conclude that for $u\in \sS$, since 
$\pi_H(u)=u$ and $||u||=1$,
\begin{equation}\label{supd}
\nabla_\sS R_K(u)=\nabla[R_K\circ\Phi_\sS](u)=   \nabla R_K(u)-2R_K(u)u,
\end{equation}
which implies \eqref{fi} by \eqref{deriveRK}. 
\vip
Second, we deduce that for $x\in E_N$,
\begin{align*}
\Delta[R_K\circ\Phi_\sS](x)=&\frac1{||\pi_H(x)||^{2}}\Big(\Delta R_K(x)
-2\nabla R_K(x)\cdot \frac{\pi_H(x)}{||\pi_H(x)||^{2}} - 
2 R_K(x) \frac{\ddiv \pi_H(x)}{||\pi_H(x)||^{2}} + \frac{4 R_K(x) }{||\pi_H(x)||^{2}}\Big)\\
& - \frac{2\pi_H(x)}{||\pi_H(x)||^4} \cdot  \Big(\nabla R_K(x)-2R_K(x)\frac{\pi_H(x)}{||\pi_H(x)||^{2}}\Big).
\end{align*}
Using that $\ddiv \, \pi_H (x)=2(N-1)$, we conclude that for $u\in \sS$, since 
$\pi_H(u)=u$, $||u||=1$ and $u\cdot\nabla R_K(u)=2R_K(u)$ by \eqref{supc},
$$
\Delta_\sS R_K (u)= \Delta[R_K\circ\Phi_\sS](u)=\Delta R_K(u)-4R_K(u)-4(N-1)R_K(u)+4R_K(u).
$$
Since finally $\Delta R_K(u)=4(|K|-1)$ by \eqref{deriveRK}, this leads to \eqref{si}.
 
\vip
{\it Step 2.2.}
We fix $u\in \sS$ and show that setting 
$I_\alpha(u)= -\frac\theta N \sum _{1\leq i, j\leq N} \frac{u^i-u^j}{\|u^i-u^j\|^2+\alpha}\cdot (\nabla_\sS R_K (u))^i$,
 it holds that
\begin{align}\label{LUsing}
I_\alpha(u) =& -\frac\theta N \sum_{i\in K, j\in K} \frac{\|u^i-u^j\|^2}{\|u^i-u^j\|^2+\alpha} 
+ \frac\theta N R_K(u)  \sum_{1\leq i, j \leq N}\frac{\|u^i-u^j\|^2}{\|u^i-u^j\|^2+\alpha}\\
&-\frac{2\theta }N \sum_{i\in K, j\notin K} \frac{u^i-u^j}{\|u^i-u^j\|^2+\alpha}\cdot (u^i-S_K(u)).  \notag
\end{align}
By \eqref{fi}, we may write $I_\alpha=I_{1,\alpha}+I_{2,\alpha}$, where
\begin{gather*}
I_{1,\alpha}(u) = -\frac{2\theta }N \sum_{i\in K, j\in \ig 1,N \id} \frac{u^i-u^j}{\|u^i-u^j\|^2+\alpha}
\cdot (u^i-S_K(u)),\\
I_{2,\alpha}(u)=\frac{2\theta}N R_K (u) \sum_{1\leq i, j \leq N}\frac{u^i-u^j}{\|u^i-u^j\|^2+\alpha}\cdot u^i.
\end{gather*}
First, by symmetry, 
\begin{align*}
I_{1,\alpha}(u)=& -\frac{2\theta }N \sum_{i\in K, j\in K} \frac{u^i-u^j}{\|u^i-u^j\|^2+\alpha}\cdot (u^i -S_K(u))
-\frac{2\theta }N \sum_{i\in K, j\notin K} \frac{u^i-u^j}{\|u^i-u^j\|^2+\alpha}\cdot (u^i -S_K(u))\\
=&-\frac{2\theta }N \sum_{i\in K, j\in K} \frac{u^i-u^j}{\|u^i-u^j\|^2+\alpha}\cdot u^i
-\frac{2\theta }N \sum_{i\in K, j\notin K} \frac{u^i-u^j}{\|u^i-u^j\|^2+\alpha}\cdot (u^i -S_K(u))\\
=& -\frac\theta N \sum_{i\in K, j\in K} \frac{\|u^i-u^j\|^2}{\|u^i-u^j\|^2+\alpha} 
-\frac{2\theta }N \sum_{i\in K, j\notin K} \frac{u^i-u^j}{\|u^i-u^j\|^2+\alpha}\cdot (u^i-S_K(u)).
\end{align*}
Second, by symmetry,
$$
I_{2,\alpha}(u)=\frac \theta N  R_K(u)
\sum_{1\leq i, j \leq N}\frac{\|u^i-u^j\|^2}{\|u^i-u^j\|^2+\alpha}.
$$

{\it Step 2.3.}  Since $\cL^U_\alpha R_K (u)= \frac12\Delta_\sS R_K(u)+I_\alpha(u)$, 
\eqref{LURKa} follows from \eqref{si} and \eqref{LUsing}.

\vip

{\it Step 3.} \blue By Steps 1 and 2, we can apply 
 Remark~\ref{caracdomaine} and Lemma~\ref{marting}:  quasi-everywhere, for all $n\ge 1$, 
there exist two 
$(\cM^U_t)_{t\ge 0}$-martingales $(M^{1,n,\e}_t)_{t\ge 0}$ and $(M^{2,n,\e}_t)_{t\ge 0}$ under $\PP^U_u$, 
such that
\begin{gather*}
\blue (R_K \Gamma^{\sS,n}_{\bK,\e}) (U_t) = (R_K \blue \Gamma^{\sS,n}_{\bK,\e}  ) (u) + M^{1,n,\e}_t + 
\int_0^t \cL^U (R_K \Gamma^{\sS,n}_{\bK,\e}) (U_s)\dd s, \\
(R_K \Gamma^{\sS,n}_{\bK,\e})^2 (U_t) = (R_K \Gamma^{\sS,n}_{\bK,\e})^2 (u) + M^{2,n,\e}_t 
+ \int_0^t \cL^U (R_K \Gamma^{\sS,n}_{\bK,\e})^2 (U_s)\dd s
\end{gather*}
for all $t\ge 0$. We recall that $\kappa_K=\inf \{ t\ge 0: U_t \notin G_{\bK,0}^{n} \}$ and introduce 
$$
\kappa_{K,n,\e} = \inf \{ t\ge 0: U_t \notin \blue G_{\bK,\e}^{n} \} \land \kappa_K.
$$
Since $\cup_{n \geq 1} G_{\bK,\e}^{n} \supset G_{\bK,\e}$ and since
$G_{\bK,\e}$ increases to $G_{\bK,0}$ as $\e\to 0$, see Lemma~\ref{indiqXU},
we conclude that $\lim_{\e \to 0} \lim_{n\to \infty}\kappa_{K,n,\e}=\kappa_K$.  Next, since
$\Gamma^{\sS,n}_{\bK,\e}=1$ on $G_{\bK,\e}^{n}\cap \sS$, we have, for all $t \in [0,\kappa_{K,n,\e}]$,
\begin{gather}\label{martU1}
R_K(U_t) = R_K(u) + M^{1,n,\e}_t +
\int_0^t \cL^U R_K (U_s)\dd s, \\
(R_K(U_t))^2 = (R_K (u))^2 + M^{2,n,\e}_t + \int_0^t \cL^U (R_K^2) (U_s)\dd s.\label{martU2}
\end{gather}
Applying the It\^o formula to compute $(R_K(U_t))^2$ from \eqref{martU1},
recalling from \eqref{lproduitU} that $\cL^U (R_K^2)=2 R_K \cL^U R_K + ||\nabla_\sS R_K||^2$
and comparing to \eqref{martU2},
we obtain that for $t \in [0,\kappa_{K,n,\e}]$,
$$
\langle M^{1,n,\e}\rangle _{t} = \int _0 ^{t} \|\nabla_\sS R_K (U_s)\|^2 \dd s.
$$
Hence, enlarging the probability space if necessary, we can find a Brownian motion
$(W_t)_{t\geq 0}$, which is defined by $W_t=\int_0^t \|\nabla_\sS R_K (U_s)\|^{-1}\dd M^{1,n,\e}_s$
for $t \in [0,\kappa_{K,n,\e}]$ and which is then extended to $\rr_+$, such that 
$M^{1,n,\e}_t=\int_0^t \|\nabla_\sS R_K (U_s)\| \dd W_s$ during  $[0,\kappa_{K,n,\e}]$. Hence,
still for $t\in[0,\kappa_{K,n,\e}]$, \bla
\begin{equation}\label{trez}
R_K(U_t)= R_K(u)+\int_0^t \|\nabla_\sS R_K (U_s)\| \dd W_s + \int_0^t \cL^U R_K (U_s)\dd s.
\end{equation}
But $\nabla_\sS R_K(u)=\nabla R_K(u)-2R_K(u) u$ by \eqref{supd}, whence
$$
\|\nabla_\sS R_K (u) \|^2 = \| \nabla R_K (u)\|^2 -4R_K (u) \nabla R_K (u) \cdot u + 4 (R_K(u))^2.
$$ 
Since $||\nabla R_K(u)||^2=4R_K(u)$ by \eqref{deriveRK} and $\nabla R_K(u)\cdot u = 2R_K(u)$ by 
\eqref{supc},
$$
\| \nabla _\sS R_K (u) \|^2 = 4 R_K (u) - 4 (R_K (u))^2 = 4R_K(u)(1-R_K(u)).
$$
\blue Inserting this, as well as the expression \eqref{lurk} of $\cL^U R_K$, in \eqref{trez}, shows that 
$R_K(U_t)$ satisfies the desired equation on $[0,\kappa_{K,n,\e}]$.
Since
$\lim_{\e\to 0}\lim_{n\to \infty}\kappa_{K,n,\e}=\kappa_K$ a.s., the proof is complete. \bla
\end{proof}

\subsection{A squared Bessel-like process}

The equation obtained in the previous lemma will be studied by comparison with the 
process we now introduce. This process behaves, near $0$, like a squared Bessel processes.

\begin{lemma}\label{touche}
Fix $\delta \in \rr$, $a>0$ and $b>0$ such that $\delta+a\sqrt{b}<2$. 
For $(W_t)_{t\ge 0}$ a $1$-dimensional Brownian motion
and for $x \in [0,1)$, consider the unique solution $(S_t)_{t\geq 0}$ of
\begin{align}\label{tbc}
S_{t} = x + \int _{0} ^{t} 2\sqrt{|S_s(1-S_s)|} \dd W_{s} + \delta t + a\int _{0}^{t} \sqrt{b+|S_s|} \dd s.
\end{align}
 For $z \in \rr$, set  $\tau_z=\inf\{t> 0 : S_t=z\}$.
For all $y\in(x,1)$, it holds that $\PP( \tau_0<\tau_y)>0$.
\end{lemma}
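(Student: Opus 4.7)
The plan is to use a scale-function argument for the one-dimensional diffusion $(S_t)_{t \geq 0}$ restricted to the interval $(0,y)$. First, the case $x=0$ is trivial: since $S_0=0$, the definition gives $\tau_0 = \inf\{t > 0 : S_t = 0\} = 0$ almost surely, while $\tau_y > 0$ almost surely by continuity of $S$, so $\PP(\tau_0 < \tau_y) = 1$. Henceforth I assume $x \in (0, y)$. Since $y < 1$, on the interval $(0, y)$ the SDE \eqref{tbc} reduces to $\dd S_t = 2\sqrt{S_t(1-S_t)}\dd W_t + (\delta + a\sqrt{b+S_t})\dd t$, with smooth coefficients and a diffusion coefficient that vanishes only at the endpoints.

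I will introduce the scale function
$$
\phi(s) = \int_c^s \exp\Big( -\int_c^u \frac{\delta + a\sqrt{b+v}}{2v(1-v)}\dd v \Big)\dd u, \qquad s \in (0,y),
$$
for some fixed $c \in (0,y)$, which satisfies $2s(1-s) \phi''(s) + (\delta + a\sqrt{b+s}) \phi'(s) = 0$ on $(0,y)$. The key analytic input is the behaviour of $\phi$ at the endpoint $0$: as $v \to 0^+$,
$$
\frac{\delta + a\sqrt{b+v}}{2v(1-v)} = \frac{\delta + a\sqrt{b}}{2v} + O(1),
$$
so $\phi'(u) \sim C u^{-(\delta+a\sqrt{b})/2}$ as $u \to 0^+$. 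The hypothesis $\delta + a\sqrt{b} < 2$ forces this exponent to be strictly less than $1$, so $\phi'$ is integrable at $0$ and $\phi(0^+)$ is finite. Since $y<1$ the integrand is bounded near $y$, so $\phi$ extends to a continuous strictly increasing function on $[0,y]$ with $\phi(0) < \phi(x) < \phi(y)$.

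By It\^o's formula, $(\phi(S_t))_{t \in [0, \tau_0 \wedge \tau_y)}$ is a local martingale. For $\varepsilon \in (0,x)$, set $\tau_\varepsilon^- = \inf\{t > 0 : S_t \leq \varepsilon\}$. On the compact interval $[\varepsilon, y]$ the diffusion coefficient is bounded below by $2\sqrt{\varepsilon(1-y)}>0$ and the drift is bounded, so a standard Lyapunov argument gives $\tau_\varepsilon^- \wedge \tau_y < \infty$ a.s. with finite mean. Optional stopping applied to the bounded martingale $(\phi(S_{t\wedge \tau_\varepsilon^-\wedge \tau_y}))_{t\geq 0}$ then yields
$$
\PP(\tau_\varepsilon^- < \tau_y) = \frac{\phi(y) - \phi(x)}{\phi(y) - \phi(\varepsilon)},
$$
which tends, as $\varepsilon \to 0$, to $(\phi(y) - \phi(x))/(\phi(y) - \phi(0^+)) > 0$. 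The main technical difficulty I foresee is upgrading this to $\PP(\tau_0 < \tau_y) > 0$: the intersection of the decreasing events $\{\tau_\varepsilon^- < \tau_y\}$ is only a priori $\{\inf_{t<\tau_y} S_t = 0\}$, which could in principle contain paths approaching $0$ without hitting it. I would close this gap using the strong Markov property at $\tau_\varepsilon^-$ together with a comparison argument: starting from $\varepsilon$, the drift $\delta + a\sqrt{b+S}$ and diffusion $2\sqrt{S(1-S)}$ behave, up to lower-order corrections, as those of a squared Bessel process of dimension $\delta + a\sqrt{b} < 2$, which hits $0$ in finite time almost surely from any positive starting point. This should give $\PP_\varepsilon(\tau_0 < \tau_y) \to 1$ as $\varepsilon \to 0$, and combined with the lower bound above, $\PP(\tau_0 < \tau_y) > 0$ follows.
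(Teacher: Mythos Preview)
Your approach is correct and is essentially the same as the paper's: both rest on the scale function and the key observation that $\delta+a\sqrt{b}<2$ makes it finite at $0$. The paper is simply more economical. After defining
\[
f(z)=\int_{1/2}^z \exp\Big(-\int_{1/2}^u \frac{\delta+a\sqrt{b+|v|}}{2|v(1-v)|}\,\dd v\Big)\dd u
\]
and noting (via the same asymptotic you use) that $f(0^+)$ is finite, it directly invokes Karatzas--Shreve \cite[(5.61) p.~344]{kshreve} for the exact formula
\[
\PP(\tau_0<\tau_y)=\frac{f(y)-f(x)}{f(y)-f(0)}>0.
\]
The subtlety you flag---reaching $0$ versus merely approaching it---is precisely what that cited result already settles, so quoting it spares you the strong-Markov/Bessel-comparison patch. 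Your hands-on route would also work but is more laborious than necessary.

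One small remark: your treatment of $x=0$ asserts $\tau_0=0$ from $S_0=0$. This requires that $0$ be a regular (instantaneously reflecting) boundary point, which is true here but not immediate from the definition of $\tau_0$ alone. The paper's formula covers $x=0$ uniformly, yielding probability $1$.
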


\begin{proof}
This equation is classically well-posed, since the diffusion coefficient is $1/2$-H\"older continuous
and the drift coefficient is Lipschitz continuous, see Revuz-Yor \cite[Theorem 3.5 page 390]{ry}.
As in Karatzas-Shreve \cite[(5.42) page 339]{kshreve}, we introduce the scale function
$$
f(z)=\int_{1/2}^z \exp\Big(-\int_{1/2}^u \frac{\delta+a\sqrt{b+|v|}}{2 |v(1-v)|} \dd v\Big) \dd u.
$$
This function is obviously continuous
on $(0,1)$ and one gets convinced, for example approximating 
$(\delta+a\sqrt{b+|v|})/(2 |v(1-v)|)$ by $(\delta+a\sqrt{b})/(2 |v|)$,
that it is also continuous at $0$ because $\delta+a\sqrt{b}<2$. By \cite[(5.61) page 344]{kshreve}, we have
\begin{equation}\label{sc}
\PP(\tau_0<\tau_y)= \frac{f(y)-f(x)}{f(y)-f(0)}.
\end{equation}
for all $y \in (x,1)$. This last quantity is nonzero (which would not be the case if $\delta +a\sqrt{b}\geq 2$, 
since then $f(0)=-\infty$).
\end{proof}

\subsection{Collisions of large clusters}

We are now ready to give the

\begin{proof}[Proof of Proposition~\ref{N-1toN}-(i)-(ii)]
We fix $N\geq 4$, $\theta>0$ such that $N>\theta$. We always assume that
$d_{\theta,N}(N)<2$ and we use the notation of Subsection~\ref{rrrr}.
\vip
{\it Step 1.} We consider $\e\in (0,1]$ and $K\subset \ig 1,N\id $ such that $|K|\in \ig 2,N-1\id$ 
and $d_{\theta,N}(|K|)<2$.
We introduce the constant $a_K= c_{|K|+1}/(2c_{|K|})$ with $(c_\ell)_{\ell \in \ig 1,N \id}$ defined in Lemma 
\ref{campingcar}. 
We prove in this step that there are some constants $p_{K,\e}>0$ and $T_{K,\e}>0$ such that, setting 
\begin{gather*}
\tilde\sigma^{K,\e} = \inf \Big\{ t> 0 : R_K(U_t) \ge \e \; \mbox{ or } \;\min_{i\notin K} R_{K\cup \{ i\}}(U_t) 
\le a_K \e\Big\}\land T_{K,\e},
\end{gather*}
with the convention that $\inf \emptyset = \xi$, it holds that 
\blue quasi-everywhere on $\{u \in \cU : R_K(u) \le \e /2\}$, \bla
$$
\PP^U_{u} \Big(\tilde\sigma^{K,\e} = \xi \;
\mbox{ or }  \inf_{t\in [0, \tilde \sigma^{K,\e})}\min_{i\notin K} R_{K\cup \{ i\}}(U_t) \le 2a_K\e  \hbox{ or } 
R_K(U_t) = 0 \mbox{ for some } t\in [0, \tilde \sigma^{K,\e}) \Big)\ge p_{K,\e}.
$$

\blue We introduce $Z_{K,\e}=\inf_{t\in [0, \tilde \sigma^{K,\e})}\min_{i\notin K} R_{K\cup \{ i\}}(U_t)$. \bla
We note that for all $t \in [0, \tilde\sigma^{K,\e})$, 
$R_K(U_t)\le \e $ and $\blue Z_{K,\e}\bla \ge a_K\e$ so that
$\min_{i\in K,j\notin K} \|U^i_t-U^j_t\| \ge \e/2$ thanks to the definition of $a_K$ and to Lemma~\ref{campingcar}. 
This implies that $\tilde\sigma^{K,\e} \leq \kappa_K$,
where we recall that $\kappa_K= \inf\{t \ge 0 : U_t \notin \blue G_{\bK,0} \bla\}$
was defined in Lemma~\ref{formuleRKU}, and that
$\blue G_{\bK,0}\cap \sS\bla=\{u \in \cU : \min_{i\in K,j\notin K} ||u^i-u^j||>0\}$.

\vip

By the Cauchy-Schwarz inequality, and since $R_K$ is bounded on $\cU$,
there is a deterministic constant $C_{K,\e}>0$, allowed to change from line to line,
such that for all $t\in [0, \tilde\sigma^{K,\e})$, we have
\begin{align*}
&-d_{\theta,N}(N)R_K(U_t)- \frac{2\theta} N  \sum_{i\in K, j\notin K} \frac{U_t^i-U_t^j}{\|U_t^i-U_t^j\|^2} 
\cdot (U_t^i-S_K(U_t)) \\
\leq & C_{K,\e} \sqrt{R_K(U_t)}+ 
C_{K,\e} \Big(\sum_{i\in K} \|U_t^i-S_K(U_t)\|^2  \Big)^{1/2}\\
\leq & C_{K,\e} \sqrt{R_K(U_t)}\\
\leq& C_{K,\e} \sqrt{b+R_K(U_t)}
\end{align*}
where $b>0$ is chosen small enough so that $d_{\theta,N}(|K|)+C_{K,\e} \sqrt{b}<2$. Actually, $b$ is only introduced
to make the drift coefficient of \eqref{tbc} Lipschitz continuous.

\vip

Recalling that $R_K(U_0)\leq \e /2$, the formula describing $R_K(U_t)\in [0,1]$ for 
$t\in [0,\kappa_K) \supset [0,\tilde\sigma^{K,\e})$, see Lemma~\ref{formuleRKU},
considering the process $(S_t)_{t\geq 0}$ solution to \eqref{tbc} with $x=\e / 2$, 
$\delta=d_{\theta,N}(|K|)$, $a=C_{K,\e}$
and with $b$ introduced a few lines above, driven by the same Brownian motion $(W_t)_{t\geq 0}$,
and using the comparison theorem, we conclude that $R_K(U_t) \leq S_t$ for all  
$t\in [0,\tilde\sigma^{K,\e})$.

\vip

Setting $\tau_z=\inf\{t\geq 0 : S_t=z\}$ for $z \in \rr$
and recalling the definition of $\tilde \sigma^{K,\e}$, we conclude that
\blue $\{Z_{K,\e} > 2a_K\e \}
\subset \{\tilde\sigma^{K,\e} \geq \tau_{\e}\land T_{K,\e}\}.$ \bla
Indeed, on $\{\inf_{t\in [0, \tilde \sigma^{K,\e})}\min_{i\notin K} R_{K\cup \{ i\}}(U_t) > 2a_K\e\}$,
either $\tilde\sigma^{K,\e}=T_{K,\e}$, or   $(R_K(U_t))_{t\ge 0}$   reaches $\e$ at time 
$\tilde\sigma^{K,\e}$ and we then have
$\tau_\e \leq \tilde\sigma^{K,\e}$. In both cases, $\tilde\sigma^{K,\e} \geq \tau_{\e}\land T_{K,\e}$. 
Hence, using again that $R_K(U_t) \leq S_t$ for  all $t\in [0,\tilde\sigma^{K,\e})$,
\begin{align*}
&\Big\{\tilde \sigma^{K,\e} < \xi \; \hbox{ and } \;\blue Z_{K,\e} \bla > 2a_K\e \;\mbox{ and }
\;S_t = 0 \mbox{ for some } t\in [0, \tau_{\e} \land T_{K,\e}]\Big\} \\
\subset& \Big\{\tilde\sigma^{K,\e} < \xi\; \mbox{ and }\; \blue Z_{K,\e} \bla > 2a_K\e \; \hbox{ and }\;R_K(U_t) = 0 \mbox{ for some } t\in [0, \tilde\sigma^{K,\e})\Big\}.
\end{align*}
\blue
But $A^c \cap B' \subset A^c\cap B$ gives $\PP(A \cup B)=\PP(A)+\PP(A^c \cap B) \geq
\PP(A)+\PP(A^c \cap B')=\PP(A\cup B')$. Hence
\begin{align*}
&\PP^U_{u} \Big(\tilde\sigma^{K,\e} = \xi \;\mbox{ or } \; \blue Z_{K,\e} \bla \le 2a_K\e \;\hbox{ or } \;
R_K(U_t) = 0 \mbox{ for some } t\in [0, \tilde \sigma^{K,\e}) \Big)\\
\geq& \PP^U_u\Big(\tilde\sigma^{K,\e} = \xi \;\hbox{ or }\;\blue Z_{K,\e} \bla \le 2a_K\e \;\mbox{ or } 
\; S_t = 0 \mbox{ for some } t\in [0, \tau_{\e} 
\land T_{K,\e})\Big)\\
\geq & \PP^U_u\Big(S_t = 0 \mbox{ for some } t\in [0, \tau_{\e} \land T_{K,\e})\Big).
\end{align*}
\bla
This last quantity equals $\PP(\tau_0<\tau_{\e}\land T_{K,\e})$ and does not depend on $u$ such that 
$R_K(u)\leq \e /2$. But $\PP(\tau_0<\tau_{\e})>0$ by Lemma~\ref{touche}
and since $d_{\theta,N}(|K|)+C_{K,\e} \sqrt{b}<2$. Hence
there exists 
$T_{K,\e}>0$ so that $\PP(\tau_0<\tau_{\e}\land T_{K,\e})>0$
and this completes the step.

\vip

{\it Step 2.} We prove (ii), i.e. that when $d_{\theta,N}(N-1)\in (0,2)$, for any $K \subset \ig 1,N \id$
with cardinal $|K|=N-1$, \blue quasi-everywhere, \bla $\PP^X_x$-a.s.,
$R_K(X_t)$ vanishes during $[0,\zeta)$.
By \eqref{etoile22}  and Remark~\ref{UtoX}, and since $\PP^U_u(\xi=\infty)=1$ \blue quasi-everywhere \bla
by Lemma~\ref{visite}-(ii), it suffices to check that \blue quasi-everywhere, \bla $\PP^U_u$-a.s.,
$(R_K(U_t))_{t\geq 0}$ vanishes at least once during $[0,\infty)$.

\vip

We fix $K\subset \ig 1,N\id $ with $|K| = N-1$, set $\e_0=1/(4a_{K} )$
and introduce $\tilde\tau^K_0 = 0$ and for all $k\ge 0$,
\begin{gather*}
\tau^K_{k+1} = \inf \{ t\ge \tilde\tau^K_k : R_K(U_t) \le \e_0 /2 \},
\\
\tilde\tau^K_{k+1} = \inf \{ t\ge \tau^K_{k+1} : R_K(U_t) \ge \e _0  \}
\land (\tau^K_{k+1}+T_{K,\e _0}).
\end{gather*}
with $T_{K,\e _0}$ defined in Step 1.
All these stopping times are finite since $(\cE^U,\cF^U)$ is recurrent by Lemma~\ref{visite}-(ii).
We also put, for $k\geq 1$,
$$
\Omega_k^K=\{R_K(U_t) = 0 \hbox{ for some } t\in [\tau^{K}_k, \tilde\tau^{K}_k]\}.
$$ 
We now prove that $\PP^U_{u } ( \cap _{k\ge 1} (\Omega_k^{K})^c) = 0$ \blue quasi-everywhere,
and this will complete the proof of (ii). \bla
\vip
For $\ell \ge 1$, 
since $\cap_{k=1}^{\ell} (\Omega_k^K)^c $ is $\cM^U_{\tau^K_{\ell+1}}$-measurable, the strong Markov property
tells us that
$$ 
\PP^U_{u } \Big( \cap_{k=1}^{\ell +1} (\Omega_k^K)^c \Big) 
= \E^U_u\Big[ \Big(\prod_{k=1}^{\ell} \indiq_{(\Omega_k^K)^c} \Big)\PP^U_{U_{\tau^K_{\ell +1} }} ( (\Omega_{1}^K)^c)\Big].
$$

We now prove that $\PP^U_u(\Omega_{1}^K) \geq p_{K,\e_0}$ \blue quasi-everywhere on 
$\{u\in \cU : R_K(u)\leq \e_0/2\}$. \bla
For such a $u$, we have $\tau^K_1=0$. Moreover, for all $i\notin K$, we have $R_{K\cup\{i\}}(u)=R_{\ig 1, N\id}(u)=1> 2a_{K}\e_0$ 
thanks to our choice of $\e_0$.
Hence $\tilde \tau^K_1=\tilde \sigma^{K,\e_0}$, recall Step 1. Since finally
$\tilde \sigma^{K,\e_0}<\infty=\xi$ and since $R_{K\cup\{i\}}(U_t)=R_{\ig 1, N\id}(U_t)=1> 2a_{K}\e_0$
for all $t\geq 0$ and all $i \notin K$,
\begin{align*}
\Omega_1^K=&\Big\{ R_K(U_t) = 0 \hbox{ for some } 
t\in [0, \tilde \sigma^{K,\e_0}]\Big\}\\
=&\Big\{\tilde \sigma^{K,\e_0}=\xi \; \hbox{ or } 
 \inf_{t\in[0,\tilde\sigma^{K,\e_0})}\min_{i\notin K}R_{K\cup\{i\}}(U_t)\leq 2a_K\e_0
\; \hbox{ or } \;R_K(U_t) = 0 \hbox{ for some } 
t\in [0, \tilde \sigma^{K,\e_0}]\Big\}.
\end{align*}
Hence Step 1 tells us that $\PP^U_u(\Omega_{1}^K) \geq p_{K,\e_0}$ \blue quasi-everywhere on $\{u\in \cU :
R_K(u)\leq \e_0/2\}$. \bla

\vip

Since $R_K(U_{ \tau^K_{\ell +1} })\leq \e_0/2$, we have proved that for all $\ell\geq 1$,
$$
\PP^U_{u } \Big( \cap_{k=1}^{\ell +1} (\Omega_k^K)^c \Big) \le (1-p_{K,\e _0}) 
\PP^U_{u} \Big( \cap_{k=1}^{\ell} (\Omega_k^K)^c \Big).
$$
This allows us to conclude that indeed, $\PP^U_{u} ( \cap_{k=1}^{\infty} (\Omega_k^K)^c)=0$.

\vip 
{\it Step 3.} We prove (i), i.e. that if $d_{\theta,N}(N-1)\leq 0$, 
then $\PP^X_x(\inf_{[0,\zeta)}R_{\ig 1,N\id}(X_t)>0)=1$ \blue quasi-everywhere. \bla
By Remark~\ref{UtoX} 
and \eqref{etoile3}, it suffices to show that \blue quasi-everywhere, \bla $\PP^U_u(\xi<\infty)=1$.

\vip

For all $K \subset \ig 1,N\id$, all $\e\in (0,1]$, we introduce
$\tilde\sigma^{K,\e}_0 = 0$ and for all $k\ge 0$,
\begin{gather*}
\sigma^{K,\e}_{k+1} = \inf \Big\{ t\ge \tilde\sigma^{K,\e}_k :  R_K(U_t) \le \e /2 \; \mbox{ and } \;
\min_{i\notin K} R_{K\cup \{ i\}}(U_t) \ge 2a_K \e \Big\},\\
 \tilde\sigma_{k+1}^{K,\e} = \inf \Big\{ t\ge \sigma_{k+1}^{K,\e} : R_K(U_t) \ge \e \; \mbox{ or } \;
\min_{i\notin K} R_{K\cup\{i\}}(U_t) \le a_K \e \Big\} 
\land (\sigma^{K,\e}_{k+1}+T_{K,\e}),
\end{gather*}
with $T_{K,\e}$ defined in Step 1 and with the convention that $\inf \emptyset = \xi$.
\vip
{\it Step 3.1.} We fix $\e\in (0,1]$ and assume that $|K|\geq k_0$, so that
$d_{\theta,N}(|K|)\leq 0$ by Lemma~\ref{dimensions}. We prove here that
\blue quasi-everywhere, \bla $\PP^U_u$-a.s., either there is $t\in [0,\xi)$ such that $R_K(U_t)=0$
or there is $k\geq 1$ such that $\sigma^{K,\e}_{k+1}=\xi$ or there is $k\ge 1$ such that 
$\inf_{t\in [\sigma^{K,\e}_k, \tilde\sigma^{K,\e}_k)}\min_{i\notin K} R_{K\cup\{i\}}(U_t)  \le 2a_K\e$. 
\vip
It suffices to prove that $\PP^U_u ( \cap _{k\ge 1} (\Omega_k^{K,\e})^c ) = 0$, where
\begin{align*}
&\Omega^{K,\e}_k = \Big\{\sigma^{K,\e}_{k+1} = \xi  \,\mbox{ or }
\inf_{t\in [\sigma^{K,\e}_k,\tilde\sigma^{K,\e}_k)} \min_{i\notin K}R_{K\cup\{i\}}(U_t) \le 2a_K\e \\
&\hskip5cm\mbox{ or } \,
R_K(U_t)=0  \hbox{ for some }  t\in 
[ \sigma^{K,\e}_k, \tilde \sigma^{K,\e}_k)\Big\}.
\end{align*}
But for all $\ell \ge 1$, $\cap_{k= 1}^\ell (\Omega_k^{K,\e})^c $ is $\cM^U_{\sigma^{K,\e}_{\ell+1}}$-measurable, whence,
by the strong Markov property,
$$
\PP^U_u \Big( \cap_{k= 1}^{\ell+1} (\Omega_k^{K,\e})^c \Big) = 
\E^U_u \Big[ \Big(\prod_{k=1}^\ell \indiq_{(\Omega_k^{K,\e})^c}\Big) 
\PP^U_{U_{\sigma^{K,\e}_{\ell+1}}}\Big((\Omega_{1}^{K,\e})^c\Big)\Big]
\leq (1-p_{K,\e}) \PP^U_u \Big( \cap_{k=1 }^{\ell} (\Omega_k^{K,\e})^c \Big).
$$
We used Step 1, that $R_K(U_{\sigma^{K,\e}_{\ell+1}})\leq \e /2$ on the event
$(\Omega_\ell^{K,\e})^c\subset\{\sigma^{K,\e}_{\ell+1}<\xi\}$, as well as the inclusion
$\{\tilde \sigma^{K,\e}_{k} = \xi\}\subset \{\sigma^{K,\e}_{k+1} = \xi\}$. One easily concludes.

\vip

{\it Step 3.2.} For all $K\subset \ig 1,N\id$ such that $|K|\geq k_0$, \blue quasi-everywhere, \bla
$\PP^U_u$-a.s., there is no $t\in [0,\xi)$ such that $R_K(U_t)=0$.
Indeed, on the contrary event, there is $t\in [0,\xi)$ such that $U_t \notin E_{k_0}$,
whence $U_t \notin \cU$, which contradicts the fact that $t \in [0,\xi)$.

\vip

{\it Step 3.3.}
We show by decreasing induction that
$$\cP (n) : \hbox{ \blue quasi-everywhere, \bla $\PP^U_u$-a.s.
on the event $\{ \xi = \infty \}$, $b_n=\min_{\{|K|=n\}}\inf_{t\ge 0} R_K (U_t)>0$}
$$
holds true for every $n\in \ig k_0 ,N\id$.

\vip

The result is clear when $n=N$, because for all $t\in [0,\xi )$, $R_{\ig 1,N \id}(U_t) = 1$.

\vip

We next assume $\cP (n)$ for some $n\in \ig k_0+1,N \id$
and we show that $\cP(n-1)$ is true. 
We fix $K\subset \ig 1,N \id$ with cardinal $|K|=n-1$ and we apply Step 3.1 with $K$ and with some
$\e \in (0,b_{n}/(4a_K))$ 
($b_n$ is random but we may apply Step 3.1 simultaneously for all $\e \in \QQ_+^*\cap (0,1]$)
and Step 3.2, we find that on the event $\{ \xi = \infty \}$, there either
exists $k\ge 1$ such that $\sigma^{K,\e}_{k+1} = \infty$ or $k\ge 1$ such that 
$\blue\inf_{t\in [\sigma^{K,\e}_k, \tilde\sigma^{K,\e}_k)}\bla \min_{i\notin K} R_{K\cup \{i\}}(U_t) \le 2a_K\e$.
This second choice is not possible, since by induction assumption,
$R_{K\cup \{i\}}(U_t)\geq b_n$ for all $t>0$ and all $i \notin K$. Hence there is $k\geq 1$
such that $\sigma^{K,\e}_{k+1} = \infty$.

\vip

By definition of  $\sigma^{K,\e}_{k+1}$, this implies that, still on the event where $\xi=\infty$,
there exists $t_0 \ge 0$ such that for all $t\ge t_0$, 
either $R_K(U_t) \ge \e/2$ or $\min_{i\in K} R_{K\cup \{i\}}(U_t) \le 2a_K\e$. 
Using again the induction assumption, we get that the second choice is never possible, so that actually,
$R_K(U_t) \ge \e/2$ for all $t\geq t_0$. Since $(R_K(U_t))_{t\ge 0}$ is continuous
and positive   on $[0,t_0]$   according to Step 3.2, this completes the step.

\vip

{\it Step 3.4.} We conclude from Step 3.3 that  \blue quasi-everywhere, \bla $\PP^U_u$-a.s. 
on the event $\{ \xi = \infty \}$, $U_t \in \cK$ for all $t\geq 0$, where
$$
\cK = \{u \in \cU : \hbox{for all $n\in \ig k_0,N\id$, all $K \subset \ig 1,N\id$ with $|K|=n$, 
$R_K (u)\geq b_n$} \}.
$$
This (random) set is compact in $\cU$, so that Lemma~\ref{visite}-(i) tells us, 
both in the case where $(\cE^U,\cF^U)$ is recurrent and in the case where $(\cE^U,\cF^U)$ is transient,
that this happens with probability $0$. Hence \blue quasi-everywhere, \bla
$\PP^U_u(\xi=\infty)=0$ as desired.
\end{proof}

\subsection{Binary collisions}

We finally give the

\begin{proof}[ Proof of Proposition~\ref{N-1toN}-(iii)] We assume that $N\geq 4$, that
$0<d_{\theta,N}(N)<2\leq d_{\theta,N}(N-1)$ and observe that $\theta < 2$ and $k_0>N$,
so that $\cX=(\rr^2)^N$ and $\cU=\sS$. The $QKS(\theta,N)$-process $\xX$ is non-exploding
by Proposition~\ref{cont}-(i), and the $QSKS(\theta,N)$-process $\uU$ is irreducible recurrent
by Lemma~\ref{visite}-(ii). In particular, $\zeta=\xi=\infty$ a.s.
We divide the proof in 4 steps.
First, we prove that $\xX$ may have some binary collisions
with positive probability. Then we check that this implies that $\uU$ also may have some binary collisions
with positive probability.
Since $\uU$ is recurrent, it will then necessarily be a.s. subjected to (infinitely many)
binary collisions. Finally, we conclude using \eqref{etoile22}.

\vip

{\it Step 1.} We set $\bK = (\{ 1,2\}, \{ 3 \}, \dots, \{ N\})$ and \blue
$$
\cK = \Big\{ x \in B(0, C ) : \|x^1-x^2\| < 1 \; 
\mbox{ and }\; \min_{i\in \ig 1,N \id, j\in \ig 3,N\id, i \neq j } \|x^i-x^j\|> 10  \Big\},
$$
with $C$ large enough so that $\mu(\cK)>0$.
We show in this step that $\PP^X_x(A)>0$  quasi-everywhere in $\cK$, where
$$
A = \Big\{X^1_t = X^2_t \mbox{ for some } t\in [0,1] \; \mbox{ and } \; 
\min_{t\in [0,1]} R_{\ig 1,N \id}(X_t)>0 \Big\}.
$$

To this end, we fix $x\in \cK$ and introduce the set
\begin{gather*}
O= \Big\{y \in (\rr^2)^2 : R_{\{1,2\}}(y)<2, \;\;
\Big\|\frac{y^1+y^2}2-\frac{x^1+x^2}2\Big\|<1\Big\},
\end{gather*}
and $B_i=\{ y \in \rr^2 : ||y-x^i||^2<1\}$ for $i \in \ig3,N\id$. 
Clearly, there is some $\e\in (0,1]$ such that
$$
L=\Big\{y\in (\rr^2)^N : (y^1,y^2)\in  O \;\hbox{ and } \;y^i \in B_i 
\hbox{ for all $i\in \ig3,N\id$}\Big\}
\subset G_{\bK,\e},
$$
where as usual
$G_{\bK,\e}=\{y \in B(0,1/\e) : \forall\; i \in \ig 1,N\id,
\;\forall\; j\in \ig 3,N\id\setminus\{i\},\; ||y^i-y^j||^2>\e\}$, recall that $\cX=(\rr^2)^N$
because $k_0>N$.

\vip

Since $G_{\bK,\e}$ is obviously included in $\{y \in (\rr^2)^N : R_{\ig1,N\id}(y)>0\}$, we conclude that
\begin{align*}
\PP^X_x(A) \geq& \PP^X_x\Big(X^1_t = X^2_t \mbox{ for some } t\in [0,1] \; \mbox{ and } \; 
X_t\in L \mbox{ for all } t\in [0,1] \Big)\\
\geq & C_{1,\e,\bK}^{-1}\QQ^{1,\e,\bK}_x\Big(X^1_t = X^2_t \mbox{ for some } t\in [0,1] \; \mbox{ and } \; 
X_t\in L \mbox{ for all } t\in [0,1] \Big)
\end{align*}
by Proposition~\ref{girsanov} with $T=1$. We now set 
$\tau_{\bK,\e}=\inf\{t>0: X_t \notin G_{\bK,\e}\}$. Proposition~\ref{girsanov} tells us that,
quasi-everywhere in $\cK\subset G_{\bK,\e}$, the law of $(X_t)_{t\in [0, \tau_{\bK,\e}]}$  under 
$\QQ^{1,\e,\bK}_x$
equals the law of 
$Y_t=(Y^1_t,\dots, Y^N_t)_{t\in [0, \tilde\tau_{\bK,\e}]}$ where $(Y^1_t,Y^2_t)_{t\ge 0}$ is a
$QKS(2\theta /N, 2)$-process issued from $(x^1,x^2)$, where for all $i\in \ig 3,N \id$, $(Y^i_t)_{t\ge 0}$ is a 
$QKS(\theta /N, 1)$-process, i.e. a $2$-dimensional Brownian motion, issued from $x^i$, 
and where all these processes are independent. We have set $\tilde \tau_{\bK,\e}=\inf\{t>0: Y_t \notin G_{\bK,\e}\}$.
This implies, together with the fact that $\{X_t\in L \mbox{ for all } t\in [0,1] \}\subset \{\tau_{\bK,\e}>1\}$,
that 
$$
\PP^X_x(A) \ge  C_{1,\e,\bK}^{-1} \; p \prod_{i=3}^N q_{i}
$$
quasi-everywhere in $\cK$, where
\begin{align*}
p=\PP\Big(\min_{s\in[0,1]} R_{\{ 1,2\}}((Y^1_s,Y^2_s))= 0 \;\mbox{ and } \; (Y^1_t,Y^2_t)\in O
\mbox{ for all } t\in [0,1] \Big),
\end{align*}
and where $q_{i}=\PP(Y^i_t\in B_i$ for all $t\in [0,1])$. Of course, $q_i>0$ for all $i\in \ig3,N\id$, 
since $(Y^i_t)_{t\geq 0}$ is a Brownian motion issued from
$x^i$. Moreover, we know from Lemma~\ref{besbro} that
$(M_t=(Y^1_t+Y^2_t)/2)_{t\ge 0}$ is a $2$-dimensional Brownian motion with diffusion coefficient $2^{-1/2}$
issued from $m=(x^1+x^2)/2$,
that $(R_t=R_{\{1,2\}}((Y^1_t,Y^2_t)))_{t\ge 0}$ is a squared Bessel process of dimension 
$d_{2\theta/N, 2}(2) = d_{\theta,N}(2)$ issued from $r=||x^1-x^2||^2/2\in (0,1/2)$, 
and that these processes are independent. Hence, recalling the definition of $O$,
$$
p= \PP\Big(\min_{s\in[0,1]} R_s= 0 \;\mbox{ and } 
\max_{s\in [0,1]} R_s < 2\Big)\PP\Big(\max_{s\in [0,1]} ||M_t-m||< 1 \Big).
$$
This last quantity is clearly positive,  because a squared Bessel process
with dimension $d_{\theta,N}(2)\in (0,2)$, see Lemma~\ref{dimensions}, does hit zero, see Revuz-Yor
\cite[Chapter XI]{ry}.
\bla
\vip

{\it Step 2.} We now deduce from Step 1 that the set $F=\{u \in \cU : u^1=u^2\}$ is not exceptional for $\uU$.
Indeed, if it was exceptional, we would have $\PP^U_u(\exists\;t\geq 0 : U_t\in F)=0$ \blue quasi-everywhere. \bla
By \eqref{etoile22} and 
Remark~\ref{UtoX}, this would imply that \blue quasi-everywhere, \bla
$\PP^X_x(\exists\;t\in[0,\tau) : X_t\in G)=0$, where $G=\{x \in \cX : x^1=x^2\}$
and $\tau=\inf\{t>0 : R_{\ig1,N\id}(X_t)=0\}$.
But on the event $A$ defined in Step 1, there is $t\in [0,1]$ such that $X_t \in G$ and it holds that $\tau>1$. 
As a conclusion, $\PP^X_x(\exists\;t\in[0,\tau) : X_t\in G)>0$ \blue quasi-everywhere in $\cK$, \bla
whence a contradiction, since $\mu(\cK)>0$.

\vip

{\it Step 3.} Since $(\cE^U,\cF^U)$ is irreducible-recurrent and since $F$ is not exceptional, 
we know from Fukushima-Oshima-Takeda
\cite[Theorem 4.7.1-(iii) page 202]{f} that \blue quasi-everywhere, \bla
$$
\PP^U_u(\forall\; r>0, \exists \; t\geq r\; : \; U_t \in F)=1.
$$

{\it Step 4.} Using again \eqref{etoile22} and Remark~\ref{UtoX} and recalling that $\xi=\infty$
and that $\rho$ is an increasing bijection from $[0,\infty)$ to $[0,\tau)$,
we conclude that \blue quasi-everywhere, \bla $\PP^X_x$-a.s.,
$X_t$ visits $F$ (an infinite number of times) during $[0,\tau)$.
Of course, the same arguments apply when replacing $\{1,2\}$ by any subset of $\ig1,N\id$
with cardinal $2$, and the proof is complete.
\end{proof}

\section{\blue Quasi-everywhere conclusion \bla}\label{conccc}

Here we prove that the conclusions of Theorem~\ref{theoremecollisions} hold \blue quasi-everywhere. \bla

\begin{proof}[Partial proof of Theorem~\ref{theoremecollisions}]
We assume that $\theta\geq 2$ and $N>3\theta$, so that $k_0=\lceil 2N/\theta\rceil \in \ig 7,N\id$,
and consider a $\cX_\triangle$-valued $QKS(\theta,N)$-process $\xX$ with life-time $\zeta$
as in Proposition~\ref{existenceXU}, where $\cX=E_{k_0}$.
\vip
{\it Preliminaries.}
For $K \subset \ig1,N\id$ and $\e\in(0,1]$, we
write $\tau_{K,\e}=\inf\{t>0 : X_t \notin G_{K,\e}\}\in [0,\zeta]$ and 
$G_{K,\e}=\{x \in \cX : \min_{i\in K, j \notin K} ||x^i-x^j||^2>\e\}\cap B(0,1/\e)$
instead of $\tau_{\bK,\e}$ and $G_{\bK,\e}$ with $\bK=(K,K^c)$ as in Proposition~\ref{girsanov}.
We also write $\QQ^{T,\e,K}_x$ instead of $\QQ^{T,\e,\bK}$ and recall that it 
is equivalent to $\PP^X_x$ on $\cM^X_T=\sigma(X_s : s\in [0,T])$.

\vip

Setting $X^K_t=(X^i_t)_{i\in K}$ and $X^{K^c}_t=(X^i_t)_{i\in K^c}$,
we know  that \blue quasi-everywhere in $G_{K,\e}$, \bla the law of 
$(X^K_{t},X^{K^c}_t)_{t\in [0,   \tau_{K,\e}  \land T]}$ under $\QQ^{T,\e, K}_x$
is the same as the law of $(Y_t,Z_t)_{t\in [0, \tilde \tau_{K, \e} \land T]}$,
where $(Y_t)_{t\ge 0}$ is a $QKS(|K|\theta/N, |K|)$-process issued from
$x|_K$ and $(Z_t)_{t\geq 0}$ is a $QKS(|K^c|\theta/N, |K^c|)$-process issued from $x|_{K^c}$,
these two processes being independent, and where 
$\tilde\tau_{K,\e} = \inf\{ t>0 : (Y_t,Z_t) \notin G_{K, \e}\}$.
We denote by $\zeta^Y$ and $\zeta^Z$ the life-times of $(Y_t)_{t\ge 0}$ and $(Z_t)_{t\ge 0}$.
The life-time of $(Y_t,Z_t)_{t\geq 0}$ is given by $\zeta'=\zeta^Y\land\zeta^Z$ and it holds that 
$\tilde \tau_{K,\e} \in [0,\zeta']$.
\vip

{\it No isolated points.} Here we prove that for all $K \subset \ig1,N\id$ with $d_{\theta,N}(|K|)\in (0,2)$,
\blue quasi-everywhere, \bla
we have $\PP^X_x(A_K)=0$, where $A_K=\{\cZ_K$ has an isolated point$\}$ and
$$\cZ_K=\{t \in (0,\zeta) : \hbox{ there is a $K$-collision in the configuration } X_t\}.$$

On $A_K$, we can find $u,v \in \QQ_+$ such that $u<v<\zeta$ and such that there is a unique 
$t\in (u,v)$ with $R_K(X_t)=0$ and $\min_{i\notin K}R_{K\cup\{i\}}(X_t)>0$. By continuity, we deduce that
on $A_K$, there exist
$r,s \in \QQ_+$ and $\e\in \QQ\cap (0,1]$ such that $r<s<\zeta$,
$X_t \in G_{K,\e}$ for all $t \in [r,s]$ and such that $\{t\in(r,s) :  R_K(X_t)=0\}$ has an isolated point.
It thus suffices that for all $r<s$ and all $\e\in(0,1]$, that we all fix from now on, \blue quasi-everywhere, \bla
$\PP^X_x(A_{K,r,s,\e})=0$, where
\begin{align*}
&A_{K,r,s,\e}=\Big\{ \hbox{$X_t \in G_{K,\e}$ for all $t \in (r,s)$ 
and $\{t\in(r,s) :  R_K(X_t)=0\}$ has an isolated point} \Big\}.
\end{align*}
By the Markov property, it suffices that $\PP^X_x(A_{K,0,s,\e})=0$ \blue quasi-everywhere in $G_{K,\e}$ \bla and, 
by equivalence, that $\QQ^{s,\e,K}_{  x  }(A_{K,0,s,\e})=0$ \blue quasi-everywhere in $G_{K,\e}$. \bla 
We write, recalling the preliminaries,
\begin{align*}
\QQ^{s,\e,K}_x(A_{K,0,s,\e})=&\QQ^{s,\e,K}_x\Big(\tau_{K,\e} \geq s
\hbox{ and $\{t\in(0,s) :  R_K(X_t)=0\}$ has an isolated point}\Big)\\
=&\PP\Big(\tilde \tau_{K,\e} \geq s
\hbox{ and $\{t\in(0,s) :  R_K(Y_t)=0\}$ has an isolated point}\Big)\\
\leq & \PP\Big(\hbox{ $\{t\in(0,s) :  R_K(Y_t)=0\}$ has an isolated point}\Big).
\end{align*}
But $(Y_t)_{t\geq 0}$ is a $QKS(|K|\theta/N, |K|)$-process,
so that we know from Lemma~\ref{besbro} that $(R_{  K  }(Y_t))_{t\geq 0}$
is a   squared   Bessel process with dimension $d_{|K|\theta/N, |K|}(|K|)=d_{\theta,N}(|K|)\in (0,2)$.
Such a process has no isolated zero, see Revuz-Yor \cite[Chapter XI]{ry}.

\vip

{\it Point (i).} We have already seen in Proposition~\ref{cont}-(ii) that \blue quasi-everywhere, \bla
$\PP^X_x$-a.s.,
$\zeta<\infty$ and $X_{\zeta-}=\lim_{t\to \zeta-}X_t$ exists in $(\rr^2)^N$ and does not belong to $E_{k_0}$.

\vip

{\it Point (ii).} We want to show that \blue quasi-everywhere, \bla $\PP^X_x$-a.s.,
there is $K_0 \subset \ig 1,N \id$ with $|K_0|=k_0$ such that
there is a $K_0$-collision   and no $K$-collision with $|K| > k_0$   in the configuration $X_{\zeta-}$.
We already know that $X_{\zeta-} \notin E_{k_0}$,
so that there is $K \subset \ig 1,N \id$ with $|K|\geq k_0$ such that
there is a $K$-collision in the configuration $X_{\zeta-}$.
Hence the goal is to verify that \blue quasi-everywhere, \bla
for all $K\subset \ig 1,N \id $ with $|K|> k_0$, $\PP^X_x(B_K)=0$, where
$$
B_K=\{\hbox{There is a $K$-collision in the configuration } X_{\zeta-}\}.
$$
On $B_K$, there is $\e\in \QQ\cap(0,1]$ such that $X_{\zeta-}\in G_{K,2\e}$.
By continuity, there also exists, still on $B_K$, 
some $r \in \QQ_+\cap [0,\zeta)$ such that $X_t\in G_{K,\e}$ for all $t \in [r,\zeta)$.
Hence we only have to prove that for all $\e \in \QQ\cap(0,1]$, all $t\in \QQ_+$, all $T \in \QQ_+$ such that
$T>r$, \blue quasi-everywhere, \bla $\PP^X_x(B_{K,r,T,\e})=0$, where
$$
B_{K,r,T,\e}=\{\zeta \in (r,T], \; X_t \in G_{K,\e} \hbox{ for all $t \in [r,\zeta)$ and } R_K(X_{\zeta-})=0\}.
$$
By the Markov property, it suffices that  $\PP^X_x(B_{K,0,T,\e})=0$ \blue quasi-everywhere in $G_{K,\e}$, \bla 
for all $\e\in \QQ\cap(0,1]$ and all $T\in \QQ_+^*$.
We now fix $\e\in \QQ\cap(0,1]$ and $T\in \QQ_+^*$. By equivalence, it suffices to prove that 
$\QQ^{T,\e,K}_x(B_{K,0,T,\e})=0$. Using the notation introduced in the preliminaries, we write
\begin{align*}
\QQ^{T,\e,K}_x(B_{K,0,T,\e})=&\QQ^{T,\e,K}_x\Big(\zeta \leq T, \; \tau_{K,\e} = \zeta \hbox{ and } R_K(X_{\zeta-})=0\Big)\\
=&\PP\Big(\zeta'\leq T, \; \tilde \tau_{K,\e}=\zeta' \hbox{ and } R_K(Y_{\zeta'-})=0\Big)\\
\leq & \PP\Big(\inf_{t\in [0,\zeta^Y)}R_K(Y_t)=0\Big).
\end{align*}
But $(Y_t)_{t\geq 0}$ is a $QKS(|K|\theta/N, |K|)$-process with $|K|>k_0\geq 7$ and with
$d_{|K|\theta/N, |K|}(|K|-1)=d_{\theta,N}(|K|-1)\leq 0$ by Lemma~\ref{dimensions}
because $|K|-1\geq k_0$. We also have $d_{|K|\theta/N, |K|}(|K|)=d_{\theta,N}(|K|)\leq 0$.
Hence  Proposition~\ref{N-1toN}-(i) tells us that
$\PP(\inf_{t\in [0,\zeta^Y)}R_K(Y_t)=0)=0$. 

\vip

{\it Point (iii).} We recall that $k_1=k_0-1$ and we fix $L\subset K \subset \ig 1,N\id$ with
$|K|=k_0$ and $|L|=k_1$. We want to prove that \blue quasi-everywhere, \bla 
$\PP^X_x$-a.s., if $R_{K}(X_{\zeta-})=0$, then
for all $t \in [0,\zeta)$, the set 
$\cZ_L \cap (t,\zeta)$ is infinite and has no isolated point. But since
$d_{\theta,N}(k_1) \in (0,2)$, see Lemma~\ref{dimensions}, we already
know that $\cZ_L$ has no isolated point. It thus suffices to check that \blue quasi-everywhere, \bla 
for all $r \in \QQ_+$, we have 
$\PP^X_x(C_{K,L,r})=0$, where 
$$
C_{K,L,r}=\{ \zeta>r,\; R_K(X_{\zeta-})=0, \hbox{ and } R_L(X_t)>0 \hbox{ for all }t\in (r,\zeta)\}.
$$
We used that since $|L|=k_1  = k_0-1  $, for all $x\in \cX=E_{k_0}$, there is a $L$ collision
in the configuration $x$ if and only if $R_L(x)=0$. 
\vip
On $C_{K,L,r}$,   thanks to point (ii) , there are $\e \in \QQ \cap (0,1]$, $T\in \QQ_+$ and 
$s\in \QQ_+^*\cap[r,\zeta)$ such that
$\zeta \in (s,T]$ and 
$X_t \in G_{K,\e}$ for all $t \in [s,\zeta)$. Thus it suffices to prove that for all $s<T$ and
all $\e\in (0,1]$, that we now fix, \blue quasi-everywhere, \bla 
$\PP^X_x(C_{K,L,s,T,\e})=0$, where
$$
C_{K,L,s,T,\e}=\{ \zeta\in (s,T],\; R_K(X_{\zeta-})=0, \; X_t \in G_{K,\e}
\hbox{ and } R_L(X_t)>0 \hbox{ for all }t\in [s,\zeta)\}.
$$
By the Markov property, it suffices that  $\PP^X_x(C_{K,L,0,T,\e})=0$ \blue quasi-everywhere in $G_{K,\e}$ \bla and,
by equivalence, that $\QQ^{T,\e,K}_x(C_{K,L,0,T,\e})=0$. Recalling the preliminaries, we write
\begin{align*}
\QQ^{T,\e,K}_x(C_{K,L,0,T,\e})=&\QQ^{T,\e,K}_x\Big(\zeta \leq T, \; R_K(X_{\zeta-})=0,\;
\tau_{K,\e} = \zeta \hbox{ and } R_L(X_t)>0
\hbox{ for all } t\in [0,\zeta)\Big)\\
=&\PP\Big(\zeta'\leq T, \; R_K(Y_{\zeta'-})=0,\;
\tilde \tau_{K,\e} = \zeta' \hbox{ and } R_L(Y_t)>0
\hbox{ for all } t\in [0,\zeta')\Big).
\end{align*}
Setting $\sigma_K=\inf\{t>0 : R_K(Y_t)=0\}$, we observe that
$\sigma_K=\zeta^Y$. Indeed,
$|K|=k_0$ and
$(Y_t)_{t\geq 0}$ is a $QKS(|K|\theta/N, |K|)$-process, of which the state space is given by $\cY_\triangle=
\cY\cup\{\triangle\}$,
where $\cY=\{y \in (\rr^2)^{|K|} : R_{ M}(y)>0$ for all $ M \subset \ig 1,N\id$ such that
$| M |\geq k_0\}$, because $\lceil 2|K|/(|K|\theta/N)\rceil=\lceil 2N/\theta\rceil=k_0$. Hence
$\{R_K( Y_{\zeta'-})=0\}\subset \{\zeta'=\sigma_K\}$, so that
\begin{align*}
\QQ^{T,\e,K}_x(C_{K,L,0,T,\e})\leq \PP(R_L(Y_t)>0 \hbox{ for all } t\in [0,\sigma_K)).
\end{align*}
This last quantity equals zero by Proposition~\ref{N-1toN}-(ii), since 
$d_{|K|\theta/N, |K|}(|K|-1)=d_{\theta,N}(|K|-1)=d_{\theta,N}(k_0-1)\in (0,2)$ by Lemma~\ref{dimensions}
and since $|L|=k_1=|K|-1$ and since $d_{|K|\theta/N, |K|}(|K|)=d_{\theta,N}(|K|)=d_{\theta,N}(k_0)\leq 0 <2 $.

\vip

{\it Point (iv).} We assume that $k_2=k_0-2$, i.e. that $d_{\theta,N}(k_0-2)\in (0,2)$.
We fix $L\subset K \subset \ig 1,N\id$ with
$|K|=k_1$ and $|L|=k_2$. We want to prove that \blue quasi-everywhere, \bla 
$\PP^X_x$-a.s., for all $t\in [0,\zeta)$, if there is a $K$-collision
in the configuration $X_t$, then 
for all $r \in [0,t)$, the set 
$\cZ_L \cap (r,t)$ is infinite and has no isolated point. We already
know that $\cZ_L$ has no isolated point. 
It thus suffices to check that \blue quasi-everywhere, \bla
for all $r \in \QQ_+$, we have $\PP^X_x(D_{K,L,r})=0$, where
\begin{align*}
&D_{K,L,r}=\{ \zeta>r \hbox{ and there is $t\in (r,\zeta)$ such that there is a $K$-collision at time $t$}\\
&\hskip9cm \hbox{but no $L$-collision during $(r,t)$}\}.
\end{align*}
We set $\sigma_{K,r}=\inf\{t>r :$ there is a $K$-collision in the configuration $X_t\}$.
It holds that
$$
D_{K,L,r}=\{\zeta>r, \; \sigma_{K,r}<\zeta \hbox{ and there is no $L$-collision during }
u\in [r,\sigma_{K,r})\}.
$$
On $D_{K,L,r}$, there exists $\e\in \QQ\cap(0,1]$ such that $X_{\sigma_{K,r}}\in G_{K,2\e}$, so that by continuity, there
exists $v\in \QQ_+\cap [r,\sigma_{K,r})$ such that $X_{u}\in G_{K,\e}$ for all $u\in [v,\sigma_{K,r}]$.
Observe that $\sigma_{K,v}=\sigma_{K,r}$ and that for all $t\in [v,\sigma_{K,v})$,
there is a $L$-collision at time $t$ if and only if $R_L(X_t)=0$, by definition of $\sigma_{K,v}$
and since $X_t\in G_{K,\e}$.
All in all, it suffices to prove that for all $v\in \QQ_+$, all $\e\in \QQ\cap (0,1]$, all $T\in \QQ_+^*$,
$\PP^X_x(D_{K,L,v,T,\e})=0$ \blue quasi-everywhere, \bla where
$$
D_{K,L,v,T,\e}=\{\zeta\in(v,T],\; \sigma_{K,v}<\zeta,\; X_{u}\in G_{K,\e} \hbox{ and }
R_L(X_u)>0 \hbox{ for all } u\in [v,\sigma_{K,v})\}.
$$
By the Markov property, it suffices to prove that $\PP^X_x(D_{K,L,0,T,\e})=0$ 
\blue quasi-everywhere in $G_{K,\e}$ \bla
and, by equivalence, we may use $\QQ^{T,\e,K}_x$ instead of $\PP^X_x$. But recalling the preliminaries,
\begin{align*}
\QQ^{T,\e,K}_x(D_{K,L,0,T,\e})=&\QQ^{T,\e,K}_x\Big(\zeta \leq T, \; \sigma_{K,0}<\zeta,\; \tau_{K,\e}\geq \sigma_{K,0}
\;\hbox{ and }\; R_L(X_t)>0 \hbox{ for all } t\in [0,\sigma_{K,0})\Big)\\
=&\PP\Big(\zeta'\leq T, \; \tilde \sigma_{K,0}<\zeta',\; \tilde \tau_{K,\e}\geq \tilde \sigma_{K,0}
\;\hbox{ and }\; R_L(Y_t)>0 \hbox{ for all } t\in [0,\tilde\sigma_{K,0})\Big)\\
\leq& \PP\Big(R_L(Y_t)>0 \hbox{ for all } t\in [0,\tilde\sigma_{K,0})\Big),
\end{align*}
where we have set $\tilde\sigma_{K,0}=\inf\{t>0 : R_K(Y_t)=0\}$.
Finally, $\PP(R_L(Y_t)>0 \hbox{ for all } t\in [0,\tilde\sigma_{K,0}))=0$ by 
 Proposition~\ref{N-1toN}-(ii),
because $(Y_t)_{t\geq 0}$ is a $QKS(|K|\theta/N, |K|)$-process, because $|L|=k_2=|K|-1$,
because $d_{|K|\theta/N, |K|}(|K|-1)=d_{\theta,N}(|K|-1)=d_{\theta,N}(k_2)\in (0,2)$
and because $d_{|K|\theta/N, |K|}(|K|)=d_{\theta,N}(|K|)=d_{\theta,N}(k_1)\in (0,2)$.

\vip

{\it Point (v).} We fix $K\subset \ig 1,N\id$ with cardinal $|K|\in \ig3,  k_2-1   \id$, so that
$d_{\theta,N}(|K|)\geq 2$. We want to prove that \blue quasi-everywhere, \bla $\PP^X_x$-a.s.,
for all $t\in [0,\zeta)$, there is no $K$-collision in the configuration $X_t$.
We introduce $\sigma_K=\inf\{t>0 : $ there is a $K$-collision in the configuration $X_t\}$,
with the convention that $\inf\emptyset = \zeta$, and we have to verify that \blue quasi-everywhere, \bla
$\PP^X_x(\sigma_K<\zeta)=0$.

\vip

On the event $\{\sigma_K<\zeta\}$, there exist $\e \in \QQ\cap(0,1]$ and $r\in \QQ_+^*\cap[0,\sigma_K)$
such that $X_t \in G_{K,\e}$ for all $t \in [r,\sigma_K]$. Hence it suffices to check that
for all $\e \in \QQ\cap(0,1]$, all $r\in \QQ_+^*$ and all $T \in \QQ_+^*\cap(r,\infty)$, which we now fix, 
\blue quasi-everywhere, \bla $\PP^X_x(F_{K,r,T,\e})=0$, where
$$
F_{K,r,T,\e}=\{\sigma_K \in (r,\zeta \land T) 
\hbox{ and $X_t \in G_{K,\e}$ for all $t \in [r,\sigma_K]$}\}.
$$
By the Markov property, it suffices that  $\PP^X_x(F_{K,0,T,\e})=0$ \blue quasi-everywhere in $G_{K,\e}$ \bla and,
by equivalence, that $\QQ^{T,\e,K}_x(F_{K,0,T,\e})=0$. Recalling the preliminaries, we write
\begin{align*}
\QQ^{T,\e,K}_x(F_{K,0,T,\e})=&\QQ^{T,\e,K}_x\Big(\sigma_K \in ( 0,\zeta\land T) 
\hbox{ and } \tau_{K,\e}\geq \sigma_K\Big) \\
=&\PP\Big(\tilde \sigma_K \in ( 0 ,\zeta'\land T) \hbox{ and } \tilde \tau_{K,\e}\geq \tilde \sigma_K\Big)\\
\leq & \PP\Big(\inf_{t\in [0,T]} R_K(Y_t)=0\Big),
\end{align*}
where we have set $\tilde \sigma_K=\inf\{t>0 : $ there is a $K$-collision in the configuration $(Y_t,Z_t)\}$.
Since $(Y_t)_{t\geq 0}$ is a $QKS(|K|\theta/N, |K|)$-process, we know from Lemma~\ref{besbro}
that $(R_K(Y_t))_{t\geq 0}$ is a squared Bessel process with dimension
$d_{|K|\theta/N, |K|}(|K|)=d_{\theta,N}(|K|)\geq 2$. Such a process does a.s. never
reach $0$.

\vip

{\it Point (vi).} The proof is exactly the same as that of (iv), replacing everywhere $k_1$
by $k_2$ and $k_2$ by $2$, and using Proposition~\ref{N-1toN}-(iii) instead of Proposition~\ref{N-1toN}-(ii), 
which is licit because 
${0<d_{k_2\theta/N, k_2}(k_2)<2\leq d_{k_2\theta/N, k_2}(k_2-1)}$, since
$d_{k_2\theta/N, k_2}(k_2)=d_{\theta,N}(k_2)$ and $d_{k_2\theta/N, k_2}(k_2-1)=d_{\theta,N}(k_2-1)$ 
and by Lemma~\ref{dimensions}.
\end{proof}

\section{Extension to all initial conditions in $E_2$}\label{rq}

We first prove Proposition~\ref{densite}: we can build a $KS(\theta,N)$-process, i.e. a 
$QKS(\theta,N)$-process such that $\PP_x^X\circ X_t^{-1}$ is absolutely continuous for all $x \in E_2$ and all
$t>0$. We next conclude the
proofs of Proposition~\ref{reloumaisafaire} and of 
Theorem~\ref{theoremecollisions}.

\subsection{Construction of a $KS(\theta,N)$-process}\label{existenceKS}

We fix $\theta>0$ and $N\geq 2$ such that $N>\theta$ during the whole subsection.
For each $n\in \nn^*$, we introduce $\phi_n \in C^\infty ( \rr _+, \rr _+^*)$ such that 
$\phi_n (r) =r$ for all $r\geq 1/n$ and we set, for $x \in (\rr^2)^N$,
$$
\bm _n (x) = \prod _{1\leq i\neq j \leq N} [\phi_n(\|x^i-x^j\|^2)]^{-\theta/N} 
\qquad \hbox{and} \qquad \mu_n(\dd x)= \bm _n (x)\dd x.
$$
We then consider the $(\rr^2)^N$-valued S.D.E
\begin{align}\label{EDSn}
X^{n}_t = x + B_t + \intot \frac{\nabla \bm _n (X^n_s)}{2\bm _n(X^n_s)} \dd s,
\end{align}
which is strongly well-posed, for every initial condition, since the drift coefficient is smooth and bounded. 
We denote by $\xX^n = (\Omega^{n}, \cM^{n}, (X^n_t)_{t\ge 0}, (\PP^{n}_x)_{x \in (\rr^2)^N})$ the corresponding 
Markov process.

\begin{lemma}\label{dirichletn}
For all $n\ge 1$, $\xX^n$ is a $\mu_n$-symmetric $(\rr^2)^N$-valued \blue diffusion \bla with regular 
Dirichlet space $(\cE^n,\cF^n)$ with core $C_c^\infty ((\rr^2)^N)$ such that for 
all $\varphi\in C_c^\infty ((\rr^2)^N)$,
$$
\cE^n(\varphi,\varphi) =\frac 12 \int_{(\rr^2)^N} \|\nabla \varphi \|^2 \dd \mu_n.
$$
Moreover $\PP^n_x \circ (X_t^n)^{-1}$ has a density with respect to the Lebesgue measure on $(\rr^2)^N$
for all $t>0$ and all $x \in (\rr^2)^N$.
\end{lemma}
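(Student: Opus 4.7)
The plan is to construct $\xX^n$ directly as the strong solution of \eqref{EDSn}, verify $\mu_n$-symmetry by a routine integration by parts, identify the associated Dirichlet space, and derive the density statement from classical parabolic regularity.

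I would first observe that $\bm_n$ is $C^\infty$ and strictly positive on $(\rr^2)^N$: $\phi_n$ is smooth and bounded below by some $c_n>0$ on $[0,1/n]$ and equals the identity on $[1/n,\infty)$, so the drift $b_n=\nabla \bm_n/(2\bm_n)=\frac{1}{2}\nabla \log \bm_n$ is smooth and, by a direct estimate, globally bounded (for $\|x^i-x^j\|^2\ge 1/n$ the corresponding contribution is $-\frac{\theta}{N}(x^i-x^j)/\|x^i-x^j\|^2$, of norm at most $\sqrt{n}$, while on $\|x^i-x^j\|^2 \le 1/n$ boundedness follows from smoothness of $\log\phi_n$ together with the factor $x^i-x^j$). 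Hence \eqref{EDSn} admits a pathwise unique strong solution for every starting point $x\in(\rr^2)^N$, which is a Feller (and in particular Hunt) process; this defines $\xX^n$, and It\^o's formula shows that its generator coincides on $C^2_c((\rr^2)^N)$ with $\cL^n \varphi = \frac{1}{2}\Delta \varphi + b_n\cdot\nabla\varphi = \frac{1}{2\bm_n}\ddiv(\bm_n \nabla \varphi)$.

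Next, for $\varphi,\psi\in C^\infty_c((\rr^2)^N)$, a standard integration by parts (all boundary terms vanish because $\varphi,\psi$ are compactly supported and $\bm_n$ is smooth) yields
\begin{equation*}
\int_{(\rr^2)^N}\varphi\, \cL^n \psi\,\dd \mu_n=\frac{1}{2}\int_{(\rr^2)^N}\varphi\,\ddiv(\bm_n\nabla\psi)\,\dd x=-\frac{1}{2}\int_{(\rr^2)^N}\nabla\varphi\cdot\nabla\psi\,\dd \mu_n,
\end{equation*}
which is symmetric in $(\varphi,\psi)$. Hence $\cL^n$ is densely defined and symmetric on $L^2(\mu_n)$ and the semigroup of $\xX^n$ is $\mu_n$-symmetric. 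The bilinear form $\cE^n(\varphi,\varphi)=\frac{1}{2}\int \|\nabla \varphi\|^2 \dd \mu_n$ on $C^\infty_c((\rr^2)^N)$ is closable by the same argument as in the proof of Proposition \ref{existenceXU}, which is in fact easier here because $\bm_n$ is smooth and strictly positive on the whole of $(\rr^2)^N$. Its closure $(\cE^n,\cF^n)$ is regular and strongly local with core $C^\infty_c((\rr^2)^N)$, and by \cite[Theorem 7.2.1]{f} it is the Dirichlet space of a $\mu_n$-symmetric Hunt process; since that process has the same generator as $\xX^n$ on $C^\infty_c((\rr^2)^N)$, uniqueness \cite[Theorem 4.2.8]{f} identifies its law with that of $\xX^n$ for quasi all starting points, and this is upgraded to all starting points by the pointwise Feller construction from the SDE.

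Finally, for the density statement, the SDE \eqref{EDSn} has identity diffusion and smooth, bounded drift, so the Kolmogorov forward equation is uniformly parabolic with smooth bounded coefficients; classical parabolic theory (Aronson-type bounds, or Friedman's construction of fundamental solutions) provides a smooth, strictly positive transition density for $\xX^n$, valid for every $x\in(\rr^2)^N$ and every $t>0$. The mildest obstacle in the whole argument is the bookkeeping between the SDE Hunt process (defined pointwise in $x$) and the Dirichlet-form Hunt process (only uniquely specified for quasi every $x$), which is settled by the Feller character of the former.
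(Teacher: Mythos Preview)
Your argument is essentially sound, but the identification step is not properly justified. You invoke \cite[Theorem 4.2.8]{f} to conclude that the abstractly constructed Hunt process coincides with $\xX^n$ because they have the same generator on $C^\infty_c$. But Theorem 4.2.8 requires the Dirichlet spaces to coincide, not merely the generators on a subset; you have shown that the Dirichlet form of $\xX^n$ restricts to $\frac12\int\|\nabla\varphi\|^2\dd\mu_n$ on $C^\infty_c$, but not that $C^\infty_c$ is a core for the full Dirichlet space of $\xX^n$, which is precisely the assertion to be proved. The ``bookkeeping'' you flag at the end (quasi-every versus every starting point) is not the real obstacle --- the gap is one step earlier. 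It is easily repaired: since the SDE has a unique strong solution, the martingale problem for $\cL^n$ on $C^\infty_c$ is well-posed, and both processes solve it, so they coincide for every starting point. This also removes the quasi-every restriction in one stroke.

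The paper takes a more direct route and never constructs a second process. Starting from the Dirichlet space $(\cE^n,\cF^n)$ of $\xX^n$ itself, it sandwiches $\cF^n$: the inclusion $\overline{C^\infty_c}^{\cE^n_1}\subset\cF^n$ is automatic by closedness, while \cite[Lemma 3.3.5]{f} gives $\cF^n\subset\{\varphi\in L^2(\mu_n):\nabla\varphi\in L^2(\mu_n)\}$; since $\bm_n$ is smooth and bounded above and below, this weighted Sobolev space equals $\overline{C^\infty_c}^{\cE^n_1}$. For the density of the law of $X^n_t$, the paper simply invokes the Girsanov theorem (the drift is bounded), which is lighter than the parabolic-regularity argument you propose, though yours is of course correct and gives more.
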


\begin{proof}
Classically, $\xX^n$ is a $\mu_n $-symmetric \blue diffusion \bla and
its (strong) generator $\cL^n$ satisfies that for all $\varphi \in C^\infty_c((\rr^2)^N)$, all $x\in (\rr^2)^N$,
$\cL^n \varphi(x)= \frac 12 \Delta \varphi(x) + \frac{\nabla \bm_n(x)}{2\bm_n(x)} \cdot \nabla \varphi(x)$.
Hence, see Subsection~\ref{ap1}, one easily shows that for $(\cE^n,\cF^n)$ the Dirichlet
space of $\xX^n$, we have $C_c^\infty ((\rr^2)^N)\subset \cF^n$ and, for $\varphi \in C_c^\infty ((\rr^2)^N)$,
$\cE^n(\varphi,\varphi) =\frac 12 \int_{(\rr^2)^N} \|\nabla \varphi \|^2 \dd \mu_n$.
Since $(\cE^n,\cF^n)$ is closed, we deduce that
$$
\overline{C_c^\infty ((\rr^2)^N)}^{\cE^n _1}  \subset \cF^n,
$$
where $\cE^n_1 (\cdot, \cdot) = \cE^n(\cdot, \cdot ) + \|\cdot \|^2_{L^2((\rr^2)^N, \mu_n)}$. 
But thanks to \cite[Lemma 3.3.5 page 136]{f},
$$
\cF^n \subset \{ \varphi\in L^2((\rr^2)^N, \mu_n) : \nabla \varphi \in L^2((\rr^2)^N, \mu_n)\},
$$
where $\nabla $ is understood in the sense of distributions. Since finally
$$
\overline{C_c^\infty ((\rr^2)^N)}^{\cE^n _1}=\{ \varphi \in L^2((\rr^2)^N, \mu_n) : 
\nabla \varphi \in L^2((\rr^2)^N, \mu_n)\},
$$
$\xX^n$ has the announced Dirichlet space. Finally, the absolute continuity of $\PP^n_x \circ (X_t^n)^{-1}$,
for $t>0$ and $x\in (\rr^2)^N$, immediately follows from the (standard) Girsanov theorem,
 since the drift coefficient is bounded. 
\end{proof}

For all $x\in E_2$ we set $d_x = \min_{i\neq j} \|x^i-x^j\|^2$. For $n\ge 1$,
we introduce the open set
\begin{equation}\label{e2n}
E_2^{n} = \Big\{   x   \in (\rr^2)^N: d_{  x  } > \frac 1n \; \hbox{ and } \;  
||  x  ||<n\Big\}.
\end{equation}
We also fix a $QKS(\theta,N)$-process $\xX=(\Omega ^X,\cM^X, (X_t)_{t\ge 0}, (\PP^X_x)_{x\in \cX_\triangle})$ 
for the whole subsection.

\begin{lemma}\label{mmloi}
There exists an exceptional set $\cN_0\subset E_2$ with respect to $\xX$ such that for all
$n\geq 1$, for all $x \in E_2^n\setminus \cN_0$, the law of $(X^n_{t\land \tau_n})_{t\geq 0}$ under $\PP^n_x$ 
equals the law of $(X_{t\land\sigma_n})_{t\geq 0}$ under $\PP^X_x$, where
$$
\tau_n = \inf \{ t>0 : X_t^n \notin E^n_2 \} \qquad \hbox{and} \qquad
\sigma_n = \inf \{ t>0 : X_t \notin E^n_2 \}.
$$
\end{lemma}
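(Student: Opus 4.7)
The plan is to identify, for each fixed $n$, the laws of both processes \emph{killed on exit from $E_2^n$} by recognizing that they are two versions of the same Hunt process attached to a single regular Dirichlet space, and then to invoke the uniqueness statement \cite[Theorem 4.2.8]{f}. The key elementary observation is that $\bm_n \equiv \bm$ on $E_2^n$: on that set, $\|x^i-x^j\|^2 > 1/n$ for every $i\neq j$, hence $\phi_n(\|x^i-x^j\|^2)=\|x^i-x^j\|^2$ and therefore $\mu_n|_{E_2^n}=\mu|_{E_2^n}$.

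Fix $n\geq 1$. First, I would apply Lemma \ref{tuage} to $\xX^n$ (which has regular Dirichlet space determined in Lemma \ref{dirichletn}) with the open set $E_2^n\subset (\rr^2)^N$: the killed process, with life-time $\tau_n$, is a continuous $\mu_n|_{E_2^n}$-symmetric $(E_2^n\cup\{\triangle\})$-valued Hunt process whose regular Dirichlet space has core $C_c^\infty(E_2^n)$ and form $\varphi\mapsto \tfrac12\int\|\nabla\varphi\|^2\dd\mu_n$. Independently, I would apply Lemma \ref{tuage} to $\xX$ with the same open set $E_2^n$ (noting that $E_2^n\subset E_2\subset E_{k_0}=\cX$, so this is indeed an open subset of $\cX$): the killed process, with life-time $\sigma_n$, is a continuous $\mu|_{E_2^n}$-symmetric $(E_2^n\cup\{\triangle\})$-valued Hunt process whose regular Dirichlet space has core $C_c^\infty(E_2^n)$ and form $\varphi\mapsto \tfrac12\int\|\nabla\varphi\|^2\dd\mu$. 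Since $\mu=\mu_n$ on $E_2^n$, these two regular Dirichlet spaces coincide.

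By \cite[Theorem 4.2.8]{f}, there is an exceptional set $\cN_0^n\subset E_2^n$, relative to either killed process (they share the same quasi notion, since they have the same Dirichlet space), such that for all $x\in E_2^n\setminus \cN_0^n$ the law of $(X^n_{t\land\tau_n})_{t\geq 0}$ under $\PP^n_x$ equals the law of $(X_{t\land\sigma_n})_{t\geq 0}$ under $\PP^X_x$. As recalled in Subsection \ref{ap1}, a set which is exceptional for the process killed on $E_2^n$ is also exceptional for the original process $\xX$; hence each $\cN_0^n$ is exceptional for $\xX$. Setting $\cN_0=\bigcup_{n\geq 1}\cN_0^n\subset E_2$, which remains exceptional as a countable union of exceptional sets, yields the claim.

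The only real substance is the identification of the two Dirichlet spaces, which reduces to the trivial equality $\bm_n=\bm$ on $E_2^n$; once this is observed, the rest is bookkeeping with Lemma \ref{tuage} and \cite[Theorem 4.2.8]{f}. The main mild subtlety, and the only point that requires a little care, is choosing the exceptional set $\cN_0$ independently of $n$: this is handled by taking the countable union of the sets $\cN_0^n$ produced by Fukushima's uniqueness theorem.
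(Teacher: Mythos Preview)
Your proposal is correct and follows essentially the same approach as the paper: apply Lemma \ref{tuage} to both $\xX^n$ and $\xX$ on the open set $E_2^n$, use $\bm_n=\bm$ there to identify the two killed Dirichlet spaces, invoke \cite[Theorem 4.2.8]{f} for uniqueness, and take the union over $n$ of the resulting exceptional sets. If anything, you are slightly more explicit than the paper about why $\bm_n=\bm$ on $E_2^n$ and about the passage from exceptional sets for the killed process to exceptional sets for $\xX$.
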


\begin{proof}
We fix $n\geq 1$.
Applying Lemma~\ref{tuage} to $\xX^n$ and $\xX$ with the open set $E^n_2$, using that $\bm _n = \bm$ on $ E_2^n$
and Lemma~\ref{dirichletn}, we find that the processes $\xX^n$ and $\xX$ killed when leaving $E^n_2$
have the same Dirichlet space.
By uniqueness, see \cite[Theorem 4.2.8 page 167]{f},
there exists an exceptional set $\cN_n$ such that for all $x\in E^n_2 \setminus \cN_n$, 
the law of $(X^n_t)_{t\geq 0}$ killed when leaving $E^n_2$ under $\PP^n_x$ equals the law
of $(X_t)_{t\ge 0}$ killed when leaving $E^n_2$ under $\PP^X_x$.
We conclude setting $\cN_0 = \cup_{n\ge 1}\cN_n$. 
\end{proof}

\begin{lemma}\label{quasilaw}
For all exceptional set $\cN$ with respect to $\xX$, all $n\geq 1$ and all $x\in E^n_2$,
we have $\PP^n_x(X^n_{\tau_n}\notin \cN) = 1$.
\end{lemma}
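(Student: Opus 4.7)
The plan is to combine the identification of laws from Lemma~\ref{mmloi}, which holds only outside an exceptional set, with the one-step smoothing provided by Lemma~\ref{dirichletn} (existence of a Lebesgue density for $X^n_t$), in order to reach every starting point in the open set $E^n_2$, not merely quasi-every such point.

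First I would enlarge the bad set. Since $\cN$ is exceptional with respect to $\xX$, standard Dirichlet-form theory (see \cite{f}) gives $\mu(\cN) = 0$ and shows that the hitting set
$$
\cN_1 = \Big\{ y \in \cX : \PP^X_y(\exists\, t \geq 0,\; X_t \in \cN) > 0\Big\}
$$
is itself exceptional. Setting $\cN_2 = \cN \cup \cN_0 \cup \cN_1$, which remains exceptional and in particular $\mu$-null, and fixing $x \in E^n_2 \setminus \cN_2$, Lemma~\ref{mmloi} yields
$$\PP^n_x(X^n_{\tau_n} \in \cN) = \PP^X_x(X_{\sigma_n} \in \cN) \leq \PP^X_x(\exists\, t\geq 0,\; X_t \in \cN) = 0.$$
Thus the function $f(y) := \PP^n_y(X^n_{\tau_n} \in \cN)$, which is Borel by general Markov theory, vanishes on $E^n_2 \setminus \cN_2$. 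Since on $E^n_2$ all pairwise distances lie between $1/n$ and $2n$, the density $\bm$ is bounded above and below by positive constants there, so the restrictions of $\mu$ and of Lebesgue measure to $E^n_2$ are mutually equivalent; combined with $\mu(\cN_2) = 0$, this gives $f = 0$ Lebesgue-a.e. on $E^n_2$.

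Next I would extend the conclusion to an arbitrary $x \in E^n_2$ via the Markov property of $\xX^n$ at a small deterministic time $\delta > 0$. Splitting according to whether $\tau_n \leq \delta$ or $\tau_n > \delta$ and using that on the latter event $X^n_\delta \in E^n_2$,
$$
\PP^n_x(X^n_{\tau_n} \in \cN) \leq \PP^n_x(\tau_n \leq \delta) + \E^n_x\big[\indiq_{\{\tau_n > \delta\}} f(X^n_\delta)\big] \leq \PP^n_x(\tau_n \leq \delta) + \int_{E^n_2} p^n_\delta(x,y) f(y)\, \dd y,
$$
where $p^n_\delta(x,\cdot)$ is the Lebesgue density of $X^n_\delta$ under $\PP^n_x$ granted by Lemma~\ref{dirichletn}. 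The integral vanishes by the previous step, whereas $\PP^n_x(\tau_n \leq \delta) \to 0$ as $\delta \to 0^+$ since $E^n_2$ is open and $t \mapsto X^n_t$ is $\PP^n_x$-a.s. continuous with $X^n_0 = x \in E^n_2$. Letting $\delta \to 0$ will therefore give $\PP^n_x(X^n_{\tau_n} \in \cN) = 0$, as required.

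The main point that will require care, rather than a genuine obstacle, is the equivalence between $\mu$ and the Lebesgue measure on $E^n_2$: it is what converts the Dirichlet-theoretic information $\mu(\cN_2) = 0$ into the Lebesgue-nullity that annihilates the density integral, and it crucially uses that $E^n_2$ is bounded and bounded away from the collision set, which is precisely why the cutoff \eqref{e2n} is introduced.
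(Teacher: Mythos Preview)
Your proof is correct and follows essentially the same approach as the paper: apply the Markov property for $\xX^n$ at a small time $\delta$, use the density of $X^n_\delta$ from Lemma~\ref{dirichletn} to avoid the bad set, and then let $\delta\to 0$. The only cosmetic difference is that you build the hitting set $\cN_1$ by hand, whereas the paper directly enlarges $\cN\cup\cN_0$ to a properly exceptional set $\cN'$ and uses that $\PP^X_y(X_{\sigma_n}\in\cN')=0$ for $y\notin\cN'$; these are equivalent maneuvers.
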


\begin{proof}
We fix $\cN$ an exceptional set with respect to $\xX$,
$n\geq 1$ and $x\in E^n_2 $. For $\e \in (0,1]$, we write
$$
\PP^n_x ( X^n_{\tau_n} \in \cN ) \le \PP^n_x ( \tau_n \leq \e ) 
+ \PP^n_x ( \tau_n>\e, X^n_{\tau_n} \in \cN )
=\PP^n_x ( \tau_n \leq \e ) +\E^n _x [ \indiq _{\{\tau_n>\e\}} \PP^n_{X_\e^n} (X^n_{\tau_n} \in \cN )]
$$
by the Markov property. But by Lemma~\ref{mmloi}, for all $y \in E^n_2 \setminus \cN_0$,
the law of 
$(X^n_{t\land\tau_n})_{t\ge 0}$ under $\PP^n_y$ is equal to the law of $(X_{t\land\sigma_n})_{t\ge 0}$ under $\PP^X_y$.
Since $\cN_0\cup \cN$ is exceptional for $\xX$, we can find $\cN'\supset \cN_0\cup \cN$ properly exceptional for $\xX$
 (see Subsection~\ref{ap1}).
Hence for all $y\in E^n_2\setminus \cN'$, 
$$
\PP^n_y ( X^n_{\tau_n} \in \cN) \leq 
\PP^n_y ( X^n_{\tau_n} \in \cN') = \PP^X_y( X_{\sigma_n} \in \cN') = 0.
$$
Since $\PP^n_x\circ (X^n_\e)^{-1}$ has a density by Lemma~\ref{mmloi}, we conclude that
$\PP^n_x(X^n_\e \in \cN')=0$ and thus that  $\PP^n_x$-a.s., we have $\PP^n_{X_\e^n} (X^n_{\tau_n} \in \cN )=0$.
All in all, we have proved that $\PP^n_x ( X^n_{\tau_n} \in \cN ) \le \PP^n_x ( \tau_n \leq \e )$,
and it suffices to let $\e\to 0$, since $\PP^n_x(\tau_n>0)=1$ by continuity and since $x\in E_2^n$.
\end{proof}

Using Lemmas~\ref{mmloi} and \ref{quasilaw},
it is slightly technical but not difficult to build from $\xX$ and the family $(\xX^n)_{n\geq 1}$ 
a $\cXt$-valued \blue diffusion \bla
$\tilde\xX = (\tilde \Omega ^X, \tilde\cM^X, (\tilde X_t)_{t\ge 0}, (\tilde\PP^X_x)_{x\in \cX_\triangle})$
such that 
\vip
$\bullet$ for all $x\in \cXt\setminus \cN_0$, the law of $(\tilde X_t)_{t\geq 0}$ under $\tilde\PP^X_x$
equals the law of $(X_t)_{t\geq 0}$ under $\PP^X_x$,
\vip
$\bullet$ for all $x\in \cN_0$, setting $n=1+\lfloor \max(1/d_x,||x||) \rfloor$ (so that $x\in E^n_2$), the law of 
$(\tilde X_{t\land \tilde\sigma_n})_{t\geq 0}$ under $\tilde\PP^X_x$ is the same as that of 
$(X^n_{t\land \tau_n})_{t\geq 0}$ under $\PP^n_x$ and the law of 
$(\tilde X_{\tilde\sigma_n+t})_{t\geq 0}$ under $\tilde\PP^X_x$ conditionally on $\tilde \cM^X_{\tilde\sigma_n}$ 
equals the law of $(X_{t})_{t\geq 0}$ under 
$\PP^X_{\tilde X_{\sigma_n}}$. We have used the notation
$\tilde\sigma_n=\inf\{t>0 : \tilde X_t\notin E^n_2\}$ and $\tilde\cM^X_t=\sigma(\tilde X_s : s \in [0,t])$.

\begin{rk}\label{rrd}
For all $x \in E_2$, setting $n=1+\lfloor \max(1/d_x,||x||) \rfloor$, 
the law of $(\tilde X_{t\land \tilde\sigma_n})_{t\geq 0}$ 
under $\tilde\PP^X_x$ is the same as that of 
$(X^n_{t\land \tau_n})_{t\geq 0}$ under $\PP^n_x$.
\end{rk}

\begin{proof}
This follows from Lemma~\ref{mmloi} when $x \in E_2\setminus \cN_0$ and from the definition 
of $\tilde\xX$ otherwise.
\end{proof}

We can finally give the

\vip

\begin{proof}[Proof of Proposition~\ref{densite}]
We fix $N\geq 2$ and $\theta>0$ such that $N>\theta$ and we 
prove that $\tilde\xX$ defined above is a $KS(\theta,N)$-process. First, it is clear that $\tilde\xX$ is a
$QKS(\theta,N)$-process because $\tilde\xX$ is a $\cXt$-valued \blue diffusion \bla and since 
for all $x\in \cXt \setminus \cN_0$, the law of $(\tilde X_t)_{t\geq 0}$ under $\tilde\PP^X_x$
equals the law of $(X_t)_{t\geq 0}$ under $\PP^X_x$, with $\cN_0$ exceptional for $\xX$.
It remains to prove that for all $x \in E_2$, all $t>0$ and all Lebesgue-null $A\subset (\rr^2)^N$,
we have $\tilde \PP^X_x ( \tilde X_t \in A)=0$. We set $n=1+\lfloor \max(1/d_x,||x||)\rfloor$ and write, 
for any 
$\e\in(0,t)$,
$$
\tilde \PP^X_x ( \tilde X_t \in A) \leq \tilde \PP^X_x ( \tilde\sigma_n >\e , \tilde X_t \in A) + 
\tilde \PP^X_x (\tilde \sigma_n\le \e)=
\tilde \E ^X_x [ \indiq _{\{\tilde \sigma_n >\e\}} \tilde \PP^X_{\tilde X_\e} ( \tilde X_{t-\e} \in A)]
+\tilde \PP^X_x (\tilde \sigma_n\le \e).
$$
Since $\tilde\xX$ is $\mu$-symmetric (because it is a $QKS(\theta,N)$-process),
since $\tilde P_{t-\e}1 \leq 1$, where $\tilde P_t$ is the semi-group of $\tilde\xX$
and since $A$ is Lebesgue-null,
$$
\int _{(\rr^2)^N} \tilde \PP _y ( \tilde X_{t-\e} \in A ) \mu (\dd y ) \leq \mu(A) = 0.
$$
Hence there is a Lebesgue-null subset $B$ of $(\rr^2)^N$ (depending on $t-\e$) such that 
$\tilde \PP_y ( \tilde X_{t-\e} \in A )=0$ for every $y \in (\rr^2)^N\setminus B$. We conclude that
$$
\tilde \PP^X_x ( \tilde X_t \in A) \leq \tilde \PP^X_x ( \tilde\sigma_n >\e , \tilde X_\e \in B)
+\tilde \PP^X_x (\tilde \sigma_n\le \e)
 = \PP^n_x ( \tau_n >\e , X_\e^n \in B)
+\tilde \PP^X_x (\tilde \sigma_n\le \e), 
$$
 where we finally used Remark~\ref{rrd}.
Since $B$ is Lebesgue-null, we deduce from Lemma~\ref{dirichletn} that $\PP^n_x ( \tau_n >\e , X_\e^n \in B)=0$.
  Thus
$\tilde \PP^X_x ( \tilde X_t \in A) \leq \tilde \PP^X_x (\tilde \sigma_n\le \e)$,
which tends to $0$ as $\e\to 0$ because $\tilde\PP^X_x(\tilde \sigma_n>0)=1$ by continuity.
\end{proof}

\subsection{Final proofs}

We fix $\theta > 0$, $N\ge 2$ such that $N>\theta$ and a $KS (\theta,N)$-process $\xX$, 
which exists thanks to Subsection~\ref{existenceKS}. We recall that $E^n_2$ was introduced in
\eqref{e2n} and define, for all $n\ge 1$,
$\sigma_n = \inf \{t \ge 0:  X_t \notin E^n_2\}$, as well as the $\sigma$-field
$$
\cG=\cap_{n\ge 1} \sigma (  X_{ \sigma_n + t}, t\ge 0).
$$
 
\begin{lemma}\label{quasitotout}
Fix $A \in \cG$. If $\PP^X_x(A)=0$ \blue quasi-everywhere, \bla then $\PP^X_x(A) = 0$ for all $x \in E_2$.
\end{lemma}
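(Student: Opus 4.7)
The plan is to use the strong Markov property of $\xX$ at the first exit time $\sigma_n$ from $E_2^n$, for $n$ chosen so that $x \in E_2^n$, together with Remark \ref{rrd} and Lemma \ref{quasilaw}, in order to push the quasi-everywhere hypothesis on $A$ down to the specific starting point $x$. Fix $A \in \cG$ and a properly exceptional set $\cN$ (with respect to $\xX$) with $\PP^X_y(A)=0$ for every $y \in \cX \setminus \cN$, fix $x\in E_2$ and set $n = 1+\lfloor\max(1/d_x,\|x\|)\rfloor$, so that $x \in E_2^n$.

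Since $A \in \cG \subset \sigma(X_{\sigma_n+t}, t\ge 0)$, there is a measurable functional $F_n$ on the canonical path space with $\indiq_A = F_n((X_{\sigma_n+t})_{t\ge 0})$. Set $\phi_n(y) = \E^X_y[F_n((X_t)_{t\ge 0})]$. The strong Markov property at $\sigma_n$ gives, for every starting point $y$, the identity $\PP^X_y(A) = \E^X_y[\phi_n(X_{\sigma_n})]$. For quasi every $y$ in the closed set $F := \cX\setminus E_2^n$, $\sigma_n = 0$ $\PP^X_y$-almost surely: when $y$ lies in the interior of $F$ this is immediate from path continuity, and on $\partial E_2^n$ it is the classical Hunt-process regularity statement that the set of irregular points of a closed set is semipolar by \cite[Theorem 4.1.3]{f}, hence exceptional under the absolute continuity of transition kernels guaranteed by Proposition \ref{densite}. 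For such regular $y$, $X_{\sigma_n}=y$ $\PP^X_y$-a.s., so the identity collapses to $\phi_n(y)=\PP^X_y(A)$, which vanishes off some properly exceptional set $\cN'\supset \cN$; thus $\phi_n(y)=0$ for every $y \in F\setminus \cN'$.

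Finally, by Remark \ref{rrd} the law of $X_{\sigma_n}$ under $\PP^X_x$ equals the law of $X^n_{\tau_n}$ under $\PP^n_x$, which by Lemma \ref{quasilaw} puts zero mass on $\cN'$. Since $X^n$ is the non-degenerate diffusion \eqref{EDSn} with bounded smooth drift evolving on the bounded open set $E_2^n$, we have $\tau_n<\infty$ $\PP^n_x$-almost surely and, by continuity, $X^n_{\tau_n}\in \partial E_2^n \subset F$ a.s.; transferring this back to $\xX$ gives $X_{\sigma_n}\in F\setminus\cN'$ $\PP^X_x$-almost surely. Therefore $\phi_n(X_{\sigma_n})=0$ $\PP^X_x$-a.s., and the identity displayed above, applied at $y=x$, yields $\PP^X_x(A)=0$.

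The main obstacle in carrying this out is the claim that quasi every boundary point of $E_2^n$ is regular for $F$: one either invokes the general fact that, under Hunt's absolute continuity hypothesis — which holds here thanks to Proposition \ref{densite} — semipolar sets are polar, or verifies regularity directly from the observation that the interparticle distances and the norm of $X^n$ locally behave like non-degenerate one-dimensional diffusions that instantaneously oscillate across any fixed level.
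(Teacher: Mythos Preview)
Your argument is correct, but it takes a more roundabout route than the paper's and carries an illusory obstacle.

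The paper proceeds directly from the absolute-continuity property built into the definition of a $KS(\theta,N)$-process (Proposition~\ref{densite}): for any fixed $\e\in(0,t)$ it writes $\PP^X_x(A)\le \PP^X_x(\sigma_n\le\e)+\PP^X_x(\sigma_n>\e,\,A)$, applies the simple Markov property at the deterministic time $\e$ (using that on $\{\sigma_n>\e\}$ the event $A\in\sigma(X_{\sigma_n+t},t\ge0)$ is invariant under the shift $\theta_\e$), and then uses that $X_\e$ has a density to kill the second term. Letting $\e\to0$ finishes the proof in three lines, without invoking Remark~\ref{rrd}, Lemma~\ref{quasilaw}, or any boundary regularity.

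Your route via the strong Markov property at $\sigma_n$ is valid, but heavier: you need $\sigma_n<\infty$ (which you correctly extract from Remark~\ref{rrd} and the non-degeneracy of $X^n$ on the bounded set $E_2^n$), and you pass through Lemma~\ref{quasilaw} to ensure $X_{\sigma_n}\notin\cN'$. More importantly, the ``main obstacle'' you flag --- regularity of boundary points of $E_2^n$ for $F=\cX\setminus E_2^n$ --- does not exist. The paper defines $\sigma_n=\inf\{t\ge0:X_t\notin E_2^n\}$ with $t\ge0$, and $E_2^n$ is open; hence for every $y\in F$ (interior or boundary) one has $X_0=y\notin E_2^n$ and therefore $\sigma_n=0$ identically. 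The semipolarity discussion is unnecessary, and your identity $\phi_n(y)=\PP^X_y(A)$ holds for \emph{all} $y\in F$, not just quasi-all. With that simplification your proof goes through cleanly; the paper's version just avoids the detour through $X_{\sigma_n}$ altogether.
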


\begin{proof}
We fix $A \in \cG$ such that $\PP^X_x(A)=0$ \blue quasi-everywhere. \bla
There is an exceptional set $\cN$ such that for all 
$x \in E_2 \setminus \cN$, $\PP^X_x(A) = 0$. We now fix $x \in E_2$ and set 
$n= 1 + \lfloor \max(1/d_x,||x||)\rfloor$.
For any $\e \in (0,1]$,
$$
\PP^X_x( A) \le  \PP^X_x (\sigma_n\leq \e )+  \PP^X_x [\sigma_n>\e, A].
$$
By the Markov property and since $A \in \cG\subset \sigma (  X_{ \sigma_n + t}, t\ge 0)$, we 
get 
$$
\PP^X_x [\sigma_n>\e, A]=\E^X_x[\indiq _{\{\sigma_n>\e\}} \PP^X_{X_\e} (A)].
$$ 
But the law of $X_\e$ under $\PP^X_x$ has a density, so that 
$\PP^X_x(X_\e \in \cN)=0$, whence $\PP^X_x(\PP^X_{X_\e} (A)=0)=1$. 
Hence $\PP^X_x [\sigma_n>\e, A]=0$ and we end with 
$\PP^X_x( A) \le  \PP^X_x (\tau_{n}\leq \e )$. As usual, we conclude that $\PP^X_x( A)=0$ by letting
$\e\to 0$.
\end{proof}

We are now ready to give the

\begin{proof}[Proof of Proposition~\ref{reloumaisafaire}]
Let $\theta \in (0,2)$ and $N\ge 2$. Since our $KS (\theta,N)$-process $\xX$ is a $QKS(\theta,N)$-process,
we know from Proposition~\ref{cont}-(i) that $\PP^X_x(\zeta=\infty)=1$  \blue quasi-everywhere. \bla
We want to prove that $\PP^X_x(\zeta=\infty)=1$ for all $x\in E_2$.
By Lemma~\ref{quasitotout}, it thus suffices to check that $\{\zeta=\infty\}$ belongs to $\cG$, which is not
hard since for each $n\geq 1$,
$$
\{\zeta=\infty\}=\{X_t\in \cX \hbox{ for all } t\geq 0\}=\{X_t\in \cX \hbox{ for all } t\geq \sigma_n\}
\in  \sigma (  X_{ \sigma_n + t}, t\ge 0).
$$
For the second equality, we used that $X_t \in \bar E^n_2\subset \cX$ for all $t\in [0,\sigma_n]$
by definition.
\end{proof}

\begin{proof}[Proof of Theorem~\ref{theoremecollisions}]
Let $\theta \geq 2$ and $N>3\theta$. Since our $KS (\theta,N)$-process $\xX$ is a $QKS(\theta,N)$-process,
we know from Section~\ref{conccc} that all the conclusions of Theorem~\ref{theoremecollisions} hold  
\blue quasi-everywhere. \bla
In other words, $\PP_x^X(A)=1$  \blue quasi-everywhere, \bla where $A$ 
is the event on which we have $\zeta<\infty$, $X_{\zeta-}=\lim_{t\to \zeta-}X_t \in (\rr^2)^N$, there is
$K_0 \in \ig 1,N\id$ with cardinal $|K_0|=k_0$ such that there is a $K_0$-collision in the configuration
$X_{\zeta-}$, etc. We want to prove that $\PP^X_x(A)=1$ for all $x\in E_2$.
By Lemma~\ref{quasitotout}, it thus suffices to check that $A$ belongs to $\cG$.
But for each $n\geq 1$, $A$ indeed belongs to $\sigma (  X_{ \sigma_n + t}, t\ge 0)$, because 
no collision (nor explosion) may happen before getting out of $E^n_2$.
\end{proof}

We end this section with the following remark (that we will not use anywhere).

\begin{rk}\label{nsar}
Fix $\theta \ge 0$ and $N\ge 2$ such that $N> \theta$.
Consider a $KS(\theta,N)$ process $\xX$ and define $\sigma=\inf\{t\geq 0 : X_t \notin E_2\}$.
For all $x \in E_2$, there is some $(\cM^X_t)_{t\geq 0}$-Brownian motion
$((B^i_t)_{t\geq 0})_{i\in \ig 1,N\id}$ (of dimension $2N$) under $\PP^X_x$  
such that for all $t\in [0,\sigma)$, all $i\in \ig1,N\id$,
\begin{equation}\label{eeddss}
X_t^i = x^i + B^i_t - \frac{\theta}{N}\sum_{j\neq i} \intot \frac{X^i_s-X^j_s}{||X^i_s-X^j_s||^2} \dd s.
\end{equation}
\end{rk}

\begin{proof}
It of course suffices to prove the result during $[0,\sigma_n)$, 
where $\sigma_n=\inf\{t\geq 0 : X_t \notin E_2^n\}$. 
For any $x\in E_2^n$ and for a given Brownian motion, the solutions to \eqref{eeddss} and \eqref{EDSn} 
 classically coincide while they remain $E_2^n$,  because their drift coefficients
coincide and are smooth inside $E_2^n$.
Hence, recalling the notation of Subsection~\ref{existenceKS},
it suffices to prove that the semi-groups $P_t(x,\cdot)$ and $P_t^n(x,\cdot)$ of the Markov processes 
$\xX$ and $\xX^n$ killed when getting out of $E_2^n$ coincide for all $x\in E_2^n$.
\vip
By Lemma~\ref{mmloi}, there is an exceptional set $\cN_0$ such that $P_t(x,\cdot)=P_t^n(x,\cdot)$ for all
$x\in E_2^n\setminus \cN_0$. We next fix $x \in E_2^n$.
For any $\e \in (0,t)$, using that $P_\e(x,\cdot)$ has a density and that $\cN_0$ is Lebesgue-null, we easily
deduce that $P_t(x,\cdot)= (P_\e P_{t-\e})(x,\cdot) = (P_\e P^n_{t-\e})(x,\cdot)$. It is then
not difficult, using that $P^n_t$ is Feller, to let $\e\to 0$ and conclude that
indeed, $P_t(x,\cdot)= P^n_{t}(x,\cdot)$.
\end{proof}

\appendix

\section{A few elementary computations}\label{ppre}
\setcounter{thm}{0}
\setcounter{equation}{0}

We recall that
$d_{\theta ,N}(k)=(k-1)(2-\theta k/N)$ for $k\geq 2$ and give the 

\begin{proof}[Proof of Lemma~\ref{dimensions}]
First, \eqref{hh}, which says that $d_{\theta ,N}(k)>0$ if and only if 
$k<k_0=\lceil 2N/\theta\rceil$,
is clear.
We next fix $N >3\theta\geq 6$, so that $k_0 \in \ig 7,N \id$ and 
$d_{\theta,N}(2)=2-2\theta/N \in (4/3,2)$. By concavity of $x\to (x-1)(2-\theta x/N)$, 
it only remains
to check that (i) $d_{\theta,N}(3)\geq 2$, (ii)  $d_{\theta,N}(k_0-3)\geq 2$,  and (iii)  $d_{\theta,N}(k_0-1)<2$.
We introduce $a=2N/\theta > 6$ and observe that $d_{\theta ,N}(k)=2a^{-1}(k-1)(a-k)$
and that $k_0=\lceil a \rceil$.

\vip

For (i), we write $d_{\theta,N}(3)=4a^{-1}(a-3)=4-12 a^{-1}> 2$ since $a>6$.

\vip

For (ii), we have $d_{\theta,N}(k_0-3)=2a^{-1}(\lceil a \rceil -4)(a-\lceil a \rceil+3)$
and we need $(\lceil a \rceil -4)(a-\lceil a \rceil+3) \geq a$.
Writing $a=n+\alpha$ with an integer $n\geq 6$
and $\alpha \in (0,1]$, we need that $(n-3)(2+\alpha)\geq n+\alpha$,
and this holds true
because $2(n-3)\geq n$ and $(n-3)\alpha\geq \alpha$.

\vip
For (iii), we write $d_{\theta,N}(k_0-1)=
2a^{-1}(\lceil a \rceil -2)(a-\lceil a \rceil+1)\leq 2a^{-1}(\lceil a \rceil -2) <2$.
\end{proof}

We next study the reference measure of the Keller-Segel particle system.

\begin{prop}\label{radon}
Let $N\geq 2$ and $\theta>0$ be such that $N> \theta$. Recall that 
$k_0=\lceil 2N/\theta \rceil$  and
the definition \eqref{mmu} of $\mu(\dd x)=\bm(x)\dd x$.
\vip
(i) The measure $\mu$ is Radon on $E_{k_0}$.
\vip
(ii) If $k_0 \leq N$, then $\mu$ is not Radon on $E_{k_0+1}$.
\end{prop}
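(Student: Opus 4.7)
The plan is to argue locally: for each $x_0\in E_{k_0}$ I will exhibit an open neighborhood of finite $\mu$-measure, so that any compact $\cK\subset E_{k_0}$ is covered by finitely many such neighborhoods. Since $x_0\in E_{k_0}$, for $\delta>0$ small enough the equivalence relation on $\ig 1,N\id$ generated by $\|x_0^i-x_0^j\|<\delta$ yields a partition $K_1,\dots,K_\ell$ of $\ig 1,N\id$ with $|K_p|\le k_0-1$ for every $p$. In a small enough open neighborhood $V$ of $x_0$ this partition is preserved and the pairwise distances between particles of different blocks stay bounded below by $\delta/2$, so the cross-cluster factors of $\bm$ are bounded on $V$ and, writing $F_K(y):=\prod_{i\ne j\in K}\|y^i-y^j\|^{-\theta/N}$,
$$
\bm(x)\le C_0\prod_{p=1}^\ell F_{K_p}(x|_{K_p}),\qquad x\in V,
$$
for some constant $C_0=C_0(x_0,V)$. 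By Fubini, $\mu(V)<\infty$ reduces to the claim that for every $k\in\ig 2,k_0-1\id$ and every $M>0$,
$$
J_k(M):=\int_{\{y\in(\rr^2)^k:\|y\|\le M\}} F_{\ig 1,k\id}(y)\,\dd y < \infty.
$$

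To establish this I will apply the obvious $k$-particle analogue of the change of variables from Lemma \ref{changeofvariable}: write $y^i=z+\sqrt{r}\,u^i$ with $z=\tfrac1k\sum_i y^i$, $r=\sum_i\|y^i-z\|^2$, and $u$ on the unit sphere $T_k$ of the hyperplane $\{w\in(\rr^2)^k:\sum w^i=0\}$. The same Jacobian computation (with $k$ in place of $N$) yields $F_{\ig 1,k\id}(y)\,\dd y=\tfrac12 r^{d_{\theta,N}(k)/2-1}F_{\ig 1,k\id}(u)\,\dd z\,\dd r\,\sigma_k(\dd u)$, whence
$$
J_k(M)\le C(M)\int_0^{M'}r^{d_{\theta,N}(k)/2-1}\,\dd r\cdot\int_{T_k} F_{\ig 1,k\id}(u)\,\sigma_k(\dd u).
$$
The radial integral is finite because $d_{\theta,N}(k)>0$ for $k\le k_0-1$ by Lemma \ref{dimensions}, and the spherical integral is finite by induction on $k$ (running the same localization-plus-polar-coordinates argument intrinsically on $T_k$, where the only sub-cluster singularities involve subsystems of size $<k$ that are already controlled); the base case $k=2$ is immediate since $F_{\{1,2\}}(y)=\|y^1-y^2\|^{-2\theta/N}$ with $2\theta/N<2$ is locally integrable on $(\rr^2)^2$.

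For (ii), assuming $k_0\le N$, I choose $x_0\in(\rr^2)^N$ with the first $k_0$ coordinates equal to $0$ and the remaining $N-k_0$ coordinates placed at distinct, widely separated points; then $x_0\in E_{k_0+1}$ since every $(k_0+1)$-subset of indices contains at least one coordinate outside the first cluster. I take $\cK$ to be a small closed ball around $x_0$ on which the indices in $\ig k_0+1,N\id$ stay well separated from each other and from the first $k_0$. All factors of $\bm$ except those inside $F_{\ig 1,k_0\id}$ remain bounded below on $\cK$, hence
$$
\mu(\cK)\ge c\int_{\{y\in(\rr^2)^{k_0}:\|y\|\le\eta\}} F_{\ig 1,k_0\id}(y)\,\dd y
$$
for some $c,\eta>0$, and the same polar decomposition produces the radial factor $r^{d_{\theta,N}(k_0)/2-1}$, which is not integrable at $0$ since $d_{\theta,N}(k_0)\le 0$ by Lemma \ref{dimensions}. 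Thus $\mu(\cK)=\infty$.

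The main technical obstacle is the inductive step establishing $\int_{T_k}F_{\ig 1,k\id}(u)\,\sigma_k(\dd u)<\infty$, where sub-cluster singularities on the sphere must be controlled by the same localization-and-polar-coordinates trick applied intrinsically on $T_k$; this is essentially the ``similar measure on a sphere'' study announced for Appendix \ref{ppre} and is the real content behind both parts of the proposition.
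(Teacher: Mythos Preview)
Your localization in part (i) matches the paper's opening exactly, and your treatment of (ii) is correct and close in spirit to the paper's. The genuine divergence is in how you bound the local cluster integrals $J_k(M)=\int_{\|y\|\le M} F_{\ig1,k\id}(y)\,\dd y$ for $k\le k_0-1$. You propose polar coordinates plus an induction on $k$, reducing finiteness to that of the spherical integral $\int_{T_k}F_{\ig1,k\id}\,\dd\sigma_k$, and handling the latter by localizing again on $T_k$ where only strictly smaller sub-clusters appear. This scheme is sound and the induction is well-founded (on $T_k$ no full $k$-cluster can form, so every singularity involves some $m<k$), but carrying it out cleanly requires a chart argument on $T_k$ and a careful identification of the local integral with the lower-dimensional $J_m$.

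The paper sidesteps the induction entirely by an elementary trick: it uses $\|u\|^2\ge|u_1u_2|$ to bound $I_n\le J_n^2$ with $J_n$ a one-dimensional integral, and then applies the AM--GM inequality to replace $\prod_{i\ne j}|t^i-t^j|^{-\theta/(2N)}$ by an average of products of the form $\prod_{j\ne i}|t^i-t^j|^{-\theta n/(2N)}$, which Fubini reduces to a single one-dimensional power integral, finite because $\theta n/(2N)<1$ for $n\le k_0-1$. This is shorter and avoids any recursive structure. Your approach is more geometric and arguably explains \emph{why} the threshold is $d_{\theta,N}(k)>0$; the paper's is quicker and entirely self-contained.

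One small correction: you write that the inductive spherical step ``is essentially the similar measure on a sphere study announced for Appendix \ref{ppre}''. In the paper the logic runs the other way: Proposition \ref{radonsphere} (finiteness of $\beta$ on $\sS\cap E_{k_0}$) is \emph{deduced from} Proposition \ref{radon} via the change of variables in Lemma \ref{changeofvariable}, not used to prove it. So you cannot simply cite that result; your induction must be self-contained, which (as you note) is where the real work in your route lies. For (ii), the paper also avoids polar coordinates, instead bounding $\bm$ below on the explicit region $A=\{x^1,x^2\in B(0,1/3),\ x^i\in B(x^1,\|x^1-x^2\|)\ \text{for }i\ge3\}$ to force all pairwise distances $\le 2\|x^1-x^2\|$ and reduce directly to a divergent two-dimensional power integral; your polar-coordinate version works equally well since the radial exponent $d_{\theta,N}(k_0)/2-1\le-1$ and the spherical factor is strictly positive.
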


\begin{proof}
(i)  To show that $\mu $ is radon on $E_{k_0}$, we have to check that 
for all $x=(x^1,\dots,x^N) \in E_{k_0}$, which we now fix, there is an open set 
$O_x \subset E_{k_0}$ such that $x\in O_x$ and $\mu(O_x)<\infty$.
We choose $O_x=\prod _{i=1}^{N} B(x^{i},d_x)$, where the balls are subsets of $\rr^2$ and
where 
$$
d_x = 1 \land  \min \Big\{ \frac{\|x^{i}-x^{j}\|}{3}  :  i,j \in \ig 1,N \id 
\mbox{ such that } 
x^i \neq x^j  \Big\}>0.
$$
We consider the partition $K_1,\dots , K_\ell $ of $\ig 1,N \id$ such that for all 
$p \neq q$ in $\ig 1,\ell\id$, for all $i,j \in K_p$ and all $k \in K_q$, $x^i = x^j$ and $x^i \neq x^k$. 
Since $x \in E_{k_0}$,
it holds that $\max_{p \in \ig 1,\ell \id} |K_p| \leq k_0-1$. By definition of $O_x$ and $d_x$, we see that 
for all $y \in O_x$, for all $p \neq q$ in $\ig 1,\ell\id$, for all $i \in K_p$, all $j\in K_q$,
$$
\|y^i - y^j \| \ge \|x^i-x^j\| -\|x^i-y^i\|-\|x^j-y^j\| \geq \|x^i-x^j\|-2d_x\geq d_x. 
$$
This implies that for some finite constant $C$ depending on $x$, 
for all $y \in O_x$,
$$
\bm(y)=\prod_{1\leq i\ne j\leq N}||y^i-y^j||^{-\theta/N} \leq C \prod_{p=1}^\ell 
\Big(\prod_{i,j \in K_p,i\ne j} 
||y^i-y^j||^{-\theta/N} \Big).
$$
Recall now that $\mu(\dd y)=\bm(y)\dd y$ and that we want to show that $\mu(O_x)<\infty$.
Since 
$x^i=x^j$ for all $i,j\in K_p$ and all $p\in\ig 1, \ell \id$, since $|K_p| \leq k_0-1$,
$d_x\leq 1$ and by a translation argument, we are reduced to show that for any 
$n \in \ig   2  ,k_0-1\id$,
(when $k_0>N$, one could study only $n \in \ig   2  ,N \id$)
$$
I_n=\int _{(B(0,1))^{n}} \Big(\prod _{1 \le i \ne j \le n} \|y^{i}-y^{j}\|^{- \theta/N}\Big)
\dd y^1 \dots \dd y^n <\infty.
$$
 We fix $n \in \ig   2  ,k_0-1\id$ and show that $I_n<\infty$.
Since $\|u\|^2 \ge |u_1u_2|$ for all $u=(u_1,u_2) \in \rr ^2$,  we have 
$I_n \leq J_n^2$, where
$$
J_n=\int _{[-1,1]^{n}} \Big(\prod _{1 \le i\neq j \le n} |t^{i}-t^{j}|^{- \theta/(2N) }\Big) 
\dd t^1\dots\dd t^n.
$$ 
But for all $t^1,\dots,t^n \in \rr$, 
\begin{align*}
\prod _{1\le i \neq j \le n} |t^i-t^j|^{-\theta/(2N)} = 
\prod _{i=1} ^{n} \Big( \prod _{j=1, j\neq i}^{n} |t^i-t^j|^{-\theta/(2N)} \Big) 
\le \frac{1}{n} \sum _{i=1}^{n} \prod _{j=1, j\neq i}^{n} |t^{i}-t^{j}|^{-\theta n/(2N)}
\end{align*}
by the inequality of arithmetic and geometric means. Thus by symmetry,
$$
J_n \le \int_{[-1,1]^{n}} \Big(\prod _{j=2}^{n} |t^{1}-t^{j}|^{- \theta n/(2N) }\Big)\dd t^1\dots\dd t^n
=\int_{-1} ^{1} \Big( \int _{-1}^{1} 
|t^{1}-t^{2}|^{- \theta n/(2N) }\dd t^{2} \Big) ^{n-1}  \dd t^{1}.
$$
Consequently,
$$
J_n \leq \int_{-1}^{1} \Big( \int _{-2}^{2} |s|^{- \theta n/(2N) }\dd s \Big) ^{n-1}  \dd t^{1}.
$$
Since $n\leq k_0-1=\lceil 2N/\theta\rceil-1 < 2N/\theta $, we have $\theta n/(2N)<1$, 
so that $J_n<\infty$, whence 
$I_n<\infty$.

\vip

(ii) We next assume that $k_0 \in \ig   2 ,N\id$. 
To prove that $\mu$ is not radon on $E_{k_0+1}$, we show that $\mu (K) = \infty$
for the  compact subset
$$
K = \prod _{i=1}^{k_0}\overline{B}(0,1) \times \prod _{k = k_0 +1} ^N \overline{B}( (2k,0), 1/2)
$$ 
of $E_{k_0+1}$. All the balls in the previous formula are balls of $\rr^2$. 
For $x=(x^1,\dots,x^N) \in K$,
it holds that $x^{k_0+1},\dots,x^N$ are far from each other and far from $x^1,\dots,x^{k_0}$,
which explains that $K$ is indeed compact in $E_{k_0+1}$. There is a positive constant $c>0$
such that for all $x \in K$,
$$
\bm(x) =\prod_{1\leq i\ne j\leq N}||x^i-x^j||^{-\theta/N} \geq c \prod_{1\le i \neq j \le k_0}
||x^i-x^j||^{-\theta/N},
$$
whence, the value of $c>0$ being allowed to vary, 
$$ 
\mu ( K ) \ge c \int _{(B(0,1))^{k_0}} \Big(\prod _{1\le i \neq j \le k_0} \|x^i-x^j\|^{-\theta/N}
\Big)\dd x^1\dots \dd x^{k_0}.
$$
We now observe that
$$
A=\{x=(x^1,\dots,x^{k_0}) : x^1,x^2 \in B(0,1/3), \; 
\forall i \notin \{1,2 \},\; x^{i} \in B(x^{1}, \| x^{1} - x^{2} \|)\} \subset (B(0,1))^{k_0}
$$
and that for $x \in A$, we have $||x^i-x^j||\leq ||x^i-x^1||+||x^j-x^1||\leq 2||x^1-x^2||$ 
for all $i,j=1,\dots,k_0$, from which
$$
\prod _{1\le i \neq j \le k_0} \|x^i-x^j\|^{-\theta/N} \geq c \|x^1-x^2\|^{-k_0(k_0-1)\theta/N}.
$$ 
As a conclusion, 
\begin{align*}
\mu ( K ) \ge& c \int_{(B(0,1/3))^2} \|x^1-x^2\|^{-k_0(k_0-1)\theta/N} \dd x^1 \dd x^2
\int_{(B(x_1, \| x^{1} - x^{2} \|))^{k_0-2}} \dd x^3\dots\dd x^{k_0}\\
\geq & c \int_{(B(0,1/3))^2} \|x^1-x^2\|^{-k_0(k_0-1)\theta/N+2(k_0-2)} \dd x^1 \dd x^2\\
\geq & c \int_{B(0,1/3)} \|u\|^{-k_0(k_0-1)\theta/N+2(k_0-2)} \dd u,
\end{align*}
where we finally used the change of variables $u = x^{1}-x^{2}$ and $v = x^{1} + x^{2}$.
This last integral diverges, because
$ -k_0(k_0-1)\theta/N+2(k_0-2)=d_{\theta,N}(k_0)-2 \leq -2$, recall
that $d_{\theta,N}(k_0)=(k_0-1)(2-k_0 \theta/N)\leq 0$ by definition of $k_0$.
\end{proof}

We need a similar result on the sphere $\sS$ defined in Section~\ref{nota}, where
$\gamma : \rr^2 \to (\rr^2)^N$ and $\Psi :  \rr^2 \times \rr^*_+ \times \sS \to E_N\subset (\rr^2)^N$
were also introduced. First, we show an explicit link between $\mu(\dd x)=\bm(x)\dd x$ 
and $\beta(\dd u)=\bm(u)\sigma(\dd u)$ 
defined in \eqref{mmu} and \eqref{beta}, that we use several times.

\begin{lemma}\label{changeofvariable}
We fix $N\ge 2$, $\theta >0$ and set $\nu= d_{\theta ,N}(N)/2 -1$.
For all Borel $\varphi : (\rr^2)^N\to \rr_+$,
$$ 
\int _{(\rr ^2)^N} \varphi(x) \mu(\dd x) = \frac12\int_{\rr ^2\times \rr^*_+\times \sS} 
\varphi( \Psi (z,r,u) ) r^\nu  \dd z  
\dd r \beta (\dd u).
$$
\end{lemma}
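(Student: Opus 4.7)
The plan is to perform an explicit change of variables via the map $\Psi$. First, I would observe that the ``collapsed'' set $\{x \in (\rr^2)^N : R_{\ig 1,N\id}(x) = 0\}$, where all $N$ particles coincide, is an affine subspace of codimension $2(N-1) \geq 2$, hence Lebesgue-null and therefore $\mu$-null. It is thus enough to integrate over $E_N$, on which $\Psi$ is a bijection onto $\rr^2 \times \rr^*_+ \times \sS$ with inverse $x \mapsto (S_{\ig 1,N\id}(x), R_{\ig 1,N\id}(x), \Phi_\sS(x))$.

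Next, I would compute the Jacobian in two stages using the orthogonal decomposition $(\rr^2)^N = H^\perp \oplus H$. The map $\gamma : \rr^2 \to H^\perp$ is linear and sends the canonical basis of $\rr^2$ to an orthogonal family in $(\rr^2)^N$ each of norm $\sqrt{N}$, so pulling back the $2$-dimensional Lebesgue measure on $H^\perp$ produces a constant factor times $\dd z$. On $H$, which has dimension $2N-2$, I would use standard polar coordinates $h = \rho u$ with $\rho = \|h\|$, $u = h/\rho \in \sS$, giving $\dd h = \rho^{2N-3} \dd \rho\, \sigma(\dd u)$. Substituting $r = \rho^2$ (so $\dd \rho = \dd r/(2\sqrt{r})$) yields $\dd h = \tfrac{1}{2} r^{N-2} \dd r\, \sigma(\dd u)$.

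For the density factor, I would compute $\bm(\Psi(z,r,u))$ directly from its definition. Since $(\Psi(z,r,u))^i - (\Psi(z,r,u))^j = \sqrt{r}(u^i - u^j)$, one has $\|(\Psi(z,r,u))^i - (\Psi(z,r,u))^j\| = \sqrt{r}\,\|u^i - u^j\|$, so taking the product over the $N(N-1)$ ordered pairs $i \neq j$ gives
$$\bm(\Psi(z,r,u)) = r^{-\theta(N-1)/2}\, \bm(u).$$
The exponent of $r$ in the combined formula is then $(N-2) - \theta(N-1)/2 = \tfrac{1}{2}(N-1)(2-\theta) - 1 = \tfrac{1}{2} d_{\theta,N}(N) - 1 = \nu$, and the factor $\bm(u)\,\sigma(\dd u)$ is exactly $\beta(\dd u)$, which delivers the desired identity.

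Every step here is a routine change of variables, so there is no genuine obstacle; the only points requiring care are the bookkeeping of the exponent of $r$ and of the overall constant after the $\rho \mapsto r = \rho^2$ substitution in the polar decomposition on $H$, and the identification of the pullback of the Euclidean measure on $H^\perp$ under $\gamma$, where it is easy to be off by a factor of $\tfrac{1}{2}$ or $N$.
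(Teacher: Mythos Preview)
Your approach is essentially identical to the paper's: split off the center of mass via the orthogonal decomposition $(\rr^2)^N = H^\perp \oplus H$ (the paper phrases this as translation invariance of $\bm$, writing $\int \varphi\,\bm\,\dd x = \int_{\rr^2\times H}\varphi(\gamma(z)+y)\,\bm(y)\,\dd z\,\dd y$), pass to polar coordinates $h=\ell u$ on the $(2N-2)$-dimensional space $H$, substitute $r=\ell^2$ to produce the factor $\tfrac12 r^{N-2}$, and use the homogeneity $\bm(\sqrt{r}\,u)=r^{-\theta(N-1)/2}\bm(u)$ to arrive at $r^\nu\,\beta(\dd u)$. Your computation of the exponent $\nu=N-2-\theta(N-1)/2$ matches the paper's exactly, and your caveat about the constant coming from the $\gamma$-pullback is well placed: the paper's first step writes $\dd x=\dd z\,\dd y$ without further comment, and in any case every subsequent use of the lemma in the paper is insensitive to the overall multiplicative constant.
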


\begin{proof}
Since $H=\{y=(y^1,\dots,y^N) \in (\rr^2)^N : \sum_1^N y^i=0\}$ and since $\bm$ is 
translation invariant,
$$
\int _{(\rr ^2)^N} \varphi(x) \mu(\dd x)= \int_{(\rr^2)^N} \varphi(x)\bm(x)\dd x 
= \int_{\rr^2\times H} \varphi(\gamma(z)+y)\bm(y) \dd z \dd y.
$$
We next note that $\sS$ is the (true) unit
sphere of the $(2N-2)$-dimensional Euclidean space $H$ and proceed to the
substitution $(\ell ,u) = (\|y\|,y/||y||)$:
$$
\int _{(\rr ^2)^N} \varphi(x) \mu(\dd x)=\int_{\rr^2\times \rr_+\times\sS} 
\varphi(\gamma(z)+\ell u) \bm(\ell u) \ell^{2N-3} 
\dd z \dd \ell \sigma(\dd u).
$$
We finally substitute $\ell=\sqrt r$ and obtain
$$
\int _{(\rr ^2)^N} \varphi(x) \mu(\dd x)= \frac12\int_{\rr^2\times \rr_+\times\sS} 
\varphi(\gamma(z)+\sqrt r u) \bm(\sqrt r u) r^{N-2} 
\dd z \dd r \sigma(\dd u).
$$
But $\bm(\sqrt r u) r^{N-2} = r^{N-2-\theta(N-1)/2}\bm(u)$ by \eqref{mmu} and $\beta(\dd u)=\bm(u)\sigma(\dd u)$,
whence
$$
\int _{(\rr ^2)^N} \varphi(x) \mu(\dd x)= \frac12\int_{\rr^2\times \rr_+\times\sS} 
\varphi(\Psi(z,r,u)) r^{N-2-\theta(N-1)/2} 
\dd z \dd r \beta(\dd u).
$$
Since finally $\nu = d_{\theta ,N}(N)/2 -1=N-2-\theta (N-1)/2$,
the conclusion follows.
\end{proof}

We can now study the measure $\beta$ on $\sS$.

\begin{prop}\label{radonsphere}
Let $N\geq 2$ and $\theta>0$ such that $N> \theta$. 
Recall that $k_0=\lceil 2N/\theta\rceil$.
\vip
(i) The measure $\beta$ is Radon on $\sS \cap E_{k_0}$.
\vip
(ii) If $k_0 \ge N$, then $\beta (\sS ) < \infty$  .  
\end{prop}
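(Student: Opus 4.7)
The plan is to transport the problem from $\sS$ to $(\rr^2)^N$ via the map $\Psi$, and leverage the substitution formula of Lemma \ref{changeofvariable} together with the fact that $\mu$ is Radon on $E_{k_0}$ (Proposition \ref{radon}(i)). First I would observe that $\Psi \colon \rr^2 \times \rr_+^* \times \sS \to E_N$ is a bijection: from $x = \Psi(z,r,u)$ one recovers $z = S_{\ig 1,N\id}(x)$, $r = R_{\ig 1,N\id}(x)$, and $u = (x-\gamma(z))/\sqrt r$. Moreover, since $R_K$ is invariant under the translation $x \mapsto x+\gamma(z)$ and homogeneous of degree $2$ in the scaling $x \mapsto \sqrt r\, x$, we have $R_K(\Psi(z,r,u)) = r\, R_K(u)$ for every $K \subset \ig 1,N\id$. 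Consequently $\Psi(z,r,u) \in E_{k_0}$ if and only if $u \in \sS \cap E_{k_0}$. Applying Lemma \ref{changeofvariable} with $\varphi = \indiq_{\Psi(B \times [a,b] \times V)}$ for Borel sets $B \subset \rr^2$, $V \subset \sS \cap E_{k_0}$, and $0 < a < b < \infty$, and using the injectivity of $\Psi$, produces the key identity
\[
\mu\bigl(\Psi(B \times [a,b] \times V)\bigr) \;=\; \tfrac12\, |B|\, \Big(\int_a^b r^\nu \dd r\Big)\, \beta(V),
\]
where $|B|$ denotes the Lebesgue measure of $B$.

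For (i), I would fix $u \in \sS \cap E_{k_0}$. Since $E_{k_0}$ is open in $(\rr^2)^N$, the set $\sS \cap E_{k_0}$ is open in $\sS$, so one can pick an open neighborhood $V$ of $u$ in $\sS$ whose closure $\overline V$ (in $\sS$) is compact and contained in $\sS \cap E_{k_0}$. Choosing any compact $B \subset \rr^2$ of positive Lebesgue measure and any $0 < a < b < \infty$, the set $\Psi(B \times [a,b] \times \overline V)$ is the continuous image of a compact set, hence compact, and by the paragraph above it is contained in $E_N \cap E_{k_0} \subset E_{k_0}$. Proposition \ref{radon}(i) then guarantees $\mu(\Psi(B \times [a,b] \times \overline V)) < \infty$. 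Since $|B|\int_a^b r^\nu \dd r \in (0,\infty)$, the key identity forces $\beta(\overline V) < \infty$, and since the neighborhood $V$ of $u$ was arbitrary, this establishes that $\beta$ is Radon on $\sS \cap E_{k_0}$.

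For (ii), assume $k_0 \geq N$. Then $\sS \subset E_N \subset E_{k_0}$, so $\sS \cap E_{k_0} = \sS$. Since $\sS$ is compact in $(\rr^2)^N$ and, by (i), $\beta$ is Radon on $\sS$, its total mass $\beta(\sS)$ is finite. I do not expect any substantial obstacle: the whole argument reduces to a change-of-variables computation, and the two points that require care are merely the injectivity of $\Psi$ and the scaling identity $R_K \circ \Psi = r\, R_K$, which together ensure that the preimage of $E_{k_0}$ under $\Psi$ is exactly $\rr^2 \times \rr_+^* \times (\sS \cap E_{k_0})$, so that finiteness on the $(\rr^2)^N$ side transfers cleanly to finiteness of $\beta$ on the sphere.
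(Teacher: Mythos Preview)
Your proof is correct and follows essentially the same approach as the paper: both transport the question back to $(\rr^2)^N$ via Lemma \ref{changeofvariable} and invoke Proposition \ref{radon}(i). Your execution is in fact slightly cleaner: by working with an arbitrary compact neighborhood $\overline V\subset\sS\cap E_{k_0}$ and the product identity $\mu(\Psi(B\times[a,b]\times V))=\tfrac12|B|(\int_a^b r^\nu\dd r)\beta(V)$, you read off $\beta(\overline V)<\infty$ directly, whereas the paper passes through the exhausting family $\cL_\e=\{u\in\sS:R_K(u)\geq\e\text{ for all }|K|\geq k_0\}$ and an ``almost every $r$'' step before concluding.
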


\begin{proof}
We start with (i). For $\e \in (0,1]$, we introduce
$$
\cK_\e = \{ x \in (\rr^2)^N : \forall K \subset \ig 1,N \id \mbox{ such that } 
|K| \ge k_0, \mbox{ we have } R_K(x) \ge \e \} \quad \hbox{and} \quad \cL_\e=\cK_\e\cap\sS.
$$
Since $\cK_\e\cap \overline{B}(0,1)$ is compact in $E_{k_0}$, with here $B(0,1)$ 
the unit ball of $(\rr^2)^N$,  
we know from Proposition~\ref{radon}-(i) that $\mu(\cK_\e \cap B(0,1))<\infty$.
Now by Lemma~\ref{changeofvariable},
$$
\mu(\cK_\e \cap B(0,1))=   \frac12  \int_{\rr^2\times\rr_+\times\sS} 
\indiq_{\{\gamma(z)+\sqrt r u \in \cK_\e \cap B(0,1)  \}} 
r^\nu\dd z \dd r \beta(\dd u).
$$
But for $(z,r,u)\in\rr^2\times\rr_+\times\sS$, 
$$
\hbox{$\gamma(z)+\sqrt r u \in \cK_\e \cap B(0,1)$ \; \; if and only
if  \; \; $u \in \cL_{\e/r}$ \, and \, $N||z||^2+r< 1$.} 
$$
Indeed,
$R_K(\gamma(z)+\sqrt r u)=r R_K(u)$ for all $K\subset \ig 1,N\id$ and
$||\gamma(z)+\sqrt r u ||^2 = \sum_{1}^N||z+\sqrt r u^i||^2=N||z||^2+r$ 
because $\sum_1^N u^i=0$ and $\sum_1^N ||u^i||^2=1$. Thus
$$
\mu(\cK_\e \cap B(0,1))= 
\int_{\rr^2\times\rr_+} \indiq_{\{ N ||z||^2+r<1 \}} r^\nu \beta(\cL_{\e/r}) \dd z \dd r.
$$
 All this implies that for all $\e\in (0,1]$, for almost all $r\in (0,1)$, $\beta(\cL_{\e/r})<\infty$.
Since $\e\to \cL_\e$ is monotone, we conclude that
$\beta(\cL_{\e})<\infty$ for all $\e\in (0,1]$. Since finally $\cup_{\e \in (0,1]} \cL_\e = \sS \cap E_{k_0}$
and since $\cL_\e$ is compact in $\sS \cap E_{k_0}$ for each $\e \in (0,1]$, we conclude as desired
that $\beta$ is Radon on $\sS \cap E_{k_0}$.

\vip

We next prove (ii). It holds that $\sS \subset E_{N}$, because for $u\in \sS$, 
we have $R_{\ig 1,N\id}(u)=1$. Hence if 
$k_0\geq N$, then $\sS \subset E_{N}\subset E_{k_0}$ , whence 
$\sS=\sS\cap E_{k_0}$ and thus $\beta$ is Radon on $\sS$ by point (i). 
Since finally $\sS$ is compact,
we conclude that $\beta(\sS)<\infty$.
\end{proof}

\section{Markov processes and Dirichlet spaces}\label{aapp}
\setcounter{thm}{0}
\setcounter{equation}{0}

In a first subsection, we recall some classical definitions and results 
about Hunt processes, diffusions and Dirichlet spaces found in 
Fukushima-Oshima-Takeda \cite{f}. In a second subsection, we mention 
a few results about martingales, times-changes, concatenation, killing and Girsanov transformation
of Hunt processes found in \cite{f} and elsewhere.

\subsection{Main definitions and properties}\label{ap1}
Let $E$ be a locally compact separable metrizable space endowed with a Radon measure $\alpha$
such that Supp $\alpha=E$. We set $\Et=E\cup\{\triangle\}$, where $\triangle$ is a cemetery point.
See \cite[Section A2]{f} for the definition of a Hunt process $\yY=(\Omega,\cM,(Y_t)_{t\geq 0},
(\PP_y)_{y\in \Et})$: it is a strong Markov process in its canonical filtration, $\PP_y(Y_0=y)=1$ for all $y\in \Et$,
$\triangle$ is an absorbing state, i.e. $Y_t=\triangle$ for all $t\geq 0$ under $\PP_\triangle$, 
and a few more technical properties are satisfied.
The life-time of $\yY$ is defined by $\zeta=\inf\{t\geq 0 : Y_t=\triangle\}$. 

\vip
Let us denote by $P_t(y,\dd z)$ its transition kernel.
Our Hunt process is said to be $\alpha$-symmetric if
$\int_{E} \varphi P_t \psi \dd \alpha= \int_{E} \psi P_t \varphi \dd \alpha$ 
for all measurable $\varphi,\psi:E\to\rr_+$ and all $t\geq 0$, see \cite[page 30]{f}. 
The Dirichlet space $(\cE,\cF)$ of our Hunt process on $L^2(E,\alpha)$ is then defined, see \cite[page 23]{f}, by
\begin{gather*}
\cF=\Big\{\varphi\in L^2(E,\alpha) : \lim_{t\to 0} \frac1t \int_{E} \varphi(P_t \varphi -\varphi)
\dd \alpha \hbox{  exists}\Big\},\\
\cE(\varphi,\psi)=-\lim_{t\to 0} \frac 1t \int_{E} \varphi(P_t \psi -\psi)\dd \alpha  \qquad\hbox{for all 
$\varphi,\psi\in \cF$}.
\end{gather*}
The generator $(\cA,\cD_A)$ of $\yY$ is defined as follows: 
\begin{gather*}
\cD_\cA=
\Big\{\varphi\in L^2(E,\alpha) : \lim_{t\to 0} \frac1t (P_t \varphi -\varphi) 
\hbox{ exists in $L^2(E,\alpha)$}\Big\},
\end{gather*}
and for $\varphi \in \cD_\cA$, we denote by $\cA \varphi\in L^2(E,\alpha)$ this limit.  
By \cite[Pages 20-21]{f}, it holds that
\begin{equation}\label{caracdomaine0}
\cD_{\cA} = \Big\{ \varphi \in \cF : \exists\; h \in L^2 (E,\alpha ) \mbox{ such that } \forall \;
\psi\in \cF, \; \hbox{ we have } \; \cE (\varphi,\psi) = -\int_E h \psi \dd \alpha \Big\}
\end{equation}
and in such a case $\cA \varphi = h$. 

\vip

The one-point compactification $\Et=E\cup\{\triangle\}$ of $E$ is endowed with the 
topology consisting of all the open sets of $E$ and of all the sets of the form 
$K^c \cup \{\triangle\}$ with $K$ compact in $E$, see page \cite[page 69]{f}.
Observe that for a $\Et$-valued sequence $(x_n)_{n\geq 0}$, we have
$\lim_n x_n=x$ if and only if  

\vip

\noindent $\bullet$ either $x \in E$, $x_n \in E$ for all $n$ large enough,
and $\lim_n x_n=x \in E$ in the usual sense;
\vip
\noindent $\bullet$ 
or $x=\triangle$ and for all compact subset $K$ of $E$, there is $n_K \in \nn$ such that
for all $n\geq n_K$,  $x_n\notin K$.

\vip

We say that our Hunt process is continuous if $t\to Y_t$ is continuous from $\rr_+$ into $\Et$,
where $\Et$ is endowed with the one-point compactification topology. \blue A continuous Hunt process
is called a {\it diffusion}. \bla

\vip

A Dirichlet space $(\cE,\cF)$  on $L^2(E,\alpha)$ is said to be regular
if it has a core, see \cite[page 6]{f}, i.e. a subset $\cC\subset C_c(E)\cap \cF$ which is
dense in $\cF$ for the norm $||\varphi||=[\int_E \varphi^2 \dd \alpha + \cE(\varphi,\varphi)]^{1/2}$ and 
dense in $C_c(E)$ for the uniform norm.

\vip

Observe two regular Dirichlet spaces $(\cE,\cF)$ and $(\cE',\cF')$ such that
$\cE(\varphi,\varphi)=\cE'(\varphi,\varphi)$ for all   $\varphi$   
in a common core $\cC$ are necessarily equal,
i.e. $\cF=\cF'$ and $\cE=\cE'$. This follows from the fact that by definition,
see \cite[page 5]{f}, a Dirichlet space is closed.

\vip

We say that a Borel set $A$ of $E$ is $(P_t)_{t\ge 0}$-invariant if for all
$\varphi\in L^2(E,\alpha) $, all $t>0$ we have $P_t(\indiq _A \varphi ) = \indiq _A P_t \varphi$ $\alpha$-a.e, see
\cite[page 53]{f}. According to \cite[page 55]{f}, we say that $(\cE, \cF)$ 
is irreducible if for all $(P_t)_{t\ge 0}$-invariant set $A$, we have either $\alpha(A) = 0$ or 
$\alpha(E\setminus A)=0$.

\vip

We say that $(\cE, \cF)$ is recurrent if for all nonnegative $\varphi\in L^1(E,\alpha)$, for 
$\alpha$-a.e. $y\in E$, we have $\E_y[\int _0 ^\infty \varphi(Y_s)\dd s] \in \{ 0, \infty \}$, see \cite[page 55]{f}.

\vip

We finally say that $(\cE, \cF)$ is transient if for all nonnegative $\varphi\in L^1(E,\alpha)$, for $\alpha$-a.e. 
$y\in E$, we have $\E_y[\int _0 ^\infty \varphi(Y_s)\dd s] <\infty $, with the convention
that $\varphi(\triangle)=0$, see \cite[page 55]{f}.

\vip

By \cite[Lemma 1.6.4 page 55]{f}, if $(\cE, \cF)$ is irreducible, then it is either recurrent or transient.

\vip

A Borel set $\cN \subset E$ is properly exceptional if $\alpha(\cN)=0$ and
$\PP_y (\exists t\ge 0 : Y_t \in   \cN  )=0$ for all $y \in E\setminus \cN$, see \cite[page 153]{f}.
A property is said to hold true \blue quasi-everywhere \bla if it holds true outside
a properly exceptional set. 

\begin{rk}\label{qua}
Two Hunt processes with the same Dirichlet
space share the same \blue quasi-everywhere \bla notion, up to the restriction that 
the capacity of every compact set is finite, which is always the case in the
present work.
\end{rk}

\begin{proof}
We fix a Hunt process $\yY$ and explain why its \blue quasi-everywhere \bla 
notion depends only on its Dirichlet space.
A set $\cN \subset E$ is exceptional, see \cite[page 152]{f}, if there exists a Borel set $\tilde \cN$ such that
$\cN\subset \tilde \cN$ and $\PP _y (\exists t\ge 0 : Y_t \in \tilde \cN )=0$ for $\alpha$-a.e. $y\in E$.  
A properly exceptional set is clearly exceptional and \cite[Theorem 4.1.1 page 155]{f} tells us that any 
exceptional set is included in a properly exceptional set. 
Thus, a property is true \blue quasi-everywhere \bla if and only if it holds true outside an exceptional set. 
Next, \cite[Theorem 4.2.1-(ii) page 161]{f} tells us that
a set $\cN$ is exceptional
if and only if its capacity is $0$, where the capacity of $\cN \subset E$ is entirely defined 
from the Dirichlet space. And for \cite[Theorem 4.2.1-(ii) page 161]{f} to apply,
one needs that the capacity of all compact sets is finite.
\end{proof}

\subsection{Toolbox}\label{tool}

We start with martingales.

\begin{lemma}\label{marting}
Let $E$ be a locally compact separable metrizable space endowed with a Radon measure $\alpha$
such that Supp $\alpha=E$, and $(\Omega,\cM,(Z_t)_{t\geq 0}, (\PP_z)_{z\in \Et})$ a $\alpha$-symmetric 
$\Et$-valued \blue diffusion \bla with regular Dirichlet space
$\left( \mathcal{E}, \mathcal{F} \right)$ on $L^2(E,\alpha)$
and generator $(\cA,\cD_\cA)$. Assume that $\varphi:E\mapsto\rr$ belongs to 
$\cD_\cA$ and that both $\varphi$ and $\cA \varphi$ are bounded. 
Define 
$$
M^\varphi_t=\varphi(Z_t)-\varphi(Z_0)-\int_0^t \cA\varphi (Z_s)\dd s,
$$ 
with the convention that
$\varphi(\triangle)=\cA\varphi(\triangle)=0$. \blue Quasi-everywhere, \bla $(M^\varphi_t)_{t\geq 0}$ is a
$\PP_z$-martingale in the canonical filtration of $(Z_t)_{t\geq 0}$.
\end{lemma}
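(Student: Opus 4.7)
The plan is to derive the martingale property from the Fukushima decomposition for functions in the strong domain of the generator. I would invoke \cite[Theorem 5.2.2]{f}: since $\varphi\in\cD_\cA\subset\cF$, it admits a quasi-continuous modification $\tilde\varphi$ for which
$$
\tilde\varphi(Z_t)-\tilde\varphi(Z_0)=M^{[\varphi]}_t+N^{[\varphi]}_t,\qquad \PP_z\hbox{-a.s. for quasi all } z\in E,
$$
where $M^{[\varphi]}$ is a martingale additive functional of finite energy (in particular a $\PP_z$-local martingale in the canonical filtration) and $N^{[\varphi]}$ is a continuous additive functional of zero energy. The stronger hypothesis $\varphi\in\cD_\cA$ then allows me to apply \cite[Theorem 5.4.2]{f}, which identifies the zero-energy part as
$$
N^{[\varphi]}_t=\int_0^t \cA\varphi(Z_s)\,\dd s.
$$

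Second, I would reconcile $\tilde\varphi$ with $\varphi$ itself. The set $\cN=\{\tilde\varphi\neq\varphi\}$ has zero capacity by the very definition of a quasi-continuous modification; by \cite[Theorem 4.1.1]{f}, it is contained in a properly exceptional set $\cN'$, and for every $z\in E\setminus\cN'$ the process $\PP_z$-a.s. never visits $\cN'$, so that $\varphi(Z_t)=\tilde\varphi(Z_t)$ simultaneously for all $t\geq 0$. Combining this with the Fukushima decomposition yields
$$
M^\varphi_t=\varphi(Z_t)-\varphi(Z_0)-\int_0^t\cA\varphi(Z_s)\,\dd s=M^{[\varphi]}_t
$$
$\PP_z$-almost surely for quasi all $z\in E$, which is the desired identification.

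Finally, the boundedness hypotheses upgrade the local martingale $M^{[\varphi]}$ to a genuine martingale: $\varphi$ bounded forces $\tilde\varphi(Z_t)-\tilde\varphi(Z_0)$ to be bounded by $2\|\varphi\|_\infty$, and $\cA\varphi$ bounded forces $\int_0^t\cA\varphi(Z_s)\,\dd s$ to be bounded on $[0,T]$ by $T\|\cA\varphi\|_\infty$ for every $T>0$; hence $M^{[\varphi]}$ is bounded on each finite interval, which is more than enough to turn the local martingale into a true $\PP_z$-martingale in the canonical filtration. The main obstacle is bibliographical rather than mathematical: one must verify that the hypotheses on the regular Dirichlet space $(\cE,\cF)$ and the continuous $\alpha$-symmetric Hunt process here match those under which \cite[Theorems 5.2.2 and 5.4.2]{f} are stated, and that the canonical filtration used in the statement is compatible with the (usual augmentation of the) filtration used by Fukushima—this is routine since the conclusion is only claimed for quasi all $z$, and the passage from the $\alpha$-a.e. semigroup identity $P_t\varphi-\varphi=\int_0^t P_s\cA\varphi\,\dd s$ to the pointwise trajectorial statement is precisely what the Fukushima decomposition delivers.
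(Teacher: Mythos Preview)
Your proof is correct and follows essentially the same route as the paper, which simply cites \cite[page 332]{f} (where the result is stated for $\varphi=R_1 f$ with $f$ bounded, a condition equivalent to $\varphi-\cA\varphi$ bounded under the present hypotheses) together with \cite[page 243]{f} for the implication MAF $\Rightarrow$ martingale. Your argument is just a more explicit unpacking of that citation via the Fukushima decomposition and the identification of the zero-energy part.
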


This can be found in \cite[page 332]{f}. There the assumption on $\varphi$ is that there is $f$ bounded and
measurable such that $\varphi = R_1f$, i.e. $\varphi=(I-\cA)^{-1}f$, which simply means that $\varphi-\cA \varphi$
is bounded. Also, the conclusion is that $(M^\varphi_t)_{t\geq 0}$ is a MAF, which indeed implies
that $(M^\varphi_t)_{t\geq 0}$ is a martingale, see \cite[page 243]{f}.

\vip

Next, we deal with time-changes.

\begin{lemma}\label{chgmttemps}
Let  $E$ be a $C^\infty$-manifold, $\alpha$ a Radon measure on $E$ such that 
${\rm Supp}(\alpha) =E$,
and $(\Omega,\cM,(Z_t)_{t\geq 0}, (\PP_z)_{z\in \Et})$ a $\alpha$-symmetric 
$\Et$-valued \blue diffusion \bla with regular Dirichlet space
$\left( \mathcal{E}, \mathcal{F} \right)$ on $L^2(E,\alpha)$ with core $C^\infty_c(E)$.
We also fix $g:E \to (0,\infty)$ continuous and take the convention that $g(\triangle)=0$.
We consider the time-change $A_{t} = \int _{0} ^{t} g(Z_{s})\dd s$ and its generalized inverse 
$\rho_t=\inf\{s>0 : A_s>t\}$.
We introduce $Y_t = Z_{\rho _{t}}\indiq_{\{\rho_t<\infty\}}+\triangle\indiq_{\{\rho_t=\infty\}}$.
Then $(\Omega,\cM,(Y_t)_{t\geq 0}, (\PP_y)_{y\in\Et})$ is a $g\alpha$-symmetric 
$\Et$-valued \blue diffusion \bla with regular Dirichlet space 
$\left(\mathcal{E}, \mathcal{F'} \right)$ on $L^2(E,g\alpha)$ with core $C^\infty_c(E)$, i.e.
$\cF'$ is the closure of $C^\infty_c(E)$ with respect to the norm
$[\int_E \varphi^2 g \dd \alpha + \cE(\varphi,\varphi)]^{1/2}$.
\end{lemma}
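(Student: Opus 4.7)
The plan is to realize $(Y_t)_{t\ge 0}$ as the time change of $(Z_t)_{t\ge 0}$ by the PCAF $A_t=\int_0^t g(Z_s)\,\dd s$, and to invoke the general time-change theorem for symmetric Hunt processes from \cite[Chapter 6]{f}. First I would check that $A$ is a strictly increasing positive continuous additive functional of $\mathbb{Z}$: continuity and additivity are immediate from the continuity of $t\mapsto Z_t$ and of $g$, strict monotonicity on $[0,\zeta)$ follows from $g>0$, and the Revuz measure of $A$ is identified as $g\alpha$ via the standard Revuz correspondence for PCAFs of the form $\int_0^{\cdot}g(Z_s)\,\dd s$ (localizing by $g\wedge n$ on an exhausting sequence of compacts if $g$ is unbounded). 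Since $g>0$ and $\mathrm{Supp}(\alpha)=E$, the measure $g\alpha$ has full topological, hence full quasi-, support in $E$.

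Because $A$ is strictly increasing on $[0,\zeta)$, its generalized inverse $\rho$ is continuous, whence the paths of $Y_t=Z_{\rho_t}$ are continuous in $\Et$; the strong Markov and quasi-left continuity properties of $\mathbb{Y}$ are inherited from those of $\mathbb{Z}$ through the standard PCAF time-change construction, so $\mathbb{Y}$ is a continuous $\Et$-valued Hunt process. The time-change theorem of \cite[Chapter 6]{f} then ensures that $\mathbb{Y}$ is $g\alpha$-symmetric and that its Dirichlet space on $L^2(E,g\alpha)$ is the trace form $(\cE,\cF_e\cap L^2(E,g\alpha))$, where $\cF_e$ is the extended Dirichlet space of $(\cE,\cF)$. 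In particular, the bilinear form $\cE$ is unchanged; only the reference measure, and hence the domain $\cF'$, are new.

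It remains to verify that $C^\infty_c(E)$ is a core of this new Dirichlet space, and here lies the main obstacle. The inclusion $C^\infty_c(E)\subset \cF'$ is clear, since any $\varphi\in C^\infty_c(E)$ belongs to $\cF\subset \cF_e$ and to $L^2(E,g\alpha)$ (as $g$ is bounded on $\mathrm{Supp}\,\varphi$), and density of $C^\infty_c(E)$ in $C_c(E)$ for the uniform norm is standard on a $C^\infty$-manifold. For density in $\cF'$ with respect to the norm $[\cE(\varphi,\varphi)+\int\varphi^2\, g\,\dd\alpha]^{1/2}$, given $\varphi\in\cF'$ I would first truncate as $\varphi_n=\chi_n\varphi$ for cutoffs $\chi_n\in C^\infty_c(E)$ with $0\le\chi_n\le 1$ and $\chi_n\uparrow 1$, using the normal contraction property of $(\cE,\cF_e)$ to guarantee $\cE(\varphi_n-\varphi,\varphi_n-\varphi)\to 0$ and dominated convergence for the $L^2(g\alpha)$ part; then, since each $\varphi_n$ has compact support on which $g$ is bounded above and below by positive constants, the norms $\cE_1^{\alpha}$ and $\cE_1^{g\alpha}$ are equivalent on functions supported there, so the hypothesis that $C^\infty_c(E)$ is a core of $(\cE,\cF)$ yields $C^\infty_c(E)$-approximants of $\varphi_n$ in the desired norm. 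The delicate point is the truncation step, which rests on the stability of $\cF_e$ under multiplication by compactly supported smooth bounded functions together with the Leibniz-type control of $\cE(\chi_n\varphi)$ that it provides.
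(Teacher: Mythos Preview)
Your approach is essentially the same as the paper's: both invoke the time-change theorem \cite[Theorem 6.2.1]{f} after identifying the Revuz measure of $A_t=\int_0^t g(Z_s)\,\dd s$ as $g\alpha$. The paper is in fact terser than you are: it simply cites the theorem and verifies the Revuz identity in one line via the $\alpha$-symmetry of the semigroup, without separately discussing continuity of paths or the core property. Your additional discussion of why $C^\infty_c(E)$ remains a core is a reasonable elaboration of a point the paper leaves implicit in the reference to \cite{f}.
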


\begin{rk}
If we apply the preceding result to the simple case where $E$ is an open subset of $\rr ^d$ 
and where $\cE (\varphi,\varphi)= \int _{\rr ^d } \| \nabla \varphi \| ^2 \dd \alpha $ 
for all $\varphi\in C^\infty_c(E)$, then when $\cE$ is seen as the Dirichlet form of a $g\alpha$-symmetric 
process, it may be
better understood as $\cE (\varphi,\varphi) = \int _{\rr ^d} \| g^{-1/2}\nabla \varphi \|^2 g \dd \alpha$.
\end{rk}

This lemma is nothing but a particular case of \cite[Theorem 6.2.1 page 316]{f}, see also the few pages before.
We only have to check that the Revuz measure in our case is $g \alpha$, i.e., see \cite[(5.1.13) page 229]{f}, 
that for 
all bounded nonnegative measurable functions $\varphi,\psi$ on $E$, for all $t>0$, 
$$
\int_E \E _x \Big[ \int_0 ^t \varphi(Z_s)g(Z_s)\dd s\Big] \psi(x) \alpha (\dd x) 
= \int _0 ^t \int_E (P^Z_s \psi)\varphi g \dd \alpha,
$$ 
where $P_t^Z$ is the semi-group of $Z$.
The left hand side equals $\int_0^t \int_E P_s^Z(\varphi g) \psi \dd \alpha$, so that the claim is obvious
since $Z$ is $\alpha$-symmetric.

\vip

The following concatenation result can be found in Li-Ying \cite[Proposition 3.2]{ly}.

\begin{lemma}\label{concatenation}
Let $E_V,E_W$ be two $C^\infty$-manifolds, $\alpha_V,\alpha_W$ be some Radon measures on $E_V$ and 
$E_W$ such that ${\rm Supp}(\alpha_V) =E_V$ and ${\rm Supp}(\alpha_W) =E_W$.
Let $(\Omega^V,\cM^V,(V_t)_{t\geq 0}, (\PP_v^V)_{v\in E_V \cup \{\triangle\}})$ be a 
$\alpha_V$-symmetric $(E_V\cup\{\triangle\})$-valued \blue diffusion \bla with regular Dirichlet space
$\left( \mathcal{E}^V, \mathcal{F}^V \right)$ on $L^2(E_V,\alpha_V)$ with core $C^\infty_c(E_V)$.
Consider $(\Omega^W,\cM^W,(W_t)_{t\geq 0}, (\PP_w^W)_{w\in E_W \cup \{\triangle\}})$, a
$\alpha_W$-symmetric $(E_W\cup\{\triangle\})$-valued \blue diffusion \bla with regular Dirichlet space
$\left( \mathcal{E}^W, \mathcal{F}^W \right)$ on $L^2(E_W,\alpha_W)$ with core $C^\infty_c(E_W)$.
Introduce the measure $\alpha = \alpha_V \otimes \alpha_W$ on $E = E_V \times E_W$.
We take the convention that $(v,\triangle)=(\triangle,w)=(\triangle,\triangle)=\triangle$
for all $v\in E_V$, all $w\in E_W$. Moreover, we set 
$\cM^{(V,W)}=\sigma(\{(V_t,W_t) : t\geq 0\})$ and we define 
$\PP^{(V,W)}_{(v,w)} = \PP_v^V\otimes\PP_w^W$
if $(v,w)\in E_V\times E_W$ and $\PP^{(V, W)}_\triangle=\PP_\triangle^V\otimes\PP_\triangle^W$.
The process 
$$
\Big(\Omega^V\times\Omega^W,\cM^{(V,W)},(V_t,W_t)_{t\geq 0}, 
(\PP_{(v,w)}^{(V,W)})_{(v,w)\in (E_V\times E_W)\cup \{\triangle\}}\Big)
$$ 
is a $\Et$-valued
$\alpha$-symmetric \blue diffusion \bla, with regular Dirichlet space $(\cE,\cF)$ on
 $L^2(E,\alpha)$ with core
$C^\infty_c(E)$ and, for $\varphi \in C^\infty_c(E)$,
$$
\cE(\varphi,\varphi)= \int_{E_V} \cE^W(\varphi(v,\cdot),\varphi(v,\cdot)) \alpha_V(\dd v) 
+ \int_{E_W} \cE^V(\varphi(\cdot,w),\varphi(\cdot,w)) 
\alpha_W(\dd w).
$$
\end{lemma}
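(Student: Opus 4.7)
The plan is to proceed in three stages: first verify that the product process inherits the Hunt property and symmetry from its factors, then identify the Dirichlet form on the tensor-product subspace, and finally match this with the claimed regular Dirichlet space having $C^\infty_c(E)$ as a core.

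For the Hunt property, the strong Markov property, right continuity, quasi-left continuity, and the absorbing character of $\triangle$ all pass directly through the product construction $\PP^{(V,W)}_{(v,w)} = \PP^V_v\otimes\PP^W_w$, since by independence the two coordinates evolve autonomously until one of them is killed. Continuity of $t\mapsto(V_t,W_t)$ into the one-point compactification of $E_V\times E_W$ follows from the fact that as soon as either coordinate reaches its cemetery the joint process sits at $\triangle$, and before that time both coordinates jointly stay in compacts. Symmetry of the product semigroup $P_t = P^V_t\otimes P^W_t$ with respect to $\alpha_V\otimes\alpha_W$ is then an immediate application of Fubini.

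The key computation identifies the Dirichlet form on tensor products. For $f(v,w)=\varphi(v)\psi(w)$ with $\varphi\in C^\infty_c(E_V)$ and $\psi\in C^\infty_c(E_W)$, the semigroup factorizes as $P_t f=(P^V_t\varphi)(P^W_t\psi)$, and the telescoping identity $\varphi\psi - (P^V_t\varphi)(P^W_t\psi) = (\varphi - P^V_t\varphi)\psi + (P^V_t\varphi)(\psi-P^W_t\psi)$ combined with the $L^2$-strong continuity of both semigroups yields, as $t\downarrow 0$,
\begin{align*}
\cE(f,f) = \|\psi\|^2_{L^2(\alpha_W)}\cE^V(\varphi,\varphi) + \|\varphi\|^2_{L^2(\alpha_V)}\cE^W(\psi,\psi),
\end{align*}
which rewrites exactly as $\int_{E_W}\cE^V(f(\cdot,w),f(\cdot,w))\alpha_W(\dd w)+\int_{E_V}\cE^W(f(v,\cdot),f(v,\cdot))\alpha_V(\dd v)$. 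For a general $\varphi\in C^\infty_c(E)$, the same identity is then obtained either by polarisation and tensor approximation, or by a direct partition-of-unity argument on the manifold.

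The genuine difficulty is to show that $C^\infty_c(E)$ is truly a core of the Dirichlet space of the product, not merely a set on which the announced formula holds. The cleanest route is to check first that the algebraic tensor product of Dirichlet domains is $\cE_1$-dense in $\cF$: any $g\in\cF$ which is $\cE_1$-orthogonal to all tensors $\varphi\otimes\psi$ with $\varphi\in\cF^V$, $\psi\in\cF^W$ must satisfy $\int g\,P_t(\varphi\otimes\psi)\,\dd\alpha=0$ for all $t>0$, whence $g=0$ by a standard semigroup density argument. One then approximates each tensor factor in $\cE^V_1$ (resp.\ $\cE^W_1$) by elements of $C^\infty_c(E_V)$ (resp.\ $C^\infty_c(E_W)$), using that these are cores of the factor Dirichlet spaces by hypothesis, to conclude that $C^\infty_c(E)$ is $\cE_1$-dense in $\cF$. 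Regularity follows since $C^\infty_c(E)$ is also uniformly dense in $C_c(E)$ by standard smoothing on the product manifold; uniqueness of the regular Dirichlet space with prescribed core and the tensor computation together then pin down both $\cF$ and $\cE$ as in the statement.
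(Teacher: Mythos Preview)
The paper does not prove this lemma at all: it simply records the statement and cites Li--Ying \cite[Proposition 3.2]{ly} as the source, adding only a short remark about the cemetery-point conventions. Your direct argument is therefore a genuinely different route, and the overall architecture---Hunt property by independence, symmetry by Fubini, Dirichlet form on tensors via the telescoping identity, then density of $C^\infty_c(E)$---is sound and standard.

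There is however one step that does not hold as written. You claim that if $g\in\cF$ is $\cE_1$-orthogonal to every tensor $\varphi\otimes\psi$ with $\varphi\in\cF^V$, $\psi\in\cF^W$, then $\int g\,P_t(\varphi\otimes\psi)\,\dd\alpha=0$ for all $t>0$. This implication is not justified: $\cE_1$-orthogonality to a subspace does not by itself yield $L^2$-orthogonality to the semigroup orbit of that subspace. The clean way to close this gap is to argue instead via semigroup invariance: since $P_t(\varphi\otimes\psi)=(P^V_t\varphi)\otimes(P^W_t\psi)$, the algebraic tensor product $\cF^V\otimes_{\mathrm{alg}}\cF^W$ is $P_t$-invariant and $L^2$-dense, hence is a core for the generator $\cA$ (a dense $P_t$-invariant subspace of $\cD_\cA$ is always a core), and therefore $\cE_1$-dense in $\cF$. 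From there your approximation of each tensor factor by $C^\infty_c$ elements, using that these are cores for the factor forms, goes through unchanged.
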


Observe that $\cM^{(V,W)}$ may be strictly smaller than $\cM^V\otimes\cM^W$ due to the identification
of all the cemetery points. Also, it actually holds true that $\PP_\triangle^V\otimes\PP_w^W
= \PP_v^V\otimes\PP_\triangle^W = \PP_\triangle^V\otimes\PP_\triangle^W$ on $\cM^{(V,W)}$
so that the choice $\PP^{(V, W)}_\triangle=\PP_\triangle^V\otimes\PP_\triangle^W$ is arbitrary but legitimate.

\vip

The following killing result is a summary, adapted to our context, of Theorems 4.4.2 page 173 and 
4.4.3-(i) page 174  in \cite[Section 4.4]{f}.

\begin{lemma}\label{tuage}
Let  $E$ be a $C^\infty$-manifold, let $\alpha$ be a Radon measure on $E$ such that 
${\rm Supp}(\alpha) =E$,
and let $(\Omega,\cM,(Z_t)_{t\geq 0}, (\PP_z)_{z\in \Et})$ be a $\alpha$-symmetric 
$\Et$-valued \blue diffusion \bla with regular Dirichlet space
$\left( \mathcal{E}, \mathcal{F} \right)$ on $L^2(E,\alpha)$ with core $C^\infty_c(E)$.
Let $O$ be an open subset of $E$ and consider $\tau_O=\inf\{t\geq 0 : X_t \notin O\}$, with the convention
that $\inf \emptyset = \infty$. Then, setting 
$$
Z^O_t = Z_t \indiq _{\{ t <\tau_O \}} + \triangle \indiq _{\{ t\geq \tau_O \}},
$$
$(\Omega,\cM,(Z^O_t)_{t\geq 0}, (\PP_z)_{z\in O \cup \{\triangle\}})$ is a 
$\alpha|_O$-symmetric $O\cup \{\triangle \}$-valued \blue diffusion \bla with regular Dirichlet 
space $(\cE _O , \cF _O)$ on $L^2 (O, \alpha|_O )$ with core $C_c^\infty (O)$ and for $\varphi \in \cF _O$,
$$ 
 \cE _O (\varphi,\varphi) = \cE (\varphi,\varphi).
$$
\end{lemma}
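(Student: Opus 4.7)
The plan is to reduce Lemma \ref{tuage} to the general killing theorems of \cite[Theorems 4.4.2 and 4.4.3]{f} and then to check the two assertions that are specific to our setting, namely that the killed process is continuous on $O \cup \{\triangle\}$ for the one-point compactification topology, and that $C_c^\infty(O)$ is a core of the killed Dirichlet form. The symmetry with respect to $\alpha|_O$ and the Hunt property follow verbatim from \cite[Theorems 4.4.2-4.4.3]{f} once applied to the open set $O$, since a regular Dirichlet form is automatically quasi-regular, and the part-process construction preserves the properly exceptional sets. From the same results, the Dirichlet form of the killed process is given by
$$
\cF_O^{\mathrm{abs}} = \{\varphi \in \cF : \tilde\varphi = 0 \text{ q.e. on } E\setminus O\},\qquad
\cE_O^{\mathrm{abs}}(\varphi,\psi) = \cE(\varphi,\psi)
$$
for $\varphi,\psi \in \cF_O^{\mathrm{abs}}$, where $\tilde\varphi$ denotes a quasi-continuous version of $\varphi$.

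Next I would verify continuity. On $[0,\tau_O)$, by construction $Z^O_t = Z_t$, which is continuous in $E$ and hence in $O$. At $t = \tau_O$, $Z^O$ jumps to $\triangle$ and stays there, so the only issue is the left limit at $\tau_O$ when $\tau_O < \infty$. But continuity of $Z$ on $\Et$ (for the usual topology of $E$) forces $Z_{\tau_O}$ to exist in $\bar E$; since $Z_{\tau_O-}\in \bar O \setminus O$ or $Z_{\tau_O} = \triangle$, the path $t \mapsto Z^O_t$ leaves every compact of $O$ as $t\uparrow\tau_O$, which means exactly that $Z^O_t \to \triangle$ in the one-point compactification topology of $O\cup\{\triangle\}$.

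It remains to show that $(\cE,\cF_O^{\mathrm{abs}})$ coincides with the closure of $C_c^\infty(O)$ for the norm $\varphi \mapsto [\cE(\varphi,\varphi)+\|\varphi\|_{L^2(O,\alpha)}^2]^{1/2}$, and that this closure serves as a core. The inclusion $C_c^\infty(O) \subset \cF_O^{\mathrm{abs}}$ is immediate: any $\varphi \in C_c^\infty(O)$ extends by zero to an element of $C_c^\infty(E) \subset \cF$ with $\tilde\varphi = 0$ everywhere on $E\setminus O$. For density, I would argue as follows. Given $\varphi \in \cF_O^{\mathrm{abs}}$, standard truncation and convolution arguments adapted to the Dirichlet space (using that $C_c^\infty(E)$ is a core of $(\cE,\cF)$ and that $\bm$ is locally bounded) allow one to approximate $\varphi$ in the $\cE_1$-norm by functions in $\cF$ whose quasi-continuous version is compactly supported in $O$. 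Multiplying by a smooth cutoff strictly inside $O$ and then smoothing by convolution with a mollifier in $C_c^\infty(O)$ produces the required approximating sequence in $C_c^\infty(O)$.

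The main obstacle is the density step: one has to pass from an abstract element $\varphi \in \cF_O^{\mathrm{abs}}$, defined only up to q.e. equality and vanishing q.e. on $E\setminus O$, to a smooth compactly supported approximant in $O$ while controlling the $\cE$-norm. Here the strong locality and regularity of $(\cE,\cF)$, together with the fact that $C_c^\infty(E)$ is a core and that $E$ is a manifold (so that convolution by mollifiers is available), will be crucial; the key technical input is that if $\varphi \in \cF$ has quasi-continuous version vanishing on $E\setminus O$, then $\varphi$ can be approximated in $\cE_1$-norm by elements of $\cF$ whose support (in the quasi-continuous sense) is a compact subset of $O$, a standard fact for regular local Dirichlet forms.
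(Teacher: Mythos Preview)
Your proposal is correct and follows the same route as the paper: the lemma is simply a specialization of \cite[Theorems 4.4.2 and 4.4.3]{f}. The paper's own treatment is even terser than yours---it merely cites these two theorems and adds one sentence observing that the regularity condition (4.4.6) of \cite[Theorem 4.4.2]{f} is automatically satisfied because $O$ is open in a manifold and the Hunt process is continuous; your explicit verification of continuity at $\tau_O$ and your sketch of the $C_c^\infty(O)$-core density are additional details the paper leaves to the reader.
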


Note that since $O$ is an open subset of the manifold $E$ and since the Hunt process is continuous, 
the regularity condition (4.4.6) of \cite[Theorem 4.4.2 page 173]{f} is obviously satisfied.

\vip

We finally give an adaptation of the 
Girsanov theorem in the context of Dirichlet spaces, which is 
a particular case of Chen-Zhang \cite[Theorem 3.4]{cz}.

\begin{lemma}\label{girsanovfuku}
Let $E$ be an open subset of $\rr^d$, with $d\geq 1$,
$\alpha$ be a Radon measure on $E$ such that ${\rm Supp} (\alpha ) = E$ and 
$(\Omega,\cM,(Z_t)_{t\geq 0}, (\PP_z)_{z\in \Et})$ be a $\alpha$-symmetric 
$\Et$-valued \blue diffusion \bla with regular Dirichlet space
$\left( \mathcal{E}, \mathcal{F} \right)$ on $L^2(E,\alpha)$ with core $C^\infty_c(E)$ such that for all 
$\varphi\in C^\infty_c(E)$, 
$$ \cE (\varphi,\varphi) = \int _E \|\nabla \varphi \|^2 \dd \alpha .$$
Let $(\cA,\cD_{\cA})$ stand for its generator.
Let $u \in \cF$ be bounded, such that \blue for $\varrho = e^u$, we have $\varrho - 1 \in \cD_\cA$
with $\cA[\varrho-1]$ is bounded. \bla
Set
$$
L^\varrho_t=\frac{\varrho (Z_t)}{\varrho (Z_0)} 
\exp \Big( -\int _0 ^t \frac{\cA \blue [\varrho-1] \bla (Z_s)}{\varrho (Z_s)} \dd s \Big),
$$
with the conventions that $\varrho (\triangle )=1$ and $\cA \blue[\varrho-1]\bla (\triangle)=0$.
\vip
Assume that $\varrho$ is continuous on $\Et$.
Then \blue quasi-everywhere, \bla $(L_t^\varrho)_{t\geq 0}$ is a bounded $(\cM_t)_{t\geq 0}$-martingale under $\PP_z$,
where we have set $\cM_t=\sigma(\{Z_s : s\in [0,t]\})$, and
there exists a probability measure $\tilde{\PP}_z$ on $(\Omega,\cM)$,
such that for all $t>0$, 
$\tilde{\PP}_z = L_t^\varrho \cdot \PP _z$ on $\cM_t$.
\vip

Moreover $(\Omega,\!\cM,\!(Z_t)_{t\geq 0},\! (\tilde{\PP}_z)_{z\in \Et})$ is a
$\varrho^2 \!\alpha$-symmetric 
$\Et$-valued \blue diffusion \bla with regular Dirichlet space $(\tilde{\cE},\cF)$ 
on $L^2(E, \varrho ^2 \alpha)$ such that for all $\varphi\in\cF$,
$$ 
\tilde{\cE } (\varphi,\varphi) = \frac{1}{2} \int _E \|\nabla \varphi \|^2 \varrho ^2 \dd \alpha.
$$
\end{lemma}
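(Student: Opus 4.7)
The plan is to invoke Chen-Zhang's Theorem 3.4 on Girsanov transformations for symmetric Dirichlet processes, after verifying its hypotheses in the present setting. First, since $u \in \cF$ is bounded, $\varrho = e^u$ satisfies $e^{-\|u\|_\infty}\leq \varrho \leq e^{\|u\|_\infty}$, so both $\varrho$ and $1/\varrho$ are bounded and bounded away from $0$. I would then apply Lemma~\ref{marting}, which is legitimate because $\varrho \in \cD_\cA$ and both $\varrho$ and $\cA\varrho$ are bounded: for quasi all $z \in E$, under $\PP_z$, the process
\[
M^\varrho_t := \varrho(Z_t) - \varrho(Z_0) - \int_0^t \cA\varrho(Z_s)\,\dd s
\]
is an $(\cM_t)_{t\geq 0}$-martingale. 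From the definition of $L^\varrho$, a pathwise estimate using the uniform bounds on $\varrho$, $1/\varrho$ and $\cA\varrho$ gives
\[
0 \leq L^\varrho_t \leq \frac{\|\varrho\|_\infty}{\inf\varrho}\,\exp\!\Big(t\,\|\cA\varrho/\varrho\|_\infty\Big),
\]
so $L^\varrho$ is uniformly bounded on each compact time interval. Recognising $L^\varrho$ as the Dol\'eans-Dade exponential of $\int_0^{\cdot}\varrho(Z_{s-})^{-1}\dd M^\varrho_s$, the bounded martingale property follows.

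The family $(L^\varrho_t \cdot \PP_z)_{t\geq 0}$ is consistent on the filtration $(\cM_t)_{t\geq 0}$ by the martingale identity, so Kolmogorov's extension on the Polish canonical Hunt-process space produces the desired probability $\tilde\PP_z$ on $(\Omega,\cM)$ with $\tilde\PP_z = L^\varrho_t\cdot \PP_z$ on each $\cM_t$. To identify the transformed process, I would follow the standard Feynman-Kac plus Doob $\varrho$-transform calculus. Writing the new semigroup as
\[
\tilde P_t\varphi(z) = \E_z[L^\varrho_t\varphi(Z_t)] = \frac{1}{\varrho(z)}\,\E_z\!\left[\varrho(Z_t)\varphi(Z_t)\exp\!\left(-\int_0^t\!\frac{\cA\varrho}{\varrho}(Z_s)\,\dd s\right)\right],
\]
one sees the composition of a Feynman-Kac perturbation by the bounded potential $V=\cA\varrho/\varrho$ (which preserves $\alpha$-symmetry and adds $\int V\varphi^2\dd\alpha$ to the quadratic form) with the Doob $\varrho$-transform (which conjugates by $\varrho$ and replaces the reference measure by $\varrho^2\alpha$). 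On the core $C_c^\infty(E)$, handling the cross term by integration by parts against $\alpha$,
\[
\tilde\cE(\varphi,\varphi) = \cE(\varrho\varphi,\varrho\varphi) - \int_E\varphi^2\,\varrho\,\cA\varrho\,\dd\alpha = \frac12 \int_E\|\nabla\varphi\|^2\varrho^2\,\dd\alpha,
\]
which is the announced form. Continuity of $\varrho$ on $E_\triangle$, together with the uniform lower bound $\varrho\geq e^{-\|u\|_\infty}>0$, ensures that the multiplicative functional $L^\varrho$ does not kill the process strictly before $\zeta$ and that the Hunt-process property survives the transformation.

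The main obstacle is the simultaneous control of exceptional sets: Lemma~\ref{marting} and Chen-Zhang's theorem are stated only quasi-everywhere, so one must exhibit a single properly exceptional $\cN\subset E$ outside of which $L^\varrho$ is a genuine $\PP_z$-martingale and $\tilde\PP_z$ is well-defined for all $t\geq 0$ simultaneously. This is resolved by taking a countable union of $t$-indexed exceptional sets along a dense subset of $\rr_+$ and exploiting the right-continuity in $t$ of both $L^\varrho$ and of the identity $\tilde\PP_z|_{\cM_t} = L^\varrho_t\cdot\PP_z|_{\cM_t}$. A secondary issue is extending the form identity above from $C_c^\infty(E)$ to all of $\cF = \tilde\cF$; this uses that $\varrho = e^u$ with $u$ a bounded element of $\cF$ acts as a bounded multiplier on $\cF$, so the norms associated to $\cE_1$ and $\tilde\cE_1$ are comparable on $\cF$ and the core $C_c^\infty(E)$ is dense for both. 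Once these verifications are in place, Chen-Zhang~\cite{cz} delivers exactly the conclusion of the lemma.
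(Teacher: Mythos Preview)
Your proposal is correct and follows essentially the same route as the paper: both invoke Chen--Zhang \cite[Theorem 3.4]{cz} after checking that the hypotheses hold, and both identify the explicit formula for $L^\varrho$ with Chen--Zhang's exponential martingale $\exp(M_t-\tfrac12\langle M\rangle_t)$ via It\^o's formula applied to $\log\varrho(Z_t)$ (your ``Dol\'eans--Dade exponential'' remark). The paper is terser, delegating the Dirichlet-form identification and the exceptional-set bookkeeping entirely to \cite{cz}, whereas you sketch these via the Feynman--Kac plus Doob $\varrho$-transform calculus; note however that in your displayed identity the sign in front of $\int_E\varphi^2\varrho\,\cA\varrho\,\dd\alpha$ should be $+$ rather than $-$ for the cross terms to cancel.
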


Actually, they speak of {\it right processes} in \cite{cz}, but this is not an issue since
we only consider continuous Hunt processes. Also, they assume that $L^\varrho$ is bounded from 
above and from below by some deterministic constants, on each compact time interval,
but this is obvious under our assumptions on $u$ and $\cA\varrho$.
Finally, their expression of $L^\varrho$ is different, see \cite[pages 485-486]{cz}: 
first, they define $M^\varrho_t$ as the martingale part of $\varrho(X_t)$.
By Lemma~\ref{marting} \blue (applied to $\varrho-1$), \bla we see
that $$M^{\varrho}_t=\varrho(Z_t)-\varrho(Z_0)-\intot \cA \blue[\varrho-1]\bla (Z_s)\dd s.$$ 
\blue
Then they put $M_t=\intot [\varrho(Z_s)]^{-1}\dd M^\varrho_s$ and define $L^\varrho$ as
$$
L^\varrho_t=\exp\Big(M_t-\frac12\langle M \rangle_t\Big).
$$
\bla
But by It\^o's formula, $\log \varrho(Z_t)= \log \varrho(Z_0)+\intot [\varrho(Z_s)]^{-1}\dd M^\varrho_s
+ \intot [\varrho(Z_s)]^{-1}\cA \blue[\varrho-1]\bla (Z_s)\dd s 
- \frac12 \intot [\varrho(Z_s)]^{-2}\dd \langle M^\varrho\rangle_s$,
whence $\log \varrho(Z_t)= \log \varrho(Z_0)+M_t+ \intot [\varrho(Z_s)]^{-1}\cA \blue[\varrho-1]\bla (Z_s)\dd s 
- \frac12 \langle M\rangle_t$,
so that $L^\varrho_t=\exp(M_t-\frac12\langle M \rangle_t)
=[\varrho(Z_0)]^{-1}\varrho(Z_t)\exp(-\intot \varrho(Z_s)^{-1}\cA \blue[\varrho-1]\bla (Z_s)\dd s )$ as desired.

\end{document}